\documentclass[11pt,a4paper]{article}
\usepackage[a4paper,margin=2.0cm]{geometry}
\usepackage{amssymb,latexsym}
\usepackage{amsmath}
\usepackage{amsthm}
\usepackage{authblk}
\usepackage{graphicx}
\usepackage{xcolor}
\makeatletter

\newcommand{\Rmnum}[1]
{\expandafter@slowromancap\romannumeral#1@}
\makeatother

\numberwithin{equation}{section}
\newtheorem{theorem}{Theorem}[section]
\newtheorem{proposition}[theorem]{Proposition}
\newtheorem{lemma}[theorem]{Lemma}

\usepackage{multirow}

\newcommand{\R}{{\mathbb{R}}}
\newcommand{\cF}{\mathcal F}
\newcommand{\cR}{\mathcal R}
\newcommand{\cM}{\mathcal M}
\newcommand{\inte}{\int_{\mathbb{R}}}

\theoremstyle{definition}

\allowdisplaybreaks[4]

\newtheorem{remark}{Remark}[section]

\title{}
\author{}

\begin{document}
\title{Cubic Nonlinear Schr\"{o}dinger Systems and Orthonormal Fermionic Ground States in $\R$}
\author{
Bin Chen\thanks{Academy of Mathematics and Systems Science, Chinese Academy of Sciences, Beijing 100190, P. R. China. B. Chen  is partially supported by NSF of China (Grant 12501151). Email:  binchen@amss.ac.cn.},\ \
Yujin Guo\thanks{School of Mathematics and Statistics,  Key Laboratory of Nonlinear Analysis $\&$ Applications (Ministry of Education), Central China Normal University, Wuhan 430079, P. R. China. Y. J.  Guo is partially supported by  NSF of China
	(Grants 12225106 and 12371113) and National Key R $\&$ D Program of China (Grant 2023YFA1010001). Email: yguo@ccnu.edu.cn. }, \
Yong Luo\thanks{School of Mathematics and Statistics, and Hubei Key Laboratory of Mathematical Sciences,  Central China Normal University, Wuhan 430079, P. R. China. Y. Luo is partially supported by NSF of China (Grant  12571119). Email: yluo@ccnu.edu.cn.},\
and Juncheng Wei\thanks{Department of Mathematics, Chinese University of Hong Kong, Shatin, NT, Hong Kong. J. C. Wei is partially supported  by   Hong Kong General Research Funds ``New frontiers in singular limits of nonlinear partial differential equations'' and ``On critical and supercritical Fujita equations''. Email: wei@math.cuhk.edu.hk.}  }

\date{}

\smallbreak \maketitle

\begin{abstract}
We study the one-dimensional  nonlinear Schr\"{o}dinger system
\[
-u_i''-2\Big(\sum_{k=1}^N u_k^2\Big)^{p-1}u_i=\mu_i u_i
\quad \mbox{in } \ \mathbb R,\ \ i=1,2,\cdots,N, \eqno{(S)}
\]
where  $u=(u_1,\cdots,u_N)\in H^1(\mathbb{R})^N$, $\mu_1\leq\mu_2\leq\cdots\leq\mu_N<0$, $1<p<3$ and $N\geq 2$ are arbitrary. For the cubic case where $p=2$ and  $N\ge 2$, we establish the following  structural  results of the system $(S)$: (i) all nontrivial solutions can be classified completely;  (ii) the linearized operator at any nontrivial solution is non-degenerate; (iii) exact $L^2$-mass formulas of all nontrivial solutions  are derived in terms of distinct Lagrange multipliers $\mu_i$.
As the applications of the above structural results, we first show that the system \((S)\) with \(p=2\) does not have any orthonormal solution for any \(N\ge2\). We then investigate
a fermionic variational problem \(J_N(p)\) with \(N\ge2\), whose minimizers  correspond to   ground states of the system \((S)\) under orthonormal constraints.
More precisely, we prove that there exists a sufficiently small \(\delta_N>0\) such that \(J_N(p)\) is attained for any \(p\in(2-\delta_N,2)\), whereas \(J_N(p)\) is not attained for any \(p\in[2,2+\delta_N)\).  These settle some conjectures of  [Gontier--Lewin--Nazar, ARMA, 2021] and [Frank--Gontier--Lewin, CMP, 2021].
\end{abstract}

\tableofcontents

\section{Introduction and main results}
We consider the following $N$-component nonlinear Schr\"{o}dinger system
\begin{equation}\label{Guo:FM}
	-u''_i-2\Big(\sum_{j=1}^N u^2_j\Big)^{p-1}u_i=\mu_iu_i\ \ \ \hbox{in}\ \ \R,\ \ i=1,2,\cdots,N,
\end{equation}
where \(N\ge2\), $1<p<3$, \(\mu_1\le \mu_2\le\cdots\le \mu_N<0\), and
$u=(u_1,\cdots,u_N)\in H^1(\mathbb{R})^N$. The coupled nonlinear Schr\"odinger system (\ref{Guo:FM}) arises in several physical contexts, including multi-component Bose--Einstein condensates, nonlinear optics, and effective models of many-particle quantum  fermionic systems, see e.g. \cite{LS,M,RL,ZS,GLN,Lewin}. In such models, different components interact with each other through a common nonlinear density, which often creates the complicated structures absent in a single scalar equation. In elliptic PDE theory, much attention has been devoted to the existence, uniqueness, and other qualitative properties  of standing waves and normalized solutions for these systems, see \cite{A,BS,BZZ,CS,LW,WZZ,WW} and the references therein.

A more specific motivation of studying \eqref{Guo:FM} arises from nonlinear Schr\"odinger equations for orthonormal functions, which are naturally related   (cf. \cite{ii,Lewin3,GLN,HLT,LT}) to  the effective descriptions of the Lieb--Thirring (L-T) inequalities and fermionic systems. Specially, the finite-rank version of the L-T inequality is concerned with the following estimate
\begin{equation}\label{1:FM}
	\sum_{j=1}^N |\lambda_j(-\Delta-V)|^\gamma
	\leq L_{\gamma,d}^{(N)}
	\int_{\mathbb R^d} V_+^{\gamma+d/2}\,dx ,
\end{equation}
where \(\lambda_j(-\Delta-V)\) denotes the $j$-th  min-max level  of the
Schr\"odinger operator \(-\Delta-V\) in $\R^d$.  The optimizer  of this
finite-rank inequality was studied in detail by Frank--Gontier--Lewin
\cite{Lewin3}. In particular, it was proved in \cite{Lewin3} that for the particular case where \(d=1\) and  the
exponent \(\gamma=\frac{1}{p-1}+\frac{1}{2}>\frac{1}{2}\), the optimizer of (\ref{1:FM}) satisfies  the  nonlinear system \eqref{Guo:FM} after  an appropriate scaling transformation of the eigenfunctions.

A classification on the solutions of \eqref{Guo:FM} gives an
eigenfunction-level description of bound-state profiles associated with
the soliton potential. This solitonic point of view is already visible in the system \eqref{Guo:FM} with $p=2$ for the cases \(N=2,3\). More precisely, when   \(N=2, 3\), all  solutions of \eqref{Guo:FM} with $p=2$   can be
classified completely (cf. \cite{ii,GLW2026}) by applying  Hirota-type bilinear
methods, together with algebraic  constants of motion.  These arguments also lead to some conjectures in \cite{GLW2026} on the structure of
solutions for \eqref{Guo:FM} with $p=2$ in the general case \(N\ge 2\).
These  conjectures are however nontrivial, since the proofs of \cite{ii,GLW2026} for the cases \(N=2,3\) heavily rely on the identities whose explicit
forms become  increasingly complicated as the number $N$ increases.
Thus, even though the formulas suggested by the cases \(N=2,3\) exhibit a clear  pattern, a uniform verification and a complete classification for the general case \(N\geq 2\) require a more systematic approach. {\em The first main purpose of the present paper} is to address rigorously the complete classification, non-degeneracy and \(L^2\)-mass identities of all finite-energy solutions for the cubic nonlinear system \eqref{Guo:FM} with $p=2$.

%

The above structural theory of \eqref{Guo:FM} is also naturally connected with fermionic variational problems under orthonormal constraints. We write $U=(u_1,\cdots,u_N)^\top$ and set
\[
\mathcal M_N:=\bigl\{U\in H^1(\R;\R^N):\,
\langle u_i,u_j\rangle_{L^2}=\delta_{ij},\ 1\le i,j\le N\bigr\}.
\]
As studied in \cite{ii,GLN,Lewin} and the references therein, the system \eqref{Guo:FM} with $N\ge2$ arises from the variational problem
\begin{equation}\label{wei1.1}
J_N(p):=\inf_{U\in\mathcal M_N}E_p(U),\ \  1<p<3,
\end{equation}
where the energy functional is defined by
\begin{equation}\label{wei1.2}
E_p(U):=\inte \sum_{i=1}^N\Big(|\nabla u_{i}|^2\Big)dx -\frac{1}{p}\int_{\R}\Big(\sum_{i=1}^N u_i^2\Big)^pdx.
\end{equation}
See \cite{G} for related physical experiments.
If $U$ is a minimizer of $J_N(p)$, then the variational theory and \cite{GLN} imply that, up to an orthogonal transformation, $U$ is a ground state of the following fermionic NLS system
\begin{equation}\label{wei1:x4}
-u_i''-\Big(\sum_{j=1}^N u_j^2\Big)^{p-1}u_i
=\mu_i u_i\quad\text{in }\ \mathbb R, \ \ i=1,\cdots,N,
\end{equation}
where $\langle u_i,u_j\rangle_{L^2}=\delta_{ij}$, and \(\mu_1\le\cdots\le\mu_N<0\) are the first $N$ eigenvalues (counted with multiplicity) of the operator $-D_{xx}-\Big(\sum_{j=1}^{N}|u_j|^2\Big)^{p-1}$ in $\R$. Throughout the whole paper, the minimizers of $J_N(p)$ are thus referred to as ground states of the  system \eqref{wei1:x4} under orthonormal constraints $\langle u_i,u_j\rangle_{L^2}=\delta_{ij}$.  After a simple scaling, one can note that the system \eqref{wei1:x4} is equivalent to
\eqref{Guo:FM}.
Therefore, the classification and \(L^2\)-mass identities of solutions for the  system \eqref{Guo:FM} with $N\geq2$ can provide the structural information on the profiles  of the minimizers for \(J_N(p)\).


The existing  results of \(J_N(p)\) exhibit a sharp contrast across
the critical exponent \(p=2\). Especially, it was proved in \cite[Theorems~3 and~4]{GLN} that \(J_2(p)\), $i.e.,$ the case $N=2$ of \(J_N(p)\), admits minimizers for all \(1<p<2\). However,  \(J_N(2)\) admits no minimizers for any \(N\ge2\), see \cite[Theorem~7]{ii}. It was further conjectured in \cite[Conjectures 2 and 5]{GLN} that for every \(N\ge2\),  the problem \(J_N(p)\) admits a minimizer for \(1<p<2\), but no minimizer for \(2\le p<3\).
Thus, \(p=2\) is the conjectured transition point  of \(J_N(2)\)  between the $L^2-$subcritical existence regime and the $L^2-$subcritical nonexistence regime. Stimulated by above conjectures, as an application of the above structural theory for \eqref{Guo:FM},
{\em the second main purpose of the present paper is to investigate the existence and nonexistence   of minimizers for \(J_N(p)\) in the regime where \(1<p<3\)} and $N\ge 2$. We prove the existence and nonexistence of minimizers for \(J_N(p)\) in the first nontrivial case where
\(N\geq2\) and \(p\) is sufficiently close to \(2\).

\subsection{Main results}
In this subsection, we introduce the main results of the present paper. For convenience, we rewrite the system  \eqref{Guo:FM} with $p=2$ as the following cubic form
\begin{equation}\label{FM}
	-u''_i-2\Big(\sum_{j=1}^N u^2_j\Big) u_i=\mu_iu_i\ \ \ \hbox{in}\,\ \R,\ \ i=1,2,\cdots,N,
\end{equation}
where \(N\ge2\),   \(\mu_1\le \mu_2\le\cdots\le \mu_N<0\), and
$u=(u_1,\cdots,u_N)\in H^1(\mathbb{R})^N$.
We first study the cubic nonlinear system \eqref{FM} for the case of pairwise distinct spectral parameters, whose full determinant structure turns out to be  visible.  Since the identically zero
components of solutions for \eqref{FM} can  be essentially removed, it is enough to state the following theorem for the
solutions of \eqref{FM} whose components are all nontrivial.

\begin{theorem}\label{thm1.1}
	For $\mu_1<\mu_2<\cdots<\mu_N<0$, suppose $u=(u_1,u_2,\cdots,u_N)$ is a  solution of \eqref{FM} in $(H^1(\R)\setminus\{0\})^N$. Then there exists a vector $a=(a_1,a_2,\cdots,a_N)\in(\R\setminus\{0\})^N$ such that $u$ can be expressed exactly as
	\begin{equation}\label{exp1}
		u_i(x)=\frac{a_ie^{\eta_ix}\det(I+B^{(i)}M(x))}{\det(I+M(x))},\ \ i=1,2,\cdots,N,
	\end{equation}
	where the matrix $M(x)$ satisfies
	\begin{equation}\label{MatrixM}
		M(x):=\bigl(m_{jk}(x)\bigr)_{1\le j,k\le N},
		\ \
		m_{jk}(x):=\frac{a_j a_k}{2\sqrt{\eta_j\eta_k}(\eta_j+\eta_k)}e^{(\eta_j+\eta_k)x},
	\end{equation}
	and for any $i=1,2,\cdots,N$,  $\eta_i=\sqrt{-\mu_i}>0$, and
	\begin{equation}\label{Bi}
		B^{(i)}:=\operatorname{diag}\Big(
		\frac{\eta_i-\eta_1}{\eta_i+\eta_1},
		\frac{\eta_i-\eta_2}{\eta_i+\eta_2},
		\dots,
		\frac{\eta_i-\eta_N}{\eta_i+\eta_N}
		\Big).
	\end{equation}
\end{theorem}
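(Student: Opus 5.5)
The plan is to view \eqref{FM} as a spectral problem for the Schr\"odinger operator $H=-\partial_x^2-V$ with the self-consistent potential $V:=2\sum_{j=1}^N u_j^2$. Each $u_i$ is then an $L^2$ eigenfunction of $H$ with eigenvalue $-\eta_i^2$, and these eigenvalues are pairwise distinct. The argument has two halves. First, show that $V$ is a reflectionless potential with exactly $N$ bound states $-\eta_1^2,\dots,-\eta_N^2$. Second, use the classical Kay--Moses / inverse scattering (Gelfand--Levitan--Marchenko) formulas for reflectionless potentials. For these potentials the normalized eigenfunctions are given exactly by Cramer-type determinant ratios of the form \eqref{exp1}, and $V=2\partial_x^2\log\det(I+M)$, with $M$ the Cauchy-type matrix \eqref{MatrixM} built from the norming constants.

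\textbf{Step 1.} Since $V\in H^1$ decays, each $u_i$ decays like $c_i e^{-\eta_i|x|}$ at $\pm\infty$, and $u_i$ is simple as an eigenfunction. Eigenfunctions of distinct eigenvalues are $L^2$-orthogonal. Define $a_i\neq0$ by the left asymptotics $u_i(x)\sim a_i e^{\eta_i x}$ as $x\to-\infty$.

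\textbf{Step 2.} This is the key identity and the main obstacle: showing that $V$ is reflectionless with no other bound states. I would first try the trace-formula/conservation-law route. The self-consistency $V=2\sum u_j^2$ is precisely the Deift--Trubowitz identity $V=2\sum_j \kappa_j\psi_j^2$ for reflectionless potentials, where the $\psi_j$ are the normalized eigenfunctions. This suggests comparing the two expressions.

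To do this, let $\tilde V$ be the reflectionless potential with eigenvalues $-\eta_j^2$ and norming data chosen so that its eigenfunctions $\tilde u_j$ have the same left asymptotics $a_je^{\eta_jx}$. By Kay--Moses, $\tilde V=2\sum_j\tilde u_j^2$ with $\tilde u_j$ given by \eqref{exp1}. Both $u$ and $\tilde u$ then solve the same ODE system \eqref{FM}.

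Next, note that \eqref{FM} is a $2N$-dimensional first-order ODE, and $u\to0$ as $x\to-\infty$. Near $-\infty$ the linearization has unstable manifold spanned by $e^{\eta_i x}$ in each component, which is $N$-dimensional. Solutions decaying at $-\infty$ therefore lie on the $N$-dimensional unstable manifold of the origin. On that manifold they are parametrized exactly by the leading coefficients $(a_1,\dots,a_N)$, because the $\eta_i$ are distinct and nonresonance issues only affect higher-order terms.

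Hence $u\equiv\tilde u$. The stable-manifold uniqueness needs care: resonances $\eta_i=\eta_j+\eta_k+\eta_l$ could create log terms. I would handle this by parametrizing the manifold via the $N$ coordinates along the linear unstable directions, which is legitimate without nonresonance.

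\textbf{Step 3.} Verify that \eqref{exp1} is a genuine $H^1$ solution with the prescribed asymptotics. $M(x)$ is a Gram matrix of positive exponentials times a Cauchy kernel $1/(\eta_j+\eta_k)$, so $M$ is positive definite and $\det(I+M)>0$. The leading term of $u_i$ as $x\to-\infty$ is $a_ie^{\eta_ix}$, since $M\to0$.

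The identity that \eqref{exp1} solves \eqref{FM} follows from the Cauchy-determinant/Jacobi identities, or equivalently from the known dressing (Darboux) construction. In particular, $\det(I+B^{(i)}M)$ arises because $B^{(i)}$ removes the $i$-th mode ($B^{(i)}_{ii}=0$).

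Because Step 2 reduces the classification to ODE uniqueness given the asymptotic data $a$, the whole difficulty concentrates in justifying that the unstable manifold is parametrized by $a$. The explicit determinant verification in Step 3 is routine but lengthy.
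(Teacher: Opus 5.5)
Your architecture matches the paper's. For every left datum $a\in(\mathbb R\setminus\{0\})^N$ you produce a determinant solution $U^a$ with $U^a_i=a_ie^{\eta_ix}+o(e^{\eta_ix})$ as $x\to-\infty$, and then you show that an $H^1$ solution is determined by its left leading coefficients. (The reflectionless/trace-formula discussion in your Step 2 is never used.) The gap is in the second half, which you rightly call the crux. That $(u,u')$ lies on the $N$-dimensional unstable manifold $W^u$ of the origin says nothing, by itself, about whether the map $u\mapsto(\lim_{x\to-\infty}e^{-\eta_ix}u_i(x))_i$ is well defined and injective. Your proposed remedy does not address this. Writing $W^u$ as a graph over the unstable eigenspace parametrizes a solution by the linear projection of $(u(x_0),u'(x_0))$ at a fixed $x_0$, not by the asymptotic coefficients $a$ you are matching between $u$ and $\tilde u$. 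For a general cubic nonlinearity, parametrization by $a$ really fails. A term $u_1^3$ in the $i$-th equation with $3\eta_1<\eta_i$ can make $u_i\sim ce^{3\eta_1x}$, and $3\eta_1=\eta_i$ produces $xe^{\eta_ix}$; in either case there is no leading coefficient at $e^{\eta_ix}$. Distinctness of the $\eta_i$ is irrelevant to this. Your claim is true here only because of the special form of the system, which your argument never uses.

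The missing idea is that the $i$-th equation reads $u_i''-\eta_i^2u_i=-2Su_i$, so its forcing always carries the factor $u_i$, and no resonance can occur in any component. Take $w=u-\tilde u$ with the same data $a$, and write $S_uu_i-S_{\tilde u}\tilde u_i=S_uw_i+(S_u-S_{\tilde u})\tilde u_i$. On a left half-line this gives $|F_i|\le Ce^{(\eta_i+2\eta_*)x}\max_je^{-\eta_jx}|w_j|$, where $\eta_*=\min_j\eta_j$. The condition $w_i\in H^1$ excludes the mode $e^{-\eta_ix}$, and $w_i=o(e^{\eta_ix})$ excludes $e^{\eta_ix}$, so $w_i(x)=\eta_i^{-1}\int_{-\infty}^x\sinh(\eta_i(x-t))F_i(t)\,dt$. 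Setting $W_R:=\max_j\sup_{x\le R}e^{-\eta_jx}|w_j(x)|$, this yields $W_R\le\Theta e^{2\eta_*R}W_R$. Taking $R\ll-1$ forces $w\equiv0$ on $(-\infty,R)$, and ODE uniqueness finishes. This weighted Volterra contraction is Lemma~\ref{lem:zero-left-principle}, and it is what must replace your unstable-manifold assertion.

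On the existence side, quoting Kay--Moses yields $\tilde u_i=\sqrt{2\eta_i}\,((I+M)^{-1}v)_i$ with $v_j=a_je^{\eta_jx}/\sqrt{2\eta_j}$, together with the trace formula. Reaching the exact form \eqref{exp1} still requires the Cramer-numerator identity $\det A_i=v_i\det(I+B^{(i)}M)$, where $A_i$ is $I+M$ with its $i$-th column replaced by $v$. Your remark that $B^{(i)}$ removes the $i$-th mode only gestures at this. The paper proves it in Lemma~\ref{lem:ratio-formula} by a principal-minor expansion and the Cauchy factorization. It then checks the equation directly from $M'=vv^\top$ and $\Lambda M+M\Lambda=vv^\top$, with $\Lambda=\operatorname{diag}(\eta_1,\dots,\eta_N)$, instead of invoking inverse scattering.
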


We remark that the parameters \(a_1,\cdots,a_N\) of Theorem~\ref{thm1.1} are not
auxiliary constants introduced by the determinant ansatz. They are precisely
the leading coefficients of the solution $u$ at \(-\infty\). Indeed, since
\(M(x)\to0\) as \(x\to-\infty\), the formula \eqref{exp1} gives that
\[
u_i(x)=a_i e^{\eta_i x}+o(e^{\eta_i x})
\ \ \mbox{as}\ \  x\to-\infty,\ \ i=1,\cdots,N .
\]
Thus, Theorem~\ref{thm1.1} gives a one-to-one parametrization of all
solutions with prescribed nonzero left asymptotic data. Moreover, translations and independent sign changes of Theorem~\ref{thm1.1} are encoded directly in
the parameters. More precisely, if \(U^a=(U_1^a, \cdots, U_N^a)\) denotes a solution of \eqref{FM} corresponding to some vector \(a\), then
\[
U^a(x+t)=U^{a(t)}(x),
\ \
a_i(t)=a_i e^{\eta_i t},\ \ a(t)=\big(a_1(t), \cdots, a_N(t)\big),
\]
and $U^{(\sigma_1a_1,\cdots,\sigma_Na_N)}_i
=
\sigma_i U^a_i$ for \(\sigma_j\in\{\pm1\}\).
Consequently, the determinant formula of Theorem~\ref{thm1.1} describes completely the full solution manifold of \eqref{FM},
including its natural translation and sign symmetries.

In the following we sketch  the proof of Theorem~\ref{thm1.1}.
For any \(N\geq2\), the main point of proving  Theorem~\ref{thm1.1} is to verify the determinant ansatz without expanding its components one by one.  By a determinant-ratio identity, the formula of Theorem \ref{thm1.1} can be rewritten as
\[
U_i^a=\sqrt{2\eta_i}\,q_i,
\ \  q=(q_1,\cdots,q_N)=(I+M)^{-1}v,\ \ i=1,\cdots,N,
\]
where $U^a=(U_1^a,\cdots,U_N^a)$ satisfies \eqref{exp1} and \(v\) is a suitable vector given by \eqref{vvect} below.  The verification is then reduced to the claim
that \(q\) satisfies a closed second-order differential system.  This can
be checked uniformly in \(N\geq2\) by exploiting the special Cauchy-type
structure \eqref{cauchystru} of \(M(x)\) in (\ref{MatrixM}).

To finish the proof of Theorem~\ref{thm1.1}, it then remains to prove that except $U^a$, no other \(H^1\)-solutions of \eqref{FM} exist. For an arbitrary
solution \(u\) of \eqref{FM}, each component's equation is an \(L^1\)-perturbation of
\(y''-\eta_i^2y=0\) near \(-\infty\). The \(H^1\)-condition however excludes the
growing mode $e^{-\eta_ix}$ as $x\to-\infty$, and hence it essentially determines a unique vector
\(a=(a_1,\cdots,a_N)\) from the asymptotic behavior
\[
u_i(x)=a_i e^{\eta_i x}+o(e^{\eta_i x})
\ \ \mbox{as}\ \  x\to-\infty,\ \ i=1,\cdots,N.
\]
Compare any solution \(u\) of \eqref{FM} with the determinant solution \(U^a\) under the
same asymptotic data. An asymptotic uniqueness principle then gives
\(u\equiv U^a\) on a left half-line, and the uniqueness theorem of
ODE systems further yields \(u\equiv U^a\) on \(\mathbb R\). We refer to Sections 2 and 3 for the detailed proof of   Theorem~\ref{thm1.1}.

Based on the classification of Theorem~\ref{thm1.1}, we next describe the linearized
structure of solutions for \eqref{FM}.
For any fixed vector
$a=(a_1,\cdots,a_N)\in(\mathbb R\setminus\{0\})^N$, let
$
U^a:=(U_1^a,\cdots,U_N^a)
$
be a solution given by Theorem~\ref{thm1.1}. The linearized operator at $U^a$  is then defined as
\begin{equation}\label{LUa}
	(\mathcal L_{U^a}\phi)_i
	:=
	-\phi_i''
	-\mu_i\phi_i
	-2\left(\sum_{j=1}^N (U_j^a)^2\right)\phi_i
	-4U_i^a\sum_{j=1}^N U_j^a\phi_j\quad\hbox{in}\ \ \R,
	\ \  i=1,\cdots,N.
\end{equation}
The natural kernel directions of (\ref{LUa}) are obtained by differentiating
the family with respect to the parameters \(a_1,\cdots,a_N\), and the
following nondegeneracy theorem shows that they are the only \(H^1\)-kernel directions.

\begin{theorem}
	\label{prop:nondegeneracy}
	For \(\mu_1<\mu_2<\cdots<\mu_N<0\),  let \(U^a\) be the
	solution given by Theorem~\ref{thm1.1} for some vector
	\(a=(a_1,\cdots,a_N)\in(\mathbb R\setminus\{0\})^N\).
	Then we have $\dim\ker_{H^1(\mathbb R)^N}\mathcal L_{U^a}=N$, and
	\[
	\ker_{H^1(\mathbb R)^N}\mathcal L_{U^a}
	=
	\operatorname{span}
	\left\{
	\partial_{a_1}U^a,\cdots,\partial_{a_N}U^a
	\right\},
	\]
	where \(\mathcal L_{U^a}\) is defined by \eqref{LUa}.
\end{theorem}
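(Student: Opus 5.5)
The plan has two halves: a lower bound $\dim\ker\ge N$ from the explicit family of Theorem~\ref{thm1.1}, and an upper bound $\dim\ker\le N$ from an asymptotic count at $-\infty$ combined with the asymptotic uniqueness principle already used for Theorem~\ref{thm1.1}.

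\textbf{Lower bound.} The map $a\mapsto U^a$ from \eqref{exp1} is smooth on $(\mathbb R\setminus\{0\})^N$ with values in $H^1(\mathbb R)^N$. This holds because $\det(I+M(x))\ge1$: indeed $M(x)$ is a positive semidefinite Gram/Cauchy-type matrix, since
\[
\frac{1}{\eta_j+\eta_k}=\int_0^\infty e^{-(\eta_j+\eta_k)s}\,ds .
\]
The same representation gives uniform exponential decay at $\pm\infty$ for the derivatives in $a$. Differentiating \eqref{FM} at $U^a$ with respect to $a_k$ then shows $\mathcal L_{U^a}\partial_{a_k}U^a=0$, and each $\partial_{a_k}U^a$ lies in $H^1$.

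For linear independence, read off the behavior at $-\infty$. From $M(x)=O(e^{2\eta_1x})$ we get
\[
U^a_i=a_ie^{\eta_ix}\bigl(1+O(e^{2\eta_1x})\bigr),
\qquad
\partial_{a_k}U^a_i=\delta_{ik}e^{\eta_ix}+o(e^{\eta_ix})\quad\text{as }x\to-\infty .
\]
Hence any linear relation among the $\partial_{a_k}U^a$ forces all coefficients to vanish.

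\textbf{Upper bound.} Let $\phi\in\ker_{H^1}\mathcal L_{U^a}$. Since $U^a$ decays exponentially, the potential terms in \eqref{LUa} are $O(e^{2\eta_1x})$ at $-\infty$. So each component solves an $L^1$-perturbation of $y''-\eta_i^2y=0$, and the $H^1$ condition kills the growing mode. Exactly as in the asymptotic argument for Theorem~\ref{thm1.1}, we obtain $\phi_i(x)=b_ie^{\eta_ix}+o(e^{\eta_ix})$ for some $b\in\mathbb R^N$.

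Set $\psi:=\phi-\sum_k b_k\partial_{a_k}U^a$. Then $\psi\in\ker\mathcal L_{U^a}$ and $\psi_i=o(e^{\eta_ix})$ for every $i$. The goal is to show $\psi\equiv0$; the uniqueness theorem for the linear ODE system then finishes the proof.

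\textbf{Main obstacle: showing $\psi\equiv0$.} I would use a linearized version of the asymptotic uniqueness principle. Each $\psi_i$ satisfies a Volterra integral equation on $(-\infty,x_0]$ built from the decaying Green's function of $-\partial_x^2+\eta_i^2$ on the left half-line. The condition $\psi_i=o(e^{\eta_ix})$ removes the free homogeneous term $ce^{\eta_ix}$, leaving
\[
\psi_i(x)=\int_{-\infty}^x\frac{\sinh(\eta_i(x-s))}{\eta_i}\,V_i[\psi](s)\,ds,
\]
where $V[\psi]$ collects the zeroth-order terms of \eqref{LUa} and satisfies $|V[\psi]|\lesssim e^{2\eta_1s}|\psi|$.

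The difficulty is the coupling: the different exponential rates $\eta_i$ interact through the term $U_i^a\sum_jU_j^a\psi_j$. To handle this, I would work in the weighted norm
\[
\|\psi\|_{x_0}:=\sup_{x\le x_0}\max_i\,e^{-\eta_ix}|\psi_i(x)| .
\]
The key check is that for $x_0$ sufficiently negative the integral operator is a contraction in this norm. This uses the extra decay factor $e^{2\eta_1 s}$, applied with care to the cross terms $U_i^aU_j^a\psi_j\sim e^{(\eta_i+2\eta_j)s}$, which indeed carry at least $e^{\eta_is}\cdot e^{2\eta_1s}$ relative weight.

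Contraction forces $\psi\equiv0$ on $(-\infty,x_0]$, hence $\psi\equiv0$ on $\mathbb R$. Therefore $\phi\in\operatorname{span}\{\partial_{a_k}U^a\}$, and $\dim\ker_{H^1(\mathbb R)^N}\mathcal L_{U^a}=N$.
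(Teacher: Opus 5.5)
Your proposal is correct and is essentially the paper's proof. The steps match: you get the left coefficients $b_i$ from Lemma~\ref{lem:L1-asymptotic}, applied componentwise with $q=-2S^a$ and with the coupling term $-4U_i^a\sum_jU_j^a\phi_j=O(e^{(\eta_i+\eta_*)x})$ as the inhomogeneity $h$ (this uses the prefactor $U_i^a$ and $\phi\in L^\infty$); you subtract $\sum_kb_k\partial_{a_k}U^a$; your weighted-sup contraction is exactly Lemma~\ref{lem:zero-left-principle}; and you finish with ODE uniqueness.

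One slip to fix: since $\mu_1<\cdots<\mu_N$, $\eta_1$ is the \emph{largest} rate. So each $e^{2\eta_1x}$ in your bounds (e.g.\ $M(x)=O(e^{2\eta_1x})$ and $|V[\psi]|\lesssim e^{2\eta_1s}|\psi|$) must be replaced by $e^{2\eta_*x}$ with $\eta_*=\min_i\eta_i=\eta_N$; after that the argument goes through unchanged.
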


We now introduce the main idea of proving Theorem~\ref{prop:nondegeneracy}. Since \(U^a\) is an
\(N\)-parameter family of solutions given by Theorem~\ref{thm1.1},
differentiating the system of \(U^a\)  with respect to \(a_k\) gives
\(\mathcal L_{U^a}(\partial_{a_k}U^a)=0\) in $\R$.
Moreover, the exponential decay of \(U^a\) obtained from the determinant
formula implies that \(\partial_{a_k}U^a\in H^1(\mathbb R)^N\). By the expression of \(U^a\), direct
computations    also
give that
\[
\partial_{a_k}U_i^a(x)
=
\delta_{ik}e^{\eta_i x}+o(e^{\eta_i x})
\quad\text{as}\ \ x\to-\infty.
\]
Therefore, these \(N\) kernel elements are linearly independent.

Conversely, let
\(\phi\in\ker_{H^1(\mathbb R)^N}\mathcal L_{U^a}\). Applying
Lemma~\ref{lem:L1-asymptotic} to the linearized system \eqref{LUa}, we obtain that there exist
constants \(b_i\in\mathbb R\) such that
\[
\phi_i(x)=b_i e^{\eta_i x}+o(e^{\eta_i x})
\quad\text{as}\ \ x\to-\infty,\quad i=1, 2, \cdots, N.
\]
Set
\(
z:=\phi-\sum_{k=1}^N b_k\partial_{a_k}U^a .
\)
Then \(z\in\ker_{H^1(\mathbb R)^N}\mathcal L_{U^a}\), i.e., \(\mathcal L_{U^a}z=0\) in $\R$, and
\[
z_i(x)=o(e^{\eta_i x})
\quad\text{as}\quad x\to-\infty,
\quad i=1,\cdots,N.
\]
We prove in Lemma~\ref{lem:zero-left-principle} that \(z\equiv0\) holds on a sufficiently
far left half-line. The uniqueness theorem of ODE systems then yields
\(z\equiv0\) in \(\mathbb R\). Hence, every kernel element of $\mathcal L_{U^a}$ in $\R$ belongs to the set
\[
\operatorname{span}\{\partial_{a_1}U^a,\cdots,\partial_{a_N}U^a\},
\]
which proves the desired characterization of the kernel. The proof of Theorem~\ref{prop:nondegeneracy} is thus complete.

We next focus on the general case of \eqref{FM}, which admits repeated spectral parameters.
Roughly speaking,  these repeated parameters of \eqref{FM} do not however create new scalar profiles of solutions for \eqref{FM}. More precisely, we shall prove that after grouping
equal spectral values, all components  in the same spectral block are the
multiples of one common profile, which has only the natural rotational freedom
inside that block.
Let
\[
\mu_1<\mu_2<\cdots<\mu_K<0
\]
be the distinct spectral values, which have their multiplicities
\(k_1,\cdots,k_K\), so that $\sum_{\alpha=1}^Kk_\alpha=N$.
Set
\begin{equation}\label{eq:multi-index}
	N_\alpha:=k_1+\cdots+k_\alpha,\ \  N_0:=0,
	\ \
	I_\alpha:=\{N_{\alpha-1}+1,\cdots,N_\alpha\}.
\end{equation}
Denote $\nu_1,\cdots,\nu_N$ the full list of the spectral parameters counted with their
multiplicities, namely
\begin{equation}\label{eq:multi-nu}
	\nu_i=\mu_\alpha,
	\ \
	i\in I_\alpha,\quad \alpha=1,\cdots,K .
\end{equation}
Let
\begin{equation}\label{1.10A}
	V^a=(V_1^a,\cdots,V_K^a),
	\ \
	a=(a_1,\cdots,a_K)\in(\mathbb R\setminus\{0\})^K,
\end{equation}
be the determinant family given by Theorem~\ref{thm1.1}, so that $V_\alpha^a$ satisfies the reduced distinct spectral system
\begin{equation}\label{eq:reduced-system}
	(V^a_\alpha)''
	+2\left(\sum_{\beta=1}^K (V^a_\beta)^2\right)V^a_\alpha
	+\mu_\alpha V^a_\alpha=0\quad\hbox{in}\ \ \R,
	\ \  \alpha=1,\cdots,K,
\end{equation}
For $n^{(\alpha)}=\bigl(
n_{N_{\alpha-1}+1}^{(\alpha)},\cdots,
n_{N_\alpha}^{(\alpha)}
\bigr)
\in \mathbb S^{k_\alpha-1},
$
namely,
\(
\sum_{i=N_{\alpha-1}+1}^{N_\alpha} (n_i^{(\alpha)})^2=1,
\)
define
\begin{equation}\label{eq:multi-lift}
	U^{a,n}=(U_1^{a,n},\cdots,U_N^{a,n}),\,\  \,\  U_i^{a,n}(x)=n_i^{(\alpha)}V_\alpha^a(x),
	\ \
	i\in I_\alpha,\ \ \alpha=1,\cdots,K,
\end{equation}
where $V_\alpha^a(x)$ is defined by \eqref{1.10A}. We denote the extended family by
\begin{equation}\label{eq:multi-family}
	\mathcal M_{\mathrm{multi}}
	:=
	\left\{
	U^{a,n}\,\ \text{is defined by } \eqref{eq:multi-lift}:\,
	a\in(\mathbb R\setminus\{0\})^K,\
	n^{(\alpha)}\in\mathbb S^{k_\alpha-1}
	\right\}.
\end{equation}
We shall derive the following property.

\begin{theorem}
	\label{thm:completeness-multiple-root}
	For $\mu_1<\mu_2<\cdots<\mu_K<0$, let $I_\alpha$ and $\nu_1,\cdots,\nu_N$ be defined by \eqref{eq:multi-index} and \eqref{eq:multi-nu},  respectively, and assume that $
	u=(u_1,\cdots,u_N)\in H^1(\mathbb R)^N$
	is a solution of
	\begin{equation}\label{eq:multi-system}
		u_i''
		+2\left(\sum_{j=1}^N u_j^2\right)u_i
		+\nu_i u_i=0\ \ \hbox{in} \,\ \R,
		\ \  i=1,\cdots,N.
	\end{equation}
	If \(\sum_{i\in I_\alpha}u_i^2\not\equiv0\) holds for every
	\(\alpha=1,\cdots,K\), then \(u\in\mathcal M_{\mathrm{multi}}\), where
	\(\mathcal M_{\mathrm{multi}}\) is given by \eqref{eq:multi-family}.
\end{theorem}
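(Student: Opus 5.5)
The strategy is to show that inside each spectral block $I_\alpha$ all components are proportional to one common function. The reduced vector then solves the distinct-spectrum system \eqref{eq:reduced-system}, and Theorem~\ref{thm1.1} applies to it.

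Fix $\alpha$ and write $W:=\sum_{j=1}^N u_j^2$. For $i,j\in I_\alpha$ the functions $u_i$ and $u_j$ solve the same scalar linear equation $y''+(2W+\mu_\alpha)y=0$. Their Wronskian $w_{ij}:=u_iu_j'-u_i'u_j$ is therefore constant on $\R$. Since $u_i,u_j\in H^1(\R)$ solve this ODE with bounded coefficient, $u_i'$ and $u_j'$ are also bounded and decay. Hence $w_{ij}\to0$ as $x\to-\infty$, so $w_{ij}\equiv0$. It follows that any two components in $I_\alpha$ are linearly dependent, because the solution space on $\R$ is two-dimensional and zero Wronskian means dependence.

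Because $\sum_{i\in I_\alpha}u_i^2\not\equiv0$, some $u_{i_0}$ with $i_0\in I_\alpha$ is nontrivial. Every $u_i$ with $i\in I_\alpha$ then equals $c_iu_{i_0}$ for some constant $c_i$. Setting $V_\alpha:=\bigl(\sum_{i\in I_\alpha}c_i^2\bigr)^{1/2}u_{i_0}$ and $n_i^{(\alpha)}:=c_i/\bigl(\sum_{k\in I_\alpha}c_k^2\bigr)^{1/2}$, we get $u_i=n_i^{(\alpha)}V_\alpha$ with $n^{(\alpha)}\in\mathbb S^{k_\alpha-1}$. We also get $\sum_{i\in I_\alpha}u_i^2=V_\alpha^2$, so $W=\sum_{\beta=1}^K V_\beta^2$. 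Since $V_\alpha$ is a constant multiple of $u_{i_0}$, it satisfies \eqref{eq:reduced-system}. Moreover $V_\alpha\in H^1(\R)\setminus\{0\}$ for every $\alpha$.

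Now $V=(V_1,\cdots,V_K)$ is a solution of \eqref{FM} with $N$ replaced by $K$, pairwise distinct $\mu_1<\cdots<\mu_K<0$, and all components nontrivial. If $K\ge2$, Theorem~\ref{thm1.1} gives $a\in(\R\setminus\{0\})^K$ with $V=V^a$, so $u=U^{a,n}\in\mathcal M_{\mathrm{multi}}$.

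The main point to check is the case $K=1$, since Theorem~\ref{thm1.1} is stated only for $N\ge2$. In that case $V_1$ solves the scalar equation $-V''-2V^3=\mu_1V$. I would either observe that the proof of Theorem~\ref{thm1.1} (asymptotic data at $-\infty$ plus the uniqueness principle) works verbatim for $N=1$, or invoke directly the classical uniqueness of the scalar soliton up to translation and sign. Either way, this identifies $V_1$ with the determinant formula $a_1e^{\eta_1x}/(1+a_1^2e^{2\eta_1x}/(4\eta_1^2))$, i.e.\ the $1\times1$ case of \eqref{exp1}.

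The other delicate point is justifying that the Wronskian vanishes at $-\infty$, which needs the decay of $u_i'$. This follows from the equation: $u_i''\in L^2$, so $u_i'\in H^1$ and $u_i'\to0$. It also follows from the exponential asymptotics used before Lemma~\ref{lem:L1-asymptotic}.
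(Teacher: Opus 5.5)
Your proposal is correct and follows the paper's proof essentially step by step: the constant Wronskians within each block are forced to vanish, every component is proportional to one nontrivial component via ODE uniqueness, the coefficients are normalized to $n^{(\alpha)}\in\mathbb S^{k_\alpha-1}$, and Theorem~\ref{thm1.1} is applied to the reduced system. The differences are minor: the paper kills each Wronskian by noting that a constant in $L^1(\mathbb R)$ must vanish, whereas you use decay of $u_i'$, and the paper applies Theorem~\ref{thm1.1} without separately addressing $K=1$, where, as you note, the same asymptotic-uniqueness argument goes through verbatim.
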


The key point of proving Theorem~\ref{thm:completeness-multiple-root} is to use
Wronskian identities to show that all components of the solution $u$ in each spectral block are
parallel. After this reduction, the $N$-component system of
Theorem~\ref{thm:completeness-multiple-root} becomes essentially a $K$-component system with pairwise distinct spectral
parameters. Theorem~\ref{thm1.1} can be then applied, which finally  yields
\(u\in\mathcal M_{\mathrm{multi}}\).
\begin{remark}\label{rem:conj51}
	One can note that Theorems~\ref{thm1.1} and
	\ref{thm:completeness-multiple-root} give a complete classification
	on \(H^1(\mathbb R)^N\) solutions of \eqref{FM} for any
	\(N\ge2\). In particular, in the case of pairwise distinct spectral
	parameters, Theorem~\ref{thm1.1} proves the explicit formula conjectured in
	\cite[Conjecture~5.1]{GLW2026}.
\end{remark}

We now describe the nondegeneracy of the extended family
$\mathcal M_{\mathrm{multi}}$ for the general case  $\nu_1\le\nu_2\le\cdots \le\nu_N<0$. Let
$U^{a,n}\in\mathcal M_{\mathrm{multi}}$
be given by \eqref{eq:multi-lift}, namely
\begin{equation}\label{eq:multi-U}
	U^{a,n}_i(x)=n_i^{(\alpha)}V_\alpha^a(x),
	\ \,
	i\in I_\alpha,\,\ \alpha=1,\cdots,K.
\end{equation}
Since $n^{(\alpha)}\in \mathbb S^{k_\alpha-1}$, we have
\begin{equation}\label{eq:mul-S}
	S^{a}=\sum_{\ell=1}^N (U^{a,n}_\ell)^2
	=
	\sum_{\beta=1}^K (V_\beta^a)^2 .
\end{equation}
The linearized operator at $U^{a,n}$ is then defined by
\begin{equation}\label{eq:multi-LU}
	(\mathcal L_{U^{a,n}}\phi)_i
	:=
	-\phi_i''
	-\nu_i\phi_i
	-2S^{a}\phi_i
	-4U^{a,n}_i\sum_{\ell=1}^N U^{a,n}_\ell\phi_\ell\ \ \,\mbox{in} \,\ \R,
	\ \  i=1,\cdots,N.
\end{equation}
In view of \eqref{eq:multi-nu}, this can equivalently be written as
\begin{equation}\label{eq:multi-LU-block}
	(\mathcal L_{U^{a,n}}\phi)_i
	=
	-\phi_i''
	-\mu_\alpha\phi_i
	-2S^{a}\phi_i
	-4U^{a,n}_i\sum_{\ell=1}^N U^{a,n}_\ell\phi_\ell\ \ \,\mbox{in} \,\ \R,
	\ \  i\in I_\alpha .
\end{equation}

The linearized operator at the reduced solution $V^a$ in \eqref{1.10A} is  defined by
\begin{equation}\label{eq:multi-LV}
	(\mathcal L_{V^a}p)_\alpha
	:=
	-p_\alpha''
	-\mu_\alpha p_\alpha
	-2S^{a}p_\alpha
	-4V_\alpha^a\sum_{\beta=1}^K V_\beta^a p_\beta\ \ \,\mbox{in} \,\ \R,
	\ \  \alpha=1,\cdots,K.
\end{equation}
Applying Theorem~\ref{prop:nondegeneracy}
to the reduced distinct spectral system \eqref{eq:reduced-system}, we have
\begin{equation}\label{eq:reduced-nondeg}
	\ker_{H^1(\mathbb R)^K}\mathcal L_{V^a}
	=
	\operatorname{span}
	\left\{
	\partial_{a_1}V^a,\cdots,\partial_{a_K}V^a
	\right\}.
\end{equation}
The nondegeneracy  in the repeated spectral  case has two types
of kernel directions.  The first type comes from changing the reduced
parameters \(a_\gamma\).  The second type comes from rotating the unit
vectors \(n^{(\alpha)}\) inside the spectral blocks.
The tangent space $T_{U^{a,n}}\mathcal M_{\mathrm{multi}}$ is defined as the span for the
following directions of two different types. Firstly, set for $\gamma=1,\cdots,K$,
\begin{equation}\label{eq:tangent-a-direction}
	\Phi_i^{(\gamma)}
	=
	n_i^{(\alpha)}\partial_{a_\gamma}V_\alpha^a,
	\ \
	i\in I_\alpha,\,\ \alpha=1,\cdots,K.
\end{equation}
Secondly, for each $\alpha=1,\cdots,K$ and each tangent vector
$\xi$ to the unit sphere $\mathbb S^{k_\alpha-1}$ at $n^{(\alpha)}$, namely
\[
\xi=(\xi_i)_{i\in I_\alpha}\in T_{n^{(\alpha)}}\mathbb S^{k_\alpha-1}
=
\left\{\zeta\in\mathbb R^{k_\alpha}:\ \, \zeta\cdot n^{(\alpha)}=0\right\},
\]
set
\begin{equation}\label{eq:tangent-rotation-direction}
	\Psi_i^{(\alpha,\xi)}
	=
	\begin{cases}
		\xi_i V_\alpha^a, & i\in I_\alpha;\\
		0, & i\notin I_\alpha.
	\end{cases}
\end{equation}
Thus
\begin{equation}\label{eq:tangent-space}
	T_{U^{a,n}}\mathcal M_{\mathrm{multi}}
	=
	\operatorname{span}
	\left\{
	\Phi^{(\gamma)},\ \Psi^{(\alpha,\xi)}:\,\
	\gamma=1,\cdots,K,\
	\alpha=1,\cdots,K,\
	\xi\cdot n^{(\alpha)}=0
	\right\}.
\end{equation}
Together with Theorem~\ref{prop:nondegeneracy}, the following theorem addresses the  nondegeneracy  for the general case  $\nu_1\le\cdots\le\nu_N<0$, where $\nu_i$ is as in (\ref{eq:multi-nu}).

\begin{theorem}
	\label{prop:nondegeneracy-multiple-roots}
	Let $U^{a,n}\in\mathcal M_{\mathrm{multi}}$ and $\mathcal L_{U^{a,n}}$ be given by
	\eqref{eq:multi-lift} and
	\eqref{eq:multi-LU}, respectively.
	Then we have
	$\dim\ker_{H^1(\mathbb R)^N}\mathcal L_{U^{a,n}}=N$ and
	$
	\ker_{H^1(\mathbb R)^N}\mathcal L_{U^{a,n}}
	=
	T_{U^{a,n}}\mathcal M_{\mathrm{multi}},
	$
	where $T_{U^{a,n}}\mathcal M_{\mathrm{multi}}$ is given by \eqref{eq:tangent-space}.
\end{theorem}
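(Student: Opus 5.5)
We decompose a kernel element blockwise, using the orthogonal frame in each spectral block to split the linearized system into a reduced part, handled by \eqref{eq:reduced-nondeg}, and transverse parts, handled by a Wronskian argument.

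\textbf{Tangent directions lie in the kernel.} Differentiating the identity satisfied by $U^{a,n}$ with respect to $a_\gamma$ gives $\mathcal L_{U^{a,n}}\Phi^{(\gamma)}=0$. Equivalently, since $\mathcal L_{V^a}\partial_{a_\gamma}V^a=0$, the relation $\sum_\ell U_\ell^{a,n}\Phi^{(\gamma)}_\ell=\sum_\beta V_\beta^a\partial_{a_\gamma}V_\beta^a$ yields $(\mathcal L_{U^{a,n}}\Phi^{(\gamma)})_i=n_i^{(\alpha)}(\mathcal L_{V^a}\partial_{a_\gamma}V^a)_\alpha=0$. For $\Psi^{(\alpha,\xi)}$ we use $\xi\cdot n^{(\alpha)}=0$, which gives $\sum_\ell U_\ell^{a,n}\Psi_\ell=V_\alpha^a\,\xi\cdot n^{(\alpha)}=0$. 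Hence $(\mathcal L\Psi)_i=\xi_i(-V_\alpha''-\mu_\alpha V_\alpha-2S^aV_\alpha)=0$ by \eqref{eq:reduced-system}. The dimension count is $K+\sum_\alpha(k_\alpha-1)=N$. For linear independence, the $\Psi$ directions are orthogonal to $n^{(\alpha)}$ blockwise while the $\Phi$ directions are parallel to it. Thus a vanishing combination splits into a parallel part and an orthogonal part. The parallel part gives a vanishing combination of the $\partial_{a_\gamma}V^a$, which forces all coefficients to vanish by Theorem~\ref{prop:nondegeneracy}. The orthogonal part gives $\xi V_\alpha^a=0$, hence $\xi=0$ because $V_\alpha^a\not\equiv0$.

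\textbf{Kernel is contained in the tangent space.} Take $\phi\in\ker_{H^1}\mathcal L_{U^{a,n}}$. In each block $I_\alpha$, decompose $\phi|_{I_\alpha}=p_\alpha(x)n^{(\alpha)}+w^{(\alpha)}(x)$ with $w^{(\alpha)}\cdot n^{(\alpha)}=0$, where $p_\alpha=\sum_{i\in I_\alpha}n_i^{(\alpha)}\phi_i$. Then $\sum_\ell U^{a,n}_\ell\phi_\ell=\sum_\beta V_\beta^a p_\beta$.
\begin{itemize}
\item Projecting the equations of block $\alpha$ onto $n^{(\alpha)}$ gives $(\mathcal L_{V^a}p)_\alpha=0$. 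Since $p\in H^1(\R)^K$, \eqref{eq:reduced-nondeg} gives $p=\sum_\gamma c_\gamma\partial_{a_\gamma}V^a$.
\item Projecting onto the orthogonal complement, the coupling term $4U_i\sum_\ell U_\ell\phi_\ell$ is proportional to $n^{(\alpha)}$ and therefore drops out. Each component $w_i^{(\alpha)}$ then solves the scalar equation $-w''-\mu_\alpha w-2S^aw=0$, which $V_\alpha^a$ also solves.
\end{itemize}

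\textbf{Main step: the transverse equation.} The key claim is that any $H^1$ solution $w$ of the scalar equation above is a constant multiple of $V_\alpha^a$. The Wronskian $W=wV_\alpha'-w'V_\alpha$ is constant, and it tends to $0$ at $-\infty$ because both functions and their derivatives decay exponentially. Indeed, for $H^1$ solutions of such ODEs with exponentially decaying potentials, $w,w'\to0$ by the same $L^1$-perturbation argument (Lemma~\ref{lem:L1-asymptotic}). Hence $W\equiv0$. On any interval where $V_\alpha\neq0$ this gives $(w/V_\alpha)'=0$. Because $V_\alpha$ solves a second-order linear ODE, its zeros are isolated and simple, and the Wronskian relation $W\equiv0$ propagates proportionality across them (alternatively, one can argue via the uniqueness of the decaying solution at $-\infty$ given by the asymptotics $\sim c e^{\eta_\alpha x}$). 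Therefore $w^{(\alpha)}=\xi V_\alpha^a$ for a constant vector $\xi\perp n^{(\alpha)}$.

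Consequently $\phi=\sum_\gamma c_\gamma\Phi^{(\gamma)}+\sum_\alpha\Psi^{(\alpha,\xi^{(\alpha)})}$, which lies in $T_{U^{a,n}}\mathcal M_{\mathrm{multi}}$. This proves the equality and $\dim=N$.

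The only delicate point is the decay of $w,w'$ needed for $W\to0$. This should follow from the one-sided asymptotics lemma already used in the proof of Theorem~\ref{prop:nondegeneracy}.
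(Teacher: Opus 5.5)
Your proof is correct and follows essentially the same route as the paper: the blockwise split $\phi_i=n_i^{(\alpha)}p_\alpha+r_i$, reduced nondegeneracy \eqref{eq:reduced-nondeg} for $p$, then a vanishing-Wronskian argument plus ODE uniqueness to show each transverse component is a constant multiple of $V_\alpha^a$. The point you flag as delicate is immediate, as in the paper: the Wronskian $W$ is constant and lies in $L^1(\mathbb R)$ because $w,w',V_\alpha^a,(V_\alpha^a)'\in L^2(\mathbb R)$, so $W\equiv0$ without any pointwise decay estimates.
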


The proof of Theorem~\ref{prop:nondegeneracy-multiple-roots} follows mainly from the nondegeneracy of Theorem \ref{prop:nondegeneracy} for the distinct spectral case with the block
reduction.  The kernel of $\mathcal L_{U^{a,n}}$ in $\R$ is generated by the reduced parameter directions
\eqref{eq:tangent-a-direction} and the internal block-rotation directions \eqref{eq:tangent-rotation-direction},
which then gives that \(\ker_{H^1(\mathbb R)^N}\mathcal L_{U^{a,n}}
=T_{U^{a,n}}\mathcal M_{\mathrm{multi}}\).

We next derive the following exact $L^2$-mass identity of   solutions for \eqref{FM}, whose proof is based on an algebraic identity generated by the constants of motion, rather than on the explicit determinant formula of solutions for \eqref{FM}.

\begin{theorem}\label{thm:L2iden}
	For \(\mu_1<\mu_2<\cdots<\mu_K<0\),
	let \(I_\alpha\) and \(\nu_1,\cdots,\nu_N\) be defined by
	\eqref{eq:multi-index} and \eqref{eq:multi-nu}, respectively.  Assume that
	\(u=(u_1,\cdots,u_N)\in H^1(\mathbb R)^N\) is a solution of
	\eqref{eq:multi-system} and satisfies
	\(
	\sum_{i\in I_\alpha}u_i^2\not\equiv 0\)
	for $\alpha=1,\cdots,K$.
	Then
	\begin{equation}\label{L2ident}
		\int_{\mathbb R}\sum_{i\in I_\alpha}u_i^2(x)\,dx
		=
		2\sqrt{|\mu_\alpha|}>0\quad\hbox{for}\,\  \alpha=1,\cdots,K .
	\end{equation}
\end{theorem}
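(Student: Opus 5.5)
The plan is to reduce to the pairwise distinct case. Then I would compute each component's mass as the total increment of a potential function built from $\log\det(I+M)$. Everything is therefore reduced to one exact identity (Step 2) plus two limits at $\pm\infty$.

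\textbf{Step 1 (reduction).} By Theorem~\ref{thm:completeness-multiple-root}, since $\sum_{i\in I_\alpha}u_i^2\not\equiv0$ for every $\alpha$, we have $u=U^{a,n}\in\mathcal M_{\mathrm{multi}}$ for some $a\in(\R\setminus\{0\})^K$ and $n^{(\alpha)}\in\mathbb S^{k_\alpha-1}$. As $|n^{(\alpha)}|=1$,
\[
\sum_{i\in I_\alpha}u_i^2=(V_\alpha^a)^2 ,
\]
where $V^a$ solves the reduced system \eqref{eq:reduced-system} with the distinct values $\mu_1<\cdots<\mu_K$. It therefore suffices to prove $\int_\R (V_\alpha^a)^2\,dx=2\eta_\alpha$ with $\eta_\alpha=\sqrt{-\mu_\alpha}$. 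By Theorem~\ref{thm1.1}, $V^a$ is given by \eqref{exp1}--\eqref{Bi} with $N$ replaced by $K$.

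\textbf{Step 2 (key identity).} Set $F(x,a):=\log\det(I+M(x))$. Note that $M=DCD$ with $D=\operatorname{diag}\bigl(a_je^{\eta_jx}/\sqrt{2\eta_j}\bigr)$ and $C$ the Cauchy matrix $\bigl(1/(\eta_j+\eta_k)\bigr)$. Hence $\partial_{\log a_\alpha}$ multiplies row $\alpha$ and column $\alpha$ of $M$ by $1$, which gives
\[
\partial_{\log a_\alpha}F=2\bigl[(I+M)^{-1}M\bigr]_{\alpha\alpha}.
\]
I aim to prove
\[
(V_\alpha^a)^2=\eta_\alpha\,\partial_x\partial_{\log a_\alpha}F .
\]
For this I would use $\partial_x M=EM+ME$ with $E=\operatorname{diag}(\eta_j)$, and the factorization $M=w w^\top\circ C$ in Cauchy form, so that $EM+ME=\tilde w\tilde w^\top$ has rank one with $\tilde w_j=a_je^{\eta_jx}/\sqrt{2\eta_j}$. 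Then
\[
\partial_x\bigl[(I+M)^{-1}M\bigr]_{\alpha\alpha}
=\bigl[(I+M)^{-1}(EM+ME)(I+M)^{-1}\bigr]_{\alpha\alpha}
=\bigl((I+M)^{-1}\tilde w\bigr)_\alpha^2 .
\]
Through the representation $V_\alpha=\sqrt{2\eta_\alpha}\,q_\alpha$ with $q=(I+M)^{-1}v$ from the paper, the right-hand side should equal $q_\alpha^2$ up to the factor $2\eta_\alpha\cdot\tfrac{1}{2}$, which yields the claim. The main obstacle is checking the normalisation constants here, namely that $v$ coincides with $\tilde w$ (possibly up to diagonal factors that cancel). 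If that fails, I would instead verify $(V^a_\alpha)^2=\eta_\alpha\partial_x\partial_{\log a_\alpha}F$ directly, using the determinant-ratio identity for $\det(I+B^{(\alpha)}M)/\det(I+M)$.

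\textbf{Step 3 (limits).} Integrating the identity over $\R$ gives
\[
\int_\R (V_\alpha^a)^2\,dx=\eta_\alpha\Bigl[\partial_{\log a_\alpha}F\Bigr]_{-\infty}^{+\infty}.
\]
As $x\to-\infty$, $M\to0$, so $(I+M)^{-1}M\to0$. As $x\to+\infty$, $M$ is invertible and dominates, because the Cauchy matrix $C$ is positive definite. Hence $(I+M)^{-1}M\to I$ and $\partial_{\log a_\alpha}F\to2$. Therefore $\int_\R(V_\alpha^a)^2\,dx=2\eta_\alpha=2\sqrt{|\mu_\alpha|}$, which is \eqref{L2ident}. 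As a consistency check, summing over $\alpha$ recovers the trace formula $\int_\R S\,dx=2\sum_\alpha\eta_\alpha$ for the reflectionless potential $2S$.
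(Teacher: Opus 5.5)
Your proposal is correct and takes a genuinely different route from the paper.

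The normalisation you flagged as the main obstacle is not an issue. Your $\tilde w$ is exactly the vector $v$ of \eqref{vvect}, and the proof of Proposition~\ref{prop:determinant-verification} already records $M'=vv^\top$. Hence $\partial_x[(I+M)^{-1}M]=(I+M)^{-1}vv^\top(I+M)^{-1}=qq^\top$, and Lemma~\ref{lem:ratio-formula} gives exactly $(V_\alpha^a)^2=2\eta_\alpha q_\alpha^2=-2\eta_\alpha\,\partial_x[(I+M)^{-1}]_{\alpha\alpha}$. So the $\partial_{\log a_\alpha}$ detour can be dropped. At $+\infty$, make the dominance precise by congruence: $\lambda_{\min}(M)\ge\lambda_{\min}(C)\min_j v_j^2\to\infty$, so $(I+M)^{-1}\to0$.

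The paper deliberately avoids the determinant formula at this point. From the constants of motion (Lemma~\ref{lem:polynomial-identity-function}) it derives the pointwise identity \eqref{eq:algebraic-identity}. It then builds $Q_\alpha=v_\alpha'/v_\alpha+\sum_{\beta\neq\alpha}W_{\alpha\beta}v_\beta/(\delta_{\alpha\beta}v_\alpha)$ with $Q_\alpha'=-v_\alpha^2$. Two further steps are then required: showing that the singularities at the zeros of $v_\alpha$ are removable, and proving that the leading coefficients at both $\pm\infty$ are nonzero.

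Your primitive is smooth on all of $\mathbb R$ because $I+M$ is positive definite. So the nodal-set argument and the right-end asymptotic analysis disappear. In fact, comparing derivatives and limits at $-\infty$ shows that the paper's $Q_\alpha$ equals $\eta_\alpha\bigl(2[(I+M)^{-1}]_{\alpha\alpha}-1\bigr)$.

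The price of your route is that it depends on the full classification (Theorem~\ref{thm1.1} via Theorem~\ref{thm:completeness-multiple-root}) and on the ratio formula. The paper's mass mechanism itself uses only the ODE and $H^1$ decay for the reduced system. It would therefore still work in settings where no explicit solution formula is available.
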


The key point of  proving Theorem~\ref{thm:L2iden} is the pointwise algebraic identity obtained in
Proposition~\ref{thm:algiden}, which is derived from the spectral
polynomial identity of Lemma~\ref{lem:polynomial-identity-function} and
reflects the integrable structure of the system. More precisely, under the
same assumptions of Theorem~\ref{thm:L2iden},
Proposition~\ref{thm:algiden} gives that for every
\(\alpha=1,\cdots,K\),
\[
\sum_{j\in I_\alpha}
\left[
(u_j')^2+
\left(\sum_{\ell=1}^N u_\ell^2+\mu_\alpha\right)u_j^2
\right]
+
\sum_{\beta\neq\alpha}
\sum_{\substack{j\in I_\alpha\\ \ell\in I_\beta}}
\frac{(u_j'u_\ell-u_ju_\ell')^2}{\mu_\beta-\mu_\alpha}
=0\quad\hbox{in}\,\ \R.
\]
This identity leads to the first-order differential relation
\eqref{eq:Q-prime} below, and the \(L^2\)-mass formula \eqref{L2ident} then follows by
integrating \eqref{eq:Q-prime} and using the sharp asymptotics of $u_i$ as
\(x\to\pm\infty\).
%


\begin{remark}\label{rem:normalized-solutions}
Theorems~\ref{thm:completeness-multiple-root} and
\ref{thm:L2iden} yield a simple classification of the
componentwise normalized solutions of \eqref{FM}, which generalizes
the classification result in \cite[Theorem~8]{ii}. More precisely, a solution
\(u=(u_1, \cdots, u_N)\in\mathcal M_{\mathrm{multi}}\) of \eqref{FM} is normalized, in the sense that
\(\int_{\mathbb R}u_i^2(x)\,dx=1\) holds
for \(i=1,\cdots,N,\) if and only if
\begin{equation}\label{eq:normalized-multipliers}
	\mu_\alpha=-\frac{k_\alpha^2}{4}<0,
	\ \ \alpha=1,\cdots,K,
\end{equation}
and there exist
\(a\in(\mathbb R\setminus\{0\})^K\) and
\(\varepsilon_i\in\{-1,1\}\), \(i=1,\cdots,N\), such that
\begin{equation}\label{eq:normalized-classification}
	u_i(x)
	=
	\frac{\varepsilon_i}{\sqrt{k_\alpha}}\,
	V_\alpha^a(x),
	\ \
	i\in I_\alpha,\ \ \alpha=1,\cdots,K,
\end{equation}
where \(k_\alpha=|I_\alpha|\ge 1\) is the multiplicity of
\(\mu_\alpha\) satisfying
\(\sum_{\alpha=1}^K k_\alpha=N\), and
\(V^a=(V_1^a,\cdots,V_K^a)\) is the determinant solution of the
reduced system \eqref{eq:reduced-system}.
In particular, Theorems~\ref{thm:completeness-multiple-root} and
\ref{thm:L2iden} prove that for any $N\ge2$, the system \eqref{FM} does not admit any orthonormal solution, which solves the conjectures of Frank--Gontier--Lewin~\cite[page 1793]{ii} and
Guo--Luo--Wei~\cite[Conjecture~5.2]{GLW2026}.
\end{remark}

We finally focus on an application of  the preceding analytical
results. For each fixed $N\ge2$, the classification
and mass identities at $p=2$ describe the possible limiting profiles of
minimizers for $J_N(p)$ as $p\to2$. Applying preceding theorems and binding inequalities, the following theorem shows that \(p=2\) is the  transition of $J_N(p)$ from the existence of minimizers to the nonexistence of minimizers.

\begin{theorem}\label{thm:main}
For any fixed $N\ge2$, suppose $J_N(p)$ is defined by \eqref{wei1.1}.
Then there exists $\delta_N\in(0,1)$ such that 
\begin{enumerate}
\item  [(i).]  $J_N(p)$ admits at least one minimizer
for $p\in(2-\delta_N,2)$;
\item  [(ii).] $J_N(p)$ admits no minimizers for
	$p\in[2,2+\delta_N)$.
\end{enumerate}
\end{theorem}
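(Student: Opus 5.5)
The plan is a perturbative (compactness plus contradiction) argument around $p=2$. It rests on the binding inequalities of \cite{GLN}: $J_N(p)$ is attained provided the strict inequality
\[
J_N(p)<J_{N-K}(p)+J_K(p)\quad\text{for all }1\le K\le N-1
\]
holds, with the convention $J_1(p)$ the scalar NLS minimum. Conversely, if $J_N(p)$ were attained at some $p$, its minimizer would solve \eqref{wei1:x4} with orthonormal components.

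\textbf{Nonexistence for $p\in[2,2+\delta_N)$.} At $p=2$ the claim follows directly. A minimizer, after rescaling \eqref{wei1:x4} to \eqref{FM}, would be an orthonormal solution of \eqref{FM}, and Remark~\ref{rem:normalized-solutions} (via Theorems~\ref{thm:completeness-multiple-root} and \ref{thm:L2iden}) shows that no such solution exists. For $p>2$ close to $2$ I argue by contradiction. Suppose $p_n\downarrow2$ and $U_n$ are minimizers of $J_N(p_n)$.
\begin{itemize}
\item First, I would get uniform $H^1$ bounds from the Lieb--Thirring/Gagliardo--Nirenberg control of $E_p$. I would also get uniform negative upper bounds on the multipliers $\mu_{i,n}$, using strict negativity of $J_N$ and the fact that the $\mu_{i,n}$ are the first $N$ eigenvalues.
\item Second, I would rule out dichotomy in the limit. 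Concentration-compactness applied to the density $\sum_i u_{i,n}^2$ either gives, up to translation, strong convergence to an orthonormal solution of the $p=2$ system \eqref{wei1:x4}, which contradicts Remark~\ref{rem:normalized-solutions}, or gives splitting into clusters that drift apart. In the splitting case, the limit functional is $J_N(2)=J_{N-K}(2)+J_K(2)$, and one must show the splitting forces non-minimality for $p_n>2$.
\end{itemize}
The cleaner route for this second step is a quantitative one: show that for $p>2$ the reverse binding $J_N(p)\ge J_{N-K}(p)+J_K(p)$ holds near $2$, with strictness derived from the exact mass identity \eqref{L2ident} and the nondegeneracy of Theorems~\ref{prop:nondegeneracy} and \ref{prop:nondegeneracy-multiple-roots}. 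Near a limiting profile, the Euler--Lagrange solutions of \eqref{wei1:x4} for $p$ near $2$ are then obtained by implicit-function continuation from the $p=2$ family $\mathcal M_{\mathrm{multi}}$. Their masses vary smoothly in $p$, so the orthonormality constraints, which fail at $p=2$ by a definite amount, still fail for $p$ near $2$. This excludes critical points with orthonormal components in a neighborhood of the family, and compactness covers the complement.

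\textbf{Existence for $p\in(2-\delta_N,2)$.} I argue by induction on $N$, starting from $N=2$, which is \cite{GLN}. The task is to verify strict binding for $p<2$ near $2$. At $p=2$ one has $J_N(2)=N\,J_1(2)$, corresponding to $N$ infinitely separated solitons. I would compute the first-order expansion in $2-p$ of $J_N(p)-J_{N-K}(p)-J_K(p)$, using a trial state built by orthonormalizing well-separated (or, better, multi-soliton determinant) profiles from Theorem~\ref{thm1.1} rescaled to mass one. The mass identity $\int u_\alpha^2=2\sqrt{|\mu_\alpha|}$ fixes the multipliers needed for normalization, so the trial energy can be computed explicitly through the $p$-derivative of $\frac1p\int\rho^p$.

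The step I expect to be the main obstacle is the delicate sign of this derivative: it must be negative for $p<2$ and positive for $p>2$. To settle it I would first try to use the derivative $\partial_p$ of the scaling identity $J_N(p)=\lambda^{\ldots}J_N$, combined with the explicit $\rho$ of the $p=2$ multi-soliton, to show that a merged configuration gains energy of order $(2-p)$ over a split one.
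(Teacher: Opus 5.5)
Your $p=2$ argument is fine: rescaling a minimizer to \eqref{FM} gives mutually orthogonal components of equal mass. Theorem~\ref{thm:completeness-multiple-root} (components in one spectral block are parallel) together with \eqref{L2ident} rules this out. The \cite{GLN} binding/compactness framework with induction is also the right skeleton.

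\textbf{Nonexistence for $p>2$.} The splitting alternative you set aside is exactly what happens. By Lemma~\ref{prop:profile}, minimizers along $p_n\to2$ must break into $N$ copies of $Q_{p_n}$ with $D(z_n)\to\infty$, and all multipliers tend to $-1/16$. In that regime your continuation argument has nothing to hold on to:
\begin{itemize}
\item The configurations lie outside every compact piece of $\mathcal M_{\mathrm{multi}}$ modulo symmetries, so ``compactness covers the complement'' fails.
\item The $p=2$ obstruction to orthonormality is the mass mismatch $|2\sqrt{|\mu_\alpha|}-2\sqrt{|\mu_\beta|}|$ (orthogonality is automatic for distinct multipliers). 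It tends to $0$, so it does not survive to $p\neq2$ with any uniform margin.
\end{itemize}
The route must fail because it never uses the sign of $p-2$. It would equally exclude orthonormal critical points for $p<2$ near $2$, contradicting existence of minimizers there, already for $N=2$ by the \cite{GLN} result you cite. A ``reverse binding'' $J_N\ge J_{N-K}+J_K$ does not help either: combined with the always-valid $\le$ it is an equality and excludes nothing.

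What is needed is a quantitative lower bound on all orthonormal states near $N$ widely separated solitons. Modulate to $R^\top U=\mathcal R_{\Psi_{p,z}}(K)$ with $K\in X_{p,z}$, use coercivity of the linearized operator on $X_{p,z}$, and compute the pair interaction exactly. This yields
\[
E_p(U)-NJ_1(p)\ge\frac14\sum_{i<j}e^{-2\kappa_pd_{ij}}\Big[1+d_{ij}\Big(1-\frac{1-e^{-t_{ij}}}{t_{ij}}\Big)\Big]-(\text{errors}),\qquad t_{ij}=(p-2)\kappa_pd_{ij}.
\]
For $p\ge2$ the bracket is at least $1$ and dominates the errors, so $E_p(U)>NJ_1(p)\ge J_N(p)$. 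For $p<2$ it turns negative once $d_{ij}\gtrsim(2-p)^{-1/2}$.

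\textbf{Existence for $p<2$.} A first-order expansion in $2-p$ cannot decide strict binding, because the relevant energy differences are beyond all orders in $2-p$.
\begin{itemize}
\item By the same interaction formula, solitons at bounded distance repel for $p$ near $2$. Your merged multi-soliton trial states therefore lie strictly above $NJ_1(p)$ rather than gaining energy of order $2-p$.
\item Attraction appears only at distances $\gtrsim(2-p)^{-1/2}$, where the gain is $O(e^{-c/\sqrt{2-p}})$.
\item You would also have to beat $J_{N-K}(p)+J_K(p)$, which are not explicit and already contain such exponentially small binding energies. Finally, \eqref{L2ident} holds only at $p=2$.
\end{itemize}

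The missing idea is to use the criterion of Lemma~\ref{prop:gln-input}~(ii). It reduces strict binding to
\[
p-1<\sqrt{\min\{|\mu_k|,|\tilde\mu_\ell|\}/\max\{|\mu_k|,|\tilde\mu_\ell|\}}
\]
for the sub-minimizers supplied by induction. One then proves multiplier clustering (Lemma~\ref{prop:left-multiplier-clustering}): $\|\Lambda_p+\kappa_p^2I\|_{\mathrm F}\le Ce^{-b/\sqrt{2-p}}=o(2-p)$. This also comes from the lower bound above, in three steps:
\begin{itemize}
\item $J_N(p)\le NJ_1(p)$ forces $D(z_p)\gtrsim(2-p)^{-1/2}$.
\item Hence $\|K_p\|_{H^1}\lesssim e^{-\kappa_pD(z_p)}$.
\item Pairing the Euler--Lagrange equation with $R_p^\top U_p$ converts this into the bound on $\Lambda_p$, so the ratio exceeds $1-(2-p)=p-1$.
\end{itemize}
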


\begin{remark}\label{rem:gln-conjectures}
Gontier--Lewin--Nazar \cite[Conjectures 2 and 5]{GLN} conjectured that  for every \(N\ge2\), \(J_N(p)\) admits a minimizer for \(1<p<2\), whereas it admits no minimizer for \(2\le p<3\). Theorem~\ref{thm:main} confirms this conjectured transition locally near \(p=2\).
\end{remark}

We next introduce the main idea of proving Theorem~\ref{thm:main}.
Since the case \(p=2\) is already known from \cite[Theorem~7]{ii},  the main issue of proving Theorem~\ref{thm:main} is to understand the minimizers of \(J_N(p)\) as $p$ approaches $2$ from both sides. For \(p\in I_{1/8}:=[15/8,17/8]\), let \(Q_p\) be the unique normalized positive minimizer of \(J_1(p)\) satisfying
\begin{equation}\label{eq:scalar-profile-definition}
	Q_p(x):=
	\left(
	\frac{2\kappa_p\sqrt p}
	{e^{(p-1)\kappa_px}+e^{-(p-1)\kappa_px}}
	\right)^{1/(p-1)}>0,
\end{equation}
where \(\kappa_p>0\) is determined by \(\|Q_p\|_2=1\), and the corresponding Lagrange multiplier is \(-\kappa_p^2\).
Applying the  classification and exact mass identities of previous theorems at \(p=2\), we can prove in Lemma~\ref{prop:profile}  a precise asymptotic description of minimizers \(U=(u_1,\cdots,u_N)\)  of \(J_N(p)\) as \(p\to2\). After an orthogonal transformation, $u_1,\cdots,u_N$ converge in \(H^1\) to \(N\) different translations of \(Q_p\) as \(p\to2\), whose mutual distances tend to infinity as \(p\to2\).

The problem of proving Theorem~\ref{thm:main} is therefore reduced to resolve the exponentially small interaction among these separated profiles as \(p\to2\). For this purpose, we use the Gram matrix to orthonormalize the separated profiles and obtain the approximate state \(\Psi_{p,z}\) as \(p\to2\).
Applying the coercivity and energy reduction of Lemmas~\ref{prop:slice-coercivity} and~\ref{lem:energy-taylor}, we shall prove in Proposition~\ref{prop:two-sided-energy-lower}  the following lower energy estimate of every nearby minimizer \(U\):
\begin{equation}\label{1:9.30}
\begin{aligned}
E_p(U)-NJ_1(p)
\ge &
\frac14\sum_{i<j}e^{-2\kappa_p d_{ij}}
\left[
1+d_{ij}\left(
1-\frac{1-e^{-(p-2)\kappa_p d_{ij}}}
{(p-2)\kappa_p d_{ij}}
\right)
\right]+\frac{c_*}{2}\|K-H_{\mathrm F}\|_{H^1}^2\\
&
-C_N|2-p|D(z)e^{-2\kappa_pD(z)}
e^{\max\{2-p,0\}\kappa_pD(z)} \ \,\ \mbox{as}\ \  p\to 2,
\end{aligned}
\end{equation}
where \(K\) is the small correction for \(\Psi_{p,z}\)
in the representation of \(U\), and \(H_{\mathrm F}\) is the leading term of \(K\),    \(d_{ij}=|z_i-z_j|\) denotes the distance between any two bubbles \(Q_p(\cdot-z_i)\) and \(Q_p(\cdot-z_j)\), and \(D(z)=\min_{i<j}d_{ij}\).
The first term on the right hand side of (\ref{1:9.30}) sums the pair  interacting  contributions between the translations \(Q_p(\cdot-z_i)\) and \(Q_p(\cdot-z_j)\).
The distinction between both sides of \(p=2\) is entirely contained in this first term as follows: the factor
\begin{equation}\label{1:9.31}
e_t:=1-\frac{1-e^{-t}}{t},
\ \ t=(p-2)\kappa_p d_{ij},
\end{equation}
is positive for \(2<p<3\), and negative for \(1<p<2\). The second term  on the right hand side of (\ref{1:9.30}) remains nonnegative for \(p\in I_\delta:=[2-\delta , 2+\delta ]\), while the last term  on the right hand side of (\ref{1:9.30}) is of lower order as \(p\to2\).

For the case \(p>2\), the term $e_t$ of (\ref{1:9.31}) satisfies
\(e_t:=1-\frac{1-e^{-t}}{t}>0\).
If \(J_N(p)\) admits minimizers  for a sequence \(p\searrow2\), then it follows from
Proposition~\ref{prop:two-sided-energy-lower} and Lemmas~\ref{prop:profile} $\&$~\ref{lem:center}  that \(J_N(p)> NJ_1(p)\). This however contradicts with \eqref{eq:s2-8}, which gives that
\(J_N(p)\le NJ_1(p)\). This thus proves Theorem~\ref{thm:main} (ii) on the nonexistence of minimizers for \(J_N(p)\) as \(p>2\) is sufficiently close to \(2\).

As for the case \(p<2\), the same lower estimate of (\ref{1:9.30}) is used in the opposite direction. Indeed, the proof of Lemma~\ref{prop:left-multiplier-clustering} first shows that any existing minimizer at a fixed rank \(m\) must have the following minimum separation
\begin{equation}
D(z)=\min_{1\leq i<j\leq m}d_{ij}\gtrsim (2-p)^{-1/2}.
\end{equation}
Applying Lemma~\ref{lem:energy-taylor}, the above estimate then
 implies that the small correction
term \(K\)  of (\ref{1:9.30}) satisfies
\[
\|K\|_{H^1}\le Ce^{-\kappa_pD(z)}=o(2-p)\ \,\ \mbox{as}\ \ p\nearrow 2.
\]
Using this estimate in the Euler--Lagrange equation of \(U\), the proof of
Lemma~\ref{prop:left-multiplier-clustering} further shows that
the associated Lagrange multiplier matrix $\Lambda$ satisfies
\begin{equation}
	\|\Lambda+\kappa_p^2I_m\|_{\mathrm F}
	=o(2-p)\ \,\ \mbox{as}\ \ p\nearrow 2,
\end{equation}
which implies that all eigenvalues of the minimizers for $J_N(p)$ are close to the scalar multiplier \(-\kappa_p^2I_m\) in the order \(o(2-p)\) as \(p\nearrow2\). This verifies the spectral condition of Lemma~\ref{prop:gln-input}, which then yields the required strict binding inequalities. A finite induction on the particle number, starting from the scalar case, finally proves  Theorem~\ref{thm:main} (i) on the existence of minimizers for \(J_N(p)\) as \(1<p<2\) is sufficiently close to \(2\).

This paper is organized as follows. In Section~2, we construct a determinant family of  \eqref{FM} and verify the equations in the distinct spectral case. Section~3 is then devoted to the proofs of Theorems~\ref{thm1.1} and \ref{prop:nondegeneracy}, which give the classification and nondegeneracy of solutions for \eqref{FM} in the distinct spectral case.  In Section \ref{sec:multiple-spectral-parameters}, we first prove Theorems~\ref{thm:completeness-multiple-root} and \ref{prop:nondegeneracy-multiple-roots} on the classification and nondegeneracy result for the repeated spectral case.  Theorem \ref{thm:L2iden} is then proved in Subsection \ref{sub4.3} by deriving an algebraic identity.
In Section \ref{sec:profile-reduction}, we first establish an exact representation of existing minimizers for \(J_N(p)\) as \(p \to 2\), and then derive the coercivity estimates needed for the energy reduction.
In Section \ref{sec:energy-conclusion}, we establish an explicit energy expansion and a uniform lower bound  of \(J_N(p)\) as \(p \to 2\). These estimates are then used in Section \ref{sec:left-existence} to prove the existence and nonexistence results of Theorem~\ref{thm:main}. Appendix~\ref{app:technical-estimates} finally contains auxiliary estimates used in Sections~\ref{sec:profile-reduction} and~\ref{sec:energy-conclusion}.

\section{Construction  of determinant solutions}
In this section, we mainly verify that the explicit determinant formula \eqref{exp1} of
Theorem~\ref{thm1.1} satisfies \eqref{FM}. Our argument is divided into two
parts. In Subsection 2.1,  we  rewrite the explicit formula \eqref{exp1} for
\(u=(u_1,u_2,\cdots,u_N)\) in terms of an auxiliary vector \(q\). In
Subsection 2.2, we then use this vector formulation to verify the differential
equation. The key point of our argument is that the auxiliary vector \(q\) satisfies a closed second-order
system, which is equivalent to \eqref{FM} after a simple rescaling.

We now introduce the notation used in the determinant formula. Throughout this section, we assume that
\[
\mu_1<\mu_2<\cdots<\mu_N<0
\quad\hbox{and}\quad
\eta_i=\sqrt{|\mu_i|}>0,\quad i=1,2,\cdots,N.
\]
Fixing
$
a=(a_1,\cdots,a_N)\in(\mathbb R\setminus\{0\})^N,
$
we recall that
\[
m_{jk}(x)
=
\frac{a_j a_k}{2\sqrt{\eta_j\eta_k}(\eta_j+\eta_k)}
e^{(\eta_j+\eta_k)x},
\ \
M(x)=(m_{jk}(x))_{1\le j,k\le N},
\]
and
\[
B^{(i)}
=
\operatorname{diag}\left(
\frac{\eta_i-\eta_1}{\eta_i+\eta_1},
\cdots,
\frac{\eta_i-\eta_N}{\eta_i+\eta_N}
\right).
\]
Define
\begin{equation}\label{f-formula}
f(x):=\det\bigl(I+M(x)\bigr),
\end{equation}
and
\begin{equation}\label{g-formula}
g_i(x):=a_i e^{\eta_i x}\det\bigl(I+B^{(i)}M(x)\bigr),
\ \  i=1,\cdots,N,
\end{equation}
such that
\[
u_i(x)=\frac{g_i}{f}\quad\hbox{for}\,\ i=1,2,\cdots,N.
\]
The main purpose of this section is to derive the following precise verification.

\begin{proposition}\label{prop:determinant-verification}
For
$\mu_1<\mu_2<\cdots<\mu_N<0$ and $\eta_i=\sqrt{-\mu_i}>0$, set \(a=(a_1,\cdots,a_N)\in(\R\setminus\{0\})^N\), and define \(f\) and \(g_i\)
by \eqref{f-formula} and \eqref{g-formula}, respectively. Then
\[
u_i(x):=\frac{g_i(x)}{f(x)},\ \  i=1,\cdots,N,
\]
defines a classical solution $u=(u_1,\cdots,u_N)\in \big(H^1(\R)\setminus\{0\}\big)^N$ of \eqref{FM}.
\end{proposition}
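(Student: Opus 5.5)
We would follow the route already sketched in the introduction: rewrite $u$ in vector form and check a closed differential system, instead of expanding the determinants. Set $e_j(x):=\frac{a_j}{\sqrt{2\eta_j}}e^{\eta_j x}$, so that
\[
m_{jk}=\frac{e_je_k}{\eta_j+\eta_k},
\qquad
\eta_j m_{jk}+m_{jk}\eta_k=e_je_k .
\]
This is the Cauchy-type structure: with $E=\operatorname{diag}(\eta_j)$ and $e=(e_1,\dots,e_N)^\top$, we have $EM+ME=ee^\top$ and $M'=ee^\top$. Next observe that $I+B^{(i)}M$ differs from $I+M$ by a rank-one correction. Indeed,
\[
(B^{(i)}M)_{jk}=\frac{\eta_i-\eta_j}{\eta_i+\eta_j}\,m_{jk}=m_{jk}-\frac{2\eta_j}{\eta_i+\eta_j}\,m_{jk}.
\]
Combining this with the Cauchy identity, one checks that $\det(I+B^{(i)}M)/\det(I+M)$ is given by a bordered determinant. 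By the matrix determinant lemma this yields $g_i/f=\sqrt{2\eta_i}\,q_i$ with $q=(I+M)^{-1}e$, which is the vector $v$ announced in the introduction. This ratio identity is the first step. An alternative that avoids guessing: apply the Cauchy–Binet expansion to both determinants, expand in principal minors $M_S$, and use the Cauchy determinant for $\det M_S$.

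Second, we derive the closed system for $q$. Differentiating $(I+M)q=e$ gives
\[
(I+M)q'=Ee-ee^\top q=Ee-(e^\top q)\,e .
\]
Using $EM=ee^\top-ME$, we get $(I+M)Eq=Ee-e(e^\top q)+MEq-MEq$. More carefully, $(I+M)Eq=Ee+M E q-EMq$... Rather than guess, we compute directly: $E(I+M)q=Ee$ and $EM=ee^\top-ME$ give
\[
(I+M)Eq=Ee-(e^\top q)e+2MEq-\dots ,
\]
and we would organize this into the identity $q'=Eq-2(e^\top q)\,\tilde q$, where $\tilde q:=(I+M)^{-1}e$ solves the transposed problem. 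Since $M$ is symmetric, $\tilde q=q$. Differentiating once more and applying the same identities, the target is
\[
q_i''=\eta_i^2q_i-2\Bigl(\sum_j 2\eta_j q_j^2\Bigr)q_i .
\]
This is the same as
\[
q_i''=\eta_i^2q_i-4\,(q^\top E q)\,q_i ,
\]
and with $u_i=\sqrt{2\eta_i}\,q_i$ it is exactly \eqref{FM}. The key scalar identity needed here is $(e^\top q)'=2\,q^\top Eq$, or a variant of it, which follows from $M'=ee^\top$ and the Cauchy relation.

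Third, regularity and membership in $H^1$. Since $M=D\,C\,D$ with $D=\operatorname{diag}(e_j)$ and $C=\bigl(1/(\eta_j+\eta_k)\bigr)$ a positive definite Cauchy matrix, $M(x)$ is positive semidefinite. Hence $f\ge1$, and $u$ is smooth. For decay: as $x\to-\infty$ we have $u_i\sim a_ie^{\eta_ix}$. As $x\to+\infty$, we factor out $D$ and write $I+M=D(D^{-2}+C)D$. Then $q=D^{-1}(D^{-2}+C)^{-1}\mathbf 1$, and since $(D^{-2}+C)^{-1}\to C^{-1}$, each $q_i=O(e^{-\eta_ix})$. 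Derivatives decay in the same way, so $u\in H^1$. For nontriviality, the left asymptotics with $a_i\ne0$ show $u_i\not\equiv0$.

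The main obstacle is the second step: obtaining the exact first-order relation for $q'$ and the scalar identity linking $e^\top q$ to $q^\top Eq$ in a form that closes at second order uniformly in $N$. We would first try to prove the Lax-type relation
\[
q'=Eq-2\,(I+M)^{-1}e\,(e^\top q)+\cdots
\]
by writing $(I+M)^{-1}E(I+M)=E+(I+M)^{-1}[E,M]$. Here $[E,M]=EM-ME=2EM-ee^\top$, which has a rank-one part. If that becomes messy, we would fall back on verifying the Hirota bilinear forms $D_x^2 g_i\cdot f+\mu_i g_if=0$ and $D_x^2 f\cdot f=4\sum_j g_j^2$ using Jacobi determinant identities.
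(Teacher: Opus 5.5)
Your Step 2 carries the substance of the proposition, and it is not carried out. The mechanism you propose for it also does not work. The relation $q'=Eq-2(e^\top q)\,q$ is false already for $N=1$: there $q=e/(1+m)$ with $m'=e^2$, and $q'=(\eta-eq)\,q$. In general, differentiating $(I+M)q=e$ gives only $(I+M)q'=(E-sI)e$ with $s:=e^\top q$. Your own commutator identity $[E,M]=2EM-ee^\top$ turns this into
\[
q'=Eq-2sq+2(I+M)^{-1}EMq .
\]
So the term hidden in your ``$\cdots$'' is $2(I+M)^{-1}EMq$. It does not vanish (for $N=1$ it equals $sq$), and it still contains $(I+M)^{-1}$. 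This is therefore not a closed first-order relation that you can simply differentiate again to reach $q''=E^2q-4rq$, where $r:=q^\top Eq$. You have the correct target and the correct scalar identity $s'=2r$, but what is missing is a way to close the computation without inverting $I+M$.

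The idea that does this, and is what the paper does (with $\Lambda=E$, $v=e$), is the following. Set $A:=I+M$ and compare $Aq''$ with $A(E^2q-4rq)$, both written as combinations of $E^2e$, $Ee$, $e$ with scalar coefficients in $s,r$. Use the invertibility of $A$ only at the very end.
\begin{itemize}
\item Differentiating $Aq=e$ twice, with $A'=ee^\top$ and $e'=Ee$, gives $Aq''=E^2e-sEe-s'e-(e^\top q')\,e$.
\item The Cauchy relation $EM+ME=ee^\top$ gives $q^\top(EM+ME)q=s^2$, hence $q^\top Ee=r+\tfrac{s^2}{2}$.
\item Then $e^\top q'=q^\top(E-sI)e=r-\tfrac{s^2}{2}$ and $s'=q^\top Ee+e^\top q'=2r$, so
\[
Aq''=E^2e-sEe-3re+\tfrac{s^2}{2}\,e .
\]
\item On the other side, $AE^2q=E^2e-[E^2,M]q$ with $[E^2,M]=E\,ee^\top-ee^\top E$. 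Hence $[E^2,M]q=sEe-(r+\tfrac{s^2}{2})e$, and $A(E^2q-4rq)$ is the same vector.
\end{itemize}
Your Hirota fallback is exactly the $N$-dependent determinant bookkeeping this avoids. Its normalization is also off: $D_x^2f\cdot f=2f^2(\log f)''=2\sum_j g_j^2$, because $(\log f)'=e^\top q=s$ and $s'=2r=\sum_j u_j^2$.

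A smaller point in Step 1: $I+B^{(i)}M$ is not a rank-one perturbation of $I+M$. The difference $-\operatorname{diag}\bigl(2\eta_j/(\eta_i+\eta_j)\bigr)M$ has rank $N$, since $M$ is invertible. Your idea can be repaired, though. Write $B^{(i)}=(\eta_iI-E)(\eta_iI+E)^{-1}$, and use $\det(I+XY)=\det(I+YX)$ together with $M(\eta_iI-E)=(\eta_iI+E)M-ee^\top$. Then the matrix determinant lemma gives
\[
\det(I+B^{(i)}M)=\det\bigl(I+M-(\eta_iI+E)^{-1}ee^\top\bigr)
=f\Bigl(1-\sum_{j}\frac{e_jq_j}{\eta_i+\eta_j}\Bigr)=f\,\frac{q_i}{e_i}.
\]
The last equality is the $i$-th row of $(I+M)q=e$. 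This is shorter than the paper's route, which uses Cramer's rule and then matches the principal-minor expansion of the numerator $\det A_i$ with that of $\det(I+B^{(i)}M)$ via Cauchy determinants. Your alternative of expanding $\det(I+B^{(i)}M)$ and $\det(I+M)$ does not by itself give the ratio; you need the Cramer numerator.

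Your Step 3 (positivity of $M$, $f\ge1$, two-sided exponential decay, $a_i\neq0$) is correct and supplies what the paper leaves implicit.
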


The proof of Proposition~\ref{prop:determinant-verification} is divided into two steps.  We first establish in Subsection 2.1 a determinant
ratio identity which rewrites \(g_i/f\) in terms of \(q\).  We then complete in Subsection 2.2
the proof of Proposition~\ref{prop:determinant-verification} by analyzing the vector \(q\).

\subsection{A determinant ratio formula}
We begin with the introduction of an auxiliary vector.
Set
\begin{equation}\label{vvect}
v(x):=
\left(
v_1(x),\cdots,v_N(x)
\right)^T,
\ \
v_j(x):=\frac{a_j}{\sqrt{2\eta_j}}e^{\eta_jx}.
\end{equation}
It then follows from \eqref{MatrixM}  that
\begin{equation}\label{cauchystru}
M(x)=V(x)C V(x),	
\end{equation}
where
\begin{equation}\label{Vm}
V(x):=\operatorname{diag}(v_1(x),\cdots,v_N(x)),
\ \
C:=\left(\frac{1}{\eta_j+\eta_k}\right)_{1\le j,k\le N}.
\end{equation}
We first observe that \(I+M(x)\) is invertible for every \(x\in\mathbb R\).
Indeed,
note that
\[
\frac1{\eta_j+\eta_k}
=
\int_0^\infty e^{-\eta_jt}e^{-\eta_kt}\,dt,
\]
and thus \(C\) is the Gram matrix of the functions
\(e^{-\eta_jt}\) in \(L^2(0,\infty)\). Since the numbers
\(\eta_j\) are distinct, these functions are linearly independent, and
therefore \(C\) is positive definite. Since \(V(x)\) is invertible, \(M(x)=V(x)CV(x)\) is positive definite by
congruence. Hence, \(I+M(x)\) is positive definite and
invertible for every \(x\in\mathbb R\).

Since $I+M(x)$ is invertible in $\R$,  define
\begin{equation}\label{eq:q-definition}
q(x):=(I+M(x))^{-1}v(x)\quad\hbox{in}\,\ \R.
\end{equation}
We now prove the following lemma.

\begin{lemma}\label{lem:ratio-formula}
Let \(f(x)\), \(g_i(x)\)  and \(q(x)\) be defined by
\eqref{f-formula}, \eqref{g-formula}, and \eqref{eq:q-definition}, respectively.  Set $u_i(x):=\frac{g_i(x)}{f(x)}$, then for every
\(i=1,\cdots,N\),
\[
u_i(x)=
\sqrt{2\eta_i}\,q_i(x),
\]
where \(q_i(x)\) denotes the \(i\)-th component of \(q(x)\).
\end{lemma}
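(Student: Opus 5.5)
The plan is to reduce the determinant $\det(I+B^{(i)}M)$ to $\det(I+M)$ through a rank-one perturbation. The key is the Cauchy structure $M=VCV$ of \eqref{cauchystru}, together with the identity
\[
\frac{\eta_i-\eta_k}{\eta_i+\eta_k}=1-\frac{2\eta_k}{\eta_i+\eta_k}.
\]
Writing $B^{(i)}=I-2\,\mathrm{diag}\bigl(\eta_k/(\eta_i+\eta_k)\bigr)$, we get
\[
I+B^{(i)}M=I+M-2\,D_iM,\qquad D_i:=\operatorname{diag}\Bigl(\frac{\eta_k}{\eta_i+\eta_k}\Bigr)_{k}.
\]
The main task is to show that $D_iM$, suitably combined, is a rank-one matrix. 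A cleaner route is to use the displacement structure of $C$: with $H:=\mathrm{diag}(\eta_1,\dots,\eta_N)$ we have $HC+CH=\mathbf 1\mathbf 1^T$. Then
\[
HM+MH=V(HC+CH)V=vv^T,
\]
since $V$ and $H$ are diagonal and commute.

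Using this, I would compute
\[
(\eta_iI+H)^{-1}(\eta_iI-H)M=M-2(\eta_iI+H)^{-1}HM.
\]
Substituting $HM=vv^T-MH$ gives
\[
B^{(i)}M=M-2(\eta_iI+H)^{-1}vv^T+2(\eta_iI+H)^{-1}MH.
\]
This contains a term $MH$ on the right, so it is not yet rank one. To handle it, I would instead conjugate. Set $R_i:=(\eta_iI+H)^{-1}(\eta_iI-H)=B^{(i)}$. The displacement relation gives
\[
(\eta_iI+H)M-M(\eta_iI-H)=vv^T,
\]
that is, $(\eta_iI+H)M=M(\eta_iI-H)+vv^T$. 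Hence
\[
M=(\eta_iI+H)^{-1}M(\eta_iI-H)+(\eta_iI+H)^{-1}vv^T.
\]
Now I take determinants. Note that $\det(I+B^{(i)}M)=\det(I+MB^{(i)})$ by Sylvester's identity, and that $MB^{(i)}$ is similar to $(\eta_iI-H)(\eta_iI+H)^{-1}M$. This suggests working with $\widetilde M:=(\eta_iI+H)^{-1}M(\eta_iI-H)$, which is similar to $B^{(i)}M$ via conjugation by $(\eta_iI-H)$. That matrix is singular, since its $i$-th diagonal entry is zero, so I would argue by continuity in $\eta_i$ or handle it directly. Then $I+M=I+\widetilde M+w v^T$ with $w:=(\eta_iI+H)^{-1}v$. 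The matrix determinant lemma gives
\[
\det(I+M)=\det(I+\widetilde M)\bigl(1+v^T(I+\widetilde M)^{-1}w\bigr).
\]
This relates $f$ and $\det(I+B^{(i)}M)$, but it still involves the $i$-th direction in an awkward way.

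The more direct route I would actually pursue is Cramer's rule for $q=(I+M)^{-1}v$:
\[
q_i=\frac{\det\bigl((I+M)\text{ with column } i \text{ replaced by } v\bigr)}{f}.
\]
So it suffices to show
\[
\det\bigl((I+M)\text{ with column } i \text{ replaced by } v\bigr)=\frac{a_ie^{\eta_ix}}{\sqrt{2\eta_i}}\det(I+B^{(i)}M)=v_i\det(I+B^{(i)}M).
\]
Both sides are polynomials in the entries of $v$. I would expand both via the Cauchy–Binet principal minor expansion, $\det(I+DM)=\sum_S\det(D_S)\det M_S$, where $\det M_S=\prod_{k\in S}v_k^2\,\det C_S$, and $\det C_S$ is given by the Cauchy determinant formula
\[
\det C_S=\frac{\prod_{j<k\in S}(\eta_j-\eta_k)^2}{\prod_{j,k\in S}(\eta_j+\eta_k)}.
\]
For the left side, expanding the replaced-column determinant gives terms $v_i\cdot\sum_{S\not\ni i}\det M_S$ plus terms $\sum_{S\ni i}$ with bordered minors. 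Each bordered minor is itself a Cauchy-type determinant, of $C$ bordered by the column $\mathbf 1$. I would evaluate it as a limit of the Cauchy determinant, letting one $\eta\to\infty$ after scaling, and this gives products of the form $\prod_{k\in S}(\eta_i-\eta_k)/(\eta_i+\eta_k)$ times $\det C_S$ for $S\not\ni i$.

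Matching term by term, index sets $S\not\ni i$ on the right correspond, with weight $\prod_{k\in S}\frac{\eta_i-\eta_k}{\eta_i+\eta_k}$, to combinations on the left. The main obstacle is this combinatorial matching: showing that for each $S\not\ni i$ the coefficient of $v_i\prod_{k\in S}v_k^2$ on the left equals $\det C_S\prod_{k\in S}\frac{\eta_i-\eta_k}{\eta_i+\eta_k}$. Here the degree in $v_i$ must be exactly one, since terms containing $v_i^3$ cancel by a repeated-column argument. I expect this identity to follow from the Cauchy determinant formula applied to the index set $S\cup\{i\}$ together with a Schur-complement computation:
\[
\det C_{S\cup\{i\}}\big/\det C_S=\frac{1}{2\eta_i}\prod_{k\in S}\Bigl(\frac{\eta_i-\eta_k}{\eta_i+\eta_k}\Bigr)^2.
\]
With that, the factor $\sqrt{2\eta_i}$ in the normalization of $v_i$ should make the left and right sides agree exactly.
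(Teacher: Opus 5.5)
Your final route, Cramer's rule reducing the lemma to $\det A_i=v_i\det(I+B^{(i)}M)$ followed by expansion into Cauchy minors, is the paper's. The ingredient it needs is already in your middle paragraph. However, the step you call ``the main obstacle'' is misdiagnosed, and the mechanism you propose for it would not close the proof.

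There is no combinatorial matching to do. Expand $\det A_i$ multilinearly only in the columns $j\neq i$ (choosing $e_j$ or $M_{\cdot j}$) and keep column $i$ equal to $v$. The term indexed by $S\subset\{1,\dots,N\}\setminus\{i\}$ then collapses to the determinant on rows and columns $S\cup\{i\}$. Pulling $v_k$ out of each row and $v_j$ out of each column $j\in S$ gives exactly
\[
v_i\Bigl(\prod_{k\in S}v_k^2\Bigr)\det\begin{pmatrix}1 & \bigl(\frac{1}{\eta_i+\eta_j}\bigr)_{j\in S}\\ \mathbf 1 & C_S\end{pmatrix}.
\]
Distinct $S$ give distinct monomials, so the lemma is equivalent, set by set, to this bordered determinant equalling $\prod_{j\in S}\beta^{(i)}_j\det C_S$, where $\beta^{(i)}_j=\frac{\eta_i-\eta_j}{\eta_i+\eta_j}$. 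Linearity in $v_i$ likewise needs no cancellation argument, because $v_i$ factors out of row $i$ of $A_i$.

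The ratio $\det C_{S\cup\{i\}}/\det C_S$ you display concerns the unbordered minor $C_{S\cup\{i\}}$. That minor never occurs in $\det A_i$, because column $i$ of $A_i$ is $v$, not $e_i+M_{\cdot i}$. At best the ratio yields $|\prod_{j\in S}\beta^{(i)}_j|$ through a square root. The sign genuinely varies: it is negative whenever an odd number of $j\in S$ have $\eta_j>\eta_i$. Nor does $\sqrt{2\eta_i}$ play any role in the per-set identity, since it is fully absorbed by $a_ie^{\eta_ix}=\sqrt{2\eta_i}\,v_i$ at the Cramer step.

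What closes the argument is your own limit idea. Take $(x_a)=(\eta_i,(\eta_k)_{k\in S})$ and $(y_b)=(y,(\eta_j)_{j\in S})$. Then the bordered determinant equals $\lim_{y\to\infty}y\det\bigl(\frac{1}{x_a+y_b}\bigr)$, and the Cauchy formula gives $(-1)^{|S|}\prod_{k\in S}\frac{\eta_k-\eta_i}{\eta_i+\eta_k}\det C_S$, which has the correct sign. The paper is cheaper still. It subtracts the first row from the others and uses $\frac{1}{\eta_k+\eta_j}-\frac{1}{\eta_i+\eta_j}=\frac{\eta_i-\eta_k}{(\eta_k+\eta_j)(\eta_i+\eta_j)}$. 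This factors the remaining block as $\operatorname{diag}(\eta_i-\eta_k)\,C_S\,\operatorname{diag}\bigl((\eta_i+\eta_j)^{-1}\bigr)$, with no Cauchy formula needed.

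Incidentally, the displacement route you abandoned does close in two lines if you multiply by $\eta_iI+H$ instead of conjugating by the singular $\eta_iI-H$. From $HM+MH=vv^\top$ one has $(\eta_iI-H)M=M(\eta_iI+H)-vv^\top$, hence
\[
(\eta_iI+H)(I+B^{(i)}M)=(I+M)(\eta_iI+H)-vv^\top .
\]
The matrix determinant lemma then gives $\det(I+B^{(i)}M)=f\bigl(1-v^\top(\eta_iI+H)^{-1}q\bigr)=f\bigl(1-(Mq)_i/v_i\bigr)=f\,q_i/v_i$, using $(I+M)q=v$ and $v_i\neq0$. That proof avoids minor expansions entirely and is considerably shorter than the paper's.
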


\begin{proof}
For any \(i\in\{1,\cdots,N\}\), we prove the identity by Cramer's rule.
For simplicity, we suppress the variable \(x\) in the following computation.
Set
\[
A:=I+M.
\]
Let \(A_i\) be the matrix obtained from \(A\) by replacing its \(i\)-th
column by \(v\). Since \(q=A^{-1}v\), Cramer's rule gives that
$
q_i=\frac{\det A_i}{\det A}.
$
Thus, it remains to identify the numerator \(\det A_i\). We claim that
\begin{equation}\label{eq:Ai-main-identity}
\det A_i
=
v_i\det\bigl(I+B^{(i)}M\bigr).
\end{equation}
Once this is proved, Lemma~\ref{lem:ratio-formula} then follows immediately from the facts that
\(f=\det A\) and
\[
a_i e^{\eta_i x}=\sqrt{2\eta_i}\,v_i(x).
\]

The rest is to prove the claim \eqref{eq:Ai-main-identity}. Write
\[
\beta_j^{(i)}
:=
\frac{\eta_i-\eta_j}{\eta_i+\eta_j},
\ \
B^{(i)}=\operatorname{diag}\bigl(\beta_1^{(i)},\cdots,\beta_N^{(i)}\bigr).
\]
For any \(N\times N\) matrix \(K\) and any subset
\(I\subset\{1,\cdots,N\}\), let \(K[I]\) denote the principal submatrix of
\(K\) indexed by \(I\), where the elements of \(I\) are always taken in their
natural increasing order. We also use the convention
\(\det K[\emptyset]=1\). We then have the following standard principal minor expansion (see, e.g., \cite[(1.2.13)]{HornJohnson2013}):
\[
\det(I+K)=\sum_{I\subset\{1,\cdots,N\}}\det K[I].
\]
Applying this identity to \(K=B^{(i)}M\), and using the fact that
\(B^{(i)}\) is diagonal, we obtain that
\begin{equation}\label{eq:BiM-principal-expansion}
\det\bigl(I+B^{(i)}M\bigr)
=
\sum_{I\subset\{1,\cdots,N\}\setminus\{i\}}
\left(\prod_{j\in I}\beta_j^{(i)}\right)\det M[I],
\end{equation}
since all terms with \(i\in I\)   vanish by the fact that \(\beta_i^{(i)}=0\).
Because $M_{kj}=\frac{v_kv_j}{\eta_k+\eta_j},$
we have
\[
M[I]
=
\operatorname{diag}(v_j)_{j\in I}
\left(\frac{1}{\eta_k+\eta_j}\right)_{k,j\in I}
\operatorname{diag}(v_j)_{j\in I}.
\]
Hence
\begin{equation}\label{eq:MI-cauchy-form}
\det M[I]
=
\left(\prod_{j\in I}v_j^2\right)
\det\left(\frac{1}{\eta_k+\eta_j}\right)_{k,j\in I}.
\end{equation}
Combining \eqref{eq:BiM-principal-expansion} and
\eqref{eq:MI-cauchy-form}, we obtain that
\begin{equation}\label{eq:BiM-expansion}
\det\bigl(I+B^{(i)}M\bigr)
=
\sum_{I\subset\{1,\cdots,N\}\setminus\{i\}}
\left(\prod_{j\in I}\beta_j^{(i)}\right)
\left(\prod_{j\in I}v_j^2\right)
\det\left(\frac{1}{\eta_k+\eta_j}\right)_{k,j\in I}.
\end{equation}

We next expand \(\det A_i\) by the multilinearity of the determinant with
respect to the columns. For each \(j\neq i\), the \(j\)-th column of \(A_i\)
is
$
e_j+M_{\cdot j},
$
where \(e_j\) is the \(j\)-th column of the identity matrix and
\(M_{\cdot j}\) is the \(j\)-th column of \(M\). The \(i\)-th column of
\(A_i\) is \(v\). Thus, in the multilinear expansion,
we choose either \(e_j\) or \(M_{\cdot j}\) for each \(j\neq i\). Denote by
\(I\subset\{1,\cdots,N\}\setminus\{i\}\) the set of indices for which
\(M_{\cdot j}\) is chosen. Then
\begin{equation}\label{eq:Ai-expansion}
\det A_i
=
\sum_{I\subset\{1,\cdots,N\}\setminus\{i\}} \Delta_I.
\end{equation}
Here \(\Delta_I\) is the determinant of the matrix whose \(j\)-th column is
\(M_{\cdot j}\) for \(j\in I\), is \(e_j\) for
\(j\notin I\cup\{i\}\), and is \(v\) for \(j=i\).

To compute \(\Delta_I\), set
$S:=I\cup\{i\}.$
For \(j\notin S\), the \(j\)-th column is the unit vector \(e_j\). Hence this column has only one nonzero entry, namely the entry \(1\) in the \(j\)-th row. Expanding the determinant successively along these unit columns, we may delete all rows and columns with labels \(j\notin S\). Therefore, \(\Delta_I\) is equal to the determinant of the matrix obtained by keeping only the rows and columns whose labels belong to \(S\). In this reduced matrix, the column with label \(i\) is \((v_k)_{k\in S}\), while the column with label \(j\in I\) is \((M_{kj})_{k\in S}\).

We then move simultaneously  the row and the column with label \(i\)  to the
first position. Since the row permutation and the column permutation have the same
sign,  their signs are cancelled. Thus, the determinant is unchanged. We then obtain that
\begin{equation}\label{eq:DeltaI-block}
\Delta_I
=
\det
\begin{pmatrix}
	v_i & (M_{ij})_{j\in I}\\
	(v_k)_{k\in I} & (M_{kj})_{k,j\in I}
\end{pmatrix}.
\end{equation}
Using \(M_{kj}=v_kv_j/(\eta_k+\eta_j)\), we factor \(v_k\) from each row
with label \(k\in S\), and factor \(v_j\) from each column with label \(j\in I\).
This gives that
\begin{equation}\label{eq:DeltaI-factor}
\Delta_I
=
v_i\left(\prod_{j\in I}v_j^2\right)
\det
\begin{pmatrix}
	1 & \left(\frac{1}{\eta_i+\eta_j}\right)_{j\in I}\\
	1 & \left(\frac{1}{\eta_k+\eta_j}\right)_{k,j\in I}
\end{pmatrix}.
\end{equation}
In the last determinant of \eqref{eq:DeltaI-factor}, subtract the first row from each of the remaining
rows and then expand along the first column. It yields that
\[
\det
\begin{pmatrix}
1 & \left(\frac{1}{\eta_i+\eta_j}\right)_{j\in I}\\
1 & \left(\frac{1}{\eta_k+\eta_j}\right)_{k,j\in I}
\end{pmatrix}
=
\det
\left(
\frac{1}{\eta_k+\eta_j}
-
\frac{1}{\eta_i+\eta_j}
\right)_{k,j\in I}.
\]
Moreover, the matrix of the above right-hand side admits the factorization
\[
\left(
\frac{1}{\eta_k+\eta_j}
-
\frac{1}{\eta_i+\eta_j}
\right)_{k,j\in I}
=
\operatorname{diag}(\eta_i-\eta_k)_{k\in I}
\left(\frac{1}{\eta_k+\eta_j}\right)_{k,j\in I}
\operatorname{diag}\left(\frac{1}{\eta_i+\eta_j}\right)_{j\in I}.
\]
We thus conclude from above that
\begin{equation}\label{eq:DeltaI-final}
\Delta_I
=
v_i
\left(\prod_{j\in I}
\frac{\eta_i-\eta_j}{\eta_i+\eta_j}
\right)
\left(\prod_{j\in I}v_j^2\right)
\det\left(\frac{1}{\eta_k+\eta_j}\right)_{k,j\in I}.
\end{equation}
Substituting \eqref{eq:DeltaI-final} into \eqref{eq:Ai-expansion}, we get that
\begin{equation*}
\det A_i
=
\sum_{I\subset\{1,\cdots,N\}\setminus\{i\}}
v_i\left(\prod_{j\in I}\beta_j^{(i)}\right)
\left(\prod_{j\in I}v_j^2\right)
\det\left(\frac{1}{\eta_k+\eta_j}\right)_{k,j\in I}.
\end{equation*}
Combining this with \eqref{eq:BiM-expansion} gives
the claim \eqref{eq:Ai-main-identity}.
This completes the proof of Lemma~\ref{lem:ratio-formula}.
\end{proof}

\subsection{Proof of Proposition~\ref{prop:determinant-verification} }
In this subsection we shall complete the proof of Proposition~\ref{prop:determinant-verification}.
By Lemma~\ref{lem:ratio-formula}, the determinant ratio  $u_i$ is reduced to
the vector
\[
q=(I+M)^{-1}v,
\ \
u_i=\sqrt{2\eta_i}\,q_i .
\]
It remains to show that \(q\) satisfies a closed second-order system, which is
equivalent to \eqref{FM}.
\begin{proof}[\normalfont\bfseries Proof of Proposition~\ref{prop:determinant-verification}]
By Lemma~\ref{lem:ratio-formula}, the determinant solution $u_i$ satisfies
\[
u_i=\sqrt{2\eta_i}\,q_i,\quad i=1,2,\cdots,N.
\]
Set
\begin{equation}\label{2.15M}
\Lambda=\operatorname{diag}(\eta_1,\cdots,\eta_N).
\end{equation}
By \eqref{vvect} and \eqref{MatrixM}, we have
\[
m_{jk}(x)=\frac{v_j(x)v_k(x)}{\eta_j+\eta_k},
\]
and hence
\[
M'(x)=v(x)v(x)^T,
\ \
v'(x)=\Lambda v(x).
\]
Recall that
\[
A(x)=I+M(x)\quad\hbox{and}\quad q(x)=A(x)^{-1}v(x).
\]
Set
\begin{equation}\label{2.16A}
s:=v^\top q,\ \  r:=q^\top\Lambda q.
\end{equation}

To prove  Proposition~\ref{prop:determinant-verification}, it suffices to show that $q$ solves
\begin{equation}\label{key2.1}
Aq''=A(\Lambda^2q-4rq)\quad\hbox{in}\,\ \R.
\end{equation}
Indeed, since $A=I+M$ is invertible, if \eqref{key2.1} holds true, then we have
\begin{equation}\label{qsecond-final-thm}
q''=\Lambda^2q-4rq\quad\hbox{in}\,\ \R.
\end{equation}
Because \(\mu_i=-\eta_i^2\) and
\[
\sum_{j=1}^N u_j^2
=
2\sum_{j=1}^N\eta_jq_j^2=2r,
\]
the equation \eqref{qsecond-final-thm} can be rewritten as
\[
q_i''-\eta_i^2q_i+4rq_i=0\quad\hbox{in}\,\ \R,
\]
which is exactly \eqref{FM}.
Thus, $u=(u_1,u_2,\cdots,u_N)=(\sqrt{2\eta_1}q_1,\sqrt{2\eta_2}q_2,\cdots,\sqrt{2\eta_N}q_N)$ satisfies \eqref{FM}.
The rest is to prove \eqref{key2.1} as follows.

\vspace {.15cm}
\noindent
\textbf{Step 1. Computation of $Aq''$.}
We claim that
\begin{equation}\label{qsecond2-thm}
Aq''=\Lambda^2v-s\Lambda v-3rv+\frac{s^2}{2}v,
\end{equation}
where $\Lambda$ is as in \eqref{2.15M}, $s$ and $r$ are defined by \eqref{2.16A}.

Indeed, since $Aq=v,$
we differentiate both sides of the above identity   to obtain that
$
Aq'+A'q=v'.
$
Since $A'=vv^\top$ and $v'=\Lambda v$, while
$
A'q=vv^\top q=v(v^\top q)=sv,
$
we obtain that
\begin{equation}\label{qprime-eq-thm}
Aq'=(\Lambda-sI)v.
\end{equation}
Differentiating \eqref{qprime-eq-thm} yields that
\[
Aq''+A'q'
=
(\Lambda-sI)'v+(\Lambda-sI)v'
=
-s'v+(\Lambda-sI)\Lambda v.
\]
Since $A'=vv^\top$, this becomes
\begin{equation}\label{qsecond1-thm}
Aq''+v(v^\top q')
=
\Lambda^2v-s\Lambda v-s'v.
\end{equation}

We next compute $v^\top q'$ and $s'$. Since
$v=Aq=(I+M)q=q+Mq,$
we have
\[
q^\top\Lambda v
=
q^\top\Lambda q+q^\top\Lambda Mq
=
r+q^\top\Lambda Mq.
\]
Using \(m_{jk}=v_jv_k/(\eta_j+\eta_k)\) and the fact that
the scalar
\(q^\top\Lambda Mq\) satisfies
\[
q^\top\Lambda Mq=(q^\top\Lambda Mq)^\top=q^\top M\Lambda q,
\]
we get that
\[
2q^\top\Lambda Mq
=
q^\top(\Lambda M+M\Lambda)q
=
\sum_{j,k=1}^N v_jq_jv_kq_k
=
s^2.
\]
It then follows from above that
\begin{equation}\label{def:qLq}
q^\top\Lambda v
=
r+\frac{s^2}{2}.
\end{equation}
Multiplying \eqref{qprime-eq-thm} by \(q^\top\) from the left and
using \(q^\top A=v^\top\), we obtain that
\[
v^\top q'
=
q^\top(\Lambda-sI)v
=
q^\top\Lambda v-s^2.
\]
By \eqref{def:qLq}, this gives that
\begin{equation}\label{vtqprime-thm}
v^\top q'
=
r-\frac{s^2}{2}.
\end{equation}

Finally, differentiating \(s=v^\top q\), we have
\[
s'
=
(v^\top q)'
=
(v')^\top q+v^\top q'
=v^\top\Lambda q +v^\top q'
=q^\top\Lambda v+v^\top q'.
\]
We then get from \eqref{def:qLq} and \eqref{vtqprime-thm} that
\begin{equation}\label{sprime2-thm}
s'=2r.
\end{equation}
Applying \eqref{sprime2-thm} and \eqref{vtqprime-thm},
it yields from \eqref{qsecond1-thm}  that
\[
Aq''
=
\Lambda^2v-s\Lambda v-s'v-v(v^\top q')
=
\Lambda^2v-s\Lambda v-3rv+\frac{s^2}{2}v,
\]
which thus proves the claim \eqref{qsecond2-thm}.

\vspace {.15cm}
\noindent
\textbf{Step 2: Computation of $A(\Lambda^2q-4rq)$.}
We claim that
\begin{equation}\label{def:cla1}
A(\Lambda^2q-4rq)
=
\Lambda^2v-s\Lambda v-3rv+\frac{s^2}{2}v,
\end{equation}
where $s$ and $r$ are again as in \eqref{2.16A}.
Indeed, since $Aq=v$, we have
\[
A(\Lambda^2q-4rq)=A\Lambda^2q-4rv.
\]
On the other hand, it follows from $A=I+M$ that
\begin{equation}\label{2.24A}
\begin{aligned}
A\Lambda^2q=\Lambda^2q+M\Lambda^2q=\Lambda^2(v-Mq)+M\Lambda^2q=\Lambda^2v-(\Lambda^2M-M\Lambda^2)q.
\end{aligned}
\end{equation}
For the commutator term of the above right hand side, its $i$-th component is
\[
\bigl((\Lambda^2M-M\Lambda^2)q\bigr)_i
=
\sum_{k=1}^N
(\eta_i^2-\eta_k^2)\frac{v_iv_k}{\eta_i+\eta_k}q_k
=
\sum_{k=1}^N
(\eta_i-\eta_k)v_iv_kq_k,
\]
which gives from \eqref{2.16A} that
\[
\bigl((\Lambda^2M-M\Lambda^2)q\bigr)_i
=
\eta_i s\,v_i-v_i\sum_{k=1}^N \eta_k v_k q_k.
\]
Since it follows from \eqref{def:qLq} that
\[
\sum_{k=1}^N \eta_k v_k q_k=q^\top \Lambda v=r+\frac{s^2}{2},
\]
we conclude from above that
\[
(\Lambda^2M-M\Lambda^2)q
=
s\Lambda v-\left(r+\frac{s^2}{2}\right)v.
\]
Substituting this into \eqref{2.24A}, we obtain that
\[
A\Lambda^2q
=
\Lambda^2v-s\Lambda v+\left(r+\frac{s^2}{2}\right)v,
\]
and hence the claim \eqref{def:cla1} holds true.

We finally conclude from \eqref{qsecond2-thm} and \eqref{def:cla1} that \eqref{key2.1} holds true, which therefore completes the proof of Proposition \ref{prop:determinant-verification}.
\end{proof}

\section{Solutions for the distinct spectral case}
In this section, we prove   Theorems~\ref{thm1.1} and \ref{prop:nondegeneracy} on  the classification and nondegeneracy results of solutions for \eqref{FM}
in the case where the spectral parameters $\mu_1<\mu_2<\cdots<\mu_N<0$ are distinct.  Our proofs are given  by  comparing the solutions through their leading asymptotic
data at \( -\infty \).  We first show that every
\(H^1\) solution has a well-defined leading coefficient at \(-\infty\).
These coefficients are then   used as the parameters of the determinant
solutions constructed in Section~2.  A left-uniqueness principle finally
shows that any two solutions with the same leading left asymptotics must
coincide with each other.

\subsection{Left  asymptotic uniqueness}
In this subsection, we mainly prove the following scalar ODE lemma to identify the leading asymptotic
behavior of each component \(u_i\) as \(x\to-\infty\), where
\(u=(u_1,\cdots,u_N)\) is a solution of \eqref{FM}.

\begin{lemma}
\label{lem:L1-asymptotic}
Let \(\eta>0\),
\(q\in C((-\infty,R))\cap L^1((-\infty,R))\), and
\(h\in C((-\infty,R))\), where $R\in\R$ is arbitrary. Suppose
\(
y\in H^1((-\infty,R))\cap C^2((-\infty,R))
\) solves
\begin{equation}\label{eq:L1-linear-ode}
y''-\eta^2y=q(x)y+h(x)
\quad\text{on}\,\ (-\infty,R),
\end{equation}
where  there exists \(\sigma>0\) such that
\begin{equation}\label{3.1A}
h(x)=O(e^{(\eta+\sigma)x})\quad \text{as}\,\ x\to-\infty.
\end{equation}
Then there exists a constant \(a\in\mathbb R\) such that
\begin{equation}\label{3.1B}
y(x)=a e^{\eta x}+o(e^{\eta x}),
\ \
y'(x)=\eta a e^{\eta x}+o(e^{\eta x})\quad \text{as}\,\ x\to-\infty.
\end{equation}
Specially, if \(h(x)\equiv0\) and \(a=0\), then
$y\equiv0\ \text{on}\ (-\infty,\, R).
$
\end{lemma}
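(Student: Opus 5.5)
The plan is to rewrite \eqref{eq:L1-linear-ode} as a Volterra integral equation anchored at $-\infty$, using variation of constants with the fundamental pair $e^{\eta x}$, $e^{-\eta x}$. Set $F:=qy+h$. Since $y\in H^1((-\infty,R))$, $y$ is bounded and tends to $0$ as $x\to-\infty$; hence $qy\in L^1$ near $-\infty$, and $h$ is integrable there by \eqref{3.1A}. Fix $x_0<R$ and write, on $(-\infty,x_0]$,
\[
y(x)=c_1e^{\eta x}+c_2e^{-\eta x}+\frac{1}{2\eta}\int_{x}^{x_0}\bigl(e^{\eta(s-x)}-e^{\eta(x-s)}\bigr)\Bigl(-F(s)\Bigr)\,ds .
\]
It is more convenient to split this as $y(x)=A(x)e^{\eta x}+B(x)e^{-\eta x}$ with $A'=-\frac{1}{2\eta}e^{-\eta x}F$ and $B'=\frac{1}{2\eta}e^{\eta x}F$ (together with the constraint $A'e^{\eta x}+B'e^{-\eta x}=0$).

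The first step is to kill the growing mode. Since $e^{\eta x}F\in L^1$ near $-\infty$, the limit $B(-\infty)$ exists. If $B(-\infty)\neq 0$, then $y$ would blow up like $e^{-\eta x}$, because the $A e^{\eta x}$ part is controlled by a Gronwall-type bound. This contradicts boundedness of $y$, so we get
\[
B(x)=\frac1{2\eta}\int_{-\infty}^x e^{\eta s}F(s)\,ds .
\]
The second step is to bootstrap the decay of $y$ itself. Set $w(x):=\sup_{s\le x}|y(s)|e^{-\eta s}$, which a priori may be infinite. Work instead with $\tilde y=e^{-\eta x}y$ on $(-\infty,x_1]$, where $\int_{-\infty}^{x_1}|q|$ is small. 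The integral equation then gives
\[
\tilde y(x)=A(x_1)-\frac1{2\eta}\int_x^{x_1}e^{-\eta s}F\,ds+\frac{e^{-2\eta x}}{2\eta}\int_{-\infty}^x e^{\eta s}F\,ds .
\]
The $q$-part of the kernel is bounded by $\frac1\eta|q(s)|\,|\tilde y(s)|$ for $s$ on either side, after using $e^{-2\eta x}e^{2\eta s}\le1$ for $s\le x$. The $h$-part contributes terms of size $O(e^{\sigma s})$, which are integrable. A Gronwall or contraction argument on $(-\infty,x_1]$ should give $\sup|\tilde y|<\infty$.

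This step is the main technical obstacle. The a priori information is only $y\to0$, not $y=O(e^{\eta x})$, so the weighted sup could be infinite. I would first handle this by truncating to $[X,x_1]$ and treating the boundary term at $X$ via the representation of $B$: the boundary term has size $|y(X)|e^{-\eta X}e^{-2\eta(x-X)}$-type weights, which I expect to be controllable uniformly in $X$. If that fails, I would rewrite $B$ as the limit derived above and estimate directly.

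Once $\tilde y$ is bounded, $e^{-\eta s}F\in L^1$ near $-\infty$, so $A(-\infty)=:a$ exists. The last term satisfies
\[
e^{-2\eta x}\int_{-\infty}^x e^{\eta s}|F|\,ds\le \int_{-\infty}^x e^{-\eta s}|F|\,ds\to0,
\]
which gives $y=ae^{\eta x}+o(e^{\eta x})$. Differentiating the representation, $y'=\eta Ae^{\eta x}-\eta Be^{-\eta x}$, gives the derivative asymptotics in \eqref{3.1B} by the same bound.

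For the special case $h\equiv0$ and $a=0$, we get
\[
\tilde y(x)=\frac1{2\eta}\int_{-\infty}^x\bigl(e^{-2\eta(x-s)}-1\bigr)q\,\tilde y\,ds,
\]
so $|\tilde y(x)|\le\frac1\eta\int_{-\infty}^x|q|\,|\tilde y|$. With $\tilde y$ bounded and $\int_{-\infty}^{x_1}|q|<\eta$, taking sups yields $\tilde y\equiv0$ on $(-\infty,x_1]$. ODE uniqueness for the linear equation then extends $y\equiv0$ to $(-\infty,R)$.
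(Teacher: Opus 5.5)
You leave a genuine gap at exactly the step you flag. Nothing in your proposal proves that $\tilde y=e^{-\eta x}y$ is bounded on $(-\infty,x_1]$. Everything after that step depends on this bound: the existence of $a=A(-\infty)$, the $o(e^{\eta x})$ remainders, and the special case.

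The inequality $\sup|\tilde y|\le C_0+\theta\sup|\tilde y|$ says nothing when the supremum is $+\infty$. The truncation you describe does not close the argument either. On $[X,x_1]$, the data at $X$ enter through $e^{-2\eta x}B(X)=e^{-2\eta(x-X)}\bigl(\tilde y(X)-A(X)\bigr)$. The weight of this term is $1$ at $x=X$. So it is bounded only by $M(X)=\sup_{[X,x_1]}|\tilde y|$ with a coefficient $\ge1$, and it cannot be absorbed.

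Saying the term is ``controllable uniformly in $X$'' is exactly the claim $y=O(e^{\eta x})$ you are trying to prove. Estimating $B(x)=\mp\frac1{2\eta}\int_{-\infty}^x e^{\eta s}F\,ds$ ``directly'' is circular for the same reason: it needs bounds on $y$ on all of $(-\infty,x]$. Your other steps are fine, including killing the growing mode, the derivative asymptotics, and the special case once $\tilde y$ is bounded.

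Within your framework you can repair the step by putting the weight outside the supremum. Set $Z(x):=e^{-\eta x}\sup_{s\le x}|y(s)|$; it is finite for every $x$ because $y$ is bounded. For $t\le x\le x_1$, insert $|F(s)|\le|q(s)|\sup_{r\le s}|y(r)|+Ce^{(\eta+\sigma)s}$ into $y(t)=A(t)e^{\eta t}+B(t)e^{-\eta t}$. Multiply by $e^{-\eta x}$ and split $\int_t^{x_1}=\int_t^x+\int_x^{x_1}$. This gives
\[
e^{-\eta x}|y(t)|\le C_1+\theta Z(x)+\frac1{2\eta}\int_x^{x_1}|q(s)|Z(s)\,ds,\qquad \theta:=\frac1{2\eta}\int_{-\infty}^{x_1}|q|<1 .
\]
Taking the supremum over $t\le x$ yields $(1-\theta)Z(x)\le C_1+\frac1{2\eta}\int_x^{x_1}|q|Z\,ds$. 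A backward Gronwall inequality then bounds $Z$, and hence $\tilde y$.

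The paper avoids the issue by moving $q$ into the homogeneous operator:
\begin{itemize}
\item It constructs $\varphi=e^{\eta x}u$, where $u$ is the fixed point of a Volterra contraction in $C_b((-\infty,R_0])$, so boundedness holds by construction.
\item It obtains $\psi\sim e^{-\eta x}$ by reduction of order.
\item It writes $y=\alpha\varphi+\beta\psi+p$, where $p=O(e^{(\eta+\sigma)x})$ comes from variation of constants in $h$ alone.
\item It uses $y\in L^2$ to force $\beta=0$.
\end{itemize}
This route never needs an a priori weighted bound on $y$.

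A minor point: your $A'$, $B'$ and first display have the wrong overall sign; they solve $y''-\eta^2y=-F$. This does not affect any of the absolute-value estimates.
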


\noindent\mbox{\bf Proof.}
The proof is divided into three steps. Roughly speaking, for $\eta>0$, we choose \(R_0<R\) sufficiently negative so that
\begin{equation}\label{eq:q-small-tail}
\frac{1}{2\eta}\|q\|_{L^1((-\infty,R_0))}<1 .
\end{equation}
We then
construct a fundamental set \(\{\varphi,\psi\}\) of solutions for the
homogeneous equation
\begin{equation}\label{eq:homoequ}
w''-\eta^2w=q(x)w\quad\hbox{in}\,\ (-\infty,R_0),
\end{equation}
such that \(\varphi\) has the decaying asymptotic behavior \(e^{\eta x}\), while
\(\psi\) has the growing asymptotic behavior \(e^{-\eta x}\) as
\(x\to-\infty\). Also, we use the variation of constants to write the solution $y$ of \eqref{eq:L1-linear-ode}  as
\[
y=\alpha\varphi+\beta\psi+p,
\]
where \(p\) is a particular solution produced by  \(h(x)\). Finally, the decaying
assumption \eqref{3.1A} on \(h(x)\) implies that \(p\) and \(p'\) are both of lower order than
\(e^{\eta x}\) as $x\to-\infty$. Therefore, the condition \(y\in H^1((-\infty,R))\) excludes the
growing mode \(\psi\) and the desired asymptotic estimates  are obtained.

\vspace {.15cm}
\noindent\textbf{Step 1.} In this step, we construct  two fundamental solutions $\varphi(x)$ and $\psi(x)$ of \eqref{eq:homoequ}.

We first construct a decaying solution of the homogeneous equation
\eqref{eq:homoequ} in $(-\infty,R_0)$. To obtain a solution $\varphi(x)$ with the leading behavior
\(e^{\eta x}\) as \(x\to-\infty\), we write
\(
\varphi(x)=e^{\eta x}u(x).
\)
Then $\varphi$ solves \eqref{eq:homoequ} with the  leading behavior
\(e^{\eta x}\) as \(x\to-\infty\), if
\begin{equation}\label{diffequ}
u''+2\eta u'=q(x)u\ \,\ \hbox{in}\ \ (-\infty,R_0),
\ \
u(x)\to1\,\ \hbox{and}\,\ u'(x)\to0
\quad\text{as}\ \ x\to-\infty .	
\end{equation}
For a bounded continuous solution \(u\), $u$ solves \eqref{diffequ} if
$u$ satisfies the following integral equation
\begin{equation}\label{eq:volterra-u}
u(x) = 1+ \frac{1}{2\eta} \int_{-\infty}^{x} \bigl(1-e^{-2\eta(x-s)}\bigr)q(s)u(s)\,ds, \ \  x<R_0 . \end{equation}
Direct differentiation of \eqref{eq:volterra-u} shows that \(u\) satisfies \eqref{diffequ},
$i.e.,$ \(u(x)\to1\) and \(u'(x)\to0\) as \(x\to-\infty\).
Further, differentiating \eqref{eq:volterra-u} once more yields that
\[ u''=q(x)u-2\eta u'\ \,\ \hbox{in}\ \ (-\infty,R_0), \]
and hence \(u''+2\eta u'=q(x)u\) in $(-\infty,R_0)$.

We next prove the existence of a solution to
\eqref{eq:volterra-u}. Set \(X=C_b((-\infty,R_0])\), equipped with the supremum norm. The
right-hand side of \eqref{eq:volterra-u} defines a map \(T:X\to X\).
Moreover, for any \(u_1,u_2\in X\),
\[
\|Tu_1-Tu_2\|_{L^\infty((-\infty,R_0))}
\le
\frac{1}{2\eta}\|q\|_{L^1((-\infty,R_0))}
\|u_1-u_2\|_{L^\infty((-\infty,R_0))} .
\]
By \eqref{eq:q-small-tail}, \(T\) is a contraction, which implies that
\eqref{eq:volterra-u} has a unique bounded continuous solution
\(u\in C_b((-\infty,R_0])\).
By the argument around \eqref{eq:volterra-u}, this solution \(u\) satisfies \eqref{diffequ}. Applying the standard one-dimensional regularity of ODE, we have
\(u\in C^2((-\infty,R_0))\).
Thus,
\(\varphi=e^{\eta x}u\) is a \(C^2\) solution of the homogeneous equation
\eqref{eq:homoequ} and satisfies
\begin{equation}\label{eq:phi-asymptotic}
\varphi(x)=e^{\eta x}[1+o(1)],
\ \
\varphi'(x)=\eta e^{\eta x}[1+o(1)]\quad \text{as } \ x\to-\infty.
\end{equation}
In view of \eqref{eq:phi-asymptotic}, by further decreasing \(R_0\)  if necessary,
we may assume that
$
\varphi(x)\neq0
\ \text{on}\ (-\infty,R_0).
$

We next construct the growing homogeneous solution of \eqref{eq:homoequ} by the reduction of order:
\begin{equation}\label{eq:psi-definition}
\psi(x)
:=
2\eta\,\varphi(x)\int_x^{R_0}\frac{dt}{\varphi(t)^2},
\ \  x<R_0 .
\end{equation}
Then \(\psi\) also solves the homogeneous equation \eqref{eq:homoequ}. Moreover, it yields that the Wronskian of $\varphi$ and $\psi$ satisfies
\[
W(\varphi,\psi)
:=
\varphi\psi'-\varphi'\psi
=
-2\eta.
\]
Thus, \(\varphi\) and \(\psi\) form a fundamental set of solutions for \eqref{eq:homoequ} on
\((-\infty,R_0)\).
It then follows from \eqref{eq:psi-definition} that
\[
\psi(x)=e^{-\eta x}\big[1+o(1)\big]\quad \text{as } \ x\to-\infty.
\]
Differentiating \eqref{eq:psi-definition}, we get that
\[
\psi'
=
2\eta\varphi'\int_x^{R_0}\frac{dt}{\varphi(t)^2}
-
\frac{2\eta}{\varphi},\,\ x<R_0.
\]
Using \eqref{eq:phi-asymptotic} again, we therefore obtain from above that
\begin{equation}\label{eq:psi-asymptotic}
\psi(x)=e^{-\eta x}\big[1+o(1)\big],
\ \
\psi'(x)=-\eta e^{-\eta x}\big[1+o(1)\big]\quad \text{as } \ x\to-\infty.
\end{equation}

\vspace {.05cm}
\noindent\textbf{Step 2.} In this step, we derive a variation-of-constants representation of \eqref{eq:L1-linear-ode}.

We now treat the inhomogeneous problem \eqref{eq:L1-linear-ode}. Define
\[
I_1(x):=\int_{-\infty}^{x}\psi(s)h(s)\,ds,
\ \
I_2(x):=\int_{-\infty}^{x}\varphi(s)h(s)\,ds,
\]
where $\varphi(x)$ and $\psi(x)$ are as in \eqref{eq:phi-asymptotic} and
\eqref{eq:psi-asymptotic}, respectively.
These integrals are well defined. Indeed, by \eqref{eq:phi-asymptotic},
\eqref{eq:psi-asymptotic}, and the assumption on \(h(x)\), we have
\[
\psi(s)h(s)=O(e^{\sigma s}),
\ \
\varphi(s)h(s)=O(e^{(2\eta+\sigma)s})\quad \text{as } \   s\to-\infty,
\]
where $\sigma>0$ is as in \eqref{3.1A}.
Set
\begin{equation}\label{eq:p-definition}
p(x)
:=
\frac{1}{2\eta}\varphi(x)I_1(x)
-
\frac{1}{2\eta}\psi(x)I_2(x).
\end{equation}
Since
\[
I_1'=\psi h,
\ \
I_2'=\varphi h,
\ \
W(\varphi,\psi)=-2\eta,
\]
direct computations give that
\[
p''-(\eta^2+q(x))p=h(x).
\]
Thus, \(p\) is a particular solution of \eqref{eq:L1-linear-ode}. Hence,
\(y-p\) solves the homogeneous equation \eqref{eq:homoequ}, and there exist constants
\(\alpha,\beta\in\mathbb R\) such that the general solution of \eqref{eq:L1-linear-ode}   satisfies
\begin{equation}\label{eq:y-decomposition}
y(x)=\alpha\varphi(x)+\beta\psi(x)+p(x),
\ \  x<R_0,
\end{equation}
where $p(x)$ is given by \eqref{eq:p-definition}.

\vspace {.15cm}
\noindent\textbf{Step 3.} This step is to analyze the asymptotic estimates of \(H^1((-\infty,R))\) solutions for \eqref{eq:L1-linear-ode}.

The above estimates of Step 2 yield that for $\sigma>0$ given by \eqref{3.1A},
\[
I_1(x)=O(e^{\sigma x}),
\ \
I_2(x)=O(e^{(2\eta+\sigma)x})\quad \text{as } \ x\to-\infty,
\]
which imply from Step 1 that
\[
\varphi(x)I_1(x)=O(e^{(\eta+\sigma)x}),
\ \
\psi(x)I_2(x)=O(e^{(\eta+\sigma)x})\quad \text{as } \ x\to-\infty.
\]
Consequently, the particular solution $p(x)$ of \eqref{eq:L1-linear-ode} satisfies
\begin{equation}\label{eq:p-estimate}
p(x)=O(e^{(\eta+\sigma)x})=o(e^{\eta x})\quad \text{as }\ x\to-\infty.
\end{equation}
Differentiating \eqref{eq:p-definition}, whose two terms involving \(h(x)\)
are cancelled,  we have
\[
p'(x)
=
\frac{1}{2\eta}\varphi'(x)I_1(x)
-
\frac{1}{2\eta}\psi'(x)I_2(x).
\]
By the estimates
of \(I_1\) and \(I_2\), we then get from   \eqref{eq:phi-asymptotic} and \eqref{eq:psi-asymptotic} that
\begin{equation}\label{eq:p-prime-estimate}
p'(x)=O(e^{(\eta+\sigma)x})=o(e^{\eta x})\quad \text{as } \ x\to-\infty.
\end{equation}

Applying \eqref{eq:phi-asymptotic}, \eqref{eq:psi-asymptotic}, and
\eqref{eq:p-estimate}, we get from \eqref{eq:y-decomposition} that the general solution $y$ of \eqref{eq:L1-linear-ode} satisfies
\[
y(x)
=
\alpha e^{\eta x}\big[1+o(1)\big]
+
\beta e^{-\eta x}\big[1+o(1)\big]
+
o(e^{\eta x})\quad \text{as }\ x\to-\infty.
\]
Since \(y\in H^1((-\infty,R))\), we have \(y\in L^2((-\infty,R))\).
The function \(e^{-\eta x}\) is not in \(L^2((-\infty,R_0))\), whereas
\[
\varphi\in L^2((-\infty,R_0)),
\ \
p=o(e^{\eta x})\quad \text{as }\ x\to-\infty.
\]
Thus \(\beta=0\) and
\[
y(x)=\alpha e^{\eta x}+o(e^{\eta x})\quad \text{as }\ x\to-\infty.
\]
Applying \eqref{eq:y-decomposition} with \(\beta=0\), we obtain from
\eqref{eq:phi-asymptotic} and \eqref{eq:p-prime-estimate} that
\[
y'(x)
=
\alpha\varphi'(x)+p'(x)
=
\eta\alpha e^{\eta x}+o(e^{\eta x})\quad \text{as }\ x\to-\infty.
\]
Taking \(a:=\alpha\), we thus obtain the desired asymptotics \eqref{3.1B}.

It remains  to prove the last assertion in the homogeneous case
\(h\equiv0\). In this case, the solution $y$ of \eqref{eq:L1-linear-ode} satisfies
\begin{equation}\label{3.14K}
y(x)=\alpha\varphi(x)+\beta\psi(x),
\ \  x<R_0,
\end{equation}
where $\varphi(x)$ and $\psi(x)$ are as in Step 1.
As before, it follows from \(y\in L^2((-\infty,R_0))\)   that \(\beta=0\). Additionally, if
\(a=\alpha=0\), then it yields from \eqref{3.14K} that
\[
y\equiv0
\quad\text{on } \ (-\infty,R_0).
\]
Since \(y\) solves the second-order linear ordinary differential equation \eqref{eq:L1-linear-ode} with
continuous coefficients, the uniqueness of Cauchy problem implies that
$
y\equiv0
\ \text{on}\  (-\infty,R).
$
This proves Lemma~\ref{lem:L1-asymptotic}.
\qed

Applying Lemma~\ref{lem:L1-asymptotic} to each component $u_i$ of $u$, we obtain the following left asymptotic behavior of \(H^1(\mathbb R)^N\) solutions for \eqref{FM}.

\begin{lemma}
\label{lem:left-asymptotic}
For \(\mu_i=-\eta_i^2<0\), \(i=1,\cdots,N\), suppose
$u=(u_1,\cdots,u_N)\in H^1(\mathbb R)^N$
solves
\[
u_i''+2\left(\sum_{j=1}^N u_j^2\right)u_i
=
\eta_i^2u_i\ \ \hbox{in } \ \R,
\ \  i=1,\cdots,N.
\]
Then for each \(i=1,\cdots,N\), there exists a constant
\(a_i^-\in\mathbb R\) such that
\[
u_i(x)=a_i^- e^{\eta_i x}+o(e^{\eta_i x}), \ \
u_i'(x)=\eta_i a_i^- e^{\eta_i x}+o(e^{\eta_i x})\ \ \hbox{as}\ \ x\to-\infty.
\]
Moreover, if \(u_i\not\equiv0\) in $\R$, then
$a_i^-\neq0$.

\end{lemma}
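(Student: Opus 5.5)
We apply Lemma~\ref{lem:L1-asymptotic} to each component separately. We treat the nonlinear coupling as a linear potential term, with $\eta=\eta_i$, $h\equiv0$ and $R=0$ (any $R$ works). Concretely, we write the $i$-th equation as
\[
u_i''-\eta_i^2u_i=q(x)u_i,\qquad q(x):=-2\sum_{j=1}^N u_j^2(x).
\]
We then check the hypotheses of the lemma in turn.

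\textbf{Continuity.} Since $u\in H^1(\mathbb R)^N$, each $u_j$ is continuous, so $q$ is continuous. Moreover $u_i''=\eta_i^2u_i-2(\sum_j u_j^2)u_i$ is continuous. Hence $u_i\in C^2(\mathbb R)$ by a standard bootstrap from the distributional equation, and $u_i\in H^1((-\infty,R))$.

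\textbf{Integrability.} We have $\|q\|_{L^1(\mathbb R)}=2\sum_j\|u_j\|_{L^2}^2<\infty$, so $q\in L^1((-\infty,R))$.

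Lemma~\ref{lem:L1-asymptotic} then yields a constant $a_i^-$ with exactly the asserted asymptotics for $u_i$ and $u_i'$ as $x\to-\infty$.

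For the nondegeneracy claim, suppose $a_i^-=0$. The last assertion of Lemma~\ref{lem:L1-asymptotic} (homogeneous case $h\equiv0$, $a=0$) gives $u_i\equiv0$ on $(-\infty,R)$. Because $R$ is arbitrary, or alternatively by uniqueness for the Cauchy problem of the linear ODE $y''=(\eta_i^2+q)y$ with continuous coefficient $q$ on all of $\mathbb R$, we get $u_i\equiv0$ on $\mathbb R$. By contraposition, $u_i\not\equiv0$ forces $a_i^-\neq0$.

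There is no serious obstacle. The only point to take care with is that we use the decoupling trick: the other components $u_j$ enter only through the fixed function $q$, which is a legitimate step because $u$ is a given solution. The regularity needed ($u_i\in C^2$, $q$ continuous) follows from the one-dimensional embedding $H^1\subset C$.
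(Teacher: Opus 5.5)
Your proposal is correct and follows the paper's proof: you apply Lemma~\ref{lem:L1-asymptotic} to each $u_i$ with $\eta=\eta_i$, $h\equiv0$ and $q=-2\sum_j u_j^2\in C(\mathbb R)\cap L^1(\mathbb R)$, after noting that $u\in C^2$. You then get $a_i^-\neq0$ for $u_i\not\equiv0$ from the lemma's homogeneous vanishing assertion together with ODE uniqueness on $\mathbb R$.
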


\begin{proof}
Since \(u\in H^1(\mathbb R)^N\), we have
$2\sum_{j=1}^N u_j^2(x)\in L^1(\mathbb R)$.
For each fixed \(i\), where $i=1,2,\cdots,N$, the equation of \(u_i\) can be rewritten as
\[
u_i''-\eta_i^2u_i=q_i(x)u_i\ \ \hbox{in}\,\ \R,
\ \
q_i(x):=-2\sum_{j=1}^N u_j^2(x)\in C(\mathbb R)\cap L^1(\mathbb R).
\]
By the standard one-dimensional regularity of ODE systems, we have
\(u\in \big(C^2(\mathbb R)\big)^N\). Applying Lemma~\ref{lem:L1-asymptotic} with \(y=u_i\),
\(\eta=\eta_i\) and \(q=q_i\), we then obtain that there exists  $a_i^-\in\R$ such that
\[
u_i(x)=a_i^- e^{\eta_i x}+o(e^{\eta_i x}),\,\
u_i'(x)=\eta_i a_i^- e^{\eta_i x}+o(e^{\eta_i x})\,\  \hbox{as}\,\ x\to-\infty.
\]
Specially, if \(a_i^-=0\), then the last assertion of
Lemma~\ref{lem:L1-asymptotic} gives that
$
u_i\equiv0\ \text{on}\ (-\infty,R)
$
for any \(R\in\mathbb R\), which further implies that \(u_i\equiv0\) in
\(\mathbb R\). Therefore, if \(u_i\not\equiv0\), it must have
$a_i^-\neq0$, and we are done.
\end{proof}

If a
solution of a diagonal second-order system is smaller than its natural
decaying rates at \(-\infty\), and if the source term is quadratically
small on the left, then the following left-uniqueness principle further implies that the solution must vanish on a sufficiently far
left half-line.

\begin{lemma}
\label{lem:zero-left-principle}
Let \(\eta_i>0\), and
set $\eta_*:=\min_{1\le i\le N}\eta_i$,  \(i=1,\cdots,N\).
Assume that
\[
z=(z_1,\cdots,z_N)\in H^1((-\infty,R_0))^N
\cap C^2((-\infty,R_0))^N
\]
satisfies  for \(i=1,\cdots,N\),
\[
z_i''-\eta_i^2z_i=F_i(x)
\ \ \text{in } \ (-\infty,R_0),
\ \
z_i(x)=o(e^{\eta_i x})\ \ \text{as }\ x\to-\infty,
\]
where \(F_i(x)\in C((-\infty,R_0))\), and there exists a constant \(C>0\) such that for  any \(x<R_0\),
\begin{equation}\label{Fibound}
|F_i(x)|
\le
C e^{(\eta_i+2\eta_*)x}
\max_{1\le j\le N} e^{-\eta_jx}|z_j(x)|,
\ \  i=1,\cdots,N.
\end{equation}
Then there exists $R<R_0$ such that \(z\equiv0\) on  $(-\infty,R)$.
\end{lemma}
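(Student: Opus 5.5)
The plan is to convert each equation into a Volterra integral equation on a left half-line, with the boundary condition at $-\infty$ built in, and then show that the weighted sup-norm of $z$ is bounded by a small multiple of itself.

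\textbf{Step 1: the integral representation.} Set $w_i(x):=e^{-\eta_i x}z_i(x)$, so that $w_i(x)\to0$ as $x\to-\infty$ by hypothesis. Define
\[
\Theta(x):=\sup_{t<x}\max_{1\le j\le N}|w_j(t)|,
\]
which is finite for $x<R_0$ by continuity and the decay at $-\infty$. Since $z_i\in H^1$, Lemma~\ref{lem:L1-asymptotic} with $q\equiv0$ is available in spirit, but we use variation of constants directly. The source $F_i=O(e^{(\eta_i+2\eta_*)x}\Theta)$ is integrable against $e^{\pm\eta_i s}$ near $-\infty$. Hence
\[
z_i(x)=\alpha_i e^{\eta_i x}+\beta_i e^{-\eta_i x}+\frac{1}{2\eta_i}\int_{-\infty}^x\bigl(e^{\eta_i(x-s)}-e^{-\eta_i(x-s)}\bigr)F_i(s)\,ds.
\]
The particular integral is $O(e^{(\eta_i+2\eta_*)x})$. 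The conditions $z_i\in L^2$ and $z_i=o(e^{\eta_i x})$ then force $\beta_i=0$ and $\alpha_i=0$.

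\textbf{Step 2: the weighted estimate.} Multiply the representation by $e^{-\eta_i x}$ and bound the kernel by $|e^{-\eta_i s}-e^{-2\eta_i x+\eta_i s}|\le e^{-\eta_i s}$ for $s<x$. Then for $t<x$,
\[
|w_i(t)|\le\frac{1}{2\eta_i}\int_{-\infty}^t e^{-\eta_i s}\,C e^{(\eta_i+2\eta_*)s}\,\Theta(s)\,ds\le\frac{C}{4\eta_i\eta_*}e^{2\eta_* x}\,\Theta(x),
\]
using that $\Theta$ is nondecreasing. Taking the supremum over $t<x$ and over $i$ gives
\[
\Theta(x)\le\frac{C}{4\eta_*^2}e^{2\eta_*x}\,\Theta(x).
\]

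\textbf{Step 3: absorption.} Choose $R<R_0$ such that $\frac{C}{4\eta_*^2}e^{2\eta_*R}<1$. Then $\Theta(x)=0$ for every $x<R$, so $z\equiv0$ on $(-\infty,R)$.

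The main point of care is Step 1: ruling out the homogeneous modes and justifying convergence of the integrals. This relies only on the a priori finiteness of $\Theta$, which comes from $z_i=o(e^{\eta_i x})$, and on the extra factor $e^{2\eta_* x}$ in \eqref{Fibound}. That factor also dominates any mismatch between the weights $e^{-\eta_j x}$ for different $j$, since \eqref{Fibound} already measures every component in its own weight.
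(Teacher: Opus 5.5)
Your proposal is correct and follows the paper's proof essentially step for step. The paper also represents $z_i$ through the kernel $\frac{1}{\eta_i}\sinh(\eta_i(x-t))$, removes the homogeneous modes using $H^1$ and $z_i=o(e^{\eta_i x})$, and closes by absorption in $W_R\le \frac{C}{4\eta_*^2}e^{2\eta_*R}W_R$, where $W_R=\max_j\sup_{x\le R}e^{-\eta_jx}|z_j(x)|$ is your $\Theta$ (the only cosmetic differences are that you track $\Theta(x)$ as a monotone function rather than fixing $R$ first, and that, since you define $\Theta$ with a strict inequality $t<s$, the bound $\max_j|w_j(s)|\le\Theta(s)$ uses continuity of $w_j$).
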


\begin{proof}
Define for any \(R<R_0\),
\[
W_R:=
\max_{1\le j\le N}\sup_{x\le R}
e^{-\eta_jx}|z_j(x)|\geq 0.
\]
By the assumption \(z_j(x)=o(e^{\eta_jx})\) as \(x\to-\infty\),
we obtain that \(W_R<\infty\).
Fix \(R<R_0\) and define
\begin{equation}\label{3.15Q}
Y_i(x)
:=
\frac1{\eta_i}
\int_{-\infty}^{x}
\sinh\bigl(\eta_i(x-t)\bigr)F_i(t)\,dt ,
\ \  x\le R,\quad i=1,\cdots,N.
\end{equation}
By the definition of \(W_R\), we obtain from \eqref{Fibound} that
\begin{equation}\label{Fibound2}
|F_i(x)|
\le
CW_R e^{(\eta_i+2\eta_*)x},\,\ x\le R,\quad i=1,\cdots,N.
\end{equation}
Hence the above integral
is well-defined for \(x\le R\). Moreover, differentiating \eqref{3.15Q}
gives that
\[
Y_i''-\eta_i^2Y_i=F_i(x)\quad\text{in } \ (-\infty,R].
\]
Thus,
\begin{equation}\label{3.16Q}
h_i:=z_i-Y_i
\end{equation}
satisfies the homogeneous equation
\[
h_i''-\eta_i^2h_i=0
\quad\text{in }\ (-\infty,R].
\]
Hence,
\[
h_i(x)=\alpha_i e^{\eta_i x}+\beta_i e^{-\eta_i x}.
\]
Since \(z_i,Y_i\in H^1((-\infty,R))\), we have
\(h_i\in H^1((-\infty,R))\). This excludes the mode \(e^{-\eta_i x}\), and hence \(\beta_i=0\).
On the other hand, we obtain from  \eqref{Fibound2} that
\[
\begin{aligned}
|Y_i(x)|
&\le
\frac{CW_R}{2\eta_i}
\int_{-\infty}^{x}
e^{\eta_i(x-t)}e^{(\eta_i+2\eta_*)t}\,dt
\le
C W_R e^{\eta_i x}e^{2\eta_*x}
=o(e^{\eta_i x})\quad \text{as}\,\ x\to-\infty.
\end{aligned}
\]
By the assumption
\(
z_i(x)=o(e^{\eta_i x})\)
as \(x\to-\infty\),
we then have
\(
h_i(x)=o(e^{\eta_i x})
\) as $x\to-\infty$.
Since \(h_i(x)=\alpha_i e^{\eta_i x}\), we get that \(\alpha_i=0\), and it hence follows from \eqref{3.16Q} that $z_i=Y_i$, $i=1,2,\cdots,N$.

We obtain from above that for $i=1,\cdots,N$,
\begin{equation}\label{3.14A}
z_i(x)=Y_i(x)
=
\frac1{\eta_i}
\int_{-\infty}^{x}
\sinh\bigl(\eta_i(x-t)\bigr)F_i(t)\,dt ,
\ \  x\le R.
\end{equation}
Since
\[
\sinh\bigl(\eta_i(x-t)\bigr)
\le
\frac12 e^{\eta_i(x-t)},
\ \  t\le x,
\]
we obtain from \eqref{Fibound2} and \eqref{3.14A} that
\[
\begin{aligned}
|z_i(x)|
&\le
\frac{C}{2\eta_i}W_R
\int_{-\infty}^{x}
e^{\eta_i(x-t)}e^{(\eta_i+2\eta_*)t}\,dt
=
\frac{C}{2\eta_i}W_R e^{\eta_i x}
\int_{-\infty}^{x}e^{2\eta_*t}\,dt  \\
&=
\frac{C}{4\eta_i\eta_*}
W_R e^{\eta_i x}e^{2\eta_*x},\quad i=1,\cdots,N.
\end{aligned}
\]
Since \(\eta_i\ge\eta_*\), there exists a constant \(\Theta:=\frac{C}{4\eta_*^2}>0\),
independent of \(R\), such that
\[
e^{-\eta_i x}|z_i(x)|
\le
\Theta W_R e^{2\eta_*x}
\le
\Theta W_R e^{2\eta_*R}\quad\hbox{in}
\,\ (-\infty,R],\,\ i=1,\cdots,N.
\]
Taking the supremum over \(x\le R\) and then the maximum over \(i\),
we get that
\[
0\leq W_R\le \Theta e^{2\eta_*R}W_R.
\]
Choosing \(R\ll-1\) such that
\(
\Theta e^{2\eta_*R}<1,
\)
we thus conclude from above that \(W_R=0\). Hence, it yields from \eqref{3.14A} that
\[
z_i(x)\equiv0,
\ \  x\le R,\,\ i=1, \cdots, N,
\]
which completes the proof of Lemma~\ref{lem:zero-left-principle}.
\end{proof}

\subsection{Proofs of Theorems~\ref{thm1.1} and \ref{prop:nondegeneracy}}
In this subsection, we mainly prove Theorems~\ref{thm1.1} and
\ref{prop:nondegeneracy}, which are concerned with the classification and nondegeneracy of solutions for \eqref{FM} in the distinct-spectral case.

For any \(H^1\) solution
\[
u=(u_1,\cdots,u_N)\in (H^1(\mathbb R)\setminus\{0\})^N,
\]
roughly speaking, the previous lemmas give the leading coefficient of
each component \(u_i\) at \(-\infty\).  In order to prove Theorem~\ref{thm1.1},  we shall use these coefficients as the
parameters in the determinant solution \(U^a\) constructed in
Section~2.  Since \(u\) and \(U^a\) have the same leading coefficients
componentwise at \(-\infty\), we shall employ the left-uniqueness principle to conclude that
\(u\equiv U^a\) in $\R$.

\begin{proof}[\normalfont\bfseries Proof of  Theorem~\ref{thm1.1}]
Suppose $u=(u_1,\cdots,u_N)$ is a solution of \eqref{FM}, and set
$
\eta_i:=\sqrt{-\mu_i}>0\ \text{for}\  i=1,\cdots,N,
$
so that the system \eqref{FM} can be rewritten as
\[
u_i''
+
2\left(\sum_{j=1}^N u_j^2\right)u_i
=
\eta_i^2u_i\quad\hbox{in}\,\ \R,
\ \  i=1,\cdots,N.
\]
By Lemma~\ref{lem:left-asymptotic}, there exist constants
\(a_i\in\mathbb R\) such that for each \(i=1,\cdots,N\),
\begin{equation}\label{3.17A}
u_i(x)=a_i e^{\eta_i x}+o(e^{\eta_i x})\quad \text{as } \   x\to-\infty.
\end{equation}
Since \(u_i\not\equiv0\), the last assertion of
Lemma~\ref{lem:left-asymptotic} gives that
\[
a_i\neq0,\ \  i=1,\cdots ,N.
\]
Let \(a=(a_1,\cdots,a_N)\) be the vector of leading coefficients obtained
above. We  denote by \(U^a=(U_1^a,\cdots,U_N^a)\)   the determinant solution of Theorem \ref{thm1.1}
with the parameter vector \(a\), namely
\begin{equation}\label{3.17B}
U_i^a(x)
=\frac{a_i e^{\eta_i x}\det(I+B^{(i)}M(x))
}{\det(I+M(x))},
\ \  i=1,\cdots,N.
\end{equation}
By Proposition~\ref{prop:determinant-verification}, \(U^a\) is an
\((H^1(\mathbb R)\setminus\{0\})^N\) solution of the  system \eqref{FM}. Moreover, since
\(M(x)\to0\) as \(x\to-\infty\), we have
\[
U_i^a(x)=a_i e^{\eta_i x}+o(e^{\eta_i x})\quad \text{as }\ x\to-\infty.
\]

It remains to prove that \(u\equiv U^a\) in $\R$. Set
\[
w_i:=u_i-U_i^a,
\ \
S_u:=\sum_{j=1}^N u_j^2,
\ \
S_a:=\sum_{j=1}^N (U_j^a)^2.
\]
Then
\[
w_i''-\eta_i^2w_i
=
-2\left(S_u u_i-S_a U_i^a\right)
=:F_i(x).
\]
Since \(u\) and \(U^a\) have the same left asymptotic data, we have
\[
w_i(x)=o(e^{\eta_i x})\quad \text{as } \ x\to-\infty.
\]
Set
$\eta_*:=\min_{1\le j\le N}\eta_j .
$
By the  asymptotics \eqref{3.17A} and \eqref{3.17B}, there exist \(R_0<0\) and \(C_0>0\)
such that
\[
|u_i(x)|+|U_i^a(x)|
\le
C_0 e^{\eta_i x}\quad\hbox{in}\,\ (-\infty,R_0],
\ \  i=1,\cdots,N.
\]
Define
\[
Z(x):=
\max_{1\le j\le N}e^{-\eta_jx}|w_j(x)|, \ \  x\le R_0,
\]
so that
\[
|w_j(x)|\le Z(x)e^{\eta_jx},
\ \  x\le R_0.
\]
Since
\[
S_u u_i-S_a U_i^a
=
S_u w_i+(S_u-S_a)U_i^a,
\ \
S_u-S_a
=
\sum_{j=1}^N (u_j+U_j^a)w_j,
\]
we obtain that
\[
\begin{aligned}
|F_i(x)|
&\le
2S_u(x)|w_i(x)|
+
2|U_i^a(x)|
\sum_{j=1}^N |u_j(x)+U_j^a(x)|\,|w_j(x)|  \\
&\le
C e^{(\eta_i+2\eta_*)x}Z(x), \ \  x\le R_0,
\end{aligned}
\]
where \(C>0\) is independent of \(x\). Thus, \(w=(w_1,\cdots,w_N)\) satisfies the
assumptions of Lemma~\ref{lem:zero-left-principle}, which then implies that
\[
w\equiv0
\ \ \text{on a sufficiently far left half-line}.
\]
In particular, we have for some \(R<R_0\),
\[
u_i(R)=U_i^a(R),
\ \
u_i'(R)=(U_i^a)'(R),
\ \  i=1,\cdots,N.
\]
By the uniqueness theorem of   ODE systems, we then obtain that
$
u\equiv U^a
\ \text{on}\ \mathbb R.
$
This proves Theorem~\ref{thm1.1}.
\end{proof}

We next prove Theorem~\ref{prop:nondegeneracy} on the
nondegeneracy of the determinant family in the distinct-spectral case.
It is clear that the tangent directions obtained by differentiating the
determinant family belong to the kernel \eqref{LUa}. To prove that these are the only \(H^1\)-kernel
directions, we again use the left asymptotics and the left-uniqueness
principle established above.

\begin{proof}[\normalfont\bfseries Proof of Theorem~\ref{prop:nondegeneracy}]
Suppose $U^a=(U_1^a,\cdots,U_N^a)$ is a solution of \eqref{FM}. For each \(k=1,\cdots,N\), differentiating the identity
\[
(U_i^a)''
+
2\left(\sum_{j=1}^N (U_j^a)^2\right)U_i^a
=
\eta_i^2U_i^a \quad\hbox{in}\,\ \R,
\ \  i=1,\cdots,N,
\]
with respect to \(a_k\), we obtain that
\(
\mathcal L_{U^a}(\partial_{a_k}U^a)=0\,\ \hbox{in}\,\ \R,
\)
where the operator $\mathcal L_{U^a}$  is given by
\[
(\mathcal L_{U^a}\phi)_i
:=
-\phi_i''
+\eta_i^2\phi_i
-2S^a\phi_i
-4U_i^a\sum_{j=1}^N U_j^a\phi_j\quad\hbox{in}\,\ \R,
\ \  i=1,\cdots,N,
\]
together with $S^a:=\sum_{j=1}^N (U_j^a)^2$.
Furthermore, direct computations yield that $\partial_{a_k}U^a\in H^1(\R)^N$ holds for $a_k\neq 0$. Thus, we have for every \(a\in(\mathbb R\setminus\{0\})^N\),
\[
\operatorname{span}
\left\{
\partial_{a_1}U^a,\cdots,\partial_{a_N}U^a
\right\}
\subset
\ker_{H^1(\mathbb R)^N}\mathcal L_{U^a} .
\]
Conversely, we have for
$\phi=(\phi_1,\cdots,\phi_N)
\in
\ker_{H^1(\mathbb R)^N}\mathcal L_{U^a}$,
\begin{equation}\label{3:15M}
\phi_i''-\eta_i^2\phi_i
=
-2S^a\phi_i
-4U_i^a\sum_{j=1}^N U_j^a\phi_j\ \,\ \hbox{in } \ \R,
\ \  i=1,\cdots,N.
\end{equation}
Since \(U_i^a(x)=O(e^{\eta_i x})\) as \(x\to-\infty\), we obtain that
\(S^a\in L^1((-\infty,R_0))\). Moreover, the one-dimensional Sobolev
embedding theorem gives that
$\phi\in L^\infty(\mathbb R)^N$.

For each \(i\), rewrite the equation (\ref{3:15M}) as
\[
\phi_i''-\eta_i^2\phi_i
=
-2S^a\phi_i+h_i(x) \ \,\ \hbox{in } \ \R,
\]
where
\(
h_i(x):=-4U_i^a(x)\sum_{j=1}^N U_j^a(x)\phi_j(x).
\)
Set
$
\eta_*:=\min_{1\le j\le N}\eta_j,
$
so that
\[
|h_i(x)|
\le
C |U_i^a(x)|\sum_{j=1}^N |U_j^a(x)|
\le
C e^{(\eta_i+\eta_*)x}\quad \text{as }\ x\to-\infty.
\]
In particular,
$
h_i(x)=o(e^{\eta_i x})
$ as $x\to-\infty$.
Applying Lemma~\ref{lem:L1-asymptotic}, we then obtain that there exists a constant
\(b_i^-\in\mathbb R\) such that
\[
\phi_i(x)=b_i^-e^{\eta_i x}+o(e^{\eta_i x})\quad \text{as } \ x\to-\infty.
\]
On the other hand, the determinant formula of \(U^a\) gives that
\[
\partial_{a_k}U_i^a(x)
=
\delta_{ik}e^{\eta_i x}+o(e^{\eta_i x})\quad \text{as}\,\  x\to-\infty.
\]
Define
\[
\psi
:=
\sum_{k=1}^N b_k^-\partial_{a_k}U^a,
\]
so that $\mathcal L_{U^a}\psi=0$ in $\R$
and
\[
\psi_i(x)=b_i^-e^{\eta_i x}+o(e^{\eta_i x})\quad \text{as }\ x\to-\infty.
\]
Hence
\(
z:=\phi-\psi
\)
satisfies
$\mathcal L_{U^a}z=0$ in $\R$,
and
\[
z_i(x)=o(e^{\eta_i x})
\ \ \text{as }\ x\to-\infty, \ \ i=1,\cdots,N.
\]
Moreover, $z(x)=(z_1(x),\cdots,z_N(x))$ satisfies
\begin{equation}\label{3.16}
z_i''-\eta_i^2 z_i
=
-2S^a z_i
-4U_i^a\sum_{j=1}^N U_j^a z_j
=: G_i(x)\quad\hbox{in}\,\ \R.
\end{equation}

Set
\[
Z(x):=\max_{1\le j\le N}e^{-\eta_jx}|z_j(x)|, \ \  x\le R_0.
\]
Since \(U_i^a(x)=O(e^{\eta_i x})\) as \(x\to-\infty\), after further decreasing
\(R_0\) if necessary, we have
\[
|G_i(x)|
\le
C e^{(\eta_i+2\eta_*)x}Z(x),
\ \  x\le R_0,\quad\hbox{where}\,\ \eta_*=\min_{1\leq i\leq N}\eta_i>0.
\]
Thus, \(z\) satisfies the assumptions of
Lemma~\ref{lem:zero-left-principle}, which then  gives that there exists \(R<R_0\) such that
$
z\equiv0 \ \text{on}\ (-\infty,R).
$
In particular,
\[
z_i(R)=z_i'(R)=0,\ \  i=1,\cdots,N.
\]
Since \(z\) satisfies the linearized ODE system \eqref{3.16} with continuous
coefficients, the uniqueness theorem of ODE systems
implies that
$z\equiv0$ on $\R$.
Thus,
\(
\phi
=
\sum_{k=1}^N b_k^-\partial_{a_k}U^a,
\)
which then proves that
\[
\ker_{H^1(\mathbb R)^N}\mathcal L_{U^a}
\subset
\operatorname{span}
\left\{
\partial_{a_1}U^a,\cdots,\partial_{a_N}U^a
\right\}.
\]
Combining the above two inclusions, we thus obtain that
\[
\ker_{H^1(\mathbb R)^N}\mathcal L_{U^a}
=
\operatorname{span}
\left\{
\partial_{a_1}U^a,\cdots,\partial_{a_N}U^a
\right\}.
\]
Finally, since
\[
\partial_{a_k}U_i^a(x)
=
\delta_{ik}e^{\eta_i x}+o(e^{\eta_i x})\quad \text{as } \   x\to-\infty,
\]
the vector-valued functions
$
\partial_{a_1}U^a,\cdots,\partial_{a_N}U^a
$
are linearly independent. Hence,
$
\dim\ker_{H^1(\mathbb R)^N}\mathcal L_{U^a}=N,
$
which completes the proof of Theorem~\ref{prop:nondegeneracy}.
\end{proof}

\section{Solutions for the repeated spectral case} 

\label{sec:multiple-spectral-parameters}

The main purpose of this section is to prove Theorems \ref{thm:completeness-multiple-root}--\ref{thm:L2iden} for   the repeated spectral case.
We first prove  Theorem~\ref{thm:completeness-multiple-root} on the classification of solutions for \eqref{eq:multi-system}. The  nondegeneracy of Theorem~\ref{prop:nondegeneracy-multiple-roots} is then proved in Subsection~\ref{sub4.2} for the linearized operator \(\mathcal L_{U^{a,n}}\) defined in \eqref{eq:multi-LU}, while the proof of Theorem~\ref{thm:L2iden} is  left to  Subsection  \ref{sub4.3}.

Let
\[
\mu_1<\cdots<\mu_K<0
\]
be the distinct spectral parameters, and denote \(k_\alpha\) to be the
multiplicity of \(\mu_\alpha\), so that
\[
k_1+\cdots+k_K=N.
\]
Set
\[
N_0:=0,\ \
N_\alpha:=k_1+\cdots+k_\alpha,
\ \
I_\alpha:=\{N_{\alpha-1}+1,\cdots,N_\alpha\}.
\]
The full list of spectral parameters counted with the multiplicity is denoted
by \(\nu_1,\cdots,\nu_N\) and
\[
\nu_i=\mu_\alpha,\ \  i\in I_\alpha.
\]
For $\alpha=1,2,\cdots,K$, we also write
$
\mu_\alpha=-\eta_\alpha^2<0$.

The key observation of   this section lies in the fact that repeated spectral parameters do not produce new
scalar profiles for the solution. Instead, the components of the solution in the same spectral block are
locked together and differ only by constant coefficients. Hence, each
nontrivial block \(I_\alpha\) reduces to a single scalar profile
\(V_\alpha\), and the collection \(V=(V_1,\cdots,V_K)\) solves the reduced
distinct-spectral system \eqref{eq:reduced-system}. Therefore, the
classification in the repeated spectral  case follows from the
distinct-spectral classification of Theorem~\ref{thm1.1}, together with the
internal rotational freedom inside each block.

We first prove
Theorem~\ref{thm:completeness-multiple-root}, namely the complete
classification of \(H^1(\mathbb R)^N\) solutions for
\eqref{eq:multi-system} with repeated spectral parameters.

\begin{proof}[\normalfont\bfseries Proof of Theorem~\ref{thm:completeness-multiple-root}]
Let
\(
S=\sum_{\ell=1}^N u_\ell^2 ,
\)
and fix \(\alpha\in\{1,\cdots,K\}\). It then yields from \eqref{eq:multi-nu} that for any \(i,j\in I_\alpha\),
\(
\nu_i=\nu_j=\mu_\alpha .
\)
Hence, both \(u_i\) and \(u_j\) solve the same linear equation
\begin{equation}\label{4.1N}
y''+2S y+\mu_\alpha y=0\quad\hbox{in}\,\ \R.
\end{equation}
Define
$
W_{ij}:=u_i'u_j-u_i u_j',
$
so that $W_{ij}'
=
u_i''u_j-u_i u_j''
=0$.
Thus, \(W_{ij}\) is a constant on \(\mathbb R\). Moreover, since
\(u_i,u_j\in H^1(\mathbb R)\), we have
$
u_i'u_j,\ u_i u_j'\in L^1(\mathbb R),
$
which implies that \(W_{ij}\in L^1(\mathbb R)\). We thus have
$
W_{ij}\equiv0\ \hbox{for any }i,j\in I_\alpha.
$

Since $\sum_{i\in I_\alpha} u_i^2(x)\not\equiv 0$, there exists
\(i_\alpha\in I_\alpha\) such that
$u_{i_\alpha}\not\equiv0$.
For any \(i\in I_\alpha\), choose \(x_\alpha\in\mathbb R\) such that
\(u_{i_\alpha}(x_\alpha)\neq0\), and set
\[
c_i:=\frac{u_i(x_\alpha)}{u_{i_\alpha}(x_\alpha)},\ \ i\in I_\alpha .
\]
Define
\begin{equation}\label{4.1M}
w_i:=u_i-c_i u_{i_\alpha},
\end{equation}
so that \(w_i\) satisfies the same linear equation
\[
w_i''+2S w_i+\mu_\alpha w_i=0\ \,\ \hbox{in}\,\ \R.
\]
Moreover, we have $w_i(x_\alpha)=0$.
Since \(W_{i i_\alpha}\equiv0\), we also have
\[
u_i'(x_\alpha)u_{i_\alpha}(x_\alpha)
-
u_i(x_\alpha)u_{i_\alpha}'(x_\alpha)=0,\ \ i\in I_\alpha,
\]
which gives that $w_i'(x_\alpha)=0$, $i\in I_\alpha$.
By the uniqueness of   second-order linear ODE, we then obtain that
$
w_i\equiv0
$ in $\R$,  $i\in I_\alpha$.
We thus derive from \eqref{4.1M} that
\[
u_i=c_i u_{i_\alpha}\ \ \hbox{in}\,\ \R,
\, \  i\in I_\alpha .
\]

Set
\[
d_\alpha:=\left(\sum_{i\in I_\alpha}c_i^2\right)^{1/2}>0,
\ \
V_\alpha:=d_\alpha u_{i_\alpha},
\ \
n_i^{(\alpha)}:=\frac{c_i}{d_\alpha},
\quad i\in I_\alpha .
\]
Then
\[
n^{(\alpha)}=(
n_{N_{\alpha-1}+1}^{(\alpha)},\cdots,
n_{N_\alpha}^{(\alpha)}
)
\in \mathbb S^{k_\alpha-1},
\]
and
\begin{equation}\label{4.1}
u_i=n_i^{(\alpha)}V_\alpha,
\ \  i\in I_\alpha .
\end{equation}
Consequently,
$
\sum_{i\in I_\alpha}u_i^2=V_\alpha^2,
$
and hence
$
S=\sum_{\beta=1}^K V_\beta^2.
$
Substitute $u_i=n_i^{(\alpha)}V_\alpha$
into the equation \eqref{4.1N}, and choose \(i=i_\alpha\) so that
\(n_{i_\alpha}^{(\alpha)}\neq0\). We then obtain that
\begin{equation}\label{4.1MM}
V_\alpha''
+
2\left(\sum_{\beta=1}^K V_\beta^2\right)V_\alpha
+
\mu_\alpha V_\alpha
=0\quad\hbox{in}\,\ \R,
\ \  \alpha=1,\cdots,K.
\end{equation}
Thus,
$
V=(V_1,\cdots,V_K)
$
is an \(H^1(\mathbb R)^K\) solution of the reduced distinct spectral system
\eqref{eq:reduced-system}. Since $\sum_{i\in I_\alpha} u_i^2(x)\not\equiv 0$, we have
\(V_\alpha\not\equiv0\) in $\R$.

Applying Theorem~\ref{thm1.1} to the reduced system
\eqref{4.1MM}, it yields that there exists
$
a=(a_1,\cdots,a_K)\in(\mathbb R\setminus\{0\})^K
$
such that
$
V=V^a=(V^a_1,\cdots,V^a_K).
$
Therefore, we get from \eqref{4.1} that
\[
u_i=n_i^{(\alpha)}V_\alpha^a,
\ \  i\in I_\alpha,\ \ \alpha=1,\cdots,K.
\]
The definition of \(\mathcal M_{\mathrm{multi}}\) in
\eqref{eq:multi-family} then gives that
$
u\in \mathcal M_{\mathrm{multi}},
$
and we are done.
\end{proof}

\subsection{Proof of Theorem~\ref{prop:nondegeneracy-multiple-roots}}\label{sub4.2}
In this subsection, we prove the nondegeneracy of Theorem~\ref{prop:nondegeneracy-multiple-roots} with repeated spectral parameters. The argument of our proof mainly follows the same reduction principle as Theorem \ref{thm1.1}, except that it is now focused on the linearized operator. The component of a kernel element parallel to \(n^{(\alpha)}\) is governed by the reduced \(K\)-component linearized operator, while the orthogonal component inside each block is generated exactly by the  rotations of \(n^{(\alpha)}\).

\begin{proof}[\normalfont\bfseries Proof of Theorem~\ref{prop:nondegeneracy-multiple-roots}]
Let
\(U^{a,n}\in\mathcal M_{\mathrm{multi}}\) be given by
\[
U_i^{a,n}(x):=n_i^{(\alpha)}V_{\alpha}^a(x),
\ \
i\in I_\alpha,\quad \alpha=1,\cdots,K,
\]
where \(V^a=(V_1^a,\cdots,V_K^a)\) is the reduced solution of \eqref{eq:reduced-system} with distinct
spectral parameters, and
\(n^{(\alpha)}=(n_{N_{\alpha-1}+1}^{(\alpha)},\cdots,
n_{N_\alpha}^{(\alpha)}
)\in
\mathbb S^{k_\alpha-1}\). Since \(|n^{(\alpha)}|=1\), we have
\[
S^{a}(x):=\sum_{\ell=1}^N (U_\ell^{a,n})^2
=
\sum_{\beta=1}^K (V_\beta^a)^2 .
\]
The linearized operator at \(U^{a,n}\) is then defined by
\begin{equation}\label{4.2M}
(\mathcal L_{U^{a,n}}\phi)_i
=
-\phi_i''
-\nu_i\phi_i
-2S^{a}\phi_i
-4U_i^{a,n}\sum_{\ell=1}^N U_\ell^{a,n}\phi_\ell\ \ \,\hbox{in}\,\ \R,
\ \  i=1,\cdots,N.
\end{equation}
We also recall the tangent directions of the manifold
\(\mathcal M_{\mathrm{multi}}\) at \(U^{a,n}\). The directions generated by
the reduced parameters are
\[
\Phi^{(\gamma)}=(\Phi_1^{(\gamma)},\cdots,\Phi_N^{(\gamma)}),\ \ \hbox{where}\,\ \Phi_i^{(\gamma)}
=
n_i^{(\alpha)}\partial_{a_\gamma}V_\alpha^a,
\ \
i\in I_\alpha,\quad \gamma=1,\cdots,K.
\]
The rotational directions inside the block \(I_\alpha\) are
\[
\Psi^{(\alpha,\xi)}=(\Psi_1^{(\alpha,\xi)},\cdots,\Psi_N^{(\alpha,\xi)}),\ \ \hbox{where}\,\ \Psi_i^{(\alpha,\xi)}
=
\begin{cases}
\xi_i V_\alpha^a, & i\in I_\alpha,\\
0, & i\notin I_\alpha,
\end{cases}
\ \
\xi\in T_{n^{(\alpha)}}\mathbb S^{k_\alpha-1},
\]
where
\[
T_{n^{(\alpha)}}\mathbb S^{k_\alpha-1}
=
\left\{\xi\in\mathbb R^{k_\alpha}:\ \xi\cdot n^{(\alpha)}=0\right\}.
\]
Thus, \(T_{U^{a,n}}\mathcal M_{\mathrm{multi}}\) is the span of all
\(\Phi^{(\gamma)}\) and \(\Psi^{(\alpha,\xi)}\).

We now prove that
\begin{equation}\label{4.2N}
T_{U^{a,n}}\mathcal M_{\mathrm{multi}}
\subset
\ker_{H^1(\mathbb R)^N}\mathcal L_{U^{a,n}} .
\end{equation}
Indeed, the directions \(\Phi^{(\gamma)}\) are obtained by differentiating
the family \(U^{a,n}\) with respect to the reduced parameters \(a_\gamma\).
Since \(U^{a,n}\) solves the full \(N\)-component system for all admissible
\(a\), we have
\[
\mathcal L_{U^{a,n}}\Phi^{(\gamma)}=0\quad\hbox{in}\,\ \R,
\ \  \gamma=1,\cdots,K.
\]
Similarly, fix $\alpha$ and let
$\xi\in\mathbb R^{k_\alpha}$ satisfy
$\xi\cdot n^{(\alpha)}=0$.
Then there exists a smooth curve
$n_\varepsilon^{(\alpha)}\in\mathbb S^{k_\alpha-1}$
such that
\[
n_0^{(\alpha)}=n^{(\alpha)},
\ \
\frac{d}{d\varepsilon}n_\varepsilon^{(\alpha)}
\Big|_{\varepsilon=0}
=\xi.
\]
Define the perturbed solution $U^\varepsilon=(U_1^\varepsilon,\cdots,U_N^\varepsilon)$ by
\[
U_i^\varepsilon
=
\begin{cases}
n_{\varepsilon,i}^{(\alpha)} V_\alpha^a,
& i\in I_\alpha,\\
U_i^{a,n},
& i\notin I_\alpha.
\end{cases}
\]
Since the system is invariant under orthogonal
rotations inside each block $I_\alpha$,
$U^\varepsilon=(U_1^\varepsilon,\cdots,U_N^\varepsilon)$ is still a solution of \eqref{eq:multi-system} for all
$\varepsilon$.
Differentiating \eqref{eq:multi-system} with respect to
\(\varepsilon\) and taking $\varepsilon=0$, it then
yields that
$
\mathcal L_{U^{a,n}} \Psi^{(\alpha,\xi)}=0
$  in $\R$.
This proves \eqref{4.2N}.

It remains to prove the reverse inclusion of \eqref{4.2N}. Set
\[
\phi=(\phi_1,\cdots,\phi_N)
\in
\ker_{H^1(\mathbb R)^N}\mathcal L_{U^{a,n}} .
\]
For each block \(I_\alpha\), decompose \(\phi\) into its component parallel
to \(n^{(\alpha)}\) and its orthogonal component. Define
\[
p_\alpha
:=
\sum_{i\in I_\alpha}n_i^{(\alpha)}\phi_i,\quad r_i
:=
\phi_i-n_i^{(\alpha)}p_\alpha,
\ \  i\in I_\alpha,\]
so that
$
\phi_i=n_i^{(\alpha)}p_\alpha+r_i
$ for $i\in I_\alpha$,
and
$
\sum_{i\in I_\alpha}n_i^{(\alpha)}r_i=0.
$
Multiplying the \(i\)-th equation of ${\mathcal L}_{U^{a,n}}\phi=0$ in $\R$ by \(n_i^{(\alpha)}\) and summing over
\(i\in I_\alpha\), we obtain that
\begin{equation}\label{4.2}
\begin{split}
&-\Big(\sum_{i\in I_\alpha}n_i^{(\alpha)}\phi_i\Big)''
-\mu_\alpha \sum_{i\in I_\alpha}n_i^{(\alpha)}\phi_i
-2S^a\sum_{i\in I_\alpha}n_i^{(\alpha)}\phi_i
\\
&\quad
-4\Big(\sum_{\ell=1}^N U_\ell^{a,n}\phi_\ell\Big)
\sum_{i\in I_\alpha}n_i^{(\alpha)}U_i^{a,n}
=0\ \ \hbox{in}\,\ \R.
\end{split}
\end{equation}
Since
\(
\sum_{i\in I_\alpha}n_i^{(\alpha)}\phi_i=p_\alpha
\),
\[
\sum_{i\in I_\alpha}n_i^{(\alpha)}U_i^{a,n}
=
V_\alpha^a\sum_{i\in I_\alpha}(n_i^{(\alpha)})^2
=
V_\alpha^a\ \ \ \text{and}\ \ \ \sum_{\ell=1}^N U_\ell^{a,n}\phi_\ell
= \sum_{\beta=1}^K V_\beta^a p_\beta,
\]
we get from \eqref{4.2} that $p_\alpha$ solves
\begin{equation}\label{4.2NN}
-p_\alpha''
-\mu_\alpha p_\alpha
-2S^{a}p_\alpha
-4V_\alpha^a\sum_{\beta=1}^K V_\beta^a p_\beta
=0\quad\hbox{in}\,\ \R.
\end{equation}
Hence, \(p=(p_1,\cdots,p_K)\in
\ker_{H^1(\mathbb R)^K}\mathcal L_{V^a}\).
By the nondegeneracy of the reduced solution in Theorem~\ref{prop:nondegeneracy}, we then have
\[
\ker_{H^1(\mathbb R)^K}\mathcal L_{V^a}
=
\operatorname{span}
\left\{
\partial_{a_1}V^a,\cdots,\partial_{a_K}V^a
\right\},
\]
and hence there exist constants \(c_1,\cdots,c_K\) such that
\(
p=\sum_{\gamma=1}^K c_\gamma \partial_{a_\gamma}V^a.
\)

We now identify the orthogonal part. Recall that \(r_i=\phi_i-n_i^{(\alpha)}p_\alpha\). For \(i\in I_\alpha\), subtract
\(n_i^{(\alpha)}\) times the equation \eqref{4.2NN} from the \(i\)-th
equation  of \(\mathcal L_{U^{a,n}}\phi=0\), and use
\[
U_i^{a,n}=n_i^{(\alpha)}V_\alpha^a,
\ \
\sum_{\ell=1}^N U_\ell^{a,n}\phi_\ell
=
\sum_{\beta=1}^K V_\beta^a p_\beta.
\]
It then yields that
\[
-r_i''
-\mu_\alpha r_i
-2S^{a} r_i
=0\ \ \hbox{in}\,\ \R,
\ \  i\in I_\alpha .
\]
Therefore, both \(r_i\) and \(V_\alpha^a\) solve the same linear equation
\begin{equation}\label{4.3}
-y''
-\mu_\alpha y
-2S^a y=0\ \ \hbox{in}\,\ \R,
\end{equation}
and their Wronskian
$
W_i^{(\alpha)}
:=
r_i(V_\alpha^a)'-r_i'V_\alpha^a
$
is a constant, due to the fact that $(W_i^{(\alpha)})'=0$. Since \(r_i,V_\alpha^a\in H^1(\mathbb R)\), we have
$
r_i(V^a_\alpha)',\ r_i'V_\alpha^a\in L^1(\mathbb R).
$
Hence, \(W_i^{(\alpha)}\in L^1(\mathbb R)\), which further gives that
\(
W_i^{(\alpha)}\equiv0\) in \(\R\).
Since \(V^a_\alpha\not\equiv0\), choose \(x_0\in\mathbb R\) such that
$
V^a_\alpha(x_0)\neq0.
$
Set
$
d_i^{(\alpha)}:=\frac{r_i(x_0)}{V^a_\alpha(x_0)},
$
so that
$
r_i(x_0)=d_i^{(\alpha)}V^a_\alpha(x_0).
$
Moreover, since $W_i^{(\alpha)}\equiv0$, we have
$
r_i'(x_0)=d_i^{(\alpha)}(V^a_\alpha)'(x_0).
$
Since \(r_i\) and \(d_i^{(\alpha)}V^a_\alpha\) solve the same
linear equation \eqref{4.3},  the uniqueness of solutions for ODE gives that
\[
r_i\equiv d_i^{(\alpha)}V^a_\alpha,
\ \  i\in I_\alpha.
\]
The orthogonality condition thus gives that
$
\sum_{i\in I_\alpha}n_i^{(\alpha)}r_i=0.
$
This implies that
$
\sum_{i\in I_\alpha}n_i^{(\alpha)}d_i^{(\alpha)}=0,
$
i.e.,
$
d^{(\alpha)}\cdot n^{(\alpha)}=0.
$

By the parallel and orthogonal parts, we now obtain that for \(i\in I_\alpha\),
\[
\phi_i
=
n_i^{(\alpha)}
\sum_{\gamma=1}^K c_\gamma \partial_{a_\gamma}V^a_\alpha
+
d_i^{(\alpha)}V^a_\alpha,
\]
where
$
d^{(\alpha)}\cdot n^{(\alpha)}=0.
$
Thus, \(\phi=(\phi_1,\cdots,\phi_N)\) is a linear combination of the parameter directions
\(\Phi^{(\gamma)}\) and the rotation directions
\(\Psi^{(\alpha,\xi)}\). Hence
\[
\ker_{H^1(\mathbb R)^N}\mathcal L_{U^{a,n}}
\subset
T_{U^{a,n}}\mathcal M_{\mathrm{multi}}.
\]
Together with \eqref{4.2N}, this proves that
\[
\ker_{H^1(\mathbb R)^N}\mathcal L_{U^{a,n}}
=
T_{U^{a,n}}\mathcal M_{\mathrm{multi}}.
\]
Finally, the parameter directions are independent, due to the nondegeneracy
of the reduced solution and the local parametrization \(a\mapsto V^a\).
The rotation directions in the block \(I_\alpha\) form the tangent space
\(T_{n^{(\alpha)}}\mathbb S^{k_\alpha-1}\), and hence contribute to
\(k_\alpha-1\) independent directions. These two types of directions are
independent, because they correspond respectively to the parallel and
orthogonal components in each block. We thus have
\[
\dim T_{U^{a,n}}\mathcal M_{\mathrm{multi}}
=
K+\sum_{\alpha=1}^K(k_\alpha-1)
=
N,
\]
and the proof of Theorem \ref{prop:nondegeneracy-multiple-roots} is therefore complete.
\end{proof}


\subsection{The $L^2$-mass formula}\label{sub4.3}
In this subsection, we shall address the proof of Theorem~\ref{thm:L2iden}, which is concerned with the $L^2$-mass formula of solutions for \eqref{FM}.

For the repeated spectral  case, it is shown in Theorem~\ref{thm:completeness-multiple-root} that the total \(L^2\)-mass of each spectral block is reduced to the \(L^2\)-mass of a single effective component in an associated distinct-spectral system.  Thus, in order to prove Theorem~\ref{thm:L2iden}, it suffices to derive the corresponding
mass identity of the distinct-spectral system by
two steps.  In the first step, we shall establish the pointwise algebraic identity
\eqref{eq:algebraic-identity} below for the solutions of the
distinct-spectral system.  This identity is obtained from the polynomial
identity in the spectral parameter given by
Lemma~\ref{lem:polynomial-identity-function}.  In the second step, we shall apply this
algebraic identity to the reduced block system, which then yields the
first-order differential identity \eqref{eq:Q-prime} below.  Integrating the identity
\eqref{eq:Q-prime} over \(\mathbb R\) finally finishes the proof of Theorem~\ref{thm:L2iden}.
We start with the following algebraic identity.

\begin{proposition}\label{thm:algiden}
Under the same assumptions of
Theorem~\ref{thm:L2iden}, we have for any \(\alpha=1,\cdots,K\),
\begin{equation}\label{eq:algebraic-identity}
\sum_{j\in I_\alpha}\Big\{
\Big[
(u_j')^2+\big(\sum_{l=1}^N u_l^2+\mu_\alpha\big)u_j^2
\Big]
+
\sum_{\substack{\beta\neq\alpha\\ l\in I_\beta}}
\frac{(u_j'u_l-u_ju_l')^2}{\mu_\beta-\mu_\alpha}\Big\}
=0\,\ \ \hbox{in}\ \ \R.
\end{equation}
\end{proposition}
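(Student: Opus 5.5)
We prove the identity by checking that its left-hand side is a first integral of the system, so it is constant, and then showing the constant is zero from the decay at $-\infty$. To keep the bookkeeping within $K$ components, we first reduce to the distinct-spectral setting using Theorem~\ref{thm:completeness-multiple-root}.

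\textbf{Reduction to the block system.} By Theorem~\ref{thm:completeness-multiple-root}, we may write $u_j=n^{(\alpha)}_jV_\alpha$ for $j\in I_\alpha$, with $|n^{(\alpha)}|=1$. Here $V=(V_1,\cdots,V_K)$ solves the reduced system \eqref{eq:reduced-system}, and $S:=\sum_{l}u_l^2=\sum_\beta V_\beta^2$. Then
\[
\sum_{j\in I_\alpha}(u_j')^2=(V_\alpha')^2,\qquad \sum_{j\in I_\alpha}u_j^2=V_\alpha^2 .
\]
For $j\in I_\alpha$ and $l\in I_\beta$ with $\beta\neq\alpha$,
\[
(u_j'u_l-u_ju_l')^2=(n^{(\alpha)}_j)^2(n^{(\beta)}_l)^2W_{\alpha\beta}^2,\qquad W_{\alpha\beta}:=V_\alpha'V_\beta-V_\alpha V_\beta'.
\]
Summing over $j\in I_\alpha$ and $l\in I_\beta$ and using the unit-norm conditions gives $W_{\alpha\beta}^2$. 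Hence \eqref{eq:algebraic-identity} is equivalent to $F_\alpha\equiv0$, where
\[
F_\alpha:=(V_\alpha')^2+(S+\mu_\alpha)V_\alpha^2+\sum_{\beta\neq\alpha}\frac{W_{\alpha\beta}^2}{\mu_\beta-\mu_\alpha}.
\]

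\textbf{$F_\alpha$ is constant.} We differentiate $F_\alpha$ using $V_\alpha''=-(2S+\mu_\alpha)V_\alpha$. The first two terms contribute
\[
2V_\alpha'V_\alpha''+S'V_\alpha^2+2(S+\mu_\alpha)V_\alpha V_\alpha'=-2SV_\alpha V_\alpha'+S'V_\alpha^2 .
\]
For the Wronskians, $W_{\alpha\beta}'=V_\alpha''V_\beta-V_\alpha V_\beta''=(\mu_\beta-\mu_\alpha)V_\alpha V_\beta$, since the $2S$ terms cancel. This factor exactly cancels the denominators. Therefore the sum contributes
\[
2V_\alpha\sum_{\beta\neq\alpha}W_{\alpha\beta}V_\beta=2V_\alpha\sum_{\beta}(V_\alpha'V_\beta-V_\alpha V_\beta')V_\beta=2SV_\alpha V_\alpha'-S'V_\alpha^2 ,
\]
where we used that the $\beta=\alpha$ term vanishes and $\sum_\beta V_\beta V_\beta'=S'/2$. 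The two contributions cancel, so $F_\alpha'\equiv0$ on $\mathbb R$.

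\textbf{The constant is zero.} By Lemma~\ref{lem:left-asymptotic}, each $V_\beta$ and $V_\beta'$ is $O(e^{\eta_\beta x})$ as $x\to-\infty$. Hence every term of $F_\alpha$ tends to $0$ there, and $F_\alpha\equiv0$, which gives \eqref{eq:algebraic-identity}. The only delicate point is the cancellation in the derivative, namely matching the Wronskian sum against $-2SV_\alpha V_\alpha'+S'V_\alpha^2$. This works precisely because the weights $1/(\mu_\beta-\mu_\alpha)$ absorb the factor produced by $W_{\alpha\beta}'$.

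One could skip the reduction and run the same computation directly on $u$. In that case the in-block Wronskians $u_j'u_l-u_ju_l'$ with $j,l\in I_\alpha$ enter the derivative computation. They vanish by the argument in the proof of Theorem~\ref{thm:completeness-multiple-root}, which is what removes the need for an in-block term in \eqref{eq:algebraic-identity}.
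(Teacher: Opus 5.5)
Your proof is correct and follows the paper's route. You reduce to the block system $V$ via Theorem~\ref{thm:completeness-multiple-root}, then show that the reduced quantity is a conserved first integral that vanishes at infinity. The only difference is that you verify conservation of each $F_\alpha$ directly, whereas the paper proves conservation of the full generating function $\mathcal C(\lambda,x)$ in Lemma~\ref{lem:polynomial-identity-function}, kills the constant by $L^1$-integrability, and evaluates at $\lambda=-\mu_\alpha$ (Lemma~\ref{lem:algebraic-identity-function}). Your $F_\alpha$ is the residue of $\mathcal C$ at that point, and you kill the constant by decay at $-\infty$ instead.
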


We remark that the constants of motion for \eqref{FM} were established in
\cite{GLW2026}. For convenience, we record
them in their generating polynomial form. More precisely, when the left-hand
side of \eqref{eq:polynomial-identity-function} below is expanded as a
polynomial in \(\lambda\), it has degree at most \(N-1\), and the coefficient of \(\lambda^{N-k}\) is exactly the
\(k\)-th constant of motion in \cite[Lemma~5.1]{GLW2026}  for each
\(k=1,\cdots,N\). In order to prove Proposition~\ref{thm:algiden},
we first prove the following constants of motion in a simplified approach.

\begin{lemma}\label{lem:polynomial-identity-function}
For $\mu_1\leq \mu_2\leq\cdots\leq \mu_N<0$, let \(u=(u_1,\cdots,u_N)\in H^1(\mathbb R)^N\) be a solution of
\eqref{FM}, namely
\[
-u_j''-2Su_j=\mu_j u_j\ \ \hbox{in}\ \ \R,\ \
S=\sum_{i=1}^N u_i^2 ,
\]
and set
$
W_{jl}:=u_j'u_l-u_j u_l'$ for $1\leq j,l\leq N$. Then the following polynomial identity holds
\begin{equation}\label{eq:polynomial-identity-function}
	\begin{split}
		\sum_{j=1}^N
		\left[
		(u_j')^2+(S+\mu_j)u_j^2
		\right]
		\prod_{i\neq j}(\lambda+\mu_i)
		+
		\sum_{1\leq j<l\leq N}
		W_{jl}^2
		\prod_{i\neq j,l}(\lambda+\mu_i)
		\equiv 0 \ \ \text{in} \ \ \R
	\end{split}
\end{equation}
as a polynomial identity in \(\lambda\).
\end{lemma}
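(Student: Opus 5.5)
Denote by $P(\lambda;x)$ the left-hand side of \eqref{eq:polynomial-identity-function}. The plan is to show that each coefficient of $P$ in $\lambda$ is a constant of motion, and then that each constant vanishes because $u$ and $u'$ decay at $\pm\infty$. Since the coefficients are polynomial in $u_j,u_j'$, $j=1,\dots,N$, it suffices to prove $\partial_x P(\lambda;x)\equiv0$ for every fixed $\lambda$. Indeed, for $u\in H^1(\mathbb R)^N$ solving \eqref{FM}, the equation gives $u_j''\in L^2$, so $u_j,u_j'\to0$ as $x\to\pm\infty$. Then every coefficient tends to $0$ at infinity, and a constant limit forces $P\equiv0$.

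\textbf{Step 1.} To differentiate in $x$, I would first fix $\lambda\neq-\mu_i$ for all $i$ and divide by $\prod_i(\lambda+\mu_i)$. This rewrites $P$ as $\prod_i(\lambda+\mu_i)\,F(\lambda)$, where
\[
F(\lambda)=\sum_j\frac{(u_j')^2+(S+\mu_j)u_j^2}{\lambda+\mu_j}+\sum_{j<l}\frac{W_{jl}^2}{(\lambda+\mu_j)(\lambda+\mu_l)}.
\]
It then suffices to show $F'(x)=0$ for generic $\lambda$, since polynomial identities extend by continuity to all $\lambda$. I would use the following derivative formulas:
\[
\bigl((u_j')^2+\mu_ju_j^2\bigr)'=-4Su_ju_j',\qquad (Su_j^2)'=S'u_j^2+2Su_ju_j',
\]
so that
\[
\bigl[(u_j')^2+(S+\mu_j)u_j^2\bigr]'=S'u_j^2-2Su_ju_j'.
\]
For the Wronskians, $W_{jl}'=u_j''u_l-u_ju_l''=(\mu_l-\mu_j)u_ju_l$, because the $S$-terms cancel.

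\textbf{Step 2.} This is the main obstacle: showing that the derivative of the first sum cancels against that of the second. The second sum has derivative
\[
\sum_{j<l}\frac{2W_{jl}(\mu_l-\mu_j)u_ju_l}{(\lambda+\mu_j)(\lambda+\mu_l)}.
\]
Using the partial fraction identity
\[
\frac{\mu_l-\mu_j}{(\lambda+\mu_j)(\lambda+\mu_l)}=\frac{1}{\lambda+\mu_j}-\frac{1}{\lambda+\mu_l},
\]
together with the antisymmetry $W_{jl}=-W_{lj}$, the second-sum derivative becomes
\[
\sum_j\frac{1}{\lambda+\mu_j}\sum_{l\neq j}2W_{jl}u_ju_l .
\]
Next I would compute
\[
\sum_l W_{jl}u_l=u_j'S-u_j\sum_l u_lu_l'=u_j'S-\tfrac12u_jS'.
\]
Hence $\sum_{l}2W_{jl}u_ju_l=2Su_ju_j'-S'u_j^2$, where the $l=j$ term is zero. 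This is exactly the negative of $\bigl[(u_j')^2+(S+\mu_j)u_j^2\bigr]'$. Therefore $F'\equiv0$.

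\textbf{Step 3.} Finally, I would clear the denominators to conclude that every coefficient of $P$ is constant in $x$, and use the decay at infinity from the opening paragraph to conclude $P\equiv0$. Repeated $\mu_i$ cause no difficulty. The computation is done for generic $\lambda$, and the partial fraction step applies to the undivided polynomial form directly; alternatively, when $\mu_j=\mu_l$ we have $W_{jl}'=0$, so those terms are constant anyway.
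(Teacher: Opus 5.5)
Your proof is correct and follows the paper's argument essentially step for step: divide by $\prod_i(\lambda+\mu_i)$, show $\partial_x$ of the resulting rational function vanishes via $P_j'=-2\sum_l u_ju_lW_{jl}$, $W_{jl}'=(\mu_l-\mu_j)u_ju_l$ and the same partial-fraction cancellation, then extend to all $\lambda$ by polynomiality. The only cosmetic difference is how the constant is killed: the paper notes that $\mathcal C(\lambda,\cdot)\in L^1(\mathbb R)$, whereas you use $u_j,u_j'\to0$ at $\pm\infty$ (from $u_j''\in L^2$), and both arguments work.
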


\begin{proof}
Set for \(j=1,\cdots,N\),
\[
P_j:=(u_j')^2+(S+\mu_j)u_j^2 ,
\]
and define for
\(\lambda\notin\{-\mu_1,\cdots,-\mu_N\}\),
\begin{equation}\label{5.3M}
\mathcal C(\lambda,x)
:=
\sum_{j=1}^N
\frac{P_j}{\lambda+\mu_j}
+
\sum_{1\leq j<l\leq N}
\frac{W_{jl}^2}{(\lambda+\mu_j)(\lambda+\mu_l)} .
\end{equation}
We claim that \(\mathcal C(\lambda,x)\) is independent of \(x\).

In order to prove the above claim, we note that
\begin{equation}\label{5.2A}
\begin{split}
\partial_x \mathcal C(\lambda,x)
&=
\sum_{j=1}^N
\frac{P_j'}{\lambda+\mu_j}
+
\sum_{1\leq j<l\leq N}
\frac{2W_{jl}W_{jl}'}{(\lambda+\mu_j)(\lambda+\mu_l)} .
\end{split}
\end{equation}
Since
\[
u_j''=-(2S+\mu_j)u_j,
\]
we obtain that
\[
\begin{split}
P_j'=2u_j'u_j''+S'u_j^2+2(S+\mu_j)u_ju_j'
=S'u_j^2-2Su_ju_j' .
\end{split}
\]
Using \(S'=2\sum_{l=1}^N u_lu_l'\), we then have
\[
P_j'
=
-2\sum_{l=1}^N u_j u_l W_{jl}.
\]
This yields from (\ref{5.2A}) that
\begin{equation}\label{5.2M}
\begin{split}
\partial_x\mathcal C(\lambda,x)
&=
\sum_{j=1}^N \frac{P_j'}{\lambda+\mu_j}
+
\sum_{1\leq j<l\leq N}
\frac{2W_{jl}W_{jl}'}{(\lambda+\mu_j)(\lambda+\mu_l)}
\\
&=
-2\sum_{j=1}^N\sum_{l=1}^N
\frac{u_j u_l W_{jl}}{\lambda+\mu_j}
+
\sum_{1\leq j<l\leq N}
\frac{2W_{jl}W_{jl}'}{(\lambda+\mu_j)(\lambda+\mu_l)} .
\end{split}
\end{equation}
Since
\[
W_{jl}'
=
u_j''u_l-u_ju_l''
=
(\mu_l-\mu_j)u_ju_l ,
\]
it follows from (\ref{5.2M}) that \(\partial_x\mathcal C(\lambda,x)\) is a sum of those terms containing \(u_ju_lW_{jl}\). Note that $W_{jl}=0$ holds for each   pair \(j=l\).  As for each   pair \(j<l\), the contribution of the first sum on the right hand side of (\ref{5.2M}) satisfies
\begin{equation}\label{5.2MM}
-\frac{2}{\lambda+\mu_j}
+
\frac{2}{\lambda+\mu_l}
=
\frac{2(\mu_j-\mu_l)}
{(\lambda+\mu_j)(\lambda+\mu_l)} .
\end{equation}
The contribution of the second sum on the right hand side of (\ref{5.2M}) satisfies
\(
\frac{2(\mu_l-\mu_j)}
{(\lambda+\mu_j)(\lambda+\mu_l)} ,
\)
which is then cancelled with (\ref{5.2M})  for every pair
\(j<l\).
We hence conclude from (\ref{5.2M}) that
\[
\partial_x\mathcal C(\lambda,x)=0,
\]
which proves the claim that \(\mathcal C(\lambda,x)\) is independent of \(x\).

On the other hand, the assumption $u\in H^1(\R)^N$ gives that $\mathcal C(\lambda,x)\in L^1(\R)$. Since $\mathcal C(\lambda,x)$ is a constant in \(x\), we now have
\[
\mathcal C(\lambda,x)\equiv0,\ \ x\in\R.
\]
Multiplying \eqref{5.3M} by
\(
\prod_{i=1}^N(\lambda+\mu_i)
\), it
gives  that \eqref{eq:polynomial-identity-function} holds for every
\(\lambda\notin\{-\mu_1,\cdots,-\mu_N\}\). Since the left-hand side of \eqref{eq:polynomial-identity-function} is a
polynomial in \(\lambda\), the identity \eqref{eq:polynomial-identity-function} holds for all \(\lambda\).
\end{proof}

\begin{lemma}\label{lem:algebraic-identity-function}
Under the assumptions of Lemma~\ref{lem:polynomial-identity-function},  assume
$
\mu_1<\mu_2<\cdots<\mu_N<0.
$
Then we have for each \(j=1,2,\cdots,N\),
\begin{equation}\label{eq:algebraic-identity-function}
(u_j')^2+(S+\mu_j)u_j^2
+
\sum_{l\neq j}
\frac{(u_j'u_l-u_ju_l')^2}{\mu_l-\mu_j}
=0
\ \ \text{in }\ \mathbb R, \ \ S=\sum_{i=1}^Nu_i^2.
\end{equation}
\end{lemma}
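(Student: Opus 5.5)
The identity will come from evaluating the polynomial identity \eqref{eq:polynomial-identity-function} of Lemma~\ref{lem:polynomial-identity-function} at the special value $\lambda=-\mu_j$. When the spectral parameters are distinct, this evaluation isolates exactly the $j$-th component.

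Fix $j\in\{1,\cdots,N\}$ and set $\lambda=-\mu_j$. In the first sum of \eqref{eq:polynomial-identity-function}, every term with index $k\neq j$ contains the factor $(\lambda+\mu_j)=0$ inside $\prod_{i\neq k}(\lambda+\mu_i)$, so it vanishes. The term $k=j$ survives and gives
\[
P_j\prod_{i\neq j}(\mu_i-\mu_j), \qquad P_j=(u_j')^2+(S+\mu_j)u_j^2 .
\]
In the second sum, a pair $\{k,l\}$ contributes only if $j\in\{k,l\}$; otherwise the product again contains the factor $(\lambda+\mu_j)=0$. The pairs containing $j$ give
\[
W_{jl}^2\prod_{i\neq j,l}(\mu_i-\mu_j), \qquad l\neq j .
\]
Here I use $W_{lj}^2=W_{jl}^2$ to treat the cases $l<j$ and $l>j$ uniformly.

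Since the $\mu_i$ are pairwise distinct, $D_j:=\prod_{i\neq j}(\mu_i-\mu_j)\neq0$. Dividing the evaluated identity by $D_j$ and noting that
\[
\frac{\prod_{i\neq j,l}(\mu_i-\mu_j)}{D_j}=\frac{1}{\mu_l-\mu_j},
\]
we obtain
\[
P_j+\sum_{l\neq j}\frac{W_{jl}^2}{\mu_l-\mu_j}=0 .
\]
This is exactly \eqref{eq:algebraic-identity-function}, since $W_{jl}^2=(u_j'u_l-u_ju_l')^2$.

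An alternative route is to multiply $\mathcal C(\lambda,x)\equiv0$ from \eqref{5.3M} by $(\lambda+\mu_j)$ and let $\lambda\to-\mu_j$, which amounts to taking the residue at the simple pole $-\mu_j$. This gives the same conclusion.

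There is no real obstacle here. The only points needing care are the bookkeeping of which products vanish, and the fact that distinctness of the $\mu_i$ is what allows division by $D_j$. Distinctness is also what makes the pole at $-\mu_j$ simple in the residue argument.
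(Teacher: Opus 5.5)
Your proposal is correct and follows the paper's proof essentially verbatim. You evaluate the polynomial identity \eqref{eq:polynomial-identity-function} at $\lambda=-\mu_j$, observe that only the $j$-th diagonal term and the pair terms containing $j$ survive, and divide by $\prod_{i\neq j}(\mu_i-\mu_j)\neq0$. The residue variant you mention is just an equivalent reformulation of the same computation.
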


\begin{proof}
By Lemma~\ref{lem:polynomial-identity-function}, we have for $
W_{rs}:=u'_ru_s-u_r u'_s$,
\begin{equation}\label{5.3A}
\begin{split}
\sum_{r=1}^N
\left[
(u_r')^2+(S+\mu_r)u_r^2
\right]
\prod_{i\neq r}(\lambda+\mu_i)
+
\sum_{1\leq r<s\leq N}
W_{rs}^2
\prod_{i\neq r,s}(\lambda+\mu_i)
\equiv0\quad\hbox{in}\,\ \R.
\end{split}
\end{equation}
Fix \(j\in\{1,2,\cdots,N\}\). Since the numbers
\(\mu_1,\cdots,\mu_N\) are pairwise distinct, we  evaluate this
polynomial identity at \(\lambda=-\mu_j\). Then all terms in the
first sum of \eqref{5.3A} vanish except the term with \(r=j\), and all terms in the
second sum of \eqref{5.3A} vanish except those involving the index \(j\). This gives from \eqref{5.3A} that for $j=1,\cdots,N,$
\[
\left[
(u_j')^2+(S+\mu_j)u_j^2
\right]
\prod_{i\neq j}(\mu_i-\mu_j)
+
\sum_{l\neq j}
(u_j'u_l-u_ju_l')^2
\prod_{i\neq j,l}(\mu_i-\mu_j)
=0\ \ \hbox{in}\,\ \R,
\]
from which it follows that
\[
(u_j')^2+(S+\mu_j)u_j^2
+
\sum_{l\neq j}
\frac{(u_j'u_l-u_ju_l')^2}{\mu_l-\mu_j}
=0\ \ \hbox{in}\,\ \R.
\]
This proves \eqref{eq:algebraic-identity-function}, and we are done.
\end{proof}

Applying the above two lemmas, we are now ready to prove Proposition~\ref{thm:algiden}.

\begin{proof}[\normalfont \bfseries Proof of Proposition~\ref{thm:algiden}]
By Theorem~\ref{thm:completeness-multiple-root}, we have
\(u\in\mathcal M_{\mathrm{multi}}\). Hence, there exist
\[
a=(a_1,\cdots,a_K)\in(\mathbb R\setminus\{0\})^K\,\ \hbox{and}\,\ n^{(\alpha)}
=
(n^{(\alpha)}_{N_{\alpha-1}+1},\cdots,n^{(\alpha)}_{N_\alpha})
\in\mathbb S^{k_\alpha-1},
\ \  \alpha=1,\cdots,K,
\]
such that
$
u_j(x)=n_j^{(\alpha)}V_\alpha^a(x)
$ holds for all $j\in I_\alpha$.
For simplicity, we write \(V_\alpha=V_\alpha^a\). Since
$
\sum_{j\in I_\alpha}\bigl(n_j^{(\alpha)}\bigr)^2=1,
$
we have
\[
\sum_{j\in I_\alpha}u_j^2=V_\alpha^2,
\ \
\sum_{j\in I_\alpha}(u_j')^2=(V_\alpha')^2.
\]
Consequently,
\[
S=\sum_{j=1}^N u_j^2
=
\sum_{\gamma=1}^K V_\gamma^2.
\]
Moreover, we have for \(\alpha\neq\beta\), \(j\in I_\alpha\), and
\(l\in I_\beta\),
\[
u_j'u_l-u_ju_l'
=
n_j^{(\alpha)}n_l^{(\beta)}
\bigl(V_\alpha'V_\beta-V_\alpha V_\beta'\bigr).
\]
We then have
\[
\begin{aligned}
\sum_{\substack{j\in I_\alpha\\ l\in I_\beta}}
(u_j'u_l-u_ju_l')^2
&  =
\Big(
\sum_{j\in I_\alpha}
\bigl(n_j^{(\alpha)}\bigr)^2
\Big)
\Big(
\sum_{l\in I_\beta}
\bigl(n_l^{(\beta)}\bigr)^2
\Big)
\bigl(V_\alpha'V_\beta-V_\alpha V_\beta'\bigr)^2
=
\bigl(V_\alpha'V_\beta-V_\alpha V_\beta'\bigr)^2 .
\end{aligned}
\]

Substituting above identities into the left-hand side of
\eqref{eq:algebraic-identity}, we obtain that
\begin{equation}\label{5.8K}
\mbox{LHS  of \eqref{eq:algebraic-identity}}= (V_\alpha')^2
+
\Big(
\sum_{\gamma=1}^K V_\gamma^2+\mu_\alpha
\Big)V_\alpha^2
+
\sum_{\beta\neq\alpha}
\frac{
\bigl(V_\alpha'V_\beta-V_\alpha V_\beta'\bigr)^2
}{\mu_\beta-\mu_\alpha}\quad\hbox{in}\,\ \R.
\end{equation}
Note that the reduced profile
$
V=(V_1,\cdots,V_K)
$
solves the distinct spectral system
\begin{equation}\label{5.3B}
V_\alpha''
+
2\Big(\sum_{\gamma=1}^K V_\gamma^2\Big)V_\alpha
+
\mu_\alpha V_\alpha
=0\quad\hbox{in}\,\ \R,
\ \  \alpha=1,\cdots,K.
\end{equation}
Applying Lemma~\ref{lem:algebraic-identity-function} to the
\(K\)-component distinct spectral system \eqref{5.3B}, we obtain from \eqref{5.8K} that
\[
\mbox{LHS  of \eqref{eq:algebraic-identity}}= (V_\alpha')^2
+
\Big(
\sum_{\gamma=1}^K V_\gamma^2+\mu_\alpha
\Big)V_\alpha^2
+
\sum_{\beta\neq\alpha}
\frac{
\bigl(V_\alpha'V_\beta-V_\alpha V_\beta'\bigr)^2
}{\mu_\beta-\mu_\alpha}
=0\quad\hbox{in}\,\ \R,
\]
where $\alpha=1,\cdots,K$. Proposition~\ref{thm:algiden} is therefore proved.
\end{proof}

We are now ready to prove the exact \(L^2\)-mass formula of Theorem~\ref{thm:L2iden}. We remark that the pointwise algebraic identity of Proposition \ref{thm:algiden} allows us to express the mass of each spectral block in terms of its corresponding spectral parameter.


\begin{proof}[\normalfont \bfseries Proof of Theorem~\ref{thm:L2iden}]
Recall that
$S=\sum_{i=1}^N u_i^2$.
For each
\(\alpha=1,\cdots,K\),   Theorem~\ref{thm:completeness-multiple-root} shows that there exist
\(n^{(\alpha)}
=
\bigl(
n_{N_{\alpha-1}+1}^{(\alpha)},\cdots,
n_{N_\alpha}^{(\alpha)}
\bigr)
\in \mathbb S^{k_\alpha-1}\) and \(0\neq v_\alpha\in
H^1(\mathbb R)\cap C^2(\mathbb R)\) such that
\[
u_i=n_i^{(\alpha)}v_\alpha\quad\hbox{in}\ \ \R,\ \  i\in I_\alpha .
\]
Hence, $S=\sum_{\gamma=1}^K v_\gamma^2$
and
\[
\int_{\mathbb R}\sum_{i\in I_\alpha}u_i^2\,dx
=
\int_{\mathbb R}v_\alpha^2\,dx\neq 0.
\]
It next suffices to prove that
$
\int_{\mathbb R}v_\alpha^2\,dx=2\eta_\alpha>0.
$

By Theorem \ref{thm:completeness-multiple-root}, the reduced function \(v_\alpha\) satisfies
\begin{equation}\label{eq:reduced-v-equation}
v_\alpha''+(2S+\mu_\alpha)v_\alpha=0\quad\hbox{in}\,\ \R,
\ \  \alpha=1,\cdots,K .
\end{equation}
Fix \(\alpha\)  and  define   for \(\beta\neq\alpha\),
\[
\delta_{\alpha\beta}:=\mu_\beta-\mu_\alpha,
\ \
W_{\alpha\beta}:=v_\alpha'v_\beta-v_\alpha v_\beta' .
\]
We then derive from \eqref{eq:reduced-v-equation} that
\begin{equation}\label{eq:Wab-prime}
W_{\alpha\beta}'
=
\delta_{\alpha\beta}v_\alpha v_\beta .
\end{equation}
Moreover, Proposition~\ref{thm:algiden} gives the following identity
\begin{equation}\label{eq:reduced-alg-identity}
(v_\alpha')^2+(S+\mu_\alpha)v_\alpha^2
+
\sum_{\beta\neq\alpha}
\frac{W_{\alpha\beta}^2}{\delta_{\alpha\beta}}
=0\ \ \hbox{in}\,\ \R.
\end{equation}
Define
\begin{equation}\label{5.12M}
Q_\alpha
:=
\frac{v_\alpha'}{v_\alpha}
+
\sum_{\beta\neq\alpha}
\frac{W_{\alpha\beta}v_\beta}
{\delta_{\alpha\beta}v_\alpha}\quad\hbox{in}\,\ \{v_\alpha\neq0\}.
\end{equation}
We first compute \(Q_\alpha'\) on the open set \(\{v_\alpha\neq0\}\): it follows from
\eqref{eq:reduced-v-equation} and \eqref{eq:reduced-alg-identity} that
\begin{equation}\label{5.6A}
\begin{aligned}
\left(\frac{v_\alpha'}{v_\alpha}\right)'
=
\frac{v_\alpha''}{v_\alpha}
-
\frac{(v_\alpha')^2}{v_\alpha^2}=
-(2S+\mu_\alpha)
-
\frac{(v_\alpha')^2}{v_\alpha^2}   =
-S+
\sum_{\beta\neq\alpha}
\frac{W_{\alpha\beta}^2}
{\delta_{\alpha\beta}v_\alpha^2}.
\end{aligned}
\end{equation}
We also derive from \eqref{eq:Wab-prime} that
\begin{equation}\label{5.6B}
\begin{aligned}
\left(
\frac{W_{\alpha\beta}v_\beta}
{\delta_{\alpha\beta}v_\alpha}
\right)'
&=
\frac{W_{\alpha\beta}'v_\beta+W_{\alpha\beta}v_\beta'}
{\delta_{\alpha\beta}v_\alpha}
-
\frac{W_{\alpha\beta}v_\beta v_\alpha'}
{\delta_{\alpha\beta}v_\alpha^2} \\
&=
v_\beta^2
+
\frac{W_{\alpha\beta}}
{\delta_{\alpha\beta}v_\alpha^2}
\left(v_\alpha v_\beta'-v_\alpha'v_\beta\right)
=v_\beta^2-
\frac{W_{\alpha\beta}^2}
{\delta_{\alpha\beta}v_\alpha^2}.
\end{aligned}
\end{equation}
Summing \eqref{5.6B} over \(\beta\neq\alpha\), it then yields from \eqref{5.6A} that
\begin{equation}\label{eq:Q-prime}
Q_\alpha'=-v_\alpha^2
\ \ \, \text{on }\, \{v_\alpha\neq0\}.
\end{equation}

It remains to justify that \(Q_\alpha\) can be extended across the zero points of
\(v_\alpha\). Write
\[
Q_\alpha=\frac{P_\alpha}{v_\alpha},
\ \
P_\alpha:=
v_\alpha'
+
\sum_{\beta\neq\alpha}
\frac{W_{\alpha\beta}v_\beta}{\delta_{\alpha\beta}} ,
\]
and let \(x_0\) be a zero point of \(v_\alpha\). Since \(v_\alpha\) is a nontrivial
solution of the linear equation \eqref{eq:reduced-v-equation}, we have
\(v_\alpha'(x_0)\neq0\). Hence, \(x_0\) is a simple zero point of \(v_\alpha\).
On the other hand, evaluating \eqref{eq:reduced-alg-identity} at \(x_0\)
and using $
W_{\alpha\beta}(x_0)=v_\alpha'(x_0)v_\beta(x_0), $
we obtain that
\[
0
=
(v_\alpha'(x_0))^2
\Big(
1+
\sum_{\beta\neq\alpha}
\frac{v_\beta(x_0)^2}{\delta_{\alpha\beta}}
\Big).
\]
Since \(v_\alpha'(x_0)\neq0\), this implies that
$1+\sum_{\beta\neq\alpha}
\frac{v_\beta(x_0)^2}{\delta_{\alpha\beta}}
=0. $
Consequently,
\[
P_\alpha(x_0)
=
v_\alpha'(x_0)
\left(
1+
\sum_{\beta\neq\alpha}
\frac{v_\beta(x_0)^2}{\delta_{\alpha\beta}}
\right)
=0 .
\]
Thus, both \(P_\alpha\) and \(v_\alpha\) vanish at \(x_0\), while
\(v_\alpha\) has a simple zero at \(x_0\). So \(P_\alpha/v_\alpha\) has a
finite limit at \(x_0\). We extend \(Q_\alpha\) to \(x_0\) by this limit.
Since \(P_\alpha\) and \(v_\alpha\) are \(C^1\), this extension is continuous.
Extending \(Q_\alpha\) by the above removable limits, we obtain a continuous
function on \(\mathbb R\). Since the zeros of \(v_\alpha\) are isolated,
there are only finitely many zeros in any compact interval. Hence, integrating
\eqref{eq:Q-prime} on each nodal interval and summing them, the interior boundary
terms are cancelled by the continuity of \(Q_\alpha\). We thus get from \eqref{eq:Q-prime} that for any
\(R_1<R_2\),
\begin{equation}\label{eq:int-v-Q}
\int_{R_1}^{R_2}v_\alpha^2\,dx
=
Q_\alpha(R_1)-Q_\alpha(R_2).
\end{equation}

Finally, we derive from Lemma~\ref{lem:L1-asymptotic} that there exist nonzero constants
\(c_\gamma^-\) and \(c_\gamma^+\) such that  
\[
v_\gamma(x)=c_\gamma^-e^{\eta_\gamma x}
+o(e^{\eta_\gamma x}),
\ \
v_\gamma'(x)=\eta_\gamma c_\gamma^-e^{\eta_\gamma x}
+o(e^{\eta_\gamma x})
\quad \text{as }\ x\to-\infty,
\]
and
\begin{equation}\label{5.15M}
v_\gamma(x)=c_\gamma^+e^{-\eta_\gamma x}
+o(e^{-\eta_\gamma x}),
\ \
v_\gamma'(x)=-\eta_\gamma c_\gamma^+e^{-\eta_\gamma x}
+o(e^{-\eta_\gamma x})
\quad \text{as }\ x\to+\infty,
\end{equation}
where $\eta_\gamma=\sqrt{|\mu_\gamma|}>0$ and $\gamma=1,2,\cdots,K$.
The right-hand asymptotics \eqref{5.15M} follow by applying
Lemma~\ref{lem:L1-asymptotic} to the reflected function
\(\widetilde v_\gamma(x):=v_\gamma(-x)\). The corresponding leading
coefficients of \eqref{5.15M} cannot vanish, since otherwise Lemma~\ref{lem:L1-asymptotic} would
imply that \(v_\gamma\) vanishes on a right half-line, and hence
\(v_\gamma\equiv0\) by the uniqueness of ODE.
We hence derive from above that
\[
\frac{v_\alpha'}{v_\alpha}\to\eta_\alpha
\quad\text{as }\ x\to-\infty,
\ \ \
\frac{v_\alpha'}{v_\alpha}\to-\eta_\alpha
\quad\text{as }\ x\to+\infty.
\]
Moreover, for every \(\beta\neq\alpha\),
\[
\frac{W_{\alpha\beta}v_\beta}{v_\alpha}
=O(e^{2\eta_\beta x})
\quad\text{as }\ x\to-\infty,\ \ \ \frac{W_{\alpha\beta}v_\beta}{v_\alpha}
=O(e^{-2\eta_\beta x})
\quad\text{as }\ x\to+\infty.
\]
This yields from \eqref{5.12M}  that
\[
Q_\alpha(x)\to\eta_\alpha
\quad\text{as }\ x\to-\infty,
\ \ \
Q_\alpha(x)\to-\eta_\alpha
\quad\text{as } \ x\to+\infty.
\]
Setting \(R_1\to-\infty\) and \(R_2\to+\infty\) for
\eqref{eq:int-v-Q}, we then conclude from above that
$
\int_{\mathbb R}v_\alpha^2\,dx
=
\eta_\alpha-(-\eta_\alpha)
=
2\eta_\alpha=2\sqrt{|\mu_\alpha|}.
$
Consequently, we have for every $\alpha=1,2,\cdots,K$,
\[
\int_{\mathbb R}\sum_{i\in I_\alpha}u_i^2\,dx
=
\int_{\mathbb R}v_\alpha^2\,dx
=
2\eta_\alpha=2\sqrt{|\mu_\alpha|}>0.
\]
This therefore completes the proof of Theorem~\ref{thm:L2iden}.
\end{proof}

\section{Profile reduction for existing minimizers as $p\to2$}
\label{sec:profile-reduction}
The rest part of this paper is devoted to the proof of
Theorem~\ref{thm:main}, where $N\ge 2$ is fixed. The purpose of this section is to reduce any
existing minimizer of $J_N(p)$ into a precise local description around
widely separated scalar profiles  as \(p\to2\). We first establish the profile
decomposition, and then introduce exact orthonormal coordinates and the translation
parameters, based on which we finally obtain in this section a coercive energy estimate.

Recall that every minimizer $U\in\mathcal M_N$ of $J_N(p)$ satisfies the matrix
Euler--Lagrange equation
\begin{equation}\label{eq:matrix-euler-lagrange}
-U''-|U|^{2p-2}U=\Lambda U
\quad\hbox{in }\R,\ \ \Lambda=\Lambda^\top\in\R^{N\times N},
\end{equation}
where $\Lambda$ is a Lagrange multiplier matrix. For a matrix
$A=(a_{ij})\in\R^{N\times N}$, its Frobenius norm is defined by
\begin{equation}\label{eq:frobenius-norm}
\|A\|_{\mathrm F}:=\Big(\sum_{i,j=1}^N |a_{ij}|^2\Big)^{1/2}.
\end{equation}
For \(p=2\), it yields from \cite[Theorem~7]{ii} that
\begin{equation}\label{48}
J_N(2)=NJ_1(2)=-\frac N{48},\ \ N\ge1.
\end{equation}
Let $Q_p>0$ and the associated  Lagrange multiplier $\kappa_p\in\R$ be as in \eqref{eq:scalar-profile-definition}, so that $Q_p$ satisfies $\|Q_p\|_2=1$, $E_p(Q_p)=J_1(p)$, and
\begin{equation}\label{q}
-Q_p''+\kappa_p^2Q_p-|Q_p|^{2p-2}Q_p=0\quad\hbox{in }\ \R,
\end{equation}
By the uniqueness of the initial value problem  \((u')^2 = \kappa_p^2 u^2 - \frac{|u|^{2p}}{p}\),  it then follows that every nontrivial real \(H^1(\mathbb{R})\)solution of equation \eqref{q} has a fixed sign. 	Thus,   we obtain  from  \cite{Wei07} that up to translation and sign,  $Q_p>0$ is the unique nontrivial real $H^1(\R)$ solution of \eqref{q}.

We also have
\[
\ker L_{+,p}=\operatorname{span}\{Q_p'\},\ \
\ker L_{-,p}=\operatorname{span}\{Q_p\},
\]
where
\begin{equation}\label{l+-}
L_{+,p}:=-\partial_x^2+\kappa_p^2-(2p-1)Q_p^{2p-2},
\ \
L_{-,p}:=-\partial_x^2+\kappa_p^2-Q_p^{2p-2}.
\end{equation}
Since $\|Q_p\|_2=1$, the explicit profile \eqref{eq:scalar-profile-definition} implies that $\kappa_p\in C^\infty(I_{1/8})$ and $\kappa_2=1/4$. Hence, for any $p\in I_{1/8}:=[\frac{15}{8},  \frac{17}{8}]$,
\[
0<\kappa_-:=\inf_{p\in I_{1/8}}\kappa_p\leq\kappa_p\leq \kappa_+:=\sup_{p\in I_{1/8}}\kappa_p<\infty.
\]
For any distinct center $z=(z_1,\cdots,z_N)\in \R^N$, we denote
\begin{equation}\label{5:A5}
\begin{aligned}
Q_{jp}(x)&:=Q_p(x-z_j),&
d_{ij}&:=|z_i-z_j|,&
D:=D(z)&:=\min_{i<j}d_{ij}.
\end{aligned}
\end{equation}
From now on, \(C>0\) denotes a generic constant and \(P(r)\) denotes a generic polynomial bound of the form \(C(1+r)^M\) with \(M\ge0\). They may depend on \(N\) and vary from line to line, but are uniform in \(p\) and \(z\) in the stated ranges.
Recall that
\begin{equation}\label{eq:s2-1}
	w_*:=Q_2=\frac{\sqrt2}{4}\operatorname{sech}(x/4),
	\ \  \kappa_2=\frac14,\ \  E_2(w_*)=-\frac1{48}.
\end{equation}
Define for $p\in I_{1/8}$ and $ z=(z_1, \cdots, z_N)\in\R^N$,
\begin{align}\label{wa}
	W_{p,z}:=\bigl(Q_{1p}(x),\cdots,Q_{Np}(x)\bigr)^\top,
\end{align}
where $Q_{jp}>0$ is as in (\ref{5:A5}).
In the following we also denote \(O(N)\) to be the set of real \(N\times N\) orthogonal matrices.

\begin{lemma}\label{prop:profile}
	Let \(U_n=(u_{1n}, \cdots, u_{Nn})\in\cM_N\) be a  minimizer of  \(J_N(p_n)\), where $p_n\in I_{1/8}:=[\frac{15}{8},  \frac{17}{8}]$ satisfies  \(\lim\limits_{n\to\infty}p_n=2\). Then up to a subsequence if necessary, there exist \(R_n^0\in O(N)\) and
\(z_n^0=(z_{1n}^0,\cdots,z_{Nn}^0)\in\R^N\) such that
\begin{align}\label{3.2}
& (R_n^0)^\top U_n=W_{p_n,z_n^0}+o (1)\ \ \text{strongly in}\ \, H^1(\R; \R^N) \ \, \text{as}\ \, n\to\infty,\\
&\qquad z_{1n}^0<\cdots<z_{Nn}^0,\ \
D(z_n^0)\to\infty\ \ \text{as}\ \ n\to\infty.\label{3.1}\end{align}
Moreover, the corresponding Lagrange multiplier matrix $\Lambda_n$ of $U_n$ is symmetric and satisfies
$$\lim\limits_{n\to\infty}\Big\|\Lambda_n+\frac1{16}I_N\Big\|_{\mathrm F}=0.
$$
\end{lemma}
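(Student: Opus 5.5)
The plan is a concentration-compactness argument at the limiting exponent $p=2$. The classification and mass results of Sections 3--4 identify every possible limiting profile. The identity $J_N(2)=NJ_1(2)=-\frac N{48}$ from \eqref{48} forces all of the mass to sit in $N$ separated rank-one bubbles.

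\textbf{Step 1: a priori bounds and convergence of energies.} I first establish the binding bound $J_N(p)\le NJ_1(p)$ for $p\in I_{1/8}$. The trial states are $N$ translates of $Q_p$, orthonormalized through the Gram matrix, with mutual distances sent to infinity. The Gagliardo--Nirenberg inequality with $\sum_i\|u_i\|_2^2=N$ and $p<3$ then gives $\sup_n\|U_n\|_{H^1}<\infty$ and a uniform lower bound on $J_N(p_n)$. On $H^1$-bounded sets, $p\mapsto E_p(U)$ is uniformly continuous (Sobolev embedding and dominated convergence). Combined with the continuity of $J_1(p)$ (explicit from \eqref{eq:scalar-profile-definition}), this yields
\[
J_N(p_n)\to J_N(2)=-\tfrac N{48}\quad\text{and}\quad E_2(U_n)\to-\tfrac N{48}.
\]
In particular $(U_n)$ is a minimizing sequence for $J_N(2)$.

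\textbf{Step 2: profile decomposition and energy accounting.} Since $\Lambda_n$ is symmetric and bounded (test \eqref{eq:matrix-euler-lagrange} against $U_n$), I can rotate by some $R_n\in O(N)$ so that $\Lambda_n$ is diagonal. Passing to a subsequence, $\Lambda_n\to\Lambda_\infty$. I then apply an $H^1$ profile decomposition: there are translations $y_n^k$ with $|y_n^k-y_n^l|\to\infty$ for $k\ne l$, and weak limits $U^{(k)}\ne0$, such that the remainder vanishes in $L^\infty$ (hence in every $L^{q}$ with $q>2$). Each $U^{(k)}$ solves the diagonal limiting system, which after the scaling $u\mapsto u/\sqrt2$ is \eqref{eq:multi-system}. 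Theorems~\ref{thm:completeness-multiple-root} and \ref{thm:L2iden} therefore apply. Each block of $U^{(k)}$ is a multiple of one profile with mass $2\sqrt{|\mu_\alpha|}$ (in the scaled variables), and the multipliers are $\le-\eta^2<0$, so there is exponential decay.

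For the energy, I use the finite-rank Lieb--Thirring bound at $p=2$ from \cite{ii}: for any $U$ with Gram matrix $G$ satisfying $0\le G\le I$, one has $E_2(U)\ge-\frac1{48}\operatorname{Tr}G^3$. Heuristically this is $J_k(2)=kJ_1(2)$ combined with the cubic scaling of $J_1(2)$ in the mass. Sum this bound over the profiles and use that the remainder is negligible in the nonlinear term. The Gram matrices $G_k$ of the profiles satisfy $\sum_kG_k\le I_N$, and the kinetic energy of the remainder is nonnegative. Strict convexity of $t\mapsto t^3$ on $[0,1]$ then forces the following: the remainder tends to $0$ in $L^2$, hence strongly; every nonzero eigenvalue of every $G_k$ equals $1$; and every profile attains the equality case. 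The equality case of the rank-$k$ bound at $p=2$ is not attained for $k\ge2$ by \cite[Theorem~7]{ii}. Hence each profile is rank one, i.e.\ $U^{(k)}=e_k\,w$ with a unit vector $e_k$. Its scalar profile $w$ minimizes $J_1(2)$ with mass $1$, so $w=w_*(\cdot-z)$ up to sign. There are exactly $N$ profiles, and the $e_k$ are orthonormal because $\sum_kG_k=I_N$.

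\textbf{Step 3: conclusion.} Let $R_n^0$ be the product of $R_n$ with the orthogonal matrix whose columns are $e_k$ (signs absorbed), and order the centres so that $z_{1n}^0<\cdots<z_{Nn}^0$. This gives \eqref{3.2} with $w_*$ in place of $Q_{p_n}$. Replacing $w_*$ by $Q_{p_n}$ costs only $o(1)$ in $H^1$, since $Q_p\to w_*$ in $H^1$. The separation $D(z_n^0)\to\infty$ is built into the profile decomposition, which gives \eqref{3.1}. For the multipliers, pass to the limit in $\Lambda_n=\langle U_n,(-\partial_x^2)U_n\rangle-\int|U_n|^{2p_n-2}U_nU_n^\top$ using strong convergence. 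Since the bubbles are separated, the off-diagonal entries tend to $0$. Each diagonal entry tends to $-\kappa_2^2=-\frac1{16}$, by \eqref{q} at $p=2$ and $\kappa_2=\frac14$. Hence $\|\Lambda_n+\frac1{16}I_N\|_{\mathrm F}\to0$.

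The main obstacle is Step 2: ruling out dichotomy with fractional masses and higher-rank bubbles (e.g.\ a two-component breather, which exists by Theorem~\ref{thm1.1}). I would try to settle it first via the trace-cubed Lieb--Thirring inequality together with the non-attainment result of \cite{ii}. If that proves too indirect, I would fall back on the mass identity of Theorem~\ref{thm:L2iden}: combined with the explicit energy of the determinant solutions, it should show directly that any rank-$k\ge2$ profile has energy strictly above $-\frac1{48}\operatorname{Tr}G^3$.
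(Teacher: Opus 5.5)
Your argument is correct, but it identifies the bubbles by a different mechanism than the paper.

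\textbf{The paper's route.} The paper first proves that all multipliers converge to $-\frac1{16}$. It combines $\mu_{in}\le J_1(p_n)$ and $\sum_i\mu_{in}=\frac{p_n+1}{3-p_n}J_N(p_n)$ from \cite{GLN} with a separate lower bound $\mu_1\ge-\frac1{16}$. That bound comes from extracting a weak limit which carries $u_{1n}$, applying Theorems~\ref{thm:completeness-multiple-root} and \ref{thm:L2iden} to the lowest spectral block, and comparing the block mass $4\sqrt{-\mu_1}$ with the bound $1$ forced by orthonormality. Once all multipliers coincide, the limiting system has a single block. Every profile is then automatically of the form $a\,w_*(\cdot-y)$ with $|a|=1$, and a simple mass/energy count gives exactly $N$ bubbles and strong convergence.

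\textbf{Your route.} You identify the profiles purely energetically. The bound $E_2(U)\ge-\frac1{48}\operatorname{Tr}\mathsf M(U)^3$, saturated in the limit, forces each profile's Gram matrix to be a projection. It also forces each profile to be a minimizer of $J_m(2)$, and non-attainment for $m\ge2$ then gives rank one. Multiplier convergence comes last, from strong convergence.

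\textbf{Comparison.} Your route is shorter and disposes of fractional masses and higher-rank (breather-type) profiles in one stroke. It does not use Sections 2--4 at all, so your remarks in Step~2 on classification and decay of the profiles, and your fallback, are superfluous. The cost is that everything rests on the sharp constant $L_{3/2,1}=\frac3{16}$ and its equality case, so you should justify the trace-cubed bound explicitly. For instance, with $\gamma=\sum_i|u_i\rangle\langle u_i|$, use
\[
\operatorname{Tr}\big((-\partial_{xx}-\rho)\gamma\big)\ge-\|(-\partial_{xx}-\rho)_-\|_{\mathfrak S^{3/2}}\|\gamma\|_{\mathfrak S^3},
\]
then the sharp Lieb--Thirring inequality, then optimize in $\int\rho^2$. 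The paper's route replaces that sharp-constant input by the integrable structure (classification plus exact mass), which is the point of this application, and obtains the multiplier convergence as an intermediate step rather than as a consequence.

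\textbf{Minor points.} What you call strict convexity is really the fact that $t^3\le t$ on $[0,1]$, with equality only at $t\in\{0,1\}$. If the profile decomposition has infinitely many profiles, you also need the usual truncation at level $K$ before letting $K\to\infty$.
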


\begin{proof}
Recall from  \eqref{eq:matrix-euler-lagrange} that  the corresponding Lagrange multiplier matrix $\Lambda_n$ of $U_n$ is symmetric. Suppose that \(R_n\in O(N)\) satisfies
$$\tilde \Lambda_n:=R^\top_n \Lambda_nR_n=\text{diag}(\mu_{1n}, \cdots, \mu_{Nn}) \ \  \mbox{for some}\ \, \mu_{1n}\leq\cdots\leq\mu_{Nn}.$$
Setting  $\tilde{U}_n:=R^\top_nU_n$,  it can be checked that $\tilde{U}_n$ is also a minimizer of $J_N(p_n)$, and
\begin{equation}\label{3.5}
	-\tilde U_n''-|\tilde U_n|^{2p_n-2}\tilde U_n=\tilde \Lambda_n\tilde U_n\ \,\ \text{in}\ \, \R,\ \
	\tilde \Lambda_n=\operatorname{diag}(\mu_{1n},\ldots,\mu_{Nn}),\ \   \mu_{1n}\le\cdots\le\mu_{Nn}.
\end{equation}
Thus, for convenience we next   write \(U_n\) and \(\Lambda_n\) for \(\widetilde U_n\) and \(\widetilde{\Lambda}_n\), respectively, so that \(U_n\) and \(\Lambda_n\) satisfy (\ref{3.5}).
We shall carry out the proof by the following three steps.

\medskip\noindent\textbf{Step 1.} In this step, we prove that
\begin{equation}\label{claim1}
	J_N(p_n)\to- N/{48},\ \
	E_2(U_n)\to-N/{48}\ \,\ \text{as}\ \ n\to\infty.
\end{equation}

Indeed,  since $Q_{p_n}$ is a minimizer of $J_1(p_n)$ and \(Q_{p_n}\to w_*\) in \(H^1(\R)\) as $n\to\infty$,  by the  explicit formula \eqref{eq:scalar-profile-definition} of  $Q_{p_n}$, one can  calculate that 
\begin{equation}\label{eq:s2-8}
	J_N(p_n)\le NJ_1(p_n),\ \
	\lim\limits_{n\to\infty}J_1(p_n)= J_1(2)=-1/{48},
\end{equation}
where  the inequality follows from \cite[Lemma 12 (ii)]{GLN}. Since \(\|U_n\|_2^2=N\),  it can be verified that
\begin{equation}\label{2.7}
	\|U_n\|_\infty^2\le2\int_\R|U_n||U_n'|dx
	\le2\sqrt N\|U_n'\|_2.
\end{equation}
Note from \eqref{eq:s2-8}  that  \(E_{p_n}(U_n)\le0\) holds for  sufficiently large \(n\). This yields  from \eqref{2.7} that for  sufficiently large \(n\),
\[
\|U_n'\|_2^2\le\frac1{p_n}\int_\R|U_n|^{2p_n}
\le\frac N{p_n}\|U_n\|_\infty^{2p_n-2}
\le C\|U_n'\|_2^{p_n-1},
\]
and hence \(\|U_n'\|_2\le C^{\frac{1}{3-p_n}}\leq C^2\), due to the fact $p_n\in I_{1/8}$, where $C>1$ is independent of $n$.   By the embedding  $H^1(\R;\R^N)\hookrightarrow L^\infty(\R;\R^N)$,
we obtain that  \(\{U_n\}\) is bounded uniformly in
\(H^1(\R;\R^N)\) and $L^\infty(\R;\R^N)$. 
Thus, since
\begin{equation*}
	t^{2+\alpha}|\log t|\le C(t^{3/2}+t^{5/2}),\ \ \forall\ t>0,\ |\alpha|\le1/4,
\end{equation*}
applying Taylor's expansion we deduce from (1.4) that
\begin{align}\label{2.9}
	E_2(U_n)=J_N(p_n)+O(|p_n-2|)\ \ \text{as}\ \ n\to\infty.
\end{align}
As a consequence of  \eqref{48}, \eqref{eq:s2-8}  and \eqref{2.9},  we have
\[\begin{split}
J_N(2)\le \liminf\limits_{n\to\infty}E_2(U_n)&=\liminf\limits_{n\to\infty}J_N(p_n)\le \limsup\limits_{n\to\infty}J_N(p_n)
\\
&\le \lim\limits_{n\to\infty} NJ_1(p_n)=-N/48= J_N(2),
\end{split}
\]
which therefore proves \eqref{claim1}.

\medskip\noindent\textbf{Step 2.} Let $\mu_{1n}\le\cdots\le\mu_{Nn}$ be given by \eqref{3.5}. This step is to prove that   up to a subsequence if necessary,
\begin{equation}\label{mu}
	\lim\limits_{n\to\infty}\mu_{in}=\mu_i=-1/16,\ \ i=1, \cdots, N.
\end{equation}

Actually, we first note from  \eqref{3.5} and \cite[Equation (43)]{GLN} that
\[
\mu_{in}=\|u_{in}'\|_2^2-
\int_\R|U_n|^{2p_n-2}u_{in}^2dx
\ge-\|U_n\|_\infty^{2p_n-2}\ge-C,\ \ i=1, \cdots, N,
\]
and
\begin{equation}\label{eq:profile-negative-multipliers}
	\mu_{in}\le J_N(p_n)-J_{N-1}(p_n)\le J_1(p_n),\ \ i=1, \cdots, N,
\end{equation}
where  the last inequality follows from \cite[Lemma 12 (ii)]{GLN}.
This shows from \eqref{eq:s2-8} that there exist $\mu_1, \cdots, \mu_N\in \R$ such that up to a subsequence if necessary,
\[
\lim\limits_{n\to\infty}\mu_{in}=\mu_i,\ \  \mu_1\le\cdots\le\mu_N\le J_1(2)<0.
\]
Since it follows from  \cite[Equation (44)]{GLN} that $\sum_{i=1}^N\mu_{in}=
\frac{p_n+1}{3-p_n}J_N(p_n)$,  we obtain from  \eqref{claim1} that
$
\sum_{i=1}^N\mu_i=-\frac N{16}.
$
This further implies that  \(\mu_1\le-1/16\).

To complete the proof of \eqref{mu},
since $\mu_1\le-1/16$, $\mu_1\le\cdots\le\mu_N$ and $\sum_{i=1}^N\mu_i=-\frac N{16}$,   it suffices to show that \(\mu_1\ge-1/16\).
By  the uniform boundedness of the sequence $\{U_n\}$ in $H^1(\R; \R^N)$, we derive from \eqref{3.5} that 
\[\begin{split}
1/16+o(1)\le-\mu_{1n}&\le \int_\R|U_n|^{2p_n-2}u_{1n}^2dx\\
&\le\|u_{1n}\|_\infty
\bigl\||U_n|^{2p_n-2}\bigr\|_2\|u_{1n}\|_2
\le C\|u_{1n}\|_\infty\ \ \text{as}\ \ n\to\infty,
\end{split}
\]
which yields that \(\|u_{1n}\|_\infty>c>0\) holds for any sufficiently
large $n$.  Choose \(x_n\in\R\) such that \(|u_{1n}(x_n)|\ge c>0\) holds for sufficiently
large $n$.  We then deduce that  up to a subsequence if necessary, 
\begin{equation}\label{2.10a}
\begin{split}
	U_n(\cdot+x_n)\rightharpoonup V\quad\text{in}\ \, H^1(\R; \R^N)\ \  \text{as}\ \ n\to\infty,\\
	-V''-|V|^2V=\operatorname{diag}(\mu_1,\cdots,\mu_N)V\  \ \text{in}\ \, \R,\,\
\end{split}
\end{equation}
where $V=(v_1, \cdots, v_N)\in H^1(\R; \R^N)$ satisfies  $v_1\not\equiv0$.
Denote  $ \widetilde V:=V/\sqrt2$, so that
\begin{align}\label{2.13}
	-\widetilde V''-2|\widetilde V|^2\widetilde V
	=\operatorname{diag}(\mu_1,\cdots,\mu_N)\widetilde V\ \ \ \text{in}\ \ \R.
\end{align}
For every \(\lambda\in\{\mu_1,\ldots,\mu_N\}\), let $\mathcal{I}_\lambda:=\{i:\, \mu_i=\lambda\}.$ Since \(\mu_i<0\) holds for all $i=1, \cdots, N$, it follows from  \eqref{2.13} and Theorem \ref{thm:completeness-multiple-root} that
\[
\dim\operatorname{span}\{v_i:\, i\in \mathcal{I}_\lambda\}\in\{0,1\}.
\]
In particular, we have $\dim\operatorname{span}\{v_i:\, i\in \mathcal{I}_{\mu_1}\}=1$, because  \(v_1\not\equiv0\).

Furthermore,
set \(\phi:=v_1/\|v_1\|_2\) and
\(w_{in}:=u_{in}(\cdot+x_n)\).  Since \(\|\phi\|_2=1\) and
\((w_{in})_{i=1}^N\) is orthonormal,  we have
\begin{align}\label{2.14}
\begin{split}
	0&\le
	\Big\|\phi-\sum_{i\in I_{\mu_1}}\langle\phi,w_{in}\rangle w_{in}\Big\|_2^2\\
&	=1-\sum_{i\in I_{\mu_1}}|\langle\phi,w_{in}\rangle|^2=1-\sum_{i\in I_{\mu_1}}|\langle\phi,\, v_{i}\rangle|^2+o(1)\ \ \text{as}\ \ n\to\infty,
\end{split}
\end{align}
where the second identity follows from the finiteness of \(\mathcal{I}_{\mu_1}\) and the weak convergence of $\{w_{in}\}_n$ in $H^1(\R)$.  Note that $1\in \mathcal{I}_{\mu_1}$ and $\dim\operatorname{span}\{v_i:\, i\in \mathcal{I}_{\mu_1}\}=1$. 
This implies that \(|\langle\phi,v_i\rangle|^2=\|v_i\|_2^2\). Combining  this  with  Theorem \ref{thm:L2iden}, we then conclude from \eqref{2.14} that
\[
4\sqrt{-\mu_1}=\sum_{i\in I_{\mu_1}}\|v_i\|_2^2\le1,
\]
and thus  \(\mu_1\ge-1/16\). Therefore, this completes the proof of Step 2.

\medskip\noindent\textbf{Step 3.}  In this step, we prove that  up to a subsequence if necessary, there exist an integer \(0<K\le N\), the vectors
\(a^{(1)},\cdots,a^{(K)}\in \R^N\), and a sequence
\(z_n^0=(z_{1n}^0,\cdots,z_{Kn}^0)\in\R^K\) such that  the vector $F_n^{(K)}
:=U_n-\sum_{k=1}^Ka^{(k)}w_*(\cdot-z_{kn}^0)$ satisfies
\begin{align}\label{2.12}
	\arraycolsep=1.5pt
	\left\{\begin{array}{lll}
		&F_n^{(K)}(\cdot+z_{kn}^0)\rightharpoonup0\
		\ \ \text{in }\,\ H^1(\R; \R^N)\ \ \text{as}\,\ n\to\infty,\ \ 1\le k\le K,\\[2.5mm]
		& \lim\limits_{n\to\infty}\|F_n^{(K)}\|_4=0,\\
		&\lim\limits_{n\to\infty}|z_{kn}^0-z_{\ell n}^0|=\infty, \ \ 1\le k\ne\ell\le K.
	\end{array}\right.
\end{align}

Set $V^{(1)}:=V$ and $x_n^{(1)}:=x_n,$
where \(V\) and \(x_n\) are as in \eqref{2.10a}.  It then follows that
\begin{align}\label{2.15a}
	&U_n(\cdot+x_n^{(1)})\rightharpoonup V^{(1)}\not\equiv0\quad\text{in }\ H^1(\R; \R^N)\ \  \text{as}\ \ n\to\infty.
\end{align}
Define  $F_n^{(1)}
:=
U_n-V^{(1)}(\cdot-x_n^{(1)}).
$
We then obtain from \eqref{2.15a} that
$$
F_n^{(1)}(\cdot+x_n^{(1)})\rightharpoonup 0\ \ \text{in}\ \ H^1(\R; \R^N)\ \ \text{as}\ \ n\to\infty.
$$
Moreover, since \begin{align}\label{2.17}
-(V^{(1)})''-|V^{(1)}|^2V^{(1)}
=-\frac1{16}V^{(1)}\ \,\ \text{in}\ \, \R,
\end{align}
applying  Theorems \ref{thm:completeness-multiple-root} and \ref{thm:L2iden}, and the uniqueness   of   solutions for equation \eqref{q},  we deduce from \eqref{2.17} that
\begin{align}\label{2.18}
\|V^{(1)}\|_2^2=1,\ \
V^{(1)}=a^{(1)}w_*(\cdot-y^{(1)}),\ \
|a^{(1)}|=1,\ \
E_2(V^{(1)})=-1/48.
\end{align}

If \(\liminf\limits_{n\to\infty}\|F_n^{(1)}\|_4=0\), then
\eqref{2.12}  holds with \(K=1\).
Otherwise, we proceed the proof of \eqref{2.12} by induction. Suppose that  there exists some \(1\le m\in\mathbb{N}\) such that  the first and third lines of \eqref{2.12}  hold with \(K=m\),  where  $$F_n^{(m)}:=U_n-\sum_{k=1}^mV^{(k)}(\cdot-x_n^{(k)}), \ \  V^{(k)}=a^{(k)}w_*(\cdot-y^{(k)}),\ \ |a^{(k)}|=1.$$
If \(\liminf\limits_{n\to\infty}\|F_n^{(m)}\|_4=0\), then
\eqref{2.12} holds with \(K=m\).
Otherwise, after passing to a subsequence, there exists \(c_0>0\) such that \(\lim\limits_{n\to\infty}\|F_n^{(m)}\|_4\ge c_0\). By the uniform boundedness of  \(\{F_n^{(m)}\}_n\)  in \(H^1(\R; \R^N)\),    the vanishing lemma
\cite[Lemma~1.21]{minimax} then yields 
that  there exist \(R>0\) and
\(\{x_n^{(m+1)}\}\subset\R\) such that up to a subsequence if necessary,
\begin{align}\label{2.19a}
\arraycolsep=1.5pt
\left\{\begin{array}{lll}
	\lim\limits_{n\to\infty}\int_{-R}^R
	|F_n^{(m)}(x+x_n^{(m+1)})|^2\,dx>0,\\[3mm]
	F_n^{(m)}(\cdot+x_n^{(m+1)})
	\rightharpoonup V^{(m+1)}\not\equiv0
	\quad\text{in }\ H^1(\R; \R^N)\ \ \text{as}\ \ n\to\infty.
\end{array}\right.
\end{align}
Since  
$F_n^{(m)}(\cdot+x_n^{(k)})\rightharpoonup0$ in $H^1(\R; \R^N)$ as $n\to\infty$, where $ 1\le k\le m$,
by  the compact embedding   $H^1(B_r)\hookrightarrow L^2(B_r)$ for any $r>0$,  one can deduce from \eqref{2.19a} that
$\lim\limits_{n\to\infty}|x_n^{(m+1)}-x_n^{(k)}|=\infty$ for $1\le k\le m$.
The third line of \eqref{2.12} then yields that
\begin{align}\label{2.20}
\lim\limits_{n\to\infty}|x_n^{(k)}-x_n^{(\ell)}|=\infty, \ \ 1\le k\ne\ell\le m+1.
\end{align}
Define
$$
F_n^{(m+1)}
:=F_n^{(m)}-V^{(m+1)}(\cdot-x_n^{(m+1)})
=U_n-\sum_{k=1}^{m+1}V^{(k)}(\cdot-x_n^{(k)}).
$$
By the  weak convergence  of \eqref{2.19a}, it thus follows from  \eqref{2.20} that
\begin{align}\label{2.15b}
-(V^{(m+1)})''-|V^{(m+1)}|^2V^{(m+1)}
=-\frac1{16}V^{(m+1)}\ \,\ \text{in}\ \ \R,
\end{align}
\[
F_n^{(m+1)}(\cdot+x_n^{(k)})\rightharpoonup0
\ \ \ \text{in}\ \, H^1(\R; \R^N) \ \ \, \text{as}\ \ n\to\infty,\ \  1\le k\le m+1.
\]
The same argument of \eqref{2.18} gives that
$V^{(m+1)}=a^{(m+1)}w_*(\cdot-y^{(m+1)})$ holds for some $a^{(m+1)}\in\R^N$ satisfying $|a^{(m+1)}|=1$.
Denote $z_{kn}^0:=y^{(k)}+x_n^{(k)}$ for $k=1, \cdots, K$. This completes the induction.

Since $\|w_*\|_2=1$, $|a^{(k)}|=1$ and
$F_n^{(K)}=U_n-\sum_{k=1}^Ka^{(k)}w_*(\cdot-z_{kn}^0)$,
  the first  and third lines of \eqref{2.12} yield that
\begin{align}\label{2.24}
\arraycolsep=1.5pt
\left\{\begin{array}{lll}
	\!\!\!&N=K+\|F_n^{(K)}\|_2^2+o(1)
	\quad\text{as }\ n\to\infty,\\[2.5mm]
	\!\!\!&\|U_n'\|_2^2
	=K\|w_*'\|_2^2+\|(F_n^{(K)})'\|_2^2+o(1)
	\quad\text{as }\ n\to\infty,
\end{array}\right.
\end{align}
which implies  that  \(K\le N\).
Thus, the construction stops at some \(1\le K\le N\). This thus gives \eqref{2.12}, and Step 3 is proved.

We are now ready to complete the proof of Lemma \ref{prop:profile}.  Since $|a^{(k)}|=1$,    the second and third  lines of \eqref{2.12} give that  $\lim\limits_{n\to\infty}\int_\R|U_n|^4dx
=K\int_\R|w_*|^4dx$.
Since $E_2(w_*)=-1/48$, we obtain from  \eqref{2.24}  that
\begin{align}\label{2.22}
E_2(U_n)=-\frac K{48}+\|(F_n^{(K)})'\|_2^2+o(1),\ \
N=K+\|F_n^{(K)}\|_2^2+o(1)\ \ \text{as}\ \ n\to\infty.
\end{align}
Since $K\le N$, the first identity of \eqref{2.22} gives from \eqref{claim1} that  \(K=N\), which further implies from \eqref{2.22} that
\begin{align}\label{2.23}
F_n^{(N)}\to0\ \ \text{ in}\ \ H^1(\R; \R^N)\ \, \text{as}\ \, n\to\infty.
\end{align}
Moreover, note that
$$(u_{1n}, \cdots, u_{Nn})=U_n=\sum_{k=1}^Na^{(k)}w_*(\cdot-z_{kn}^0)
+F_n^{(N)}.$$
Applying the third line of \eqref{2.12} and $\inte u_{in}u_{jn}dx=\delta_{ij}$,   we then obtain from \eqref{2.23} that $$\sum_{k=1}^N(a^{(k)})_i(a^{(k)})_j=\delta_{ij},$$
which yields that
\begin{equation}\label{2.23a}
\begin{split}
R^\top U_n=e_1w_*(\cdot-z_{1n}^0)+\cdots+e_Nw_*(\cdot-z_{Nn}^0)
+R^\top F_n^{(N)}, \\
R:= (a^{(1)},\cdots,a^{(N)})\in O(N).\qquad\qquad\qquad
\end{split}
\end{equation}
Here $e_j\in\R^N$ is the $j$-th standard basis vector, $1\le j\le N$.
Moreover,  because $R\in O(N)$, we have $\|R^\top F_n^{(N)}\|_{H^1}=\|F_n^{(N)}\|_{H^1}\to0$  as $n\to\infty$. Since \(N\) is finite,  we may assume that up to a  subsequence and
relabeling  if necessary, $z_{1n}^0<\cdots<z_{Nn}^0$ holds for  sufficiently large \(n>0\). This thus proves \eqref{3.1} in view of \eqref{2.12}.
Since \(Q_{p_n}\to w_*\) in \(H^1(\R)\) as $n\to\infty$,  we obtain from \eqref{2.23a} that  \eqref{3.2} holds for the original sequence satisfying $R_n^0:=R_nR$, where $R_n$ is the orthogonal matrix chosen at the beginning of the proof.
We finally have for every \(R\in O(N)\),
\[
\Big\|R^\top\Lambda_n R+\frac1{16}I_N\Big\|_{\mathrm F}
=\Big\|\Lambda_n+\frac1{16}I_N\Big\|_{\mathrm F}\to 0\ \,\ \mbox{as}\ \ n\to\infty,
\]
due to Step 2. This therefore completes the proof of Lemma  \ref{prop:profile}.  \end{proof}

\subsection{Adjustment of rotation and translation parameters}\label{sec:local-energy}

Recall from Lemma~\ref{prop:profile} that there exist \(z_n^0\) and the rotation \(R_n^0\) such that
\begin{equation}\label{eq:oct-1}
(R_n^0)^\top U_n=W_{p_n,z_n^0}+o(1)\quad \text{strongly in}\ \, H^1(\R; \R^N)\ \ \hbox{as}\ \ n\to\infty,
\end{equation}
where $p_n\to 2$ as $n\to\infty$.
This qualitative approximation is however not sufficient for the analysis below. The main purpose of this subsection  is to establish Lemma~\ref{lem:center}, where the rotation and translation parameters are further adjusted: more precisely, \(R_n\) and \(z_n\) are chosen close to \(R_n^0\) and \(z_n^0\), respectively, so that \(R_n^\top U_n\) can be represented by  a correction \(K\) satisfying suitable orthogonality conditions.

For any  vector $P=(P_1,\cdots,P_N)^\top$
and $Q=(Q_1,\cdots,Q_N)^\top$ in $H^1(\R;\R^N)$, we denote the pairing matrix of $P$ and $Q$ as
\begin{align}
\label{5:2:A}
\mathsf M(P, Q):=\big(\langle P_i,Q_j\rangle_{L^2}\big)_{i,j=1}^N,\ \ \mathsf M(P):=M(P,P).
\end{align}
Let \(W_{p,z}\) be as in \eqref{wa}, where \((p,z)\in I_{1/8}\times\R^N\) is chosen so that
\(D(z)>0\) is sufficiently large. Define
\begin{equation}\label{w}
	\begin{aligned}
		G_{p,z}:=\mathsf M(W_{p,z}),\ \ \Psi_{p,z}:=G_{p,z}^{-1/2}W_{p,z}\in\cM_N,
	\end{aligned}
\end{equation}
so that $\mathsf M(\Psi_{p,z})=I_N$. Note from
\eqref{a.2} and \eqref{a.7} in the
Appendix that  for any \(p\in I_{1/8}:=[\frac{15}{8}, \frac{17}{8}]\),
\begin{align}\label{a.7b}
	\|G_{p,z}-I_N\|_{\mathrm{F}}+
	\|G_{p,z}^{-1/2}-I_N\|_{\mathrm{F}}+\|\partial_{z_j}(G_{p,z}^{-1/2})\|_{\mathrm{F}}\leq CD(z)e^{-\kappa_pD(z)} 
\end{align}
uniformly as \(D(z)\to\infty\), which yields that   \eqref{w} is well defined. Moreover, we derive from \eqref{eq:oct-1}--\eqref{a.7b} that
\begin{equation}\label{eq:oct-2}
	(R_n^0)^\top U_n=\Psi_{p_n,z_n^0}+o(1)\,\ \  \text{strongly in}\ \, H^1(\R; \R^N)\ \ \hbox{as}\ \ n\to\infty,
\end{equation}
where $p_n\to 2$ as $n\to\infty$.

Denote for fixed $z$ and $U\in\cM_N$,
\[A=A(z,U):=\mathsf M(U,\Psi_{p,z}).\]
Since
$A-I_N=\mathsf M(U-\Psi_{p,z},\Psi_{p,z})$, taking $U=(R_n^0)^\top U_n$ and $z=z_n^0$,
it yields from \eqref{eq:oct-2} that
\[
\|A(z_n^0,(R_n^0)^\top U_n)-I_N\|_{\mathrm{F}}=o(1)\ \ \hbox{as}\ \ n\to\infty.
\]
Hence, \(A\bigl(z_n^0,(R_n^0)^\top U_n\bigr)\) is invertible for all sufficiently large \(n>0\), and
\(A\bigl(z_n^0,(R_n^0)^\top U_n\bigr)\) admits the following polar decomposition
\begin{equation}\label{eq:oct1}
	A=R_AS_A,\ \ \hbox{where}\,\ R_A:=A(A^\top A)^{-1/2}\in O(N),\,\ \hbox{and}\,\ S_A=(A^\top A)^{\frac12}>0.
\end{equation}
Denote
\begin{equation}\label{2.28}
	K(z,U):=A^{-1}U-\Psi_{p,z}.
\end{equation}
We then obtain from \eqref{2.28} that
\[
\mathsf M(K,\Psi_{p,z})=A^{-1}A-I_N=0,\ \ \,
I_N+\mathsf M(K)=\mathsf M(A^{-1}U)=(A^\top A)^{-1}.
\]
Since $R_A^\top A=(A^\top A)^{1/2}$ and
$U=A(\Psi_{p,z}+K)$,  we deduce from \eqref{eq:oct1} and \eqref{2.28} that $U$ admits the following exact representation
\begin{equation}\label{2.26}
	R_A^\top U=\cR_{\Psi_{p,z}}(K):=[I_N+\mathsf M(K)]^{-1/2}(\Psi_{p,z}+K)\in\cM_N,\ \ \mathsf M(K,\Psi_{p,z})=0.
\end{equation}

For each fixed $z$, the rotation $R$ and the correction $K$ are fixed by \eqref{eq:oct1} and \eqref{2.28}.
It remains to choose the centers \(z=(z_1,\cdots,z_N)\) near \(z^0\). Towards this aim, we now define
\begin{equation}\label{tder}
	T_j(p,z):=\partial_{z_j}\Psi_{p,z}
	-\mathsf M(\partial_{z_j}\Psi_{p,z},\Psi_{p,z})\Psi_{p,z},
	\ \ j=1,\cdots,N,
\end{equation}
so that $\mathsf M(T_j,\Psi_{p,z})=0$.
In the Appendix, we prove that there exist a
sufficiently large $D_*>0$ and a constant $C>0$ such that
\begin{align}\label{eq:reference-center-derivatives}
	&\ \ \ \ \ \ \ \ \  \ \sup_{\substack{p\in I_{1/8},\ D(z)\ge D_*}}
	\sum_{|\alpha|\le2}\|\partial_z^\alpha\Psi_{p,z}\|_{H^1}
	\le C,\\
	&\|\Psi_{p,z}- W_{p,z}\|_{H^1}+\|T_j(p, z)+e_j Q'_p(x-z_j)\|_{H^1}\leq CD(z)e^{-\kappa_pD(z)}\label{2.35},
\end{align}
where the vector $e_jQ_p'(x-z_j)$ has the $j$-th component $Q_p'(x-z_j)$,
and all other components are zero.  Thus, $T_j(p,z)$ can be regarded, up to an exponentially small error, as the translation derivative associated with the center of the $j$-th bubble.
Define for any $p\in I_{1/8}$ and $z\in\R^N$ satisfying $D(z)\ge D_*$,
\begin{equation}\label{xpz}
	X_{p,z}:=\Big\{K\in H^1(\R;\R^N):\
	\mathsf M(K,\Psi_{p,z})=0,\
	\langle K,T_j(p,z)\rangle=0,\  j=1, \cdots, N\Big\}.
\end{equation}
The following lemma shows that if \(U\in\cM_N\) is sufficiently close to \(\Psi_{p,z^0}\) in $H^1(\R; \R^N)$, then  one can choose \(z\) sufficiently  close to \(z^0\), so that under the corresponding rotation \(R_A(z,U)\), the correction \(K=K(z,U)\) satisfies \(K\in X_{p,z}\).

\begin{lemma}\label{lem:center} Suppose that $\varepsilon_0>0$ is sufficiently small,  and $D_0> D_*$ is sufficiently large. Then for any $p\in I_{1/8},$  $z^0\in \R^N\ \text{satisfying}\ D(z^0)\ge D_0$ and  $U\in\cM_N\ \text{satisfying}\ \|U-\Psi_{p,z^0}\|_{H^1}<\varepsilon_0,$
there exists  $(z, K, R)$  such that
\begin{equation}\label{2.33}
	K\in X_{p,z},\ \ R\in O(N),\ \  R^\top U=\cR_{\Psi_{p,z}}(K),
\end{equation}	
and
\begin{equation}\label{2.38b}
	|z-z^0|+\|K\|_{H^1}\le C\|U-\Psi_{p,z^0}\|_{H^1}.
\end{equation}
\end{lemma}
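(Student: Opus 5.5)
The plan is a modulation argument via the implicit function theorem, applied to the $N$ scalar conditions $\langle K(z,U),T_j(p,z)\rangle=0$. The other conditions are automatic. Indeed, for every $z$ near $z^0$, the construction \eqref{eq:oct1}--\eqref{2.26} already produces $R=R_A(z,U)\in O(N)$ and $K=K(z,U)=A^{-1}U-\Psi_{p,z}$. These satisfy $\mathsf M(K,\Psi_{p,z})=0$ and $R^\top U=\cR_{\Psi_{p,z}}(K)$, provided $A(z,U)=\mathsf M(U,\Psi_{p,z})$ is invertible.

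\textbf{Invertibility of $A$.} By \eqref{eq:reference-center-derivatives},
\[
\|A(z,U)-I_N\|_{\mathrm F}\le C\|U-\Psi_{p,z^0}\|_{H^1}+C|z-z^0|.
\]
Hence $A$ is invertible, with $\|A^{-1}\|$ uniformly bounded, on the region $|z-z^0|\le r_0$, $\|U-\Psi_{p,z^0}\|_{H^1}<\varepsilon_0$, for $r_0,\varepsilon_0$ small. On this region we also get
\[
\|K(z,U)\|_{H^1}\le C\big(\|U-\Psi_{p,z^0}\|_{H^1}+|z-z^0|\big).
\]
This follows from
\[
K=A^{-1}\big(U-\Psi_{p,z^0}\big)+A^{-1}\big(\Psi_{p,z^0}-\Psi_{p,z}\big)+(A^{-1}-I_N)\Psi_{p,z}.
\]

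\textbf{The map $\Phi$ and its Jacobian.} Define $\Phi(z,U)\in\R^N$ by $\Phi_j(z,U):=\langle K(z,U),T_j(p,z)\rangle$. It is $C^1$ in $z$, with bounds uniform in $p\in I_{1/8}$ and $D(z)\ge D_*$, by \eqref{eq:reference-center-derivatives}. At the reference point $U=\Psi_{p,z^0}$ we have $A(z^0,U)=I_N$, so $K(z^0,\Psi_{p,z^0})=0$ and $\Phi(z^0,\Psi_{p,z^0})=0$. Next we compute $\partial_{z_k}\Phi_j$ at this point. Since $K=0$ there, only the derivative of $K$ contributes:
\[
\partial_{z_k}K=-A^{-1}(\partial_{z_k}A)A^{-1}U-\partial_{z_k}\Psi_{p,z}=-\mathsf M(\Psi,\partial_{z_k}\Psi)\Psi-\partial_{z_k}\Psi.
\]
Because $\mathsf M(T_j,\Psi_{p,z})=0$, the first term pairs to zero with $T_j$, leaving
\[
\partial_{z_k}\Phi_j=-\langle\partial_{z_k}\Psi,T_j\rangle.
\]
Using \eqref{2.35} twice (for $T_j$, and for $\partial_{z_k}\Psi$, which equals $T_k$ plus a multiple of $\Psi$ orthogonal to $T_j$), this gives
\[
\partial_{z_k}\Phi_j=-\|Q_p'\|_2^2\,\delta_{jk}+O\big(D e^{-\kappa_pD}\big).
\]
Since $\|Q_p'\|_2^2\ge c>0$ uniformly on $I_{1/8}$, the Jacobian is uniformly invertible for $D_0$ large.

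\textbf{Main obstacle: uniformity of the fixed point.} Off the reference point I must control $D_z\Phi(z,U)$ for general $U$ near $\Psi_{p,z^0}$ and $z$ near $z^0$. The difference from the reference Jacobian is bounded by $C(\|U-\Psi_{p,z^0}\|_{H^1}+|z-z^0|)$, using the second-derivative bound in \eqref{eq:reference-center-derivatives} and the uniform bound on $A^{-1}$. Rather than invoke an abstract implicit function theorem, with its dependence on the possibly unbounded parameter $z^0$, I will run a quantitative contraction directly. Set $J_0:=D_z\Phi(z^0,\Psi_{p,z^0})$ and iterate
\[
z\mapsto z-J_0^{-1}\Phi(z,U)
\]
on the ball $|z-z^0|\le r_0$. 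This map is a contraction, since $\|I-J_0^{-1}D_z\Phi\|\le 1/2$ there. Its displacement at $z^0$ is at most
\[
C|\Phi(z^0,U)|\le C\|K(z^0,U)\|_{H^1}\le C\|U-\Psi_{p,z^0}\|_{H^1}.
\]
The fixed point $z$ therefore lies in the ball and satisfies $|z-z^0|\le C\|U-\Psi_{p,z^0}\|_{H^1}$. Moreover $D(z)\ge D_0-2r_0\ge D_*$, so $\Psi_{p,z}$ and $T_j(p,z)$ are defined.

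\textbf{Conclusion.} At the fixed point, $K:=K(z,U)$ and $R:=R_A(z,U)$ satisfy \eqref{2.33}, because $\Phi(z,U)=0$ and the orthogonality $\mathsf M(K,\Psi_{p,z})=0$ is automatic. The bound on $K$ from the first step, combined with the estimate on $|z-z^0|$, then gives \eqref{2.38b}.
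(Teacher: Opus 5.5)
Your proposal is correct and takes essentially the same route as the paper. The paper also solves $F_i(z,U)=\langle K(z,U),T_i(p,z)\rangle=0$ by the frozen-Jacobian contraction $z\mapsto z+\Gamma(p,z^0)^{-1}F(z,U)$ on a small ball, where $-\Gamma(p,z^0)=-\big(\langle T_i,T_j\rangle\big)_{i,j}$ is exactly your $J_0$, using the same bounds on $A^{-1}$, on $K$, and on $D_zF+\Gamma(p,z^0)$. The only minor difference is that the paper controls the off-reference Jacobian via the identity $\|\partial_{z_j}K+T_j\|_{H^1}\le C(\|K\|_{H^1}+\|K\|_{H^1}^2)$, whereas you use a direct Lipschitz estimate.
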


\begin{proof}
Choose sufficiently large \(D_0\ge D_*+2\)  and let $z^0\in\R^N$ satisfy  $D(z^0)\ge D_0$, so that \(D(z)\ge D_*\) holds for any $z\in\R^N$ satisfying \(|z-z^0|\le1\).
We  claim that there exist \(0<r_0<1\) and \(\varepsilon_0>0\),  independent of \(p\in I_{1/8}\) and \(z^0\),  such that 
\begin{align}\label{2.35a}
\|A(z,U)-I_N\|_{\mathrm F}<1/2,\ \ \forall \  (z,U)\in B_{r_0}(z^0)\times B_{\varepsilon_0}(\Psi_{p,z^0}),
\end{align}
where $A(z,U):=\mathsf M(U,\Psi_{p,z})$ and
$$B_{r_0}(z^0)\times B_{\varepsilon_0}(\Psi_{p,z^0}):=\big\{(z, U)\in \mathbb R^N\times \cM_N:\ |z-z^0|\leq r_0,\ \|U-\Psi_{p,z^0}\|_{H^1}\leq\varepsilon_0\big\}.$$
Indeed,  the derivative estimate \eqref{eq:reference-center-derivatives} gives that
\begin{align}\label{2.36}
\|\Psi_{p,z}-\Psi_{p,z^0}\|_{H^1}\le C|z-z^0|,
\end{align} which then yields from (\ref{5:2:A}) that
\begin{equation}\label{2.37}
\begin{split}
\|A(z,U)-I_N\|_{\mathrm F}=&\big\|\mathsf M(U-\Psi_{p,z^0},\Psi_{p,z})
+\mathsf M(\Psi_{p,z^0},\Psi_{p,z}-\Psi_{p,z^0})\big\|_{\mathrm{F}}\\
\le& C\bigl(\|U-\Psi_{p,z^0}\|_{H^1}+|z-z^0|\bigr). \end{split}
\end{equation}
By choosing  sufficiently small $r_0>0$ and $\varepsilon_0>0$, we then obtain that  the claim \eqref{2.35a} holds true.

We now obtain  from \eqref{2.35a} that  \(A(z,U)\) is invertible and
\(\|A(z,U)^{-1}\|_{\mathrm F}\le C\) holds for  $(z, U)\in B_{r_0}(z^0)\times B_{\varepsilon_0}(\Psi_{p,z^0})$. Let \(K(z,U)\) be as in \eqref{2.28}, and set
\(R(z,U):=R_A(z,U)\), where \(R_A\) is given by
\eqref{eq:oct1}.
Thus, we note from \eqref{2.26} that in order  to prove \eqref{2.33}, it next suffices to solve
$F(z,U)=0$ in  $B_{r_0}(z^0)$, where $U\in B_{\varepsilon_0}(\Psi_{p,z^0})$ is fixed,  and 
\begin{equation}\label{55:z0}
F:=(F_1,\cdots,F_N),\ \  F_i(z,U):=\big\langle K(z,U),\, T_i(p,z)\big\rangle.
\end{equation}
To solve \(F(z,U)=0\), we first claim that
\begin{equation}\label{z0}
\left.D_zF(z,\Psi_{p,z^0})\right|_{\substack{z=z^0}}=-\Gamma(p,z^0),
\end{equation}
where $\Gamma(p,z):=\big(\langle T_i(p,z),T_j(p,z)\rangle\big)_{i,j=1}^N$, and  $T_i(p,z)$ is given by \eqref{tder}. In fact, differentiating \(\mathsf M(\Psi_{p,z})=I_N\) yields that
\[
\mathsf M(\Psi_{p,z},\partial_{z_j}\Psi_{p,z})
=-\mathsf M(\partial_{z_j}\Psi_{p,z},\Psi_{p,z}).
\]
Since $A(z^0, \Psi_{p, z^0})=I_N$	and \(\partial_{z_j}(A^{-1})
=-A^{-1}(\partial_{z_j}A)A^{-1},\)
one gets that
\begin{equation}\label{eq:center-coordinate-tangent}
\begin{aligned}
	\left.\partial_{z_j}K(z,\Psi_{p,z^0})\right|_{z=z^0}
	=&\left.\partial_{z_j}\big[A(z,\Psi_{p,z^0})^{-1}\Psi_{p,z^0}-\Psi_{p,z}\big]\right|_{z=z^0}\\
	=&-\mathsf M(\Psi_{p,z^0},\partial_{z_j}\Psi_{p,z^0})\Psi_{p,z^0}
	-\partial_{z_j}\Psi_{p,z^0}=-T_j(p,z^0).
\end{aligned}
\end{equation}
This thus proves  \eqref{z0}, because \(K(z^0, \Psi_{p,z^0}) = 0\).

We next claim that
\begin{equation}\label{step2}
\|D_zF(z,U)+\Gamma(p,z^0)\|_{\mathrm F}
\le C\bigl(\|U-\Psi_{p,z^0}\|_{H^1}+|z-z^0|\bigr).
\end{equation}
Recall that $F_i(z,U)=\big\langle K(z,U),\, T_i(p,z)\big\rangle$,
which implies that
\begin{align}\label{2.41}
\partial_{z_j}F_i(z,U)+\Gamma_{ij}(p,z^0)
=&\big\langle\partial_{z_j}K(z,U)+T_j(p,z^0),\, T_i(p,z)\big\rangle\\
&+\big\langle T_j(p,z^0),\, T_i(p,z^0)-T_i(p,z)\big\rangle+\big\langle K(z,U),\, \partial_{z_j}T_i(p,z)\big\rangle.\nonumber
\end{align}
Similar to \eqref{2.36}, we can follow
\eqref{eq:reference-center-derivatives}  to obtain that
\begin{align}\label{2.42}
\|T_i(p,z)-T_i(p,z^0)\|_{H^1}\le C|z-z^0|\quad\hbox{and}\quad
\|\partial_{z_j}T_i(p,z)\|_{H^1}\le C.
\end{align}
Since
\[
A^{-1}-I_N=A^{-1}(I_N-A), \ \ K(z,U)=A^{-1}(U-\Psi_{p,z^0})
+(A^{-1}-I_N)\Psi_{p,z^0}+\Psi_{p,z^0}-\Psi_{p,z},
\]
we conclude from \eqref{2.36} and \eqref{2.37} that
\begin{equation}\label{2.54}
\|K(z,U)\|_{H^1}
\le C\bigl(\|U-\Psi_{p,z^0}\|_{H^1}+|z-z^0|\bigr).
\end{equation}
This hence  shows from \eqref{2.41} and \eqref{2.42} that
\begin{equation}\label{2.55}
\big|\partial_{z_j}F_i(z,U)+\Gamma_{ij}(p,z^0)\big|
\le C\Big(
\|\partial_{z_j}K(z,U)+T_j(p,z)\|_{H^1}
+\|K(z,U)\|_{H^1}+|z-z^0|
\Big).
\end{equation}
Moreover, differentiating  $K(z,U)=A(z, U)^{-1}U-\Psi_{p,z} $ gives that
\[
\begin{aligned}
\partial_{z_j}K(z,U)
=-A^{-1}\mathsf M(U,\partial_{z_j}\Psi_{p,z})A^{-1}U
-\partial_{z_j}\Psi_{p,z}.
\end{aligned}
\]
By the facts that $A^{-1}\mathsf M(U,\, \partial_{z_j}\Psi_{p,z})
=\mathsf M(A^{-1}U,\, \partial_{z_j}\Psi_{p,z})$ and
$$
A^{-1}U=\Psi_{p,z}+K(z,U),\ \
\mathsf M(\Psi_{p,z},\, \partial_{z_j}\Psi_{p,z})
=-\mathsf M(\partial_{z_j}\Psi_{p,z},\, \Psi_{p,z}),$$
we have
\begin{align*}
&\partial_{z_j}K(z,U)+T_j(p,z)\\
=&-\mathsf M(\Psi_{p,z},\partial_{z_j}\Psi_{p,z})K(z,U)-\mathsf M(K(z,U),\partial_{z_j}\Psi_{p,z})
(\Psi_{p,z}+K(z,U)),
\end{align*}
and thus
\begin{align}\label{2.45}
\big\|\partial_{z_j}K(z,U)+T_j(p,z)\big\|_{H^1}
\leq C\big(\|K(z,U)\|_{H^1}+\|K(z,U)\|_{H^1}^2\big).
\end{align}
As a consequence of \eqref{2.54}--\eqref{2.45},  we conclude that  the claim \eqref{step2} holds true.

Denote \(c_*:=\frac12\inf_{p\in I_{1/8}}\|Q_p'\|_2^2>0\). One can verify from  \eqref{2.35}  that  $$\|\Gamma(p,z)-\|Q_p'\|_2^2I_N\|_{\mathrm F} \le ce^{-\theta D(z)},$$
where $c, \theta>0$ are independent of $(p,z)$.  Increasing $D_0$ if necessary, we may assume that $ce^{-\theta D(z)}\leq c_*$ holds for any $D(z)\ge D_0$. This shows that
$$
\lambda_{\min}(\Gamma(p,z))
\ge \|Q_p'\|_2^2-c_*
\ge c_*,
$$
and hence
\begin{equation}\label{2.47}
\|\Gamma(p,z)^{-1}\|_{\mathrm F}\le \sqrt N/c_*,\ \ \forall\ p\in I_{1/8}, \ z\in\R^N\ \text{with}\ D(z)\ge D_0,
\end{equation}
where $\lambda_{\min}(\Gamma(p,z))$ denotes the smallest eigenvalue of $\Gamma(p,z)$.
Define
$$\mathcal T_U(z):=z+\Gamma(p,z^0)^{-1}F(z,U), \ \ \forall z\in B_{r_0}(z^0).$$
Since
\[
D_z\mathcal T_U(z)=\Gamma(p,z^0)^{-1}
\bigl[D_zF(z,U)+\Gamma(p,z^0)\bigr],
\]
it follows from \eqref{step2} and \eqref{2.47}  that
\[\|D_z\mathcal T_U(z)\|_{\mathrm F}\le C(\varepsilon_0+r_0) \ \ \mbox{on}\,  \ B_{r_0}(z^0).\]
By decreasing $(r_0, \varepsilon_0)$  if necessary, we get that $\|D_z\mathcal T_U(z)\|_{\mathrm F}\le 1/2$. 
This further implies from  \eqref{2.54} and \eqref{2.47}  that
\begin{align}\label{2.59}
|\mathcal T_U(z)-z^0|
&\leq
|\mathcal T_U(z)-\mathcal T_U(z^0)|
+
|\mathcal T_U(z^0)-z^0|\nonumber\\
&=|\mathcal T_U(z)-\mathcal T_U(z^0)|
+|\Gamma(p,z^0)^{-1}F(z^0, U)|
\\
&\leq
\frac12|z-z^0|+C\|U-\Psi_{p, z^0}\|_{H^1}.\nonumber
\end{align}
Choose $\varepsilon_0>0$ sufficiently small so that $C\varepsilon_0<r_0/4$.  We then conclude from above that
\[
|\mathcal T_U(z)-z^0|<r_0,\ \ \forall\ z\in B_{r_0}(z^0).
\]
Therefore,  for fixed $p\in I_{1/8}$, $z^0\in\R^N$ satisfying $D(z^0)\geq D_0$,  and $U\in B_{\varepsilon_0}(\Psi_{p, z^0})$, the contraction-mapping theorem  gives that there exists $z=z(U)\in B_{r_0}(z^0)$ such that $$z+\Gamma(p,z^0)^{-1}F(z,U)=\mathcal T_U(z)=z,$$ which yields that
$F(z(U), U)=0$.  
This therefore proves \eqref{2.33} in view of (\ref{55:z0}).

Finally,  substituting $\mathcal T_U(z(U))=z(U)$ into \eqref{2.59}, we derive that
$$|z(U)-z^0|\leq 2C\|U-\Psi_{p, z^0}\|_{H^1},$$
which then proves \eqref{2.38b} in view of \eqref{2.54}. The proof of Lemma \ref{lem:center} is therefore complete.\end{proof}



\subsection{Coercivity and energy reduction}\label{sec:taylor-reduction}

The purpose of this subsection  is to  prove Lemma~\ref{lem:energy-taylor} on  the energy lower bound of $J_N(p)$ as $p\to 2$. Towards this aim, we define
\begin{equation}\label{eq:linearized-reference-operator}
\mathcal L_{p,z}:=-\partial_{xx}+\kappa_p^2-\mathcal V_p(\Psi_{p,z}),
\end{equation}
where
\begin{equation}\label{v}
\mathcal V_p(\Psi_{p,z})
:=|\Psi_{p,z}|^{2p-2}I_N
+(2p-2)|\Psi_{p,z}|^{2p-4}\Psi_{p,z}\Psi_{p,z}^\top.
\end{equation}
Let  $X_{p,z}$ be given by \eqref{xpz} for $p\in I_{1/8}$ and $z\in\R^N$ satisfying $D(z)\geq D_*$. We first establish  the following uniform coercivity estimate on \(X_{p,z}\).

\begin{lemma}\label{prop:slice-coercivity}
There exist \(0<\delta<1/8\) and \(D_1>D_*\) such that for any
\(p\in I_\delta:=[2-\delta, 2+\delta]\) and  $z\in\R^N$ satisfying \(D(z)\ge D_1\),
\begin{equation}\label{3.4}
	\langle\mathcal L_{p,z}H,H\rangle\ge c_*\|H\|_{H^1}^2,\ \ \forall\  H\in X_{p,z},
\end{equation}
where $c_*>0$ is independent of $(p, z)$.
Consequently, for every \(f\in H^{-1}(\R;\R^N)\), there  exists a unique \(H_{f}\in X_{p,z}\) such that
\begin{equation}\label{eq:s3-13a}
	\|H_{f}\|_{H^1}\le c_*^{-1}\|f\|_{H^{-1}},\ \	\langle\mathcal L_{p,z}H_{f},\ G\rangle=\langle f,\, G\rangle, 	\ \ \forall\,  G\in X_{p,z},
\end{equation}
where $H^{-1}(\R;\R^N)$ denotes the dual space of $H^1(\R;\R^N)$.
\end{lemma}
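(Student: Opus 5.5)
The plan is a contradiction-plus-localization argument. It reduces \eqref{3.4} to the known nondegeneracy of the scalar operators $L_{\pm,2}$ at $p=2$, where $Q_2=w_*$.

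\textbf{Step 1: reduce to a weaker coercivity statement.} It suffices to find $c>0$ with $\langle\mathcal L_{p,z}H,H\rangle\ge c\|H\|_2^2$ on $X_{p,z}$. Since $\mathcal V_p(\Psi_{p,z})$ is uniformly bounded in $L^\infty$ by \eqref{eq:reference-center-derivatives}, one has $\langle\mathcal L_{p,z}H,H\rangle\ge\|H'\|_2^2-C\|H\|_2^2$. Interpolating between this bound and the $L^2$ bound upgrades the estimate to $H^1$ coercivity.

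\textbf{Step 2: contradiction setup.} Suppose the weaker bound fails. Then there are $p_n\to p_\infty\in[2-\delta,2+\delta]$ (later $\delta_n\to0$, so $p_n\to2$), centers $z_n$ with $D(z_n)\to\infty$, and $H_n\in X_{p_n,z_n}$ with $\|H_n\|_{H^1}=1$ and $\limsup\langle\mathcal L_{p_n,z_n}H_n,H_n\rangle\le0$. The correct logical form is: if no $\delta,D_1$ work, pick such sequences.

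Take a partition of unity $\chi_j^2$ with $\sum_j\chi_j^2+\chi_0^2=1$. Each $\chi_j$ is localized in a window of size $\sim D/4$ around $z_j$, and $\chi_0$ lives away from all centers. The IMS localization formula gives
\[
\langle\mathcal L H,H\rangle=\sum_{j=0}^N\langle\mathcal L(\chi_jH),\chi_jH\rangle-\sum_j\|\chi_j'H\|_2^2,
\]
and the error term is $O(D^{-2})$. By \eqref{2.35}, on the support of $\chi_j$ the potential $\mathcal V_p(\Psi)$ equals $\mathcal V_p(e_jQ_p(\cdot-z_j))$ up to $o(1)$. Acting on the vector $e_j$ this is $(2p-1)Q_p^{2p-2}$, and on the orthogonal directions it is $Q_p^{2p-2}$. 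So after translating, $\chi_jH$ sees $L_{+,p}$ in component $j$ and $L_{-,p}$ in the other components. On the support of $\chi_0$ the potential is $o(1)$, so that piece contributes at least $\kappa_p^2\|\chi_0H\|_2^2-o(1)$.

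\textbf{Step 3: localized spectral input (the main obstacle).} I must show that the constraints defining $X_{p,z}$ pass to the limit as orthogonality to the kernels. After translating by $z_j$, the pieces $\chi_jH_n$ converge weakly. The constraint $\langle K,T_j\rangle=0$ together with \eqref{2.35} gives, in the limit, orthogonality of the $j$-th component to $Q_2'$. The constraint $\mathsf M(K,\Psi)=0$ gives orthogonality of every component $i$ to $Q_2$ at center $z_j$; these are the $\langle K_i,\Psi_j\rangle=0$ entries. These are exactly the kernel directions $\ker L_{+,2}=\mathrm{span}\{Q_2'\}$ and $\ker L_{-,2}=\mathrm{span}\{Q_2\}$.

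The delicate point is the entries $\langle K_j,\Psi_j\rangle$. In that component $L_{+,2}$ has a negative direction, which orthogonality to $Q_2'$ alone does not remove. The saving fact is that orthogonality to $Q_2$ suffices: $\langle L_{+}^{-1}Q,Q\rangle<0$ in the $L^2$-subcritical/critical regime, and it equals $0$ at $p=2$, which is exactly mass-critical for $d=1$. At $p=2$ one checks $\langle L_{+,2}^{-1}w_*,w_*\rangle=-\frac12\frac{d}{d\kappa}\|Q\|^2$ along the scaling family. If this vanishes, coercivity of $L_{+,2}$ on $\{Q,Q'\}^\perp$ instead follows from the standard fact that the form is nonnegative there with kernel generated by $x w_*$-type directions. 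This is the step I would verify most carefully. If pure orthogonality fails at $p=2$, I would use that the minimizing-problem geometry, where $\mathsf M$ fixes the full Gram structure, also removes $L_{+}^{-1}Q$.

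With positivity of $L_{\pm,2}$ on these orthogonal complements established, a standard weak-limit argument finishes. Either the localized limits are nonzero, which contradicts strict positivity, or all mass escapes to the $\chi_0$ region, which contradicts the positive lower bound $\kappa_p^2\ge\kappa_-^2$. Continuity in $p$ then extends the result to $p\in I_\delta$.

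\textbf{Step 4: the consequence \eqref{eq:s3-13a}.} The space $X_{p,z}$ is a closed subspace of $H^1(\R;\R^N)$, and the bilinear form $\langle\mathcal L_{p,z}\cdot,\cdot\rangle$ is bounded and coercive on it. The Lax--Milgram theorem applied to $f|_{X_{p,z}}$ gives the unique $H_f$, and the bound $\|H_f\|_{H^1}\le c_*^{-1}\|f\|_{H^{-1}}$ follows by testing with $G=H_f$.
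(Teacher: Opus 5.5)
\textbf{The gap is in Step 3, at the point you flagged.} You assert that $p=2$ is the mass-critical exponent in one dimension and that $\langle L_{+,2}^{-1}w_*,w_*\rangle=0$. Both claims are false. At $p=2$ the nonlinearity $|U|^{2p-2}U$ is cubic, and in $d=1$ the $L^2$-critical case is $p=3$ (quintic). In this paper $p=2$ is the threshold for the fermionic problem $J_N(p)$, not the threshold of $L^2$-criticality.

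Under your premise the estimate you need would in fact fail. The function $\psi:=L_{+,2}^{-1}w_*$ is even, hence orthogonal to $w_*'$. If also $\langle\psi,w_*\rangle=0$, then $\psi\in\{w_*,w_*'\}^\perp$ and $\langle L_{+,2}\psi,\psi\rangle=\langle w_*,\psi\rangle=0$, so no coercivity on $\{w_*,w_*'\}^\perp$ is possible. Your remark that the form is nonnegative ``with kernel generated by $xw_*$-type directions'' therefore does not give \eqref{3.4}. The fallback to the ``minimizing-problem geometry'' is not available either: membership in $X_{p,z}$ only encodes $\mathsf M(K,\Psi_{p,z})=0$ and $\langle K,T_j\rangle=0$. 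As written, the $j$-th component near $z_j$, which is the only place $L_{+}$ enters, is not controlled, and the argument does not close.

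\textbf{The missing fact is the correct sign.} Set $\omega=\kappa^2$. Then $Q_\omega=\sqrt2\,\kappa\,\mathrm{sech}(\kappa x)$ solves $-Q''+\omega Q-Q^3=0$ with $\|Q_\omega\|_2^2=4\kappa$. Differentiating the equation in $\omega$ gives $L_{+,2}\,\partial_\omega Q_\omega=-Q_\omega$, so
\[
\langle L_{+,2}^{-1}w_*,w_*\rangle=-\tfrac12\,\tfrac{d}{d\omega}\|Q_\omega\|_2^2\Big|_{\kappa=1/4}=-\frac{1}{\kappa_2}=-4<0 .
\]
$L_{+,2}$ has a single negative eigenvalue and $\ker L_{+,2}=\operatorname{span}\{w_*'\}$. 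Combined with the negative sign above, Weinstein's argument gives $\langle L_{+,2}h,h\rangle\ge c\|h\|_{H^1}^2$ for $h\perp\{w_*,w_*'\}$. Likewise, $L_{-,2}\ge0$ with $\ker L_{-,2}=\operatorname{span}\{w_*\}$ gives coercivity on $w_*^\perp$. This is exactly the input the paper imports from Martel--Merle--Tsai, Lemma~2.2.

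With this corrected, the rest of your plan is sound and has the same architecture as the paper's proof: IMS localization, $L_{+}$ in the $j$-th component and $L_{-}$ in the others near $z_j$, the exterior piece controlled by $\kappa_p^2$, and Lax--Milgram. The only difference is how you close. You argue by contradiction with windows of size $\sim D/4$ and $p_n\to2$. The paper instead perturbs the scalar coercivity to $p\in I_\delta$, proves a coercivity-with-error bound for arbitrary $h$, and uses windows of fixed size $R$ with $D_1>4R$, which makes the constants explicit. In your weak-limit step, use compactness of the potential term to split the localized form into the form at the weak limit plus a remainder bounded below by $\min\{1,\kappa^2\}\|\cdot\|_{H^1}^2$; this part is routine.
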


\begin{proof}
Let \(L_{\pm,p}\) be given by  \eqref{l+-} and write \(q_{\pm,p}(h):=\langle L_{\pm,p}h,h\rangle\).
We first claim that there exist $\delta\in(0, 1/8]$ and $c>0$ such that for any \(p\in I_\delta\),
\begin{align}\label{step1}
& q_{-,p}(h)\ge c\|h\|_{H^1}^2,\ \
\text{if }\ h\perp Q_p,\\
& q_{+,p}(h)\ge c\|h\|_{H^1}^2,\ \
\text{if }\ h\perp \{Q_p,\, Q_p'\}.\label{2.56}
\end{align}
In fact, for \(p=2\),  it follows from  \cite[Lemma 2.2]{MMT2006} that there exists a constant $c_1>0$ such that
\begin{align}\label{2.57}
q_{-,2}(h)\ge c_1\|h\|_{H^1}^2\ \ \text{for any}\  \ h\perp Q_2,\ \  \  \
q_{+,2}(h)\ge c_1\|h\|_{H^1}^2\ \ \text{for any}\ \ h\perp \{Q_2,\, Q_2'\}.
\end{align}
To extend these estimates to a small neighborhood of \(p=2\), we denote \(\widetilde h:=h-\langle h,Q_2\rangle Q_2\) for \(h\perp Q_p\). We then have
\[
\widetilde h\perp Q_2,\ \
\|h-\widetilde h\|_{H^1}
\le\|Q_2\|_{H^1}\|Q_2-Q_p\|_2\|h\|_2
=o(1)\|h\|_{H^1}\ \ \text{as}\ \ p\to2.
\]
Since $\lim\limits_{p\to2}\kappa_p=\kappa_2$ and $ \lim\limits_{p\to2}\|Q_p^{2p-2}-Q_2^2\|_{\infty}=0$, 
we further obtain that
\[
q_{-,p}(h)=q_{-,2}(\widetilde h)+o(1)\|h\|_{H^1}^2\ \ \text{as}\ \ p\to2,
\]
which thus proves the claim \eqref{step1} in view of (\ref{2.57}). Moreover,  set
$$
\widetilde h:=h-\langle h,Q_2\rangle Q_2
-\frac{\langle h,Q_2'\rangle}{\|Q_2'\|_2^2}Q_2'  \ \ \mbox{for}\ \    h\perp \{Q_p,\, Q_p'\} .
$$
Similarly, following the facts that
$$\lim\limits_{p\to2}\kappa_p=\kappa_2,\ \ \lim\limits_{p\to2}\|(2p-1)Q_p^{2p-2}-3Q_2^2\|_{\infty}=0,$$
since \(Q_p\to Q_2\) and \(Q_p'\to Q_2'\) in \(L^2\) as $p\to2$,   we deduce that
$$\|h-\widetilde h\|_{H^1}=o(1)\|h\|_{H^1},\ \ q_{+,p}(h)=q_{+,2}(\widetilde h)+o(1)\|h\|_{H^1}^2 \ \  \mbox{as}\,\  p\to 2.
 $$
 This proves the claim \eqref{2.56} in view of \eqref{2.57}.

Applying \eqref{step1} and \eqref{2.56}, we now claim that  for any $p\in I_\delta$ and $h\in H^1(\R)$,
\begin{equation}\label{eq:scalar-coercivity-errors}
\begin{aligned}
	&q_{-,p}(h)\ge \frac{c}{4}\|h\|_{H^1}^2-C|\langle h,Q_p\rangle|^2, \ \ q_{+,p}(h)\ge \frac{c}{4}\|h\|_{H^1}^2
	-C\bigl(|\langle h,Q_p\rangle|^2+|\langle h,Q_p'\rangle|^2\bigr),
\end{aligned}
\end{equation}
where $c>0$ is as in \eqref{step1} and \eqref{2.56}.  Actually, since  $L_{-,p}Q_p=0$, it follows from  \eqref{step1} that
\[
\begin{aligned}
q_{-,p}(h)=q_{-,p}\bigl(h-\langle h,Q_p\rangle Q_p\bigr)
\ge c\|h-\langle h,Q_p\rangle Q_p\|_{H^1}^2
\ge \frac c2\|h\|_{H^1}^2-C|\langle h,Q_p\rangle|^2.
\end{aligned}
\]
Similarly, since $ L_{+,p}Q_p'=0 $ and $L_{+,p}Q_p=-(2p-2)Q_p^{2p-1}$,
one can  verify from \eqref{2.56} that
\[
\begin{aligned}
q_{+,p}(h)
&\ge q_{+,p}\Big(h-\langle h,Q_p\rangle Q_p-\frac{\langle h,Q_p'\rangle}{\|Q_p'\|_2^2}Q_p'\Big)
-C\big|\langle h,Q_p\rangle\big|\, \|h\|_2\\
&\ge c\Big\| h-\langle h,Q_p\rangle Q_p-\frac{\langle h,Q_p'\rangle}{\|Q_p'\|_2^2}Q_p'\Big\|_{H^1}^2
-C\big|\langle h,Q_p\rangle\big|\, \|h\|_2\\
&\ge \frac c2\|h\|_{H^1}^2 -C\bigl(|\langle h,Q_p\rangle|^2+|\langle h,Q_p'\rangle|^2\bigr)
-C\big|\langle h,Q_p\rangle\big|\, \|h\|_2\\
&\ge\frac c4\|h\|_{H^1}^2-C\big(|\langle h,Q_p\rangle|^2+
|\langle h,Q_p'\rangle|^2\big),
\end{aligned}
\]
where we have used Young's inequality, together with the fact that $\inf_{p\in I_\delta}\|Q_p'\|_2>0$.
These prove the claim \eqref{eq:scalar-coercivity-errors}.

We next estimate $\langle\mathcal L_{p,z}H,H\rangle$ for  \(H=(h_1,\cdots,h_N)^\top\in X_{p,z}\).
Fix  sufficiently large \(R>1\), and assume \(D(z)>4R\).  Choose smooth cutoff functions
\(\chi_1,\cdots,\chi_N\in C_c^\infty(\R;[0,1])\) such that
\(\chi_j(x)\equiv 1\) for \(|x-z_j|\le R\),
\(\chi_j(x)\equiv 0\) for \(|x-z_j|\ge2R\), and
\begin{equation}\label{sep-cutoff}
\sum_{j=0}^N\chi_j^2=1,\ \
\sum_{j=0}^N|\chi_j'|^2\le CR^{-2},
\end{equation}
so that \(\chi_0\in C^\infty(\R;[0,1])\) is an exterior cutoff function. The IMS identity
(cf. \cite[Theorem 3.2]{ims}) then yields from (\ref{sep-cutoff}) that for any $H\in X_{p,z}$,
\begin{equation}\label{2.60a}
\begin{split}
\langle\mathcal L_{p,z}H,H\rangle
&=\sum_{j=0}^N\langle\mathcal L_{p,z}\chi_jH,\chi_jH\rangle
-\int_\R\sum_{j=0}^N|\chi_j'|^2|H|^2\\
&\geq \sum_{j=0}^N\langle\mathcal L_{p,z}\chi_jH,\chi_jH\rangle
-CR^{-2}\|H\|_2^2.\end{split}
\end{equation}
Since $\kappa_-=\inf_{p\in I_{1/8}}\kappa_p>0$ and  \(\|\Psi_{p,z}\|_{L^\infty(\operatorname{supp}\chi_0)}
\le Ce^{-\theta R}\), we can  choose \(R>0\) sufficiently large  so that
$$\sup_{\operatorname{supp}\chi_0}
\|\mathcal V_p(\Psi_{p,z})\|_{\mathrm F}\le Ce^{-\theta R}\le\kappa_-^2/2.$$
This then yields from (\ref{eq:linearized-reference-operator}) that
\begin{equation}\label{3.13b}
\begin{aligned}
	\langle\mathcal L_{p,z}\chi_0H,\chi_0H\rangle
	\ge \min\{1, \kappa_-^2/2\}\|\chi_0H\|_{H^1}^2.
\end{aligned}
\end{equation}
In view of (\ref{2.60a}), it thus remains to estimate $\langle\mathcal L_{p,z}\chi_jH,\chi_jH\rangle$
from below for $j=1, \cdots,N$.

By the exponential decay of $Q_p$,  and the $L^\infty$-uniform boundedness of $\{\Psi_{p,z}\}$ and $\{Q_{p}\}$,  we deduce  from \eqref{2.35}  and (\ref{v})  that
\begin{align*}
\sup_{B_{2R}(z_j)}
\|\mathcal V_p(\Psi_{p,z})-\mathcal V_p(Q_{jp}e_j)\|_{\mathrm F}
\le C\sup_{B_{2R}(z_j)}|\Psi_{p,z}-Q_{jp}e_j|
\le Ce^{-\theta R},\ \ j=1, \cdots,N,
\end{align*}
where $C,\theta>0$ are independent of $(p,z, R)$. 
This then implies that for any $H\in X_{p,z}$,
\begin{equation}\label{2.60}
\begin{split}
&\Big|\langle\mathcal L_{p,z}\chi_jH,\chi_jH\rangle\ -\left\langle
\bigl(-\partial_{xx}+\kappa_p^2-\mathcal V_p(Q_{jp}e_j)\bigr)\chi_jH,
\chi_jH
\right\rangle\Big|\\
\le & Ce^{-\theta R}\|\chi_jH\|_2^2,\ \ j=1, \cdots,N.
\end{split}
\end{equation}
Note from \eqref{eq:scalar-coercivity-errors}  that  for any $ p\in I_\delta$ and  $H=(h_1, \cdots, h_N)^\top\in X_{p,z}$,
\begin{align}\label{2.61}
& \Big\langle
\big(-\partial_{xx}+\kappa_p^2-\mathcal V_p(Q_{jp}e_j)\big)\chi_jH,\,
\chi_jH\Big\rangle\nonumber\\
=&q_{+,p}\bigl((\chi_jh_j)(\cdot+z_j)\bigr)
+\sum_{i\ne j}q_{-,p}\bigl((\chi_jh_i)(\cdot+z_j)\bigr)\\[-1.5mm]
\geq&\frac{c}{4}\sum_{i=1}^N\|\chi_jh_i\|_{H^1}^2-C\sum_{i=1}^N|\langle\chi_jh_i,Q_{jp}\rangle|^2 -C|\langle\chi_jh_j,Q_{jp}'\rangle|^2,\ \ j=1, \cdots,N.\nonumber
\end{align}
Since $H\in X_{p, z}$, we have
\(\langle h_i,(\Psi_{p,z})_j\rangle=0\) and \(\langle H,T_j\rangle=0\) for $i, j=1, \cdots, N$, which yields that
\begin{align*}
\arraycolsep=1.5pt
\left\{\begin{array}{lll}
	\big\langle\chi_jh_i,\, Q_{jp}\big\rangle
	&=\big\langle h_i,\, Q_{jp}-(\Psi_{p,z})_j\big\rangle
	-\big\langle(1-\chi_j)h_i,\, Q_{jp}\big\rangle,\ \ i, j=1, \cdots, N,\\[2mm]
	\big\langle\chi_jh_j,\, Q_{jp}'\big\rangle
	&=\big\langle H,\, e_jQ_{jp}'+T_j\big\rangle
	-\big\langle(1-\chi_j)h_j,\, Q_{jp}'\big\rangle,\ \  j=1, \cdots, N.
\end{array}\right.
\end{align*}
By the exponential  estimate $$\|Q_{jp}\|_{L^2(B_R^c(z_j))} +\|Q_{jp}'\|_{L^2(B_R^c(z_j))} \le Ce^{-\theta R},$$
we further conclude from \eqref{2.35} that for any $ p\in I_\delta$ and  $H=(h_1, \cdots, h_N)^\top\in X_{p,z}$,
\begin{equation}\label{3.7b}
\sum_{i=1}^N|\langle\chi_jh_i,Q_{jp}\rangle|
+|\langle\chi_jh_j,Q_{jp}'\rangle|
\le Ce^{-\theta R}\|H\|_{H^1},  \ \ j=1, \cdots, N.
\end{equation}
This thus shows from \eqref{2.60} and  \eqref{2.61} that  for any $ p\in I_\delta$,
\begin{equation}\label{3.13a}
\langle\mathcal L_{p,z}\chi_jH,\chi_jH\rangle
\ge \frac{c}{4}\|\chi_jH\|_{H^1}^2-Ce^{-\theta R}\|H\|_{H^1}^2,\ \  j=1, \cdots, N.
\end{equation}

Denote $2c_* :=\min\{c/4, 1, \kappa_-^2/2\}>0$. Consequently,  we deduce from  \eqref{2.60a},  \eqref{3.13b} and \eqref{3.13a} that  for any $ p\in I_\delta$,
\begin{equation}\label{2.67}
\begin{split}
\langle\mathcal L_{p,z}H,H\rangle
&\geq 2c_*\sum_{j=0}^N\|\chi_jH\|_{H^1}^2
-C\big(R^{-2}+e^{-\theta R}\big)\|H\|_{H^1}^2\\
&\geq \bigl[2c_*-C(R^{-2}+e^{-\theta R})\bigr]\|H\|_{H^1}^2,
\end{split}
\end{equation}
where the last inequality follows from the fact that
\[
\sum_{j=0}^N\|\chi_jH\|_{H^1}^2
=\|H\|_{H^1}^2+\int_\R\sum_{j=0}^N|\chi_j'|^2|H|^2\geq \|H\|_{H^1}^2.
\]
Choose \(R>0\) sufficiently large  so that   \(C\big(R^{-2}+e^{-\theta R}\big)<c_*\) holds in \eqref{2.67},
and then take \(D_1>4R\).  This therefore proves \eqref{3.4}. Finally, note that  \(X_{p,z}\) is a closed subspace of \(H^1\), and the $L^\infty$-uniform boundedness of $\mathcal V_p(\Psi_{p,z})$ gives that
$$ |\langle\mathcal L_{p,z}H,G\rangle|
\le C\|H\|_{H^1}\|G\|_{H^1},\ \ \forall \, H, G\in X_{p,z}.$$
Applying  the Lax--Milgram theorem on \(X_{p,z}\), we then obtain from  \eqref{3.4} that  \eqref{eq:s3-13a} holds true.  This therefore completes the proof of Lemma \ref{prop:slice-coercivity}. \end{proof}

\vspace{0.05cm}

Let \(p\in I_\delta\) and $z\in \R^N$ satisfy $D(z)\ge D_1$, where $\delta >0$ and $D_1>0$ are as in Lemma \ref{prop:slice-coercivity}. Define
\begin{align}\label{f}
\cF_{\kappa_p}(U):=-U''+\kappa_p^2U-|U|^{2p-2}U,
\ \  U\in H^1(\R;\R^N),\ \  F:=-\cF_{\kappa_p}(\Psi_{p,z}),
\end{align}
and let \(H_{\mathrm{F}}\in X_{p,z}\) be uniquely given by Lemma \ref{prop:slice-coercivity}.  We then deduce from Lemma \ref{prop:slice-coercivity} that
\begin{equation}\label{eq:energy-linear-response}
\|H_{\mathrm{F}}\|_{H^1}\le c_*^{-1}\|F\|_{H^{-1}}, \ \
\langle\mathcal L_{p,z}H_{\mathrm{F}},G\rangle=\langle F,G\rangle,\ \ \forall\, G\in X_{p,z} .
\end{equation}
The definition of $X_{p,z}$ gives that \(\mathsf M(K,\Psi_{p,z})=0\) holds for any $K\in X_{p, z}$.  This then implies that $$\mathsf M(\Psi_{p,z}+K)=I_N+\mathsf M(K)\ge I_N,$$
and  hence
\begin{equation}\label{2.70}
\cR_{\Psi_{p,z}}(K):
=[I_N+\mathsf M(K)]^{-1/2}(\Psi_{p,z}+K)\in\cM_N
\end{equation}
is well-defined.

\begin{lemma}\label{lem:energy-taylor}
Suppose \(p\in I_\delta\) and  $z\in\R^N$ satisfies \(D(z)\ge D_1\), where   \(0<\delta<1/8\) and $D_1>D_*$ are as in Lemma \ref{prop:slice-coercivity}. Then there exists \(\varepsilon_0>0\) such that for any
\(\|F\|_{H^{-1}}\le\varepsilon_0\) and
\(K\in X_{p,z}\) satisfying \(\|K\|_{H^1}\le\varepsilon_0\),
\begin{equation}\label{eq:energy-square-lower}
	E_p(\cR_{\Psi_{p,z}}(K))
	\ge E_p(\Psi_{p,z})-\langle F,H_{\mathrm{F}}\rangle
	+\frac{c_*}{2}\|K-H_{\mathrm{F}}\|_{H^1}^2-C\|F\|_{H^{-1}}^3,
\end{equation}
where \(c_*>0\) is as in  \eqref{3.4}, $F$, $H_{\mathrm{F}}$ and $ \cR_{\Psi_{p,z}}(K)$ are given by \eqref{f}--\eqref{2.70}, respectively.
\end{lemma}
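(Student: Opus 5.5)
Our plan is a second-order Taylor expansion of $E_p$ along the orthonormalized curve $K\mapsto\cR_{\Psi_{p,z}}(K)$, followed by completing the square with respect to the quadratic form of $\mathcal L_{p,z}$ on $X_{p,z}$.

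\emph{Step 1: expansion of the Gram normalization.} Since $\mathsf M(K,\Psi_{p,z})=0$, we have $[I_N+\mathsf M(K)]^{-1/2}=I_N-\tfrac12\mathsf M(K)+O(\|K\|_{H^1}^4)$. Hence
\[
\cR_{\Psi_{p,z}}(K)=\Psi_{p,z}+K-\tfrac12\mathsf M(K)\Psi_{p,z}+O(\|K\|_{H^1}^3)
\]
in $H^1$.

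\emph{Step 2: Taylor expansion of the energy.} We expand $E_p$ at $\Psi_{p,z}$ to second order. The first variation is $2\langle -\Psi''-|\Psi|^{2p-2}\Psi,\cdot\rangle$. Using the identity $-\Psi''-|\Psi|^{2p-2}\Psi=-\kappa_p^2\Psi-F$, the first-order term in $K$ becomes $-2\langle F,K\rangle$, because $\langle\Psi_j,K_j\rangle=0$ for each $j$. The term $-\tfrac12\mathsf M(K)\Psi$ contributes
\[
2\langle -\kappa_p^2\Psi-F,\,-\tfrac12\mathsf M(K)\Psi\rangle=\kappa_p^2\operatorname{tr}\mathsf M(K)+O(\|F\|_{H^{-1}}\|K\|_{H^1}^2),
\]
using $\mathsf M(\Psi)=I_N$. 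This $\kappa_p^2\|K\|_2^2$ is precisely the mass term that converts the Hessian $-\partial_{xx}-\mathcal V_p$ into $\mathcal L_{p,z}$. The remainder is cubic. Since $1<p<3$, the map $U\mapsto|U|^{2p}$ has a Taylor remainder controlled by $\|K\|_{H^1}^{\min(3,2p)}$; for $p\in I_\delta$ near $2$ we have $2p>3$, so the remainder is $O(\|K\|_{H^1}^3)$ uniformly thanks to the $L^\infty$ bounds. Altogether, with the normalization in which the energy Hessian is $2\mathcal L_{p,z}$, we obtain
\[
E_p(\cR(K))=E_p(\Psi)-2\langle F,K\rangle+\langle\mathcal L_{p,z}K,K\rangle+O\big(\|K\|^3_{H^1}+\|F\|_{H^{-1}}\|K\|^2_{H^1}\big).
\]
Here we match constants to the paper's convention, which defines $H_{\mathrm F}$ via $\langle\mathcal L H_{\mathrm F},G\rangle=\langle F,G\rangle$. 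The factor-of-two bookkeeping must be checked carefully, since the energy's $\int|u'|^2$ carries no $\tfrac12$.

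\emph{Step 3: completing the square.} Since $K,H_{\mathrm F}\in X_{p,z}$,
\[
\langle\mathcal L K,K\rangle-2\langle F,K\rangle=\langle\mathcal L(K-H_{\mathrm F}),K-H_{\mathrm F}\rangle-\langle F,H_{\mathrm F}\rangle.
\]
By Lemma~\ref{prop:slice-coercivity}, the first term is at least $c_*\|K-H_{\mathrm F}\|_{H^1}^2$.

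\emph{Step 4: absorbing the error terms (main obstacle).} The error $O(\|K\|^3+\|F\|\|K\|^2)$ must be absorbed. We write $K=(K-H_{\mathrm F})+H_{\mathrm F}$ with $\|H_{\mathrm F}\|\le c_*^{-1}\|F\|$. Then
\[
\|K\|^3\le C\big(\|K-H_{\mathrm F}\|^2\|K\|+\|F\|^3\big),
\]
and smallness $\|K\|\le\varepsilon_0$ absorbs the first term into $\tfrac{c_*}{2}\|K-H_{\mathrm F}\|^2$. Similarly, $\|F\|\|K\|^2\le 2\|F\|\|K-H_{\mathrm F}\|^2+C\|F\|^3$, and smallness $\|F\|\le\varepsilon_0$ absorbs it as well. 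The resulting inequality is \eqref{eq:energy-square-lower}.

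The delicate point is the uniformity of the cubic Taylor remainder for the non-smooth nonlinearity $|U|^{2p}$ near zeros of $|\Psi|$. We would handle it via the pointwise bound
\[
\big||a+b|^{2p}-\text{(2nd-order Taylor polynomial)}\big|\le C\big(|a|^{2p-3}|b|^3+|b|^{2p}\big),
\]
valid for $2p\ge3$, together with $H^1\hookrightarrow L^\infty$.
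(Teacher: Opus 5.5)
Your proposal is correct and follows the paper's proof essentially step for step. Both arguments expand to $E_p(\Psi_{p,z})-2\langle F,K\rangle+\langle\mathcal L_{p,z}K,K\rangle+O(\|F\|_{H^{-1}}\|K\|_{H^1}^2+\|K\|_{H^1}^3)$, complete the square on $X_{p,z}$ using the defining relation of $H_{\mathrm F}$ and the coercivity \eqref{3.4}, and absorb the errors via $\|H_{\mathrm F}\|_{H^1}\le c_*^{-1}\|F\|_{H^{-1}}$.

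The only difference is cosmetic and lies in how the $\kappa_p^2$ mass term appears. The paper sets $h=\cR_{\Psi_{p,z}}(K)-\Psi_{p,z}$, adds $\kappa_p^2(\|\Psi_{p,z}+h\|_2^2-\|\Psi_{p,z}\|_2^2)=0$, and then replaces $h$ by $K$ using $\|h-K\|_{H^1}=O(\|K\|_{H^1}^2)$. You instead extract the same term explicitly from the $-\tfrac12\mathsf M(K)\Psi_{p,z}$ correction. The two computations are equivalent.
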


\begin{proof}
We first claim  that
\begin{equation}\label{eq:energy-taylor}
\begin{aligned}
	E_p(\cR_{\Psi_{p,z}}(K))={}&E_p(\Psi_{p,z})-2\langle F,K\rangle
	+\langle\mathcal L_{p,z}K,K\rangle\\
&+O\bigl(\|F\|_{H^{-1}}\|K\|_{H^1}^2+\|K\|_{H^1}^3\bigr)\ \ \mbox{as} \ \, \|K\|_{H^1}\to0,
\end{aligned}
\end{equation}
where
$f=O\bigl(\|F\|_{H^{-1}}\|K\|_{H^1}^2+\|K\|_{H^1}^3\bigr)$ as $\|K\|_{H^1}\to 0$
means that there exists $C >0$, independent of $(p,z,K)$,
such that
$$
|f|\le C\bigl(
\|F\|_{H^{-1}}\|K\|_{H^1}^2+\|K\|_{H^1}^3
\bigr) \ \ \mbox{as} \ \, \|K\|_{H^1}\to 0.
$$
Indeed, since \(\|\mathsf M(K)\|_{\mathrm F}\le\|K\|_2^2\),  similar to \eqref{a.3}, a matrix expansion gives that
\begin{equation}\label{2.71}
[I_N+\mathsf M(K)]^{-1/2}=I_N+O(\|K\|_{H^1}^2)\ \ \text{as}\ \ \|K\|_{H^1}\to0.
\end{equation}
Denote \(h:=\cR_{\Psi_{p,z}}(K)-\Psi_{p,z}\). It then  follows from \eqref{2.70} and \eqref{2.71}  that
\begin{equation}\label{2.72}
\|h\|_{H^1}=O(\|K\|_{H^1}),\ \
\|h-K\|_{H^1}=O(\|K\|_{H^1}^2)\ \ \text{as}\ \ \|K\|_{H^1}\to0.
\end{equation}
Moreover, since    \(\Psi_{p,z}+h,\,\Psi_{p,z}\in \cM_N\), it gives that
$$\|\Psi_{p,z}+h\|^2_2=\|\Psi_{p,z}\|^2_2=N.$$
We thus calculate that 
\begin{align}\label{2.73}
&E_p(\Psi_{p,z}+h)-E_p(\Psi_{p,z})\nonumber\\
=&\Big[E_p(\Psi_{p,z}+h)+\kappa_p^2\big\|\Psi_{p,z}+h\big\|_2^2\, \Big]
-\big[E_p(\Psi_{p,z})+\kappa_p^2\|\Psi_{p,z}\|_2^2\, \big]\nonumber\\
=&2\int_\R\big(\Psi'_{p,z}\cdot h'+\kappa_p^2\Psi_{p,z}\cdot h\big)dx
+\int_\R\big(|h'|^2+\kappa_p^2|h|^2\big)dx\nonumber\\
&-\frac1p\int_\R\big(|\Psi_{p,z}+h|^{2p}-|\Psi_{p,z}|^{2p}\big)dx\\
=&\int_\R\Big[|h'|^2+\kappa_p^2|h|^2
-|\Psi_{p,z}|^{2p-2}|h|^2
-(2p-2)|\Psi_{p,z}|^{2p-4}(\Psi_{p,z}\cdot h)^2\Big]dx\nonumber\\
&+2\int_\R\bigl(\Psi'_{p,z}\cdot h'+\kappa_p^2\Psi_{p,z}\cdot h
-|\Psi_{p,z}|^{2p-2}\Psi_{p,z}\cdot h\bigr)dx+O(\|h\|_{H^1}^3)\nonumber\\
=&-2\langle F,h\rangle
+\langle\mathcal L_{p,z}h,h\rangle+O(\|h\|_{H^1}^3)\ \ \text{as} \ \ \|K\|_{H^1}\to0,\nonumber
\end{align}
where the remainder term \(O(\|h\|_{H^1}^3)\) is uniform in \((p,z)\), due to the facts that \(2p \geq \frac{15}{4} > 3\) and \(\Psi_{p,z}\) is uniformly bounded in \(H^1\).
Replacing \(h\) by \(K\) for \eqref{2.73}, we thus deduce from \eqref{2.72} and (\ref{2.73}) that  the claim \eqref{eq:energy-taylor} holds true. 

Since \(K,H_{\mathrm{F}}\in X_{p,z}\), it derives from \eqref{eq:energy-linear-response} that 
\begin{equation}\label{eq:energy-square-identity}
\begin{split}
-2\langle F,K\rangle+\langle\mathcal L_{p,z}K,K\rangle
&=-\langle F,H_{\mathrm{F}}\rangle
+\langle\mathcal L_{p,z}(K-H_{\mathrm{F}}),K-H_{\mathrm{F}}\rangle\\
&
\ge -\langle F,H_{\mathrm{F}}\rangle+c_*\|K-H_{\mathrm{F}}\|_{H^1}^2,
\end{split}
\end{equation}
where  we have used \eqref{3.4} in the last inequality.
This then implies from \eqref{eq:energy-taylor}  that  there exist positive constants $C$ and $\varepsilon_0$, independent of $(p,z,K)$, such that for  any $p\in I_\delta$, $z\in\R^N$ satisfying $D(z)\ge D_1$, and $K\in X_{p,z}$ satisfying $\|K\|_{H^1}\le\varepsilon_0$,
\begin{equation}\label{2.76}
\begin{split}
E_p(\cR_{\Psi_{p,z}}(K))
\ge&E_p(\Psi_{p,z})-\langle F,H_{\mathrm{F}}\rangle+c_*\|K-H_{\mathrm{F}}\|_{H^1}^2\\
&-C\bigl(\|F\|_{H^{-1}}\|K\|_{H^1}^2+\|K\|_{H^1}^3\bigr).
\end{split}
\end{equation}
Since
$$\|K-H_{\mathrm{F}}\|_{H^1}\le\|K\|_{H^1}+c_*^{-1}\|F\|_{H^{-1}},$$
we deduce from \eqref{eq:energy-linear-response} that
\begin{equation}\label{5.83}
\begin{split}
&\quad \ C\big(\|F\|_{H^{-1}}\|K\|_{H^1}^2+\|K\|_{H^1}^3\big)\\
&\le C\bigl(
\|F\|_{H^{-1}}\|K-H_{\mathrm{F}}\|_{H^1}^2
+\|F\|_{H^{-1}}\|H_{\mathrm{F}}\|_{H^1}^2
+\|K-H_{\mathrm{F}}\|_{H^1}^3+\|H_{\mathrm{F}}\|_{H^1}^3\bigr)\\
&\le C\bigl(\|F\|_{H^{-1}}+\|K\|_{H^1}\bigr)
\|K-H_{\mathrm{F}}\|_{H^1}^2+C\|F\|_{H^{-1}}^3.
\end{split}
\end{equation}
Choose \(\varepsilon_0>0\) sufficiently small  such that \(C\bigl(\|F\|_{H^{-1}}+\|K\|_{H^1}\bigr)\leq c_*/2\).
We then conclude from \eqref{2.76} and (\ref{5.83}) that \eqref{eq:energy-square-lower} holds true, and the proof of Lemma \ref{lem:energy-taylor} is therefore complete.\end{proof}

\section{Energy expansion of $J_N(p)$ as $p\to2$}
\label{sec:energy-conclusion}

From this section on, for convenience we always denote
\begin{equation}\label{6.1A}
q_{ij}:=e^{-\kappa_pd_{ij}},\,\
q_D:=e^{-\kappa_pD},\ \
t_{ij}:=(p-2)\kappa_pd_{ij},
\end{equation}
where $d_{ij}>0$ and $D=D(z)>0$ are defined in (\ref{5:A5}), and the Lagrange multiplier $\kappa_p\in\R$ is as in \eqref{eq:scalar-profile-definition}.
The main purpose of  this section is to derive the following uniform energy lower bound of \(J_N(p)\) as \(p\to2\),

\begin{proposition}\label{prop:two-sided-energy-lower}
Suppose that $p\in I_\delta$ and $z\in\R^N$ satisfies $D(z)>D_0$, where $\delta>0$ is sufficiently small and $D_0>0$ is sufficiently large. Then there exists a sufficiently small $ \varepsilon_0>0$   such that  for any  \(K\in X_{p,z}\) satisfying \(\|K\|_{H^1}\le \varepsilon_0\),
\begin{equation}\label{eq:uniform-energy-lower}
	\begin{aligned}
		E_p(\cR_{\Psi_{p,z}}(K))-NJ_1(p)
		\ge{}&\frac14\sum_{i<j}e^{-2\kappa_p d_{ij}}
		\left[1+d_{ij}\left(1-\frac{1-e^{-(p-2)\kappa_pd_{ij}}}{(p-2)\kappa_pd_{ij}}\right)\right]\\
		&+\frac{c_*}{2}\|K-H_{\mathrm{F}}\|_{H^1}^2
		-C|2-p|D(z)q_{D(z)}^2
		e^{\max\{2-p,0\}\kappa_pD(z)},
	\end{aligned}
\end{equation}
where \(F=-\cF_{\kappa_p}(\Psi_{p,z})\), \(H_{\mathrm{F}}\)  and \(c_*\) are as in \eqref{f}, \eqref{eq:energy-linear-response}  and \eqref{3.4}, respectively.
\end{proposition}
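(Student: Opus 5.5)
The starting point is Lemma~\ref{lem:energy-taylor}. It reduces the claim to two things: a precise expansion of $E_p(\Psi_{p,z})-NJ_1(p)$, and an upper bound for $\langle F,H_{\mathrm F}\rangle+C\|F\|_{H^{-1}}^3$. Concretely, I will prove
\begin{equation*}
E_p(\Psi_{p,z})-NJ_1(p)\ge\frac14\sum_{i<j}q_{ij}^2\Big[1+d_{ij}\Big(1-\frac{1-e^{-t_{ij}}}{t_{ij}}\Big)\Big]-C|2-p|Dq_D^2e^{\max\{2-p,0\}\kappa_pD}-o(q_D^2),
\end{equation*}
together with $\|F\|_{H^{-1}}\le C\sum_{i<j}(\text{pair interaction})$. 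I then absorb $\langle F,H_{\mathrm F}\rangle\le c_*^{-1}\|F\|_{H^{-1}}^2$ into the error term. This absorption is delicate, since $\|F\|_{H^{-1}}$ is itself of order $q_D^{\min\{p-1,1\}}$ times at most polynomial factors, so the bound on $F$ needs care. Smallness of $\|F\|_{H^{-1}}$ for $D\ge D_0$ comes from the same estimate, which makes Lemma~\ref{lem:energy-taylor} applicable.

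\textbf{Step 1: Gram-matrix reduction.} Write $\Psi=G^{-1/2}W$ with $G=I+E$ and $E_{ij}=\langle Q_{ip},Q_{jp}\rangle$ for $i\ne j$. The kinetic part is $\mathrm{tr}(G^{-1}\mathsf M(W',W'))$. Expanding $G^{-1/2}=I-\frac12E+\frac38E^2+\cdots$, the kinetic energy becomes $N\|Q_p'\|_2^2-\sum_{i\ne j}E_{ij}\langle Q_{ip}',Q_{jp}'\rangle+O(q_D^2\cdot P(D))$.

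For the nonlinear term, $|\Psi|^2=W^\top G^{-1}W=\sum_k Q_{kp}^2+2\sum_{i<j}(G^{-1})_{ij}Q_{ip}Q_{jp}+\cdots$. Then $\frac1p\int(\sum_kQ_{kp}^2)^p$ splits into the single-bubble terms plus the cross terms $\int Q_{ip}^{2p-2}Q_{jp}^2$, which are the leading pair interactions.

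Using the explicit sech-type profile \eqref{eq:scalar-profile-definition}, I evaluate the relevant integrals asymptotically:
\begin{itemize}
\item the overlap $\langle Q_{ip},Q_{jp}\rangle\sim c\,d_{ij}q_{ij}$ when $p=2$;
\item the overlap $\int Q_{ip}^{2p-2}Q_{jp}^2$, which behaves like $q_{ij}^2$ for $p<2$ and like $q_{ij}^{2(p-1)}$ for $p>2$.
\end{itemize}
Writing the cross terms as $\int_0^{d}e^{-2\kappa_p x}e^{-(2p-4)\kappa_p\cdot}\,dx$ and integrating between the centers produces exactly the factor $d_{ij}\frac{1-e^{-t_{ij}}}{t_{ij}}$. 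The remaining contributions (the Gram correction to the $L^2$ term and $\kappa_p^2$-type identities via \eqref{q}) give the constant $\frac14$ and the $+d_{ij}$. The differences between $p$ and $2$ in these coefficients are bounded by $|p-2|Dq_D^2e^{\max\{2-p,0\}\kappa_pD}$, using $|\partial_p Q_p|\le C(1+|x|)Q_p^{\,\cdot}$.

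\textbf{Step 2: bounding $F$.} Since $-\mathcal F_{\kappa_p}(Q_{jp})=0$, the residual is
\begin{equation*}
F=-\mathcal F_{\kappa_p}(\Psi)=\big(|\Psi|^{2p-2}\Psi-\textstyle\sum_je_jQ_{jp}^{2p-1}\big)+(\text{terms from }G^{-1/2}-I).
\end{equation*}
Both parts are localized interaction terms. Estimating them in $L^2\subset H^{-1}$ gives $\|F\|_{H^{-1}}^2\le CP(D)q_D^{2\min\{p-1,1\}}\cdot q_D^{\sigma}$ for some gain $\sigma$.

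That estimate alone is not quite enough, so I bound $\langle F,H_{\mathrm F}\rangle$ more sharply. Pairing $F$ against $X_{p,z}$-projections shows its leading part is orthogonal up to $O(|p-2|Dq_D^2)$. The remainder of size $o(q_D^2)$ is absorbed by shrinking the constant in front of $\sum q_{ij}^2$. Choosing $\delta$ small and $D_0$ large lets the positive term $\frac14\sum q_{ij}^2$ dominate these losses, since all losses carry an extra factor $q_D^{\sigma}$ or $|p-2|D$.

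\textbf{Main obstacle.} I expect the hardest step to be the exact computation of the pair interaction with the sharp constant $\frac14$ and the exact $t$-dependent bracket, uniformly in $p\in I_\delta$ and $d_{ij}\to\infty$. The constants must match on both sides of $p=2$, and all other errors must be shown to be genuinely lower order, which matters most when $p>2$ and $q_{ij}^{2(p-1)}$ competes with $q_{ij}^2$. My first attempt there is to change variables so that the integral is over $[z_i,z_j]$ with the exact asymptotics $Q_p(x)\approx(4\kappa_p^2p)^{1/(2p-2)}e^{-\kappa_p|x|}$, and to control the tails separately.
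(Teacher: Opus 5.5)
There is a genuine gap: your plan treats the response term $\langle F,H_{\mathrm F}\rangle$ as an error of size $o(q_D^2)$. In fact it has the same order as the positive pair interaction and a definite coefficient.

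By Lemma~\ref{lem:residual-response}, $F=\sum_{i<j}q_{ij}H^{(0)}_{ij}+O(q_D^{11/8})$ in $L^2$ with $\|H^{(0)}_{ij}\|_2\asymp 1$. So $\|F\|_{H^{-1}}\asymp q_D$, and there is no extra gain $q_D^{\sigma}$.

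Nor is the leading part of $F$ almost orthogonal to $X_{p,z}$. The $i$-th component of $q_{ij}H^{(0)}_{ij}$ is $q_{ij}h_p^-(\cdot-z_j)$, and at $p=2$ one has $h_2^{\mp}=\frac14 w_*\pm w_*'$. Up to exponentially small corrections, the constraints defining $X_{p,z}$ only impose $K_i\perp Q_{jp}$ for all $i,j$ and $K_j\perp Q'_{jp}$. Hence the direction $e_iQ'_{jp}$ with $i\neq j$ is unconstrained, and $F$ has an order-$q_{ij}$ component along it.

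Solving the slice problem near $z_j$ with $L_{-,p}$ gives $H_{\mathrm F}=\sum_{i<j}q_{ij}\Phi^{(0)}_{ij}+O(q_D^{11/8})$, where $\phi_p^\pm=L_{-,p}^{-1}(h_p^\pm-\langle h_p^\pm,Q_p\rangle Q_p)$ and $\phi_2^+=xw_*/2$. Therefore
$\langle F,H_{\mathrm F}\rangle=2\langle h_p^+,\phi_p^+\rangle\sum_{i<j}q_{ij}^2+O(q_D^{19/8})=\frac12\sum_{i<j}q_{ij}^2+O\big(|2-p|\sum_{i<j}q_{ij}^2+q_D^{19/8}\big)$.
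This is Lemma~\ref{lem:quadratic-response-coefficient}, the ingredient your plan is missing.

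Consequently your Step~1 target is miscalibrated. The correct expansion (Lemma~\ref{lem:reference-energy}) is
$E_p(\Psi_{p,z})-NJ_1(p)=\sum_{i<j}q_{ij}^2\big[1+d_{ij}\big(4\kappa_pa_p^4-a_p^{2p}\frac{1-e^{-t_{ij}}}{t_{ij}}\big)\big]$ plus admissible errors.
\begin{itemize}
\item The constant per pair is $+1$. It comes from the corridor integral $\int_\R[e^{2\kappa_2x}Q_2^2-a_2^2\mathbf 1_{(0,\infty)}]\,dx=-1$.
\item The term $4\kappa_pa_p^4d_{ij}$ comes from the Gram correction $2\langle Q_{ip},Q_{jp}\rangle\int_\R Q_{ip}^{2p-1}Q_{jp}\,dx$.
\item In the proposition, the constant part $\frac14 q_{ij}^2$ is $\frac12\big(1-\frac12\big)q_{ij}^2$; the final halving absorbs $P(D)q_D^{19/8}$.
\item The $\frac14 d_{ij}(\cdots)$ part uses $4\kappa_pa_p^4,\ a_p^{2p}=\frac14+O(|p-2|)$.
\end{itemize}
With only $\frac14$ as the constant in Step~1, you would end with $\frac14-\frac12<0$.

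There are two smaller issues.
\begin{itemize}
\item Your asymptotics for the one-sided cross term $\int_\R Q_{ip}^{2p-2}Q_{jp}^2\,dx$ are reversed: it is for $p<2$ that it exceeds $q_{ij}^2$. This is why the symmetric integrand must be split at the midpoint and expanded around the larger bubble.
\item An $O(P(D)q_D^2)$ error in the kinetic term is not admissible. The main terms are $q_{ij}^2$ and $d_{ij}q_{ij}^2$, while the first- and second-order Gram contributions are of size $d_{ij}^2q_{ij}^2$ and must cancel exactly. The paper avoids this by first combining kinetic and $\kappa_p^2$-mass terms via $\int_\R(Q'_{ip}Q'_{jp}+\kappa_p^2Q_{ip}Q_{jp})\,dx=\int_\R Q_{ip}^{2p-1}Q_{jp}\,dx$, and only then expanding $G_{p,z}^{-1}$.
\end{itemize}
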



Denote $E_{p,z}:=G_{p,z}-I_N$, where $G_{p,z}$ is defined by (\ref{w}).
The same arguments of \eqref{a.3} and \eqref{a.5}  in the Appendix then give that for any $p\in I_{1/8}$,
\begin{align}\label{g-1}
	G_{p,z}^{-1}=I_N-E_{p,z}+O(\|E_{p,z}\|^2_{\mathrm{F}})\ \ \text{and}\ \ \|E_{p,z}\|_{\mathrm{F}}\leq P(D(z))e^{-\kappa_p D(z)}
\end{align}
uniformly  as $D(z)\to\infty$.  Moreover, set
\begin{equation}\label{eq:sep-1}
a_p:=(2\kappa_p\sqrt p)^{1/(p-1)}, \ \  p\in I_{1/8}.
\end{equation}
It is clear that $a_p\in C^\infty(I_{1/8})$.  The explicit formula of $Q_p$ gives that  for any \(p\in I_{1/8}\),
\begin{equation}\label{eq:s1-5}
\begin{split}
	Q_p(x)&=a_pe^{-\kappa_p|x|}
	\bigl(1+O(e^{-2(p-1)\kappa_p|x|})\bigr),\\
	Q_p'(x)&
	=-\kappa_pa_p\operatorname{sgn}(x)e^{-\kappa_p|x|}
	\Big(1+O\big(e^{-2(p-1)\kappa_p|x|}\big)\Big),
\end{split}
\end{equation}
uniformly as $|x|\to\infty$.
Without loss of generality, we always assume that $z=(z_1, \cdots, z_N)^\top\in\R^N$ satisfies \(z_1<\cdots<z_N\).
Set
\begin{equation}\label{eq:full-leading-pair-fields}
h_p^\pm(x):=a_pe^{\pm\kappa_px}Q_p(x)^{2p-2},\,\ H_{ij}^{(0)}:=h_p^-(\cdot-z_j)e_i+h_p^+(\cdot-z_i)e_j, \ \ i, j=1, \cdots, N.
\end{equation}
The explicit formula of \(Q_p\) gives   that  for any $p\in I_{1/8}$, \(h_p^-(x)=h_p^+(-x)\) and $0<h_p^\pm(x)
\le Ce^{-\frac34\kappa_p|x|}$ uniformly  in $\R$.
In order to prove Proposition~\ref{prop:two-sided-energy-lower}, we first derive  the following error estimate.

\begin{lemma}\label{lem:residual-response}
There exists a sufficiently large  $D_0>0$ such that for any \(p\in I_{1/8}\) and $z\in\R^N$ satisfying \(D(z)>D_0\),
\begin{equation}\label{eq:full-source-leading}
\Big\|F-\sum_{i<j}e^{-\kappa_p d_{ij}}H_{ij}^{(0)}\Big\|_2 \le C_Ne^{-\frac{11}{8}\kappa_pD(z)},
\end{equation}
where \(F\) is defined in
\eqref{f}.
\end{lemma}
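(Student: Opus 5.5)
We need to prove that $F=-\mathcal F_{\kappa_p}(\Psi_{p,z})$ equals $\sum_{i<j}e^{-\kappa_p d_{ij}}H^{(0)}_{ij}$ up to an $L^2$ error $C_Ne^{-\frac{11}{8}\kappa_pD}$. The plan has four steps: expand $\Psi_{p,z}$ to first order in the Gram matrix, cancel the contributions that are exact solutions, isolate the two-bubble nonlinear cross term, and absorb everything else into the error.

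\textbf{Step 1: expand $\Psi_{p,z}$.} Write $\Psi_{p,z}=G_{p,z}^{-1/2}W_{p,z}$ with $G_{p,z}=I_N+E_{p,z}$. By \eqref{g-1},
\[
G^{-1/2}=I_N-\tfrac12E_{p,z}+O\big(P(D)e^{-2\kappa_pD}\big),
\]
and $(E_{p,z})_{ij}=\langle Q_{ip},Q_{jp}\rangle$ for $i\neq j$, with zero diagonal. Since $2\kappa_p>\frac{11}{8}\kappa_p$, the quadratic remainder contributes $O(P(D)e^{-2\kappa_pD})$ in $H^2$, which is admissible. Thus $\Psi_{p,z}=W+Y+O(\cdot)$ with $Y:=-\frac12E_{p,z}W$.

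\textbf{Step 2: cancel exact solutions and the linear term.} Each $Q_{jp}e_j$ solves $\mathcal F_{\kappa_p}(Q_{jp}e_j)=0$. The operator is $-\partial_{xx}+\kappa_p^2$ minus a nonlinearity. Hence
\[
\mathcal F_{\kappa_p}(W+Y)=\Big(\sum_j Q_{jp}^{2p-2}\Big)W-|W|^{2p-2}W+\big[\text{terms linear in }Y\big]+O(|Y|^2).
\]
The terms linear in $Y$ are
\[
\big(-\partial_{xx}+\kappa_p^2\big)Y-\mathcal V_p(W)Y=-\tfrac12\sum_{i\neq j}E_{ij}\big(-Q_{jp}''+\kappa_p^2Q_{jp}-\cdots\big)e_i .
\]
Using the equation of $Q_{jp}$, each component becomes
\[
\tfrac12\sum_{j\neq i}E_{ij}\Big[(2p-2)\,Q_{jp}^{2p-2}Q_{jp}\ \text{(in the $j$-th direction)}\ \text{minus cross potentials}\Big].
\]
Its size is $E_{ij}\sim d_{ij}e^{-\kappa_pd_{ij}}$ times a function localized near $z_j$. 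This is too large for the error bound, so it must either cancel or be part of the leading term. The leading term $H^{(0)}_{ij}$ contains only $h^\mp$, which are localized pieces of $a_pe^{\mp\kappa_p(\cdot)}Q_p^{2p-2}$. I therefore expect the $L^2$-mass/orthonormalization term to cancel against the $\kappa_p^2$-mismatch. I would verify this carefully: the $Y$-linear contribution should reduce to $\frac12E_{ij}(\dots)$, localized near the bubbles, and should be absorbed after comparing with the $\kappa_p$ term. If it does not cancel, I would reconsider whether $F$ is tested against $X_{p,z}$ later. But the statement is an $L^2$ estimate, so I will aim to show that the $Y$ contribution is $O(De^{-\kappa_pD})$ times $Q^{2p-1}$-type profiles, which cancel exactly through $L_{+}$-type identities.

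\textbf{Step 3: the nonlinear cross term.} For each pair, near $z_j$ the $i$-th component of $|W|^{2p-2}W-Q_{jp}^{2p-2}W$ is
\[
Q_{jp}^{2p-2}Q_{ip}+O\big(Q_{jp}^{2p-3}Q_{ip}^2\big).
\]
By \eqref{eq:s1-5}, $Q_{ip}(x)=a_pe^{-\kappa_p(x-z_i)}(1+\dots)$ for $x$ near $z_j>z_i$. This equals $e^{-\kappa_pd_{ij}}a_pe^{-\kappa_p(x-z_j)}$, which gives $e^{-\kappa_pd_{ij}}h^-_p(\cdot-z_j)e_i$. The symmetric contribution near $z_i$ gives $h^+_p(\cdot-z_i)e_j$. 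The errors are:
\begin{itemize}
\item the relative correction $O(e^{-2(p-1)\kappa_p|x-z_i|})$, which is of size $e^{-2(p-1)\kappa_pd_{ij}}$ where $h$ is concentrated;
\item quadratic terms $Q_{ip}^2Q_{jp}^{2p-3}$, of order $e^{-2\kappa_pd}$;
\item the regions between bubbles.
\end{itemize}
To control these I split $\mathbb R$ into the regions $|x-z_k|\le d/2$ and weigh the decay $e^{-\frac34\kappa_p|x|}$ of $h^\pm$. With $p\ge 15/8$, $2(p-1)\ge 7/4$ and $1+\frac34-\ldots$, so the exponents should exceed $\frac{11}{8}$; the key exponent count is $\kappa_pd+\frac38\kappa_pd$.

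\textbf{Main obstacle.} The hardest step is the bookkeeping in Step 2: showing that the first-order Gram correction does not produce an $O(De^{-\kappa_pD})$ term outside $H^{(0)}$. I would first try exploiting the mutual orthogonality structure $E_{ij}=E_{ji}$ together with the equation for $Q_p$, checking whether those terms cancel pairwise.
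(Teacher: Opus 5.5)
There is a genuine gap, and it is exactly the step you flag as the main obstacle. Step~2 is never carried out, and the mechanism you propose for it (an $L_{+}$-type identity, or a balance between the orthonormalization and the $\kappa_p^2$ term) is not what happens. The missing observation is that the Gram correction is a \emph{constant} matrix, so it commutes with $-\partial_{xx}+\kappa_p^2$:
\[
(-\partial_{xx}+\kappa_p^2)\Psi_{p,z}=G_{p,z}^{-1/2}(-\partial_{xx}+\kappa_p^2)W_{p,z}=G_{p,z}^{-1/2}\big(Q_{ip}^{2p-1}\big)_{i=1}^N .
\]
Hence exactly $F=G_{p,z}^{-1/2}\big(|\Psi_{p,z}|^{2p-2}Q_{ip}-Q_{ip}^{2p-1}\big)_{i=1}^N$. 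Setting $F^0:=\big(|W_{p,z}|^{2p-2}Q_{ip}-Q_{ip}^{2p-1}\big)_{i=1}^N$, one gets
\[
F-F^0=(G_{p,z}^{-1/2}-I_N)F^0+G_{p,z}^{-1/2}\big[(|\Psi_{p,z}|^{2p-2}-|W_{p,z}|^{2p-2})W_{p,z}\big].
\]
The first term is $O(P(D)q_D)\|F^0\|_2=O(P(D)q_D^2)$. The second is bounded pointwise by $C\big||\Psi_{p,z}|^2-|W_{p,z}|^2\big|$. Here $|\Psi_{p,z}|^2-|W_{p,z}|^2=W_{p,z}^\top(G_{p,z}^{-1}-I_N)W_{p,z}$ has diagonal coefficients $O(\|E_{p,z}\|_{\mathrm F}^2)$. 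Its off-diagonal coefficients are $O(\|E_{p,z}\|_{\mathrm F})$ and multiply $Q_{ip}Q_{jp}$, with $\|Q_{ip}Q_{jp}\|_2\le C(1+d_{ij})^{1/2}e^{-\kappa_pd_{ij}}$. So the second term is also $O(P(D)q_D^2)$. Thus no $O(De^{-\kappa_pD})$ term survives, and everything reduces to $F^0$.

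In your perturbative language the same cancellation reads:
\[
(-\partial_{xx}+\kappa_p^2-\mathcal V_p(W_{p,z}))Y=\tfrac12E_{p,z}F^0-(2p-2)|W_{p,z}|^{2p-4}W_{p,z}(W_{p,z}\cdot Y),
\]
with $W_{p,z}\cdot Y=-\frac12\sum_{i\ne j}E_{ij}Q_{ip}Q_{jp}$; both pieces are quadratically small. Geometrically, near $z_j$ the perturbation $Y\approx-\frac12\sum_{i\ne j}E_{ij}Q_{jp}e_i$ is transverse to $W_{p,z}\approx Q_{jp}e_j$. The linearization therefore acts on it as $L_{-,p}$, and $L_{-,p}Q_p=0$. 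The term ``$(2p-2)Q_{jp}^{2p-1}$ in the $j$-th direction'' you anticipate comes from applying $L_{+,p}$ instead, and it does not exist. With the identity above, your Steps~1--2 collapse into a short estimate and the rest of your plan has the paper's structure.

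Your bookkeeping for $F^0$ also needs repair.
\begin{itemize}
\item The zeroth-order term in your Step~2 display is wrong. In fact $\mathcal F_{\kappa_p}(W_{p,z})_i=Q_{ip}^{2p-1}-|W_{p,z}|^{2p-2}Q_{ip}$, not $\big(\sum_jQ_{jp}^{2p-2}-|W_{p,z}|^{2p-2}\big)Q_{ip}$. With your formula, the cross terms $Q_{ip}Q_{jp}^{2p-2}$ that form the leading term would cancel, contradicting your Step~3.
\item The claim that everything beyond $Q_{ip}Q_{jp}^{2p-2}$ is $O(e^{-2\kappa_pd})$ holds only near the centers. In the corridor where $Q_{ip}\sim Q_{jp}\sim e^{-\kappa_pd_{ij}/2}$ no Taylor expansion is available. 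At the midpoint, $\big[(Q_{ip}^2+Q_{jp}^2)^{p-1}-Q_{ip}^{2p-2}-Q_{jp}^{2p-2}\big]Q_{ip}\approx(2^{p-1}-2)a_p^{2p-1}e^{-(p-\frac12)\kappa_pd_{ij}}$. For $p=\frac{15}{8}$ this is a fixed nonzero multiple of $e^{-\frac{11}{8}\kappa_pd_{ij}}$, and this corridor term is what dictates the exponent in the statement.
\end{itemize}
The paper controls all regions uniformly with four ingredients:
\begin{itemize}
\item the bound $|(A+B)^\alpha-A^\alpha-B^\alpha|\le C\min(A,B)^{7/8}$ for $\alpha=p-1\in[\frac78,\frac98]$ (Lemma~\ref{lem:two-sided-power-bounds});
\item a telescoping over the bubbles when $N\ge3$;
\item the inequality $|x-z_j|+\frac74\max\{|x-z_j|,|x-z_k|\}\ge\frac{11}{8}d_{jk}$;
\item the global estimate $|Q_p(x)-a_pe^{-\kappa_px}|\le Ce^{-\frac32\kappa_px}$ on $\R$, which replaces $Q_{ip}$ by $e^{-\kappa_pd_{ij}}a_pe^{-\kappa_p(x-z_j)}$. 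This is usable because $Q_{jp}^{2p-2}$ decays at rate $(2p-2)\kappa_p\ge\frac74\kappa_p>\frac32\kappa_p$.
\end{itemize}
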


\begin{proof}
Since $$(-\partial_x^2+\kappa_p^2)\Psi_{p,z}=G_{p, z}^{-1/2}(-\partial_x^2+\kappa_p^2)W_{p,z}=G_{p, z}^{-1/2}(Q_{ip}^{2p-1})_{i=1}^N,$$
we have
\begin{equation}
	F=G_{p,z}^{-1/2}\bigl(|\Psi_{p,z}|^{2p-2}Q_{ip}-Q_{ip}^{2p-1}\bigr)_{i=1}^N.
	\label{eq:s3-16}
\end{equation}
Set $F^0:=\bigl(|W_{p,z}|^{2p-2}Q_{ip}-Q_{ip}^{2p-1}\bigr)_{i=1}^N$. We shall proceed the proof by the following three steps.

\medskip\noindent\textbf{Step 1.} In this step, we claim that  for any $p\in I_{1/8}$ and $z\in\R^N$ satisfying $D(z)>1$,
\begin{equation}\label{4.9a}
\begin{split}
	&\Big\|F^0-
	\Big(\sum_{j\ne i}Q_{ip}Q_{jp}^{2p-2}\Big)_{i=1}^N\Big\|_2\\
=&\Big\|\Big[\Big(\sum_{j=1}^NQ_{jp}^2\Big)^{p-1}-\sum_{j=1}^NQ_{jp}^{2p-2}\Big](Q_{ip})_{i=1}^N\Big\|_2
	\le Ce^{-\frac{11}{8}\kappa_p D(z)}.
\end{split}
\end{equation}

Actually, we note that  for any $p\in I_{1/8}$,
\begin{align}\label{4.7}
	\arraycolsep=1.5pt
	\left\{\begin{array}{lll}
		\Big(\sum\limits_{j=1}^NQ_{jp}^2\Big)^{p-1}
		-\sum\limits_{j=1}^NQ_{jp}^{2p-2}
		=
		\sum\limits_{k=2}^N
		\Big[
		\Big(\sum\limits_{j=1}^{k}Q_{jp}^2\Big)^{p-1}
		-\Big(\sum\limits_{j=1}^{k-1}Q_{jp}^2\Big)^{p-1}
		-Q_{kp}^{2p-2}
		\Big]\ \ \text{in}\ \ \R,\\[3.5mm]
		\Big|
		\Big(\sum\limits_{j=1}^{k}Q_{jp}^2\Big)^{p-1}
		-\Big(\sum\limits_{j=1}^{k-1}Q_{jp}^2\Big)^{p-1}
		-Q_{kp}^{2p-2}
		\Big|\leq
		C\min\Big(\sum\limits_{j=1}^{k-1}Q_{jp}^2,\, Q_{kp}^2\Big)^{7/8}\ \ \text{in}\ \ \R,
	\end{array}\right.
\end{align}
where  we have used the second estimate of \eqref{eq:two-sided-power-bounds}.
Since $(a+b)^{7/8}\leq a^{7/8}+b^{7/8}$ holds for $a, b\geq0$,  and
$$
\min\Big(\sum_{j=1}^{k-1}Q_{jp}^2(x),\ Q_{kp}^2(x)\Big)
\leq
\sum_{j=1}^{k-1}\min\big(Q_{jp}^2(x),\, Q_{kp}^2(x)\big),\ \ x\in\R,
$$
we get that  
\[
\begin{aligned}
	\min\Big(\sum_{j=1}^{k-1}Q_{jp}^2,\, Q_{kp}^2\Big)^{7/8}
	\leq
	\sum_{j=1}^{k-1}\min(Q_{jp}^2,\, Q_{kp}^2)^{7/8}=
	\sum_{j=1}^{k-1}\min\big(Q_{jp},\, Q_{kp}\big)^{7/4}, \ \ x\in\R.
\end{aligned}
\]
It then  yields from \eqref{4.7} that   for any $p\in I_{1/8}$, 
\begin{equation}\label{eq:raw-source-power-bound}
	\Big|\Big(\sum_{j=1}^NQ_{jp}^2(x)\Big)^{p-1}-\sum_{j=1}^NQ_{jp}^{2p-2}(x)\Big|
	\le C\sum_{j<k}\min\big(Q_{jp}(x),Q_{kp}(x)\big)^{7/4},\ \ x\in\R.
\end{equation}
One can verify that  for $j\ne k$,
$$
|x-z_j|\geq\frac{d_{jk}}2-|x-\frac{z_j+z_k}{2}| ,\ \ \ \max\{|x-z_j|,|x-z_k|\}=\frac{d_{jk}}2+|x-\frac{z_j+z_k}{2}|,\ \ x\in\R,
$$
which implies that
\[
\begin{aligned}
|x-z_j|+\frac74\max\{|x-z_j|,|x-z_k|\}
&\ge \frac{11}{8}d_{jk}.
\end{aligned}
\]
By the exponential decay of $Q_p$,  we thus deduce from above that
\begin{align}\label{4.9}
	\big\|Q_{jp}\min(Q_{jp},Q_{kp})^{7/4}\big\|_2\leq C e^{-\frac{11}{8}\kappa_p d_{jk}}
	\leq C e^{-\frac{11}{8}\kappa_p D(z)},\ \ j\ne k.
\end{align}
Since
\begin{align*}
	&Q_{ip}(x)\min\big(Q_{jp}(x),\, Q_{kp}(x)\big)^{7/4}\\
	\leq&
	Q_{jp}\min\big(Q_{jp}(x),\, Q_{kp}(x)\big)^{7/4}+Q_{ip}\min\big(Q_{ip}(x),\, Q_{jp}(x)\big)^{7/4},\ \ i\neq j,\ i\neq k,
\end{align*}
we further deduce from \eqref{4.9} that for $ j\ne k, $
\begin{align*}
	\big\|Q_{ip}\min(Q_{jp},Q_{kp})^{7/4}\big\|_2
	\leq C e^{-\frac{11}{8}\kappa_p D(z)},\ \ i=1, \cdots, N.
\end{align*}
This there proves the claim \eqref{4.9a} in view of \eqref{eq:raw-source-power-bound}.

\medskip\noindent\textbf{Step 2.} In this step, we claim that  for any $p\in I_{1/8}$ and  $D(z)>1$,
\begin{align}\label{4.8}
	\|F^0\|_2\le C\,q_D,\ \
	\Big\|F^0-\sum_{i<j}e^{-\kappa_p d_{ij}}H_{ij}^{(0)}
	\Big\|_2\le Ce^{-\frac{11}{8}\kappa_pD(z)},
\end{align}
where $q_D$ is as in (\ref{6.1A}), and $H_{ij}^{(0)}$ is defined by (\ref{eq:full-leading-pair-fields}).

We first prove that
\begin{equation}\label{oct-1}
|Q_p(x)-a_pe^{-\kappa_px}|
\leq Ce^{-\frac32\kappa_px}\ \,\ \text{in}\ \ \mathbb R.
\end{equation}
Indeed, it yields from \eqref{eq:s1-5} that  for any $p\in I_{1/8}$,
\[
|Q_p(x)-a_pe^{-\kappa_px}|
\le Ce^{-(2p-1)\kappa_px}
\le Ce^{-\frac32\kappa_px},\qquad x\ge0.
\]
On the other hand,
\[
|Q_p(x)-a_pe^{-\kappa_px}|
\le C\bigl(e^{\kappa_px}+e^{-\kappa_px}\bigr)
\le Ce^{-\frac32\kappa_px}\qquad x<0.
\]
This proves \eqref{oct-1}.
We then calculate that  for any $p\in I_{1/8}$,  $D(z)>1$ and $i<j$,
\begin{align*}
	&|Q_{ip}Q_{jp}^{2p-2}-e^{-\kappa_p d_{ij}}h_p^-(x-z_j)|\\
	=&
	|Q_{ip}-a_pe^{-\kappa_p(x-z_i)}|Q_{jp}^{2p-2}
	\leq
	Ce^{-\frac32\kappa_p(x-z_i)}
	e^{-(2p-2)\kappa_p|x-z_j|}\\
	=&
	Ce^{-\frac32\kappa_pd_{ij}}
	e^{-\frac32\kappa_p(x-z_j)}
	e^{-(2p-2)\kappa_p|x-z_j|}
	\leq
	Ce^{-\frac32\kappa_pd_{ij}}e^{-\kappa_p|x-z_j|/4}\ \ \text{in}\ \ \R,
\end{align*}
and hence
\begin{align}\label{4.13}
	\|Q_{ip}Q_{jp}^{2p-2}-e^{-\kappa_p d_{ij}}h_p^-(x-z_j)\|_2\leq Ce^{-\frac32\kappa_pd_{ij}}\leq Ce^{-\frac32\kappa_pD(z)},\ \ i<j.
\end{align}
Similarly, one can verify that  for any $p\in I_{1/8}$ and $D(z)>1$,
$$\|Q_{jp}Q_{ip}^{2p-2}-e^{-\kappa_p d_{ij}}h_p^+(x-z_i)\|_2\leq  Ce^{-\frac32\kappa_pD(z)},\ \ i<j.
$$	
This  thus implies from  \eqref{4.13} that    for any $p\in I_{1/8}$ and $D(z)>1$,
\begin{align*}
	&
	\Big\|
	\Big(\sum_{j\ne i}Q_{ip}Q_{jp}^{2p-2}\Big)_{i=1}^N
	-\sum_{i<j}e^{-\kappa_p d_{ij}}\big[h_p^-(x-z_j)e_i+h_p^+(x-z_i)e_j\big]
	\Big\|_2
	\\
	=&\Big\|
	\sum_{i<j}
	\left[
	Q_{ip}Q_{jp}^{2p-2}e_i
	+Q_{jp}Q_{ip}^{2p-2}e_j\right]
	-\sum_{i<j}e^{-\kappa_p d_{ij}}\big[h_p^-(x-z_j)e_i+h_p^+(x-z_i)e_j\big]
	\Big\|_2
	\\
	\le&
	\sum_{i<j}
	\Big[
	\left\|Q_{ip}Q_{jp}^{2p-2}
	-e^{-\kappa_p d_{ij}}h_p^-(x-z_j)\right\|_2
	+
	\left\|Q_{jp}Q_{ip}^{2p-2}
	-e^{-\kappa_p d_{ij}}h_p^+(x-z_i)\right\|_2
	\Big]\\
	\le& Ce^{-\frac32\kappa_pD(z)}.
\end{align*}
We then deduce from \eqref{4.9a}  that $$\Big\|F^0-\sum_{i<j}e^{-\kappa_p d_{ij}}H_{ij}^{(0)}
\Big\|_2\le Ce^{-\frac{11}{8}\kappa_pD(z)}.$$
Since  $\{H_{ij}^{(0)}\}_{p,z}$ is bounded uniformly in $L^2(\R; \R^N)$, we further obtain that $\|F^0\|_2\le C\,q_D$. This completes the proof of the claim \eqref{4.8}.

\medskip\noindent\textbf{Step 3.} To complete the proof of Lemma \ref{lem:residual-response}, in view of \eqref{4.8}, it remains to prove that for any \(p\in I_{1/8}\) and sufficiently large \(D(z)>0\),
\begin{align}\label{4.14}
	\|F-F^0\|_2\le  Ce^{-\frac{11}{8}\kappa_pD(z)}.
\end{align} 

Since it yields from \eqref{eq:s3-16} that
\begin{align}\label{4.14a}
	F-F^0=(G_{p,z}^{-1/2}-I_N)F^0+
	G_{p,z}^{-1/2}\bigl([|\Psi_{p,z}|^{2p-2}-|W_{p,z}|^{2p-2}]W_{p,z}\bigr).
\end{align}
We first claim that  for any \(p\in I_{1/8}\) and sufficiently large \(D(z)>0\),
\begin{equation}\label{4.15b}
	\big\||\Psi_{p,z}|^2-|W_{p,z}|^2\big\|_2\le P(D)q_D^2.
\end{equation}
Actually, it follows that
\begin{equation}\label{4.15a}
	|\Psi_{p,z}|^2-|W_{p,z}|^2=W_{p,z}^\top(G_{p,z}^{-1}-I)W_{p,z}=\sum_{i=1}^N(G_{p,z}^{-1}-I)_{ii}Q_{ip}^2
	+2\sum_{i<j}(G_{p,z}^{-1})_{ij}Q_{ip}Q_{jp}.
\end{equation}
Set $E_{p,z}:=G_{p,z}-I_N$. Since \((E_{p,z})_{ii}=0\),  one gets from \eqref{g-1} that  for any $p\in I_{1/8}$,
\begin{equation}\label{sep0929}
	(G_{p,z}^{-1}-I)_{ii}=O(\|E_{p,z}\|_{\mathrm F}^2)\le P(D)q_D^2,\ \
	|(G_{p,z}^{-1})_{ij}|=O(\|E_{p,z}\|_{\mathrm F})\le P(D)q_D,\ \ i\ne j,
\end{equation}
uniformly as $D(z)\to\infty$.
By the exponential decay of $Q_p$, one can calculate that for any \(p\in I_{1/8}\) and sufficiently large \(D(z)>0\),
\begin{align}	\label{eq:s3-16a}
	\|Q_{ip}Q_{jp}\|_2\le C(1+d_{ij})^{1/2}e^{-\kappa_p d_{ij}},\ \ i\ne j.
\end{align}
As a consequence  of \eqref{4.15a}--\eqref{eq:s3-16a}, we obtain that the claim \eqref{4.15b} holds true.

Applying the uniform boundedness  of $\{\Psi_{p,z}\}_{p, z}$ and $\{W_{p,z}\}_{p,z}$ in $L^\infty(\R; \R^N)$, we obtain that for any $p\in I_{1/8}$,
\[
\bigl||\Psi_{p,z}|^{2p-2}-|W_{p,z}|^{2p-2}\bigr|\,|W_{p,z}|
\le C\big||\Psi_{p,z}|-|W_{p,z}|\big|\, |W_{p,z}|\leq C\big||\Psi_{p,z}|^2-|W_{p,z}|^2\big|\ \ \text{in}\ \ \R
\]
uniformly  as $D(z)\to\infty$.  This then yields from \eqref{4.15b} that for any $p\in I_{1/8}$,
\begin{align}\label{4.19a}
	\big\|
	(|\Psi_{p,z}|^{2p-2}-|W_{p,z}|^{2p-2})\,W_{p,z}\big\|_2
	\le P(D)q_D^2
\end{align}
uniformly   as $D(z)\to\infty$.
Moreover, by  the first estimate in  \eqref{4.8}, we derive from \eqref{a.7b} that  for $p\in I_{1/8}$,
\begin{align}\label{4.19}
	\|(G_{p,z}^{-1/2}-I_N)F^0\|_{L^2}
	&\le P(D)q_D\|F^0\|_{L^2}
	\le P(D)q_D^2
\end{align}
uniformly as $D(z)\to\infty$.
We thus deduce from \eqref{4.14a}, \eqref{4.19a} and \eqref{4.19} that  \eqref{4.14} holds true, and the proof of Lemma  \ref{lem:residual-response} is therefore complete.\end{proof}

\vspace{0.05cm}

We now choose $0<\delta<1/8$ so that Lemma~\ref{prop:slice-coercivity} and the  coercivity estimate \eqref{step1} hold.  
Thus, it follows from \eqref{step1} that for any $p\in I_\delta$, $L_{-,p}$ is invertible on $Q_p^\perp$. Define  for $p\in I_\delta$,
\begin{equation}\label{eq:endpoint-source-response}
	\phi_p^\pm:=L_{-,p}^{-1}
	\big(h_p^\pm-\langle h_p^\pm,Q_p\rangle Q_p\big)
	\in  Q_p^\perp,
\end{equation}
where $L_{-,p}^{-1}$ denotes the inverse of $L_{-,p}$ restricted to $Q_p^\perp$. Since \(h_p^-(x)=h_p^+(-x)\) and $Q_p(-x)=Q_p(x)$,  we have  \(\phi_p^-(x)=\phi_p^+(-x)\).
Note that $$c\|\phi_p^\pm\|_{H^1}^2\leq\langle L_{-,p} \phi_p^\pm,\, \phi_p^\pm\rangle\leq C\|\phi_p^\pm\|_2.$$
 Thus, $\{\phi_p^\pm\}_{p\in I_\delta}$ is  bounded uniformly  in $H^1(\R;\R)$. Moreover, since  $f_p^\pm:=
Q_p^{2p-2}\phi_p^\pm
+h_p^\pm
-\langle h_p^\pm,Q_p\rangle Q_p$ satisfies  $|f_p^\pm(x)|\le Ce^{-\frac34\kappa_p|x|}$ in $\R$,
we have
\begin{align}\label{4.15}
	|\phi_p^\pm(x)|+|(\phi_p^\pm)'(x)|
	\le Ce^{-\frac34\kappa_p|x|}\ \ \text{in}\ \ \R.
\end{align}
Consequently, the sequence $\{e^{\kappa_p|x|/2}\phi_p^\pm\}_{p\in I_\delta}$ is  bounded  uniformly in $H^1(\R;\R)$.
For convenience, define
\[
\Phi_{ij}^{(0)}:=\phi_p^-(\cdot-z_j)e_i+\phi_p^+(\cdot-z_i)e_j,\ \ i,j=1, \cdots,N.
\]
We next derive the following estimate.

\begin{lemma}\label{lem:quadratic-response-coefficient}
Let $\delta>0$ be as in \eqref{eq:endpoint-source-response}. Then  we have  for any $p\in I_\delta$,
\begin{equation}\label{eq:response-energy-explicit}
\langle F, H_{\mathrm{F}}\rangle
=\frac12\sum_{i<j}e^{-2\kappa_p d_{ij}}
		+O\Big(|2-p|\sum_{i<j}e^{-2\kappa_p d_{ij}}+q_D^{19/8}\Big)\ \ \mbox{uniformly as} \,\ D(z)\to\infty,
\end{equation}
where \(F\) and \(H_{\mathrm F}\) are defined in
	\eqref{f} and \eqref{eq:energy-linear-response}, respectively.
\end{lemma}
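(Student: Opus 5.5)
The plan is to construct an explicit approximate solution of the linear problem \eqref{eq:energy-linear-response} from the building blocks $\phi_p^\pm$. The natural candidate is
\[
H_{\mathrm{app}}:=\sum_{i<j}e^{-\kappa_pd_{ij}}\Phi_{ij}^{(0)},
\qquad
\Phi_{ij}^{(0)}=\phi_p^-(\cdot-z_j)e_i+\phi_p^+(\cdot-z_i)e_j .
\]
This is motivated by Lemma~\ref{lem:residual-response}, which identifies $F$ with $\sum_{i<j}e^{-\kappa_pd_{ij}}H_{ij}^{(0)}$ up to $O(q_D^{11/8})$ in $L^2$. Near the center $z_j$, the field $H_{ij}^{(0)}$ lives only in the $i$-th component with $i\neq j$. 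There $\mathcal L_{p,z}$ acts on that component as $L_{-,p}(\cdot-z_j)$, up to a potential error $O(e^{-\frac34\kappa_p|x-z_j|}\cdot q_D)$ coming from \eqref{2.35}, \eqref{v} and the exponential decay of $Q_p$. So $\Phi_{ij}^{(0)}$ is the local response to $H_{ij}^{(0)}$.

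\textbf{Step 1: $H_F\approx H_{\mathrm{app}}$.} I would first let $\widetilde H$ be the $L^2$-projection of $H_{\mathrm{app}}$ onto $X_{p,z}$. Since $\phi_p^\pm\perp Q_p$, the constraints $\mathsf M(\cdot,\Psi_{p,z})=0$ and $\langle\cdot,T_j\rangle=0$ are violated only through overlaps of exponentially separated profiles and through $\Psi_{p,z}-W_{p,z}$. The $T_j$ constraint involves only the $j$-th component near $z_j$, where $\Phi^{(0)}$ has no mass at that center. Hence $\|\widetilde H-H_{\mathrm{app}}\|_{H^1}\le Cq_D^{11/8}$.

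Next, for every $G\in X_{p,z}$, I would estimate $\langle \mathcal L_{p,z}\widetilde H-F,G\rangle$. Three sources of error appear:
\begin{itemize}
\item[(a)] the discrepancy $F-\sum_{i<j} e^{-\kappa_pd_{ij}}H^{(0)}_{ij}=O(q_D^{11/8})$ from Lemma~\ref{lem:residual-response};
\item[(b)] the terms $\langle h_p^\pm,Q_p\rangle\langle Q_p(\cdot-z_j),G_i\rangle$, which are $O(q_D^{11/8})\|G\|_{H^1}$ because $G_i\perp(\Psi_{p,z})_j$ and $\|(\Psi_{p,z})_j-Q_{jp}\|\le CDq_D$;
\item[(c)] the potential mismatch, which is bounded by $q_D\cdot\|e^{-\frac34\kappa_p|x|}\phi_p^\pm\|$ times the cross-center decay; by \eqref{4.15} this is again $O(q_D^{11/8})$.
\end{itemize}
Coercivity (Lemma~\ref{prop:slice-coercivity}) then gives $\|H_{\mathrm F}-\widetilde H\|_{H^1}\le Cq_D^{11/8}$. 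Together with $\|F\|_{H^{-1}}\le Cq_D$, this yields
\[
\langle F,H_{\mathrm F}\rangle=\sum_{i<j}\sum_{k<l}e^{-\kappa_p(d_{ij}+d_{kl})}\langle H^{(0)}_{ij},\Phi^{(0)}_{kl}\rangle+O(q_D^{19/8}).
\]

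\textbf{Step 2: only diagonal pairs survive.} A direct check of where each component is centered shows the following. For $(i,j)\neq(k,l)$, every matching component of $H^{(0)}_{ij}$ and $\Phi^{(0)}_{kl}$ is centered at two distinct points $z_m\ne z_{m'}$. Since both factors decay like $e^{-\frac34\kappa_p|x|}$, the overlap is $O(D e^{-\frac34\kappa_pD})$, and these terms contribute $O(q_D^{19/8})$. The diagonal terms give $\langle h_p^-,\phi_p^-\rangle+\langle h_p^+,\phi_p^+\rangle=2\langle h_p^+,\phi_p^+\rangle$, by the reflection symmetry $h_p^-(x)=h_p^+(-x)$, $\phi_p^-(x)=\phi_p^+(-x)$. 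The quantity $\beta_p:=\langle h_p^+,\phi_p^+\rangle$ is Lipschitz in $p\in I_\delta$, since $\kappa_p$, $a_p$, $Q_p$ are smooth in $p$ and $L_{-,p}^{-1}$ on $Q_p^\perp$ is uniformly bounded. So it remains to show $\beta_2=\tfrac14$.

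\textbf{Step 3 (main obstacle): the explicit value at $p=2$.} At $p=2$ we have $\kappa=\tfrac14$, $Q=\tfrac{\sqrt2}{4}\operatorname{sech}(x/4)$ and $a_2=\tfrac{\sqrt2}{2}$, so $h^+=a_2e^{x/4}Q^2$. The formal identity $L_{-,2}(-a_2e^{x/4})=h^+$ suggests the ansatz
\[
\phi=-a_2e^{x/4}+\alpha\,e^{x/4}\tanh(x/4)+\gamma\,xQ+cQ,
\]
or an equivalent combination of $\operatorname{sech}$, $\tanh$ and $e^{\pm x/4}$, with the coefficients chosen to cancel the growth at $+\infty$. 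This combination comes from linearizing the explicit two-soliton family of Theorem~\ref{thm1.1}. One then solves $L_{-,2}\phi=h^+-\langle h^+,Q\rangle Q$ with $\phi\perp Q$, and evaluates $\langle h^+,\phi\rangle$ by elementary $\operatorname{sech}$ integrals.

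I expect finding the correct closed form to be the hardest part. If no clean ansatz works, I would fall back on an alternative computation. Because $\phi\perp Q$, $L_{-,2}\phi=h^+-\langle h^+,Q\rangle Q$ and $Q\in\ker L_{-,2}$, we have $\langle h^+,\phi\rangle=\langle L_{-,2}\phi,\phi\rangle$. I would then compute $\phi$ by variation of constants, using the fundamental pair $Q$ and $Q\int Q^{-2}$.
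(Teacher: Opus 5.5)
Your plan is the paper's proof, step for step. The paper uses the same approximant $\Phi=\sum_{i<j}e^{-\kappa_pd_{ij}}\Phi^{(0)}_{ij}$ and gets $\|\mathcal L_{p,z}\Phi-F\|_{X_{p,z}^*}=O(q_D^{11/8})$ from the same three error sources. It then corrects $\Phi$ by the $L^2$-projection $\Pi_{p,z}$ onto $X_{p,z}$, at cost $O(q_D^{7/4})$, precisely because the $k$-th component of $\Phi$ carries no mass at $z_k$. Coercivity gives $\|H_{\mathrm F}-\Phi\|_{H^1}=O(q_D^{11/8})$. The same diagonal/off-diagonal bookkeeping then leads to $2\langle h_p^+,\phi_p^+\rangle\sum_{i<j}e^{-2\kappa_pd_{ij}}+O(q_D^{19/8})$.

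In your item (b), the term is in fact exactly zero on $X_{p,z}$. Since $G_{p,z}^{-1/2}$ is invertible, $\operatorname{span}\{(\Psi_{p,z})_j\}=\operatorname{span}\{Q_{jp}\}$, so $\langle Q_{jp},G_i\rangle=0$. Your cruder bound also suffices.

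The step you leave open is much shorter than you expect.
\begin{itemize}
\item Since $e^{y}\operatorname{sech}^2y=\operatorname{sech}y\,(1+\tanh y)$, one has $h_2^+=\tfrac14Q_2-Q_2'$. Hence $\langle h_2^+,Q_2\rangle=\tfrac14$, and the projected source is exactly $-Q_2'$.
\item Because $L_{-,2}Q_2=0$, we get $L_{-,2}(xQ_2)=-2Q_2'$. Also $xQ_2\perp Q_2$ by parity.
\item Therefore $\phi_2^+=\tfrac12xQ_2$, and $\langle h_2^+,\phi_2^+\rangle=-\tfrac14\int_\R x(Q_2^2)'\,dx=\tfrac14$.
\end{itemize}
Your ansatz does contain this solution. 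With $\alpha=a_2$ its first two terms collapse to $-a_2\operatorname{sech}(x/4)=-2Q_2$, and then $\gamma=\tfrac12$, $c=2$. So the variation-of-constants fallback is unnecessary.

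For the Lipschitz dependence of $\langle h_p^+,\phi_p^+\rangle$ on $p$, the one point needing care is that the constraint space $Q_p^\perp$ moves with $p$. The paper handles this by applying the uniform coercivity of $L_{-,p}$ on $Q_p^\perp$ to $v=\phi_p^+-\phi_2^++\langle\phi_2^+,Q_p\rangle Q_p$. This yields $\|\phi_p^+-\phi_2^+\|_2\le C|p-2|$.
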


\begin{proof}
We shall carry out the proof of Lemma \ref{lem:quadratic-response-coefficient} by three steps.

\medskip\noindent\textbf{Step 1.} Denote $\Phi:=\sum_{i<j}e^{-\kappa_p d_{ij}}\Phi_{ij}^{(0)}$, and let $\mathcal V_p$ and $\mathcal L_{p,z}$ be as in \eqref{v} and \eqref{eq:linearized-reference-operator}, respectively. In this step, we claim that  for any $p\in I_\delta$, 
\begin{equation}\label{eq:s4-full-defect}
\begin{split}
	\|\mathcal L_{p,z}\Phi-F\|_{X_{p,z}^*}
	:=&\sup_{\substack{Z\in X_{p,z},\, \|Z\|_{H^1}=1}}
	|\langle\mathcal L_{p,z}\Phi-F,\, Z\rangle|\\
	=&O(q^{11/8}_D)\ \ \mbox{uniformly as} \,\ D(z)\to\infty.
\end{split}
\end{equation}

Actually, we note from Lemma \ref{lem:residual-response} that
\begin{equation}\label{4.32a}
	\|\mathcal L_{p,z}\Phi-F\|_{X_{p,z}^*}\leq C q_D^{11/8}
	+\sup_{\substack{Z\in X_{p,z},\, \|Z\|_{H^1}=1}}
	\big|\big\langle\mathcal L_{p,z}\Phi-\sum_{i<j}e^{-\kappa_p d_{ij}}H_{ij}^{(0)},\, Z\big\rangle\big|.
\end{equation}
By  the definition of $\mathcal V_p$, one can verify from \eqref{eq:endpoint-source-response} that
\begin{equation}\label{4.31b}
\begin{split}
	&\mathcal L_{p,z}\Phi_{ij}^{(0)}-H_{ij}^{(0)}\\
	=&\bigl[
	\mathcal V_p(Q_{ip}e_i)-\mathcal V_p(\Psi_{p,z})
	\bigr]
	\phi_p^+(\cdot-z_i)e_j+
	\bigl[
	\mathcal V_p(Q_{jp}e_j)-\mathcal V_p(\Psi_{p,z})
	\bigr]
	\phi_p^-(\cdot-z_j)e_i \\[2mm]
	&	-\langle h_p^-,Q_p\rangle Q_{jp}e_i
	-\langle h_p^+,Q_p\rangle Q_{ip}e_j,\ \ i<j.
\end{split}
\end{equation}
It follows  from \eqref{xpz} that for any  \(Z=(Z_1, \cdots, Z_N)^\top\in X_{p,z}\),  
\begin{equation}\label{M:4.32a}
\langle Q_{jp}e_i,\, Z\rangle=\langle Q_{jp},Z_i\rangle=0,\ \  i, j=1, \cdots, N,
\end{equation}

We note that for any $p\in I_\delta$ and sufficiently large $D(z)>0$, the sequences $\{Q_{ip}e_i\}_{p,z}$ and $\{\Psi_{p,z}\}_{p,z}$ are bounded uniformly in $L^\infty(\R; \R^N)$. We then  verify  from  \eqref{v} that  for any  $p\in I_\delta$ and sufficiently large $D(z)>0$,
\[
\bigl\|\mathcal V_p(Q_{ip}e_i)
-\mathcal V_p(\Psi_{p,z})\bigr\|_{\mathrm F}
\le C(|Q_{ip}e_i|+|\Psi_{p,z}|)^{2p-3}|Q_{ip}e_i-\Psi_{p,z}|
\leq C |Q_{ip}e_i-\Psi_{p,z}| .
\]
Since
\[
\Psi_{p,z}-Q_{jp}e_j=(\Psi_{p,z}-W_{p,z})+\sum_{k\ne j}Q_{kp}e_k,
\]
we conclude that for $i\neq j$,
\begin{align}\label{4.33a}
&\Big\|\bigl[
\mathcal V_p(Q_{ip}e_i)-\mathcal V_p(\Psi_{p,z})\bigr]\phi_p^\pm (\cdot-z_i)e_j\Big\|_2\nonumber\\
\leq &\Big\|\big\|
\mathcal V_p(Q_{ip}e_i)-\mathcal V_p(\Psi_{p,z})\big\|_{\mathrm{F}}\phi_p^\pm(\cdot-z_i)e_j\Big\|_2\\
\leq &
C\Big\|\,\big|\Psi_{p,z}-Q_{ip}e_i\big|\,
\big|\phi_p^\pm(\cdot-z_i)\big|\Big\|_2\nonumber\\
\le &C\|\Psi_{p,z}-W_{p,z}\|_\infty\|\phi_p^\pm\|_2
+C\sum_{k\ne i}\|Q_{kp}\phi_p^\pm(\cdot-z_i)\|_2.\nonumber
\end{align}
By   the exponential decay of $(Q_p, Q_p')$, we calculate from \eqref{4.15} that  \begin{align}\label{4.33b}
	\|Q_{kp}\phi_p^\pm (\cdot-z_i)\|_2=O(q_D^{3/4}),\ \ k\neq i.
\end{align}
We then deduce from
\eqref{4.33a}, \eqref{4.33b} and \eqref{eq:s4-full-background}  that
\begin{align*}
	\Big\|\bigl[
	\mathcal V_p(Q_{ip}e_i)-\mathcal V_p(\Psi_{p,z})\bigr]\phi_p^\pm(\cdot-z_i)e_j\Big\|_2
	= O(q_D^{3/4}),
\end{align*}
which thus yields from  (\ref{4.31b}) and (\ref{M:4.32a}) that
\begin{align}\label{6:4.33a}
|\langle\mathcal L_{p,z}\Phi_{ij}^{(0)}-H_{ij}^{(0)},Z\rangle|
\le Cq_D^{3/4}\|Z\|_{H^1}, \ \ \forall\ Z\in X_{p,z}.
\end{align}
Multiplying (\ref{6:4.33a}) by \(e^{-\kappa_pd_{ij}}\) and  summing over $i<j$, we therefore obtain from (\ref{6:4.33a}) and \eqref{4.32a} that the claim \eqref{eq:s4-full-defect} holds true.

\medskip\noindent\textbf{Step 2.} This step is to prove that for any $p\in I_\delta$,
\begin{equation}\label{eq:linear-leading-profiles}
	\big\|H_{\mathrm{F}}-\Phi\big\|_{H^1}
	=O\big(q_D^{11/8}\big) \ \ \text{uniformly}\ \   \text{as}\ \, D(z)\to\infty,
\end{equation}
where \(H_{\mathrm F}\) is defined by \eqref{eq:energy-linear-response}, and $\Phi=\sum_{i<j}e^{-\kappa_p d_{ij}}\Phi_{ij}^{(0)}$ is given by Step 1.

Let $T_i(p,z)$ be as in \eqref{tder}, and denote
\begin{align*}
	\Gamma(p, z):=\big(\Gamma_{ij}(p,z)\big)_{i, j=1}^N,\ \ \ \Gamma_{ij}(p,z):=\big\langle T_i(p,z),\, T_j(p,z)\big\rangle,\ \ i,j=1, \cdots,N.
\end{align*}
Recall from \eqref{2.35} that for any $p\in I_\delta$,  $\|T_j(p, z)+e_j Q'_p(x-z_j)\|_{H^1}\to0$
uniformly  as $D(z)\to\infty$,
which yields that  for any  $p\in I_\delta$  and sufficiently large $D(z)>0$, $\Gamma$ is invertible and the sequence $\{\|\Gamma^{-1}\|_{\mathrm{F}}\}_{p,z}$ is bounded uniformly, due to the fact that   \(\inf_{p\in I_\delta}\|Q_p'\|_2^2>0\). 
Denote
\[
\Pi_{p,z}\Phi:=\Phi-\mathsf M(\Phi,\Psi_{p,z})\Psi_{p,z}
-\sum_{i,j=1}^N(\Gamma^{-1})_{ij}\langle \Phi,T_i(p,z)\rangle T_j(p,z).
\]
We first claim that for $p\in I_\delta$,
\begin{equation}\label{eq:leading-projection-error}
	\|\Pi_{p,z}\Phi-\Phi\|_{H^1}=O(q_D^{7/4})\ \
	\text{uniformly}\ \   \text{as}\ \, D(z)\to\infty.
\end{equation}
Actually, the same argument of \eqref{4.33b} gives that
\begin{equation}\label{eq:s4-cross-center}
	\begin{aligned}
		|\langle\phi_p^\pm(\cdot-z_i),Q_{kp}\rangle|
		+|\langle\phi_p^\pm(\cdot-z_i),Q_{kp}'\rangle|
		\le Ce^{-\frac{3}{4}\kappa_pd_{ik}},\ \ i\ne k,
	\end{aligned}
\end{equation}
Following the facts that $\phi_p^{\pm}\perp Q_p$ and \(\|\Phi\|_2= O(q_D)\), we then deduce  from  \eqref{eq:s4-cross-center} and \eqref{eq:s4-full-background}  that for  any $p\in I_\delta$ and $k=1, \cdots, N$,
\begin{equation}\label{4.31a}
\begin{split}
	&\|\mathsf M(\Phi,\Psi_{p,z})\|_{\mathrm{F}}+|\langle \Phi, T_k(p,z)\rangle|\\
=&\|\mathsf M(\Phi,W_{p,z})\|_{\mathrm{F}}+|\langle \Phi, -e_kQ'_{kp}\rangle|+O\big(P(D)q_D^2\big)=O\big(q_D^{7/4}\big)
\end{split}
\end{equation}
uniformly as $D(z)\to\infty$. By the uniform boundedness of \(\{\|\Gamma^{-1}\|_{\mathrm{F}}\}_{p,z}\),   we thus get from \eqref{4.31a} that the claim \eqref{eq:leading-projection-error} holds true.

Since \(\mathsf M(\Psi_{p,z})=I_N\) and \(\mathsf M(T_i(p,z),\Psi_{p,z})=0\) for $i=1, \cdots, N$, we have
\[
\big\langle  \Psi_{p,z},T_k(p,z)\big\rangle=0,\ \ \mathsf M(\Pi_{p,z}\Phi,\Psi_{p,z})=0.\]
We then deduce that
\[
\begin{aligned}
	&\quad\langle\Pi_{p,z}\Phi,T_k(p,z)\rangle\\
	&=\langle\Phi,T_k(p,z)\rangle
	-\big\langle\mathsf M(\Phi,\Psi_{p,z})\Psi_{p,z},T_k(p,z)\big\rangle
	-\sum_{j=1}^N\langle\Phi,T_j(p,z)\rangle
	\sum_{i=1}^N\Gamma_{ki}(\Gamma^{-1})_{ij}\\
	&=\langle\Phi,T_k(p,z)\rangle-\langle\Phi,T_k(p,z)\rangle=0,
\end{aligned}
\]
which implies that  \(\Pi_{p,z}\Phi\in X_{p,z}\), and hence \(H_{\mathrm{F}}-\Pi_{p,z}\Phi\in X_{p,z}\).
Choose  $H=H_{\mathrm{F}}-\Pi_{p,z}\Phi$, so that it yields from \eqref{3.4} that
\begin{align*}
	c_*\|H_{\mathrm{F}}-\Pi_{p,z}\Phi\|_{H^1}^2
	\le&\big\langle\mathcal L_{p,z}\big(H_{\mathrm{F}}-\Pi_{p,z}\Phi\big),\, H_{\mathrm{F}}-\Pi_{p,z}\Phi\big\rangle\\
	=&\big\langle F-\mathcal L_{p,z}\Pi_{p,z}\Phi,\, H_{\mathrm{F}}-\Pi_{p,z}\Phi\big\rangle\\
\le&\|F-\mathcal L_{p,z}\Pi_{p,z}\Phi\|_{X_{p,z}^*}
	\|H_{\mathrm{F}}-\Pi_{p,z}\Phi\|_{H^1}.\nonumber
\end{align*}
This further gives  from \eqref{eq:s4-full-defect} and \eqref{eq:leading-projection-error} that
\begin{align*}
	\|H_{\mathrm{F}}-\Phi\|_{H^1}\leq& \|H_{\mathrm{F}}-\Pi_{p,z}\Phi\|_{H^1}+\|\Pi_{p,z}\Phi-\Phi\|_{H^1}\\
	\le &\|\Pi_{p,z}\Phi-\Phi\|_{H^1}+C\|F-\mathcal L_{p,z}\Pi_{p,z}\Phi\|_{X_{p,z}^*}\\
	\le &C\|F-\mathcal L_{p,z}\Phi\|_{X_{p,z}^*}
	+C\|\Pi_{p,z}\Phi-\Phi\|_{H^1}= O(q_D^{11/8}),
\end{align*}
which therefore proves \eqref{eq:linear-leading-profiles}.

\medskip\noindent\textbf{Step 3.} In this step, we are ready to finish the proof of Lemma \ref{lem:quadratic-response-coefficient}. Since it follows from \eqref{eq:full-source-leading} that  for any $p\in I_\delta$, \(\|F\|_2=O(q_D)\) uniformly as $D(z)\to\infty$,
we get from \eqref{eq:s3-13a}  that  for any $p\in I_\delta$,
\begin{equation}\label{eq:residual-response-size}
	\|H_{\mathrm{F}}\|_{H^1}\le C \|F\|_{H^{-1}}\le C\|F\|_2=O(q_D) \ \ \mbox{uniformly  as}\,\ D(z)\to\infty.
\end{equation}
Let $H_{ij}^{(0)}$  be as in \eqref{eq:full-leading-pair-fields}.
We  then conclude  from \eqref{eq:full-source-leading},  \eqref{eq:linear-leading-profiles} and \eqref{eq:residual-response-size} that  for any $p\in I_\delta$,
\begin{equation}\label{4.36a}
\begin{split}
	\langle F, H_{\mathrm{F}}\rangle=&\big\langle \sum_{i<j}e^{-\kappa_pd_{ij}}H_{ij}^{(0)}, \Phi\big\rangle+\big\langle F-\sum_{i<j}e^{-\kappa_pd_{ij}}H_{ij}^{(0)}, H_{\mathrm{F}}\big\rangle\\
&-\big\langle \sum_{i<j}e^{-\kappa_pd_{ij}}H_{ij}^{(0)}, \Phi-H_{\mathrm{F}}\big\rangle \\
	=&
	\sum_{i<j}\sum_{k<l}e^{-\kappa_p(d_{ij}+d_{kl})}
	\langle H_{ij}^{(0)},\Phi_{kl}^{(0)}\rangle
	+O\big(q_D^{19/8}\big)  \ \ \mbox{uniformly  as}\,\ D(z)\to\infty.
\end{split}
\end{equation}
Note that for $i<j$ and $k<l$,
\begin{align*}
	\langle H_{ij}^{(0)},\, \Phi_{kl}^{(0)}\rangle
	=&\big\langle h_p^-(\cdot-z_j)e_i+h_p^+(\cdot-z_i)e_j,\, \phi_p^-(\cdot-z_l)e_k+\phi_p^+(\cdot-z_k)e_l\big\rangle\\
	=&\begin{cases}
		2\langle h_p^+,\phi_p^+\rangle,&(i,j)=(k,l);\\
		0,&\{i,j\}\cap\{k,l\}=\varnothing;\\
		O(q_D^{1/2}),&\text{otherwise};
	\end{cases}
\end{align*}
where the last estimate follows from the exponential decay of $(h_p^\pm, \phi_p^\pm)$.  This thus  yields from  \eqref{4.36a} that for any $p\in I_\delta$,
\begin{equation}\label{eq:response-energy-expansion}
	\langle F,H_{\mathrm{F}}\rangle
	=2\langle h_p^+,\phi_p^+\rangle\sum_{i<j}e^{-2\kappa_pd_{ij}}
	+O\big(q_D^{19/8}\big) \ \ \mbox{uniformly  as}\,\ D(z)\to\infty.
\end{equation}

We next estimate the term $\langle h_p^+,\phi_p^+\rangle$ of (\ref{eq:response-energy-expansion}).  A direct calculation gives that
\begin{align*}
	h_2^+=\frac14w_*-w_*',\ \
	\langle h_2^+,w_*\rangle=\frac14,\ \
	L_{-,2}\phi_2^+=-w_*',\ \ L_{-,2}(xw_*/2)=-w_*'.
\end{align*}
Since \(w_*\perp\{\phi_2^+, \, xw_*/2\}\) and ker$L_{-,2}=\hbox{span}\{w_*\}$, we obtain from the above that $\phi_2^+= xw_*/2$ in $\R$, which then yields that \begin{align}\label{14}
	\langle h_2^+,\phi_2^+\rangle=1/4.
\end{align}
By the explicit formula of $(Q_p, h_p^+)$, since $\kappa_p, a_p\in C^1(I_\delta)$,   one can  verify that  for any $p\in I_\delta$,
$$|\partial_pQ_p(x)|
+|\partial_ph_p^+(x)|
+|\partial_p(Q_p(x)^{2p-2})|
\le C(1+|x|)e^{-c|x|}\ \ \text{in}\ \ \R.$$
We hence deduce that
\begin{align}\label{p-2}
	\|h_p^+-h_2^+\|_2+\|Q_p-Q_2\|_2
	+|\kappa_p^2-\kappa_2^2|
	+\|Q_p^{2p-2}-Q_2^2\|_\infty
	\le C|p-2|.
\end{align}
Moreover, set $
v:=\phi_p^+-\phi_2^+
+\langle\phi_2^+,Q_p\rangle Q_p.$
Then $\langle v,Q_p\rangle=0$ and
\begin{align}\label{4.39}
	|\langle\phi_2^+,Q_p\rangle|
	=|\langle\phi_2^+,Q_p-Q_2\rangle|
	\le C|p-2|.
\end{align}
Applying $L_{-,p}Q_p=0$  and following the equations of $(\phi_p^+, \phi_2^+)$,  one gets   from \eqref{step1} that
\[
\begin{aligned}
	c\|v\|_{H^1}^2
	\le \langle L_{-,p}v,v\rangle
	=&\langle h_p^+-h_2^+,v\rangle
	+\langle h_2^+,Q_2\rangle\langle Q_2-Q_p,v\rangle\\
	&+\int_{\mathbb R}
	\bigl(\kappa_2^2-\kappa_p^2+Q_p^{2p-2}-Q_2^2\bigr)
	\phi_2^+v\,dx\le C|p-2|\|v\|_{H^1},
\end{aligned}
\]
which yields that $\|v\|_{H^1}\le C|p-2|$. We thus obtain from \eqref{4.39} that
\[
\|\phi_p^+-\phi_2^+\|_2
\le \|v\|_2+|\langle\phi_2^+,Q_p\rangle|
\le C|p-2|.
\]
We then derive from \eqref{14} and \eqref{p-2} that
\begin{equation*}
	2\langle h_p^+,\phi_p^+\rangle=\frac12+O(|2-p|),
\end{equation*}
which thus yields from \eqref{eq:response-energy-expansion} that
\eqref{eq:response-energy-explicit} holds true. The proof of Lemma \ref{lem:quadratic-response-coefficient} is therefore complete.\end{proof}

\subsection{Proof of Proposition~\ref{prop:two-sided-energy-lower}}

The purpose of this subsection is to complete the proof of Proposition~\ref{prop:two-sided-energy-lower}. Towards this aim, we now derive the following energy expansion with explicit coefficients.

\begin{lemma}\label{lem:reference-energy}
Under the assumptions of
Proposition~\ref{prop:two-sided-energy-lower}, we have for any \(p\in I_\delta\),
\begin{equation}
	\begin{aligned}
		E_p(\Psi_{p,z})-NJ_1(p)
		=&\sum_{i<j}e^{-2\kappa_p d_{ij}}
		\bigg[d_{ij}\Big(4\kappa_pa_p^4
		-a_p^{2p}\frac{1-e^{-(p-2)\kappa_pd_{ij}}}{(p-2)\kappa_pd_{ij}}\Big)+1\bigg]\\
		&+O\Big(|2-p|\sum_{i<j}e^{-2\kappa_p d_{ij}}e^{\max\{(2-p)\kappa_pd_{ij},0\}}
		+P(D)q_D^{5/2}\Big)
	\end{aligned}
	\label{eq:reference-energy-explicit}
\end{equation}
uniformly  as  \(D(z)\to\infty\).
\end{lemma}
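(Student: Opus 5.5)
We will compute $E_p(\Psi_{p,z})$ by expanding around $W_{p,z}$, using $\Psi_{p,z}=G_{p,z}^{-1/2}W_{p,z}$ together with \eqref{g-1}. First we write the kinetic part exactly. Since $(-\partial_x^2+\kappa_p^2)W_{p,z}=(Q_{ip}^{2p-1})_i$, we have
\[
\int|\Psi'_{p,z}|^2=\operatorname{tr}\bigl(G^{-1}\mathsf M(W',W)\bigr)=\operatorname{tr}\bigl(G^{-1}\mathsf M_1\bigr)-\kappa_p^2N,
\]
where $(\mathsf M_1)_{ij}=\langle Q_{ip}^{2p-1},Q_{jp}\rangle$ and we used $\operatorname{tr}(G^{-1}G)=N$. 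Expanding $G^{-1}=I-E+E^2+\cdots$ with $E_{ii}=0$ and $E_{ij}=\langle Q_{ip},Q_{jp}\rangle$ gives
\[
\int|\Psi'|^2=N\|Q_p'\|_2^2-2\sum_{i<j}E_{ij}(\mathsf M_1)_{ij}+2\sum_{i<j}E_{ij}^2\|Q_p\|_{2p}^{2p}+O(P(D)q_D^3).
\]
For the potential part we write $|\Psi|^2=|W|^2+W^\top(G^{-1}-I)W$ as in \eqref{4.15a}. We then expand $\frac1p\int|\Psi|^{2p}$ to first order in the correction, using \eqref{4.15b}, whose contribution is $O(P(D)q_D^{5/2})$ after pairing with $|W|^{2p-2}$. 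This gives
\[
-\frac1p\int|W|^{2p}+2\sum_{i<j}E_{ij}\int|W|^{2p-2}Q_{ip}Q_{jp}-\sum_{i\ne j}E_{ij}^2\int|W|^{2p-2}Q_{ip}^2+\dots
\]
On the bulk of each $Q_{ip}$ we may replace $|W|^{2p-2}Q_{ip}$ by $Q_{ip}^{2p-1}$ up to $O(q_D^{5/2})$-type errors, in the manner of Lemma~\ref{lem:residual-response}. The first-order $E_{ij}$ terms then cancel, and the second-order $E_{ij}^2$ terms cancel against the kinetic ones, apart from a residual cross term $\int(|W|^{2p-2}-Q_{ip}^{2p-2}-Q_{jp}^{2p-2})Q_{ip}Q_{jp}$, which is of order $q_D^{11/4}$ and hence negligible.

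What remains is the genuine two-body term
\[
-\frac1p\int\Bigl(|W|^{2p}-\sum_iQ_{ip}^{2p}\Bigr).
\]
Restricting to pairs, with three-body overlaps being $O(q_D^{5/2})$, and splitting $\R$ at the midpoint $m_{ij}=(z_i+z_j)/2$, we approximate
\[
(Q_i^2+Q_j^2)^p-Q_i^{2p}-Q_j^{2p}\approx pQ_i^{2p-2}Q_j^2
\]
near $z_i$, and symmetrically near $z_j$. The main contribution is then $-2\int_{-\infty}^{d/2}Q_p^{2p-2}(x)\,a_p^2e^{-2\kappa_p(d-x)}\,dx$ together with its mirror image. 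Using the asymptotics \eqref{eq:s1-5}, $Q_p^{2p-2}e^{2\kappa_px}\approx a_p^{2p-2}e^{-(2p-4)\kappa_p x}$ for $x>0$. Integrating up to $d/2$ and then accounting for the intermediate region where both tails are exponential yields exactly the factor
\[
a_p^{2p}\,d_{ij}\,\frac{1-e^{-(p-2)\kappa_pd_{ij}}}{(p-2)\kappa_pd_{ij}}\,e^{-2\kappa_pd_{ij}},
\]
and this is where the $e^{\max\{(2-p)\kappa_pd,0\}}$ growth in the error originates when $p<2$. Similarly $E_{ij}=\langle Q_{ip},Q_{jp}\rangle\approx a_p^2e^{-\kappa_pd}(d+c_p)$ and $(\mathsf M_1)_{ij}$ contribute the term $4\kappa_pa_p^4d_{ij}e^{-2\kappa_pd_{ij}}$. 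The constant term $1$ comes from evaluating the $O(1)$ (non-$d$) parts of these integrals at $p=2$ from the explicit $\operatorname{sech}$ profile, with deviations $O(|p-2|)$ by the $C^1$ dependence of $a_p,\kappa_p$ as in \eqref{p-2}. Finally $N\|Q_p'\|^2-\frac Np\|Q_p\|_{2p}^{2p}=NJ_1(p)$.

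\textbf{Main obstacle.} The hardest step is the precise evaluation of the two-body integrals uniformly in $p$ near $2$, including the nonsymmetric tail region, where $Q_p^{2p-2}$ decays like $e^{-(2p-2)\kappa_p|x|}$ rather than $e^{-2\kappa_p|x|}$. This is what produces the $t$-dependent factor, and its error control must track $e^{(2-p)\kappa_pd}$ for $p<2$. We would first compute everything explicitly at $p=2$ with $w_*$ (where the constant is checked to be $1$ and $4\kappa_2a_2^4=a_2^4=\frac1{4}$), and then handle $p\ne2$ by differentiating in $p$ with the bound $(1+|x|)e^{-c|x|}$. The $p$-dependence of the midpoint integral is kept exact, while the remaining contributions are treated only perturbatively.
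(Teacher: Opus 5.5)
Your overall plan is the paper's: expand $E_p(\Psi_{p,z})$ around $W_{p,z}$ through the Gram matrix, reduce to pairs, evaluate the overlap and corridor integrals, and fix constants at $p=2$. However, the bookkeeping of the first-order Gram terms is wrong, and that is exactly the step that produces half of the formula. Combining your kinetic and potential expansions, the first-order terms are
\[
2\sum_{i<j}E_{ij}\Big[\int_\R|W_{p,z}|^{2p-2}Q_{ip}Q_{jp}\,dx-(\mathsf M_1)_{ij}\Big]=2\sum_{i<j}E_{ij}\,\langle F^0_i,Q_{jp}\rangle,
\qquad F^0_i:=|W_{p,z}|^{2p-2}Q_{ip}-Q_{ip}^{2p-1},
\]
and they do \emph{not} cancel. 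Replacing $|W_{p,z}|^{2p-2}Q_{ip}$ by $Q_{ip}^{2p-1}$ is harmless near $z_i$. But the pairing with $Q_{jp}$ is dominated by the bulk of $Q_{jp}$, where $|W_{p,z}|^{2p-2}\approx Q_{jp}^{2p-2}$. Lemma~\ref{lem:residual-response}, which you invoke, shows precisely this: $F^0$ has size $q_D$ and is concentrated near the \emph{other} centers (see \eqref{4.8}). Hence
\[
\langle F^0_i,Q_{jp}\rangle=\int_\R Q_{jp}^{2p-1}Q_{ip}\,dx+O(P(D)q_D^{15/8})=2\kappa_pa_p^2e^{-\kappa_pd_{ij}}+\cdots,
\]
and multiplying by $E_{ij}=a_p^2d_{ij}e^{-\kappa_pd_{ij}}+\cdots$ gives exactly the $+4\kappa_pa_p^4d_{ij}e^{-2\kappa_pd_{ij}}$ term.

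So your statement that after the cancellations ``what remains is the genuine two-body term'' is false. Your later claim that $E_{ij}$ and $(\mathsf M_1)_{ij}$ contribute $4\kappa_pa_p^4d_{ij}e^{-2\kappa_pd_{ij}}$ contradicts it. If the first-order terms really cancelled, the only $E_{ij}(\mathsf M_1)_{ij}$ term left would be the kinetic $-2E_{ij}(\mathsf M_1)_{ij}$, which has the wrong sign. This is not cosmetic: the lemma rests on $4\kappa_2a_2^4=a_2^4=\tfrac14$, i.e.\ on the $d_{ij}$-linear terms cancelling at $p=2$.

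The paper handles this step in \eqref{eq:reference-gram-combined}--\eqref{eq:reference-gram-leading}. It keeps the brackets $\int_\R[Q_{ip}^{2p-1}Q_{jp}-|W_{p,z}|^{2p-2}Q_{ip}Q_{jp}]\,dx=-\langle F^0_i,Q_{jp}\rangle$, which are $O(q_D)$ for all $i,j$ by \eqref{4.8}. Consequently every $O(\|E_{p,z}\|_{\mathrm F}^2)$ contribution costs only $O(P(D)q_D^3)$; this includes your diagonal $E_{ij}^2$ terms, which you otherwise treat correctly. What survives is $2\sum_{i<j}\langle Q_{ip},Q_{jp}\rangle\int_\R Q_{ip}^{2p-1}Q_{jp}\,dx$.

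There are two smaller points.
\begin{itemize}
\item For $p<2$, the Taylor remainder of $(|W_{p,z}|^2+\eta)^p$ involves $|W_{p,z}|^{2p-4}\eta^2$, and $|W_{p,z}|^{2p-4}$ is unbounded at infinity. The $L^2$ bound \eqref{4.15b} alone therefore does not control it. You need the pointwise bound $|\eta|\le\|G_{p,z}^{-1}-I_N\|_{\mathrm F}\,|W_{p,z}|^2$; the paper uses it to obtain $\eta^2|W_{p,z}|^{2p-4}\le C|\eta|^{15/8}$.
\item The constant $1$ comes entirely from the corridor integral, $2a_2^2\int_\R[e^{2\kappa_2x}Q_2^2-a_2^2\mathbf 1_{(0,\infty)}]\,dx=-1$. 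The overlap offset $\int_\R[e^{\kappa_2x}Q_2-a_2\mathbf 1_{(0,\infty)}]\,dx$ vanishes by oddness. The $|2-p|e^{\max\{-t_{ij},0\}}$ error comes from the corridor remainder $(1+v)^p-1-pv-v^p=O(|p-2|v^{3/2})$, while the main corridor integral is kept exact.
\end{itemize}
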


\begin{proof} We proceed the proof of Proposition \ref{lem:reference-energy} by the following two steps.

\noindent\textbf{Step 1.}
In this step, we prove that for any \(p\in I_{1/8}\),
\begin{align}\label{eq:reference-energy-expansion}
E_p(\Psi_{p,z})-NJ_1(p)
		=&\sum_{i<j}\Big\{
		4\kappa_pa_p^2e^{-\kappa_p d_{ij}}\inte Q_{ip}Q_{jp}dx\\
&\qquad-\frac1p\int_\R
		\big[(Q_{ip}^2+Q_{jp}^2)^p-Q_{ip}^{2p}-Q_{jp}^{2p}\big]dx\Big\}
+O\Big(P(D)q_D^{5/2}\Big)\nonumber
\end{align}
uniformly as  \(D(z)\to\infty\).

To prove (\ref{eq:reference-energy-expansion}), we note that
\[
E_p(\Psi_{p,z})-NJ_1(p)
=\big[E_p(\Psi_{p,z})-E_p(W_{p,z})\big]+\big[E_p(W_{p,z})-NJ_1(p)\big].
\]
Since $Q_p$ is a minimizer of  \(J_1(p)\),  we have  for any $p\in I_\delta$,
\begin{align}\label{4.26}
	E_p(W_{p,z})-NJ_1(p)
	&=-\frac1p\int_\R\Big(|W_{p,z}|^{2p}-\sum_{i=1}^NQ_{ip}^{2p}\Big)dx\\
	&=-\frac1p\sum_{i<j}\int_\R
	\Big[\big(Q_{ip}^2+Q_{jp}^2\big)^p-Q_{ip}^{2p}-Q_{jp}^{2p}\Big]dx
	+O \Big(P(D)q_D^{5/2}\Big)\nonumber
\end{align}
uniformly as $D(z)\to\infty$, where the second identity follows from Lemma~\ref{lem:three-label-energy} in the Appendix.  We  first claim that for any $\ p\in I_\delta$,
\begin{equation}\label{eq:reference-gram-combined}
	\begin{aligned}
		E_p(\Psi_{p,z})-E_p(W_{p,z})
		=&\sum_{i,j=1}^N(G_{p,z}^{-1}-I)_{ij}
		\int_\R\Big[Q_{ip}^{2p-1}Q_{jp}-|W_{p,z}|^{2p-2}Q_{ip}Q_{jp}\Big]dx\\
		&+O\left(P(D)q_D^{15/4}\right)\ \ \text{uniformly }\ \text{as}\,\ D(z)\to\infty.
	\end{aligned}
\end{equation}

Indeed, we set
 $$   \eta:=|\Psi_{p,z}|^2-|W_{p,z}|^2.$$
Since \(\|\Psi_{p,z}\|_2^2=N\)  and
\begin{equation}
	\int_\R \big[Q_{ip}'Q_{jp}'+\kappa_p^2Q_{ip}Q_{jp}\big]dx
	=\int_\R Q_{ip}^{2p-1}Q_{jp}dx
	=\int_\R Q_{jp}^{2p-1}Q_{ip}dx,\ \ i,j=1, \cdots,N,
	\label{eq:scalar-cross-identity}
\end{equation}
we deduce that
\begin{equation}\label{eq:reference-energy-exact}
	\begin{aligned}
		E_p(\Psi_{p,z})+\kappa_p^2N
		&=\int_\R\bigl(|\Psi_{p,z}'|^2+\kappa_p^2|\Psi_{p,z}|^2\bigr)dx
		-\frac1p\int_\R|\Psi_{p,z}|^{2p}dx\\
		&=\sum_{i,j}(G_{p,z}^{-1})_{ij}\int_\R \big(Q_{ip}'Q_{jp}'+\kappa_p^2Q_{ip}Q_{jp}\big)dx
		-\frac1p\int_\R(|W_{p,z}|^2+\eta)^pdx\\
		&=\sum_{i,j}(G_{p,z}^{-1})_{ij}\int_\R Q_{ip}^{2p-1}Q_{jp}
		-\frac1p\int_\R(|W_{p,z}|^2+\eta)^p,
	\end{aligned}
\end{equation}
which yields that
\begin{equation}\label{4.27}
\begin{split}
	E_p(\Psi_{p,z})-E_p(W_{p,z})=&\sum_{i,j}(G_{p,z}^{-1}-I_N)_{ij}\int_\R Q_{ip}^{2p-1}Q_{jp}\\
	&-\frac1p\int_\R\big[(|W_{p,z}|^2+\eta)^p-|W_{p,z}|^{2p}\big]dx.
\end{split}
\end{equation}
Applying the  matrix bound of \eqref{a.7}, we derive that  for any $p\in I_\delta$,  \(\eta|W_{p,z}|^{-2}\in[-1/2,\, 1/2]\)  uniformly as $D(z)\to\infty$.  By  Taylor's  expansion with Lagrange remainder, we then get that for any $p\in I_\delta$,
\begin{equation}\label{4.32}
	\Big|\big(1+\eta|W_{p,z}|^{-2}\big)^p-1-p\eta|W_{p,z}|^{-2}\Big|\leq C(\eta|W_{p,z}|^{-2})^2\ \ \text{in}\ \,\R  \,\ \text{uniformly}\ \, \text{as}\,\ D(z)\to\infty.
\end{equation}
Following the fact that
\[
\eta^2|W_{p,z}|^{2p-4}
=|\eta|^{15/8}\left(|\eta||W_{p,z}|^{-2}\right)^{1/8}
|W_{p,z}|^{2p-15/4}\le C|\eta|^{15/8}\ \ \text{in}\,\ \R,
\]
we  deduce that  for any $p\in I_\delta$,
\[
\frac{1}{p}\Big|(|W_{p,z}|^2+\eta)^p-|W_{p,z}|^{2p}-p|W_{p,z}|^{2p-2}\eta\Big|
\le C\eta^2|W_{p,z}|^{2p-4}\le C|\eta|^{15/8}\ \ \text{in}\,\ \R
\]
uniformly as $D(z)\to\infty$. By the exponential decay of $Q_p$,
we derive from \eqref{4.15a} and \eqref{sep0929} that
$\|\eta\|_{15/8}=O(P(D)q_D^2)$ as $D(z)\to\infty$. It follows that
\begin{equation*}
	\frac1p\int_\R\big[(|W_{p,z}|^2+\eta)^p-|W_{p,z}|^{2p}\big]
	=\int_\R |W_{p,z}|^{2p-2}\eta+O\Big(P(D)q_D^{15/4}\Big)
\end{equation*}
uniformly as $D(z)\to\infty$,
which hence implies from \eqref{4.27} that the claim \eqref{eq:reference-gram-combined} holds true.

We now prove that for any $p\in I_\delta$,
\begin{equation}\label{eq:reference-gram-leading}
	E_p(\Psi_{p,z})-E_p(W_{p,z})
	=2\sum_{i<j}\langle Q_{ip},Q_{jp}\rangle\int_\R Q_{ip}^{2p-1}Q_{jp}dx
	+O\!\left(P(D)q_D^{23/8}\right)
\end{equation}
uniformly as $D(z)\to\infty$. Actually, we first recall from \eqref{4.8} that $$F^0=\bigl(|W_{p,z}|^{2p-2}Q_{ip}-Q_{ip}^{2p-1}\bigr)_{i=1}^N$$ satisfies $\|F^0\|_2=O(q_D)$ as $D(z)\to\infty$.  We then obtain from \eqref{g-1} and \eqref{eq:reference-gram-combined} that
\begin{equation}\label{4.31}
	\begin{aligned}
		&E_p(\Psi_{p,z})-E_p(W_{p,z})\\
		=&\sum_{i\ne j}(E_{p,z})_{ij}\int_\R\Big[|W_{p,z}|^{2p-2}Q_{ip}-Q_{ip}^{2p-1}\Big]Q_{jp}dx
		+O\left(P(D)q_D^3\right)
	\end{aligned}
\end{equation}
uniformly as $D(z)\to\infty$.
Note from \eqref{eq:two-sided-power-bounds}  that
\begin{align}\label{4.33}
	&\Big||W_{p,z}|^{2p-2}
	-Q_{ip}^{2p-2}-Q_{jp}^{2p-2}\Big|\nonumber\\
	\le&
	\left||W_{p,z}|^{2p-2}
	-(Q_{ip}^2+Q_{jp}^2)^{p-1}\right|+
	\left|(Q_{ip}^2+Q_{jp}^2)^{p-1}
	-Q_{ip}^{2p-2}-Q_{jp}^{2p-2}\right|\\
	\leq& C\Big(\sum_{k\ne i,j}Q_{kp}^2\Big)^{7/8}+C\min(Q_{ip}^2,Q_{jp}^2)^{7/8}
	\leq C\sum_{k\ne i,j}Q_{kp}^{7/4}+C(Q_{ip}Q_{jp})^{7/8}\ \ \text{in}\ \ \R.\nonumber
\end{align}
Using the explicit formula of $Q_p$, we deduce that
\begin{align}\label{4.34}
	\int_\R Q_{kp}^{7/4}Q_{ip}Q_{jp}dx=O(q_D^2),\ \  i\ne j,\  k\notin\{i,j\}.
\end{align}
We then get from \eqref{eq:s3-16a}, \eqref{4.33}  and \eqref{4.34} that  for any $p\in I_\delta$,
\begin{equation}\label{eq:reference-offdiagonal}
	\begin{split}
		&\Big|\int_\R\Big[|W_{p,z}|^{2p-2}Q_{ip}-Q_{ip}^{2p-1}\Big]Q_{jp}dx-\inte Q_{jp}^{2p-1}Q_{ip}dx\Big|\\
		\leq&
		C\sum_{k\ne i,j}\int_\R Q_{kp}^{7/4}Q_{ip}Q_{jp}dx+C\int_\R(Q_{ip}Q_{jp})^{15/8}dx
		=O\Big(P(D)q_D^{15/8}\Big)
\end{split}\end{equation}
uniformly as $D(z)\to\infty$.
This yields from \eqref{g-1} and \eqref{4.31} that \eqref{eq:reference-gram-leading} holds true.

Since it follows from \eqref{eq:s1-5}  that  for any $p\in I_{1/8}$ and $i\neq j$,
\begin{align}
	\arraycolsep=1.5pt
	\left\{\begin{array}{lll}
		\left|Q_p(x-d_{ij})-a_pe^{-\kappa_p d_{ij}}e^{\kappa_px}\right|
		\le Ce^{-p\kappa_p d_{ij}}e^{\kappa_px}\ \ \ &\text{on} \ \ (-\infty, \, d_{ij}/2],\\[3mm]
		Q_p(x)\le Ce^{-\kappa_px},\ \
		Q_p(x-d_{ij})\le Ce^{-\kappa_p d_{ij}}e^{\kappa_px}\ \ \ &\text{on} \ \ (d_{ij}/2, \, \infty),
	\end{array}\right.
\end{align}
uniformly as $D(z)\to\infty$,
we have
\[
\left|
\int_{\mathbb R}Q_{ip}^{2p-1}Q_{jp}\,dx
-a_pe^{-\kappa_p d_{ij}}\int_{\mathbb R}e^{\kappa_px}Q_p^{2p-1}\,dx
\right|\\
\le Ce^{-p\kappa_p d_{ij}}  \ \ \text{uniformly}\ \, \text{as}\,\ D(z)\to\infty.
\]
We then calculate that   for $i\neq j$,
\begin{equation}\label{4.36}
	\begin{aligned}
		\int_\R Q_{ip}^{2p-1}Q_{jp}dx
		&=a_pe^{-\kappa_p d_{ij}}\int_\R e^{\kappa_px}Q_p(x)^{2p-1}\,dx
		+O\Big(P(D)q_D^{15/8}\Big)\\
		&=a_pe^{-\kappa_p d_{ij}}\left[e^{\kappa_px}
		\big(\kappa_pQ_p(x)-Q_p'(x)\big)\right]_{-\infty}^{+\infty}
		+O\Big(P(D)q_D^{15/8}\Big)\\
		&=2\kappa_pa_p^2e^{-\kappa_p d_{ij}}
		+O\Big(P(D)q_D^{15/8}\Big),
	\end{aligned}
\end{equation}
where we have used \eqref{q} for the second identity.  As a consequence of \eqref{4.26}, \eqref{eq:reference-gram-leading} and \eqref{4.36},  we obtain that  \eqref{eq:reference-energy-expansion} holds, which hence establishes Step 1.

\medskip\noindent\textbf{Step 2.} Applying \eqref{eq:reference-energy-expansion} of Step 1, it remains to prove that for any $p\in I_{1/8}$ and $i\neq j$,
\begin{align}\label{eq:energy-overlap}
	e^{\kappa_p d_{ij}}\langle Q_{ip},Q_{jp}\rangle
	&=a_p^2d_{ij}+O\Big(|2-p|+q_D^{7/8}\Big),
\end{align}
and
\begin{equation}\label{eq:energy-corridor}
\begin{split}
	&\frac{e^{2\kappa_pd_{ij}}}{p}\int_\R\big[(Q_{ip}^2+Q_{jp}^2)^p-Q_{ip}^{2p}-Q_{jp}^{2p}\big]dx\\
	=&a_p^{2p}d_{ij}\frac{1-e^{-t_{ij}}}{t_{ij}}-1
	+O\Big(|2-p| e^{\max\{-t_{ij},0\}}+q_D^{3/4}\Big)\end{split}
\end{equation}
uniformly  as $D(z)\to\infty$, where  $t_{ij}:=(p-2)\kappa_pd_{ij}$.

We first prove \eqref{eq:energy-overlap}.  Actually, note from \eqref{eq:s1-5}  that  for any $p\in I_{1/8}$ and $i\neq j$,
\begin{align}\label{4.42a}
	\arraycolsep=1.5pt
	\left\{\begin{array}{lll}
		Q_p(x-d_{ij})=a_pe^{-\kappa_p d_{ij}} e^{\kappa_px}
		\bigl(1+O(e^{-(2p-2)\kappa_p(d_{ij}-x)})\bigr)\ \ \ &\text{on} \ \ (-\infty, \, d_{ij}/2],\\[3mm]
		e^{\kappa_px}Q_p(x)-a_p=O(e^{-(2p-2)\kappa_px})\ \ \ &\text{on} \ \ (d_{ij}/2, \, \infty),
	\end{array}\right.
\end{align}
uniformly as $D(z)\to\infty$.
We then calculate that for any $p\in I_{1/8}$ and $i\neq j$,
\begin{equation}\label{4.42}
\begin{split}
	e^{\kappa_p d_{ij}}\langle Q_{ip},Q_{jp}\rangle
	=&2e^{\kappa_pd_{ij}}\int_{-\infty}^{d_{ij}/2}Q_pQ_p(x-d_{ij})\,dx \\
=&2a_p\int_{-\infty}^{d_{ij}/2}e^{\kappa_px}Q_p\,dx
	+O(e^{-(p-1)\kappa_p d_{ij}})\\
	=&a_p^2d_{ij}+2a_p\int_{-\infty}^{d_{ij}/2}
	\big[e^{\kappa_px}Q_p-a_p\mathbf1_{(0,\infty)}(x)\big]\,dx	+O(e^{-(p-1)\kappa_p d_{ij}}) \\
	=&a_p^2d_{ij}+2a_p\inte
	\big[e^{\kappa_px}Q_p-a_p\mathbf1_{(0,\infty)}(x)\big]\,dx	+O(e^{-(p-1)\kappa_p d_{ij}})\end{split}
\end{equation}
uniformly  as $D(z)\to\infty$, where  the first identity follows from the evenness of \(Q_p\), and the second identity follows from the uniform boundedness of \(\|e^{\kappa_px}Q_p\|_{L^\infty((-\infty, d_{ij}/2])}\).
Moreover, recall that
\begin{equation}\label{eq:scalar-exact-tail}
	e^{\kappa_px}Q_p(x)
	=a_p\bigl(1+e^{-2(p-1)\kappa_px}\bigr)^{-1/(p-1)},\ \ x\geq0.
\end{equation}
Differentiate \eqref{eq:scalar-exact-tail} with respect to $p$, and use the evenness of $Q_p$ and the fact that
$\kappa_p,a_p\in C^1(I_{1/8})$. One can then verify that for any $p\in I_{1/8}$,
\begin{align}\label{4.45}
	\left|
	\partial_p\left[e^{\kappa_px}Q_p(x)
	-a_p\mathbf1_{(0,\infty)}(x)\right]
	\right|
	\le C(1+|x|)e^{-c|x|}\ \ \text{in}\ \ \R,
\end{align}
where the constants $C>0$  and $c>0$ are independent of $x$ and $p$. Since $\inte
\big[e^{\kappa_2x}Q_2-a_2\mathbf1_{(0,\infty)}(x)\big]\,dx=0$,  we conclude that
\begin{align}\label{4.44}
	|f(p)|=|f(p)-f(2)|=\big|\int_2^pf'(s)ds\big|\leq C|p-2|,
\end{align}
where $f(p):=\inte\big[e^{\kappa_px}Q_p-a_p\mathbf1_{(0,\infty)}(x)\big]\,dx$. This thus implies from \eqref{4.42} that  \eqref{eq:energy-overlap} holds true.

We next prove \eqref{eq:energy-corridor} as follows. 
Denote \(v(x):=Q_p^2(x-d_{ij})Q_{p}^{-2}\le1\) for $x\in(-\infty, d_{ij}/2]$.  Then
\begin{align*}
	\frac{1}{p}\Big[\big(Q_{p}^2+Q_{p}^2(x-d_{ij})\big)^p-Q_{p}^{2p}-Q_{p}^{2p}(x-d_{ij})\Big]
	=Q_{p}^{2p-2}Q_{p}^2(x-d_{ij})+R_p(x),
\end{align*}
where
\(R_p(x):=\frac{1}{p}Q_{p}^{2p}\big[(1+v)^p-1-v^p-pv\big]\).  This implies that
\begin{equation}\label{4.47}
\begin{split}
	&\quad\frac{e^{2\kappa_pd_{ij}}}{p}
	\int_{\mathbb R}
	\big[(Q_{ip}^2+Q_{jp}^2)^p-Q_{ip}^{2p}-Q_{jp}^{2p}\big]\,dx \\
	&=
	2e^{2\kappa_pd_{ij}}
	\int_{-\infty}^{d_{ij}/2}
	\big[Q_p^{2p-2}Q_p^2(x-d_{ij})+R_p\big]\,dx.
\end{split}
\end{equation}
By the mean value theorem in \(p\), we have for any \(p\in I_{1/8}\) and   $0\le v\le1$,
\begin{equation}\label{eq:599}
\partial_v^2\big[(1+v)^p-1-pv-v^2\big]
=p(p-1)(1+v)^{p-2}-2=O(|p-2|).
\end{equation}
Since the expression in the bracket of (\ref{eq:599}) and its first derivative vanish at \(v=0\),
integrating (\ref{eq:599}) twice gives that
$$\big|(1+v)^p-1-pv-v^2\big|\le C|p-2|v^2 \ \ \mbox{uniformly in }\ 0\le v\le1.$$
Consequently, we get that for any $p\in I_{1/8}$,
\begin{align*}
	|(1+v)^p-1-pv-v^p|
	\leq &|(1+v)^p-1-pv-v^2|+|\int_2^p v^s\log v\,ds|\\
	\leq& C|p-2|v^2+|p-2|\sup_{s\in I_{1/8}}v^s|\log v|\\
	\leq &C|p-2|v^{3/2}\ \ \text{on}\ \ (-\infty, d_{ij}/2]
\end{align*}
uniformly as $D(z)\to\infty$.
This shows that   for any $p\in I_{1/8}$ and $i\neq j$,
\begin{align}\label{eq:half-line-corridor-remainder}
	e^{2\kappa_p d_{ij}}\int_{-\infty}^{d_{ij}/2}|R_p(x)|\,dx
	\leq& C|p-2|e^{2\kappa_p d_{ij}}\int_{-\infty}^{d_{ij}/2} Q_{p}^{2p-3}Q_{p}^3(x-d_{ij})dx\nonumber\\
	\le& C|p-2| e^{-\kappa_p d_{ij}}\Big[
	\int_{-\infty}^0e^{2p\kappa_px}\,dx
	+\int_0^{d_{ij}/2}e^{2(3-p)\kappa_px}\,dx\Big]\\
	\le& C|p-2|\big[e^{-\kappa_p d_{ij}}+e^{-(p-2)\kappa_pd_{ij}}\big]
	\le C|p-2|e^{-(p-2)\kappa_pd_{ij}}\nonumber
\end{align}
uniformly as $D(z)\to\infty$.
Moreover, the same argument of \eqref{4.42} gives that   for any $p\in I_{1/8}$,
\begin{equation}\label{4.50}
\begin{split}
	&2e^{2\kappa_p d_{ij}}\int_{-\infty}^{d_{ij}/2}Q_{p}^{2p-2}Q_{p}^2(x-d_{ij})\,dx\\
	=&2a_p^2\int_{-\infty}^{d_{ij}/2}e^{2\kappa_px}Q_p^{2p-2}\,dx
	+O\Big(e^{-(2p-3)\kappa_p d_{ij}}\Big)\\ =&2a_p^2\int_{-\infty}^{d_{ij}/2}\big[e^{2\kappa_px}Q_p^{2p-2}-a_p^{2p-2}
e^{-2(p-2)\kappa_px}\mathbf1_{(0,\infty)}(x)\big]\,dx\\
&+a_p^{2p}d_{ij}\frac{1-e^{-t_{ij}}}{t_{ij}}	+O\Big(e^{-(2p-3)\kappa_p d_{ij}}\Big) \\ =&2a_p^2\inte\big[e^{2\kappa_px}Q_p^{2p-2}-a_p^{2p-2}e^{-2(p-2)\kappa_px}\mathbf1_{(0,\infty)}(x)\big]\,dx\\
&+a_p^{2p}d_{ij}\frac{1-e^{-t_{ij}}}{t_{ij}}	+O\Big(e^{-(2p-3)\kappa_p d_{ij}}\Big)
\end{split}
\end{equation}
uniformly   as $D(z)\to\infty$.  Similar to \eqref{4.45}, one can prove that
\begin{align*}
	\left|
	\partial_p\big[e^{2\kappa_px}Q_p^{2p-2}-a_p^{2p-2}e^{-2(p-2)\kappa_px}\mathbf1_{(0,\infty)}(x)\big]
	\right|
	\le C(1+|x|)e^{-c|x|}\ \ \text{in}\ \ \R.
\end{align*}
Since
$\int_\R\big[e^{2\kappa_2x}Q_2^{2}-a_2^{2}\mathbf1_{(0,\infty)}(x)\big]\,dx=-1$,
one gets that
$$\inte\big[e^{2\kappa_px}Q_p^{2p-2}-a_p^{2p-2}e^{-2(p-2)\kappa_px}\mathbf1_{(0,\infty)}(x)\big]\,dx=-1+O(|p-2|).$$
This further yields from \eqref{4.50} that  for any $p\in I_{1/8}$,
\begin{equation}\label{4.51}
\begin{split}
	&2e^{2\kappa_p d_{ij}}\int_{-\infty}^{d_{ij}/2}Q_{p}^{2p-2}Q_{p}^2(x-d_{ij})\,dx\\
	=&a_p^{2p}d_{ij}\frac{1-e^{-t_{ij}}}{t_{ij}}-1
	+O\big(|p-2|+e^{-(2p-3)\kappa_p d_{ij}}\big)
\end{split}
\end{equation}
uniformly as $D(z)\to\infty$.  As a consequence of \eqref{4.47},  \eqref{eq:half-line-corridor-remainder} and \eqref{4.51},  we obtain  that \eqref{eq:energy-corridor} holds true, which therefore completes the proof of  Lemma \ref{lem:reference-energy}.
\end{proof}

We are now ready to address the proof of Proposition~\ref{prop:two-sided-energy-lower}.

\begin{proof}[\normalfont\bfseries Proof of Proposition~\ref{prop:two-sided-energy-lower}]
Let  $D_0>1$ be sufficiently large and  $\delta, \varepsilon_0>0$ be sufficiently small.
Since it follows from \eqref{eq:residual-response-size} that  for any $p\in I_\delta$,
\(\|F\|_{H^{-1}}\leq C\,q_D\to0\) uniformly as $D(z)\to\infty$,   we deduce from Lemma \ref{lem:energy-taylor}  that for any $p\in I_\delta$ and $z\in\R^N$ satisfying $D(z)>D_0$,
\begin{equation}
	E_p(\cR_{\Psi_{p,z}}(K))\ge E_p(\Psi_{p,z})-\langle F,H_{\mathrm{F}}\rangle
	+\frac{c_*}{2}\|K-H_{\mathrm{F}}\|_{H^1}^2-C_Nq_D^3
	\label{eq:energy-square-separated}
\end{equation}
holds for \(K\in X_{p,z}\) satisfying \(\|K\|_{H^1}\le \varepsilon_0\).
Substituting \eqref{eq:response-energy-explicit} and \eqref{eq:reference-energy-explicit}  into
\eqref{eq:energy-square-separated}, 
we obtain that for any    $p\in I_\delta$,
\begin{equation}\label{eq:common-pair-energy-lower}
\begin{split}
	&E_p(\cR_{\Psi_{p,z}}(K))-NJ_1(p)
	\\
\ge{}&\sum_{i<j}e^{-2\kappa_p d_{ij}}
	\Big[d_{ij}\bigg(4\kappa_pa_p^4
	-a_p^{2p}\frac{1-e^{-(p-2)\kappa_pd_{ij}}}{(p-2)\kappa_pd_{ij}}\Big)+\frac12\bigg]\\
	&+\frac{c_*}{2}\|K-H_{\mathrm{F}}\|_{H^1}^2
	-C|2-p|\sum_{i<j}e^{-2\kappa_p d_{ij}}e^{\max\{2-p,0\}\kappa_pd_{ij}}
	-P(D)q_D^{19/8}
\end{split}
\end{equation}
uniformly as $D(z)\to\infty$. Since $\kappa_p, a_p\in C^1$, \(\kappa_2=1/4\) and \(a_2=1/\sqrt2\), we have
\[
\big|4\kappa_pa_p^4-1/4\big|
+\left|a_p^{2p}-1/4\right|\le C|2-p|,\ \ \forall\ p\in I_\delta.
\]
By the fact that
\[
0<\frac{1-e^{-t}}t=\int_0^1e^{-st}\,ds
\le e^{\max\{-t,0\}},\ \ \forall\ t\in\R,
\]
we then conclude that
\begin{align}\label{4.54}
	\Big|4\kappa_pa_p^4-a_p^{2p}\frac{1-e^{-(p-2)\kappa_pd_{ij}}}{(p-2)\kappa_pd_{ij}}
	-\frac14\Big(1-\frac{1-e^{-(p-2)\kappa_pd_{ij}}}{(p-2)\kappa_pd_{ij}}\Big)\Big|\le C|2-p|e^{\max\{2-p,0\}\kappa_pd_{ij}}.
\end{align}
Note that  for any sufficiently large  $D(z)>0$,
\begin{align}\label{j1}
	\arraycolsep=1.5pt
	\left\{\begin{array}{lll}
		\frac12\sum_{i<j}q_{ij}^2-P(D)q_D^{19/8}\ge\frac14\sum_{i<j}q_{ij}^2,\\[3mm]
		\sum_{i<j}d_{ij}e^{-2\kappa_p d_{ij}}
		e^{\max\{2-p,0\}\kappa_pd_{ij}}\leq
		\frac{N(N-1)}2\,Dq_D^2
		e^{\max\{2-p,0\}\kappa_pD},
	\end{array}\right.
\end{align}
where  the second inequality follows from the fact that the function
\(d\mapsto de^{-\min\{p,2\}\kappa_pd}\)
is decreasing for sufficiently large \(d>0\).
Substituting \eqref{4.54} and \eqref{j1} into \eqref{eq:common-pair-energy-lower}, we hence conclude that \eqref{eq:uniform-energy-lower} holds true. This therefore completes the proof of Proposition  \ref{prop:two-sided-energy-lower}. \end{proof}

\section{Existence and nonexistence of minimizers  as $p\to2$}
\label{sec:left-existence}
This section is concerned with the complete proof of  Theorem~\ref{thm:main}. Towards this purpose,  suppose that  $ U_n=(u_{1n},\cdots, u_{Nn})\in\cM_N$ is a minimizer of $J_N(p_n)$, where  $p_n\to2$ as $n\to\infty$. Recall from Lemmas  \ref{prop:profile} and \ref{lem:center} that up to a subsequence if necessary, there exist
\begin{align}\label{eq:s7-parameters}
	z_n=(z_{1n},\cdots,z_{Nn})\in\mathbb R^N,\ \ K_n\in X_{p_n,z_n},\ \  R_n\in O(N),
\end{align}
such that
\begin{align}\label{eq:s7-minimizer-chart}
	&R_n^\top U_n=\cR_{\Psi_{p_n,z_n}}(K_n):=
	\left[I_N+\mathsf M(K_n,K_n)
	\right]^{-1/2}\left(\Psi_{p_n,z_n}+K_n\right),\\[2mm]
	&z_{1n}<\cdots<z_{Nn},\ \  \
	\lim\limits_{n\to\infty}D(z_n)=\infty,\ \ \
	\lim\limits_{n\to\infty}\|K_n\|_{H^1}=0.\label{eq:s7-separated-limit}
\end{align}
Indeed, denote $\tilde{U}_n:=(R_n^0)^\top U_n$, where $R_n^0$ is as in \eqref{3.2}.  Apply  Lemma~\ref{lem:center} with $p=p_n, \ z^0=z_n^0$ and  $U=\tilde{U}_n$.  We then derive  from  Lemmas  \ref{prop:profile} and \ref{lem:center} that \eqref{eq:s7-parameters}--\eqref{eq:s7-separated-limit} hold true.
We first prove the nonexistence of minimizers for $J_N(p)$ with $p\in[2,2+\delta_N)$.

\begin{proof}[\normalfont\bfseries Proof of Theorem~\ref{thm:main}~(ii).]
Since the nonexistence of the case $p=2$ follows directly from \cite[Theorem~7]{ii}, we only need to prove the assertion for the case $2<p<3$.

Fix sufficiently large $C>0$, and  choose $0<\delta<\min\{1/8,\, (8eC)^{-1}\}$  and $D_0>16eC\kappa_-^{-1}$, where $\inf\limits_{p\in I_{1/8}}\kappa_p\geq\kappa_->0$.  
Note that
\begin{align*}
	\arraycolsep=1.5pt
	1-\frac{1-e^{-t}}t=\int_0^1(1-e^{-st})\,ds\geq
	\left\{\begin{array}{lll}
		\int_0^1e^{-1}st\,ds= \frac12e^{-1}t,\ \ \text{if}\ \ 0\leq t\leq1;\\[3mm]
		\int_0^1(1-e^{-s})\,ds= e^{-1},\ \ \text{if}\ \ t\geq 1;
	\end{array}\right.
\end{align*}
where the first line follows from the mean value theorem.
We then have
\begin{equation}
	\frac14+\frac d4\Big(1-\frac{1-e^{-(p-2)\kappa_pd}}{(p-2)\kappa_pd}\Big)-C(p-2)d
	\ge\frac14,\ \ \forall\ 2\leq p\leq2+\delta,\,\ d\geq D_0.
	\label{eq:positive-pair-normalized}
\end{equation}
Since \(Dq_D^2\le\sum_{i<j}d_{ij}e^{-2\kappa_p d_{ij}}\), it follows from \eqref{eq:uniform-energy-lower} and \eqref{eq:positive-pair-normalized} that there exist sufficiently small $\delta>0$ and $\varepsilon_0>0$, and  sufficiently large $D_0>0$ such that for any $p\in[2,2+\delta]$, $z\in\R^N$ satisfying $D(z)>D_0$, and $K\in X_{p,z}$ satisfying $\|K\|_{H^1}\le\varepsilon_0$,
\begin{equation}\label{eq:positive-pair-energy}
	E_p(\cR_{\Psi_{p,z}}(K))-NJ_1(p)
	\ge\frac14\sum_{i<j}e^{-2\kappa_pd_{ij}}
	+\frac{c_*}{2}\|K-H_{\mathrm{F}}\|_{H^1}^2>0.
\end{equation}
On the contrary, suppose  the nonexistence assertion of Theorem~\ref{thm:main} fails as $p\searrow 2$. Then there exists a sequence $\{p_n\}$, where  $p_n\searrow2$ as $n\to\infty$, such that $J_N(p_n)$ admits a minimizer \(U_n\in\cM_N\) for any large $n>0$.  Applying Lemma~\ref{prop:profile}, we then deduce from \eqref{eq:s7-parameters}--\eqref{eq:s7-separated-limit} that there exists $(z_n, R_n, K_n)\in \R^N\times O(N)\times  X_{p_n,z_n}$ such that
\begin{equation}\label{5.3}
	R_n^\top U_n=\cR_{\Psi_{p_n,z_n}}(K_n), \ \
	D(z_n)\to\infty,\ \ \|K_n\|_{H^1}\to0\ \ \text{as}\ \ n\to\infty.
\end{equation}
By the orthogonal
invariance  of $E_{p_n}$, we conclude from \eqref{eq:positive-pair-energy} and \eqref{5.3}   that for sufficiently large \(n>0\),
$$
J_N(p_n)=E_{p_n}(R_n^\top U_n)=E_{p_n}(\cR_{\Psi_{p_n,z_n}}(K_n))>NJ_1(p_n),
$$
which however contradicts \eqref{eq:s2-8}. Thus, there exists $\delta_N^+>0$ such that $J_N(p)$ is not attained for $p\in (2, 2+\delta_N^+)$. This proves  Theorem~\ref{thm:main}~(ii).
\end{proof}

\subsection{Proof of Theorem~\ref{thm:main}~(i)}
This subsection is devoted to the proof of Theorem~\ref{thm:main}~(i), which is concerned with the existence of minimizers $U_p$ for $J_N(p)$ as $p\nearrow 2$. Towards this aim, we always choose $15/8<p<2$ sufficiently close to $2$, so that $1/8\le\kappa_p\le1/2$.

The following lemma shows that the Lagrange multiplier matrix $\Lambda_p$ of the minimizer $U_p$ for $J_N(p)$ can be comparable with \(-\kappa_p^2I_N\), which is precisely needed for the spectral binding condition.

\begin{lemma}\label{prop:left-multiplier-clustering}
For any $N\in \mathbb{N}^+$, suppose  $J_N(p)$ admits a minimizer \(U_p\in\cM_N\) as \(p \nearrow 2\).
Then there exists \(\delta_N>0\)
such that for any \(p\in[2-\delta_N,2)\), the associated symmetric Lagrange multiplier matrix
\(\Lambda_p\) of  the minimizer \(U_p\) satisfies
\begin{equation}\label{eq:left-exponential-clustering}
	\|\Lambda_p+\kappa_p^2I_N\|_{\mathrm F}
	\le C_N e^{-b_N/\sqrt{2-p}},
\end{equation}
where the constants \( C_N>0\) and \(b_N>0\) are independent of \(p\) and \(U_p\).
\end{lemma}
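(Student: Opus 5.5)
The plan is to argue by contradiction and compactness. This lets us use the profile reduction of Lemma~\ref{prop:profile} together with the chart of Lemma~\ref{lem:center}, and then read off the Lagrange multipliers from the Euler--Lagrange equation in that chart.

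\textbf{Setting up the chart.} Suppose the estimate fails for every choice of constants. Then there are $p_n\nearrow2$ and minimizers $U_n=U_{p_n}$ for which $\|\Lambda_{p_n}+\kappa_{p_n}^2I_N\|_{\mathrm F}e^{b/\sqrt{2-p_n}}\to\infty$ for a fixed small $b$ to be chosen. As in \eqref{eq:s7-parameters}--\eqref{eq:s7-separated-limit}, after a subsequence we may write $R_n^\top U_n=\cR_{\Psi_{p_n,z_n}}(K_n)$. Here $K_n\in X_{p_n,z_n}$, $D_n:=D(z_n)\to\infty$ and $\|K_n\|_{H^1}\to0$. By orthogonal invariance of $E_p$ and \eqref{eq:s2-8},
\[
E_{p_n}(\cR_{\Psi_{p_n,z_n}}(K_n))-NJ_1(p_n)\le0 .
\]
We combine this upper bound with the lower bound of Proposition~\ref{prop:two-sided-energy-lower}.

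\textbf{Step 1: separation $D_n\gtrsim(2-p_n)^{-1/2}$.} For $p<2$ we have $t_{ij}<0$, and $e_t=1-\frac{1-e^{-t}}{t}=-\int_0^1(e^{s|t|}-1)\,ds$. Hence $|e_t|\le C|t|e^{|t|}$. Each pair term in \eqref{eq:uniform-energy-lower} is therefore bounded below by
\[
\tfrac14 e^{-2\kappa_pd_{ij}}\bigl[1-C(2-p)d_{ij}^2e^{(2-p)\kappa_pd_{ij}}\bigr].
\]
Suppose $D_n\le b_0(2-p_n)^{-1/2}$ with $b_0$ small. Then the pair realizing $D_n$ contributes at least $\tfrac18q_{D_n}^2$. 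For every other pair $d_{ij}\ge D_n$, and $d\mapsto d^2e^{-(2-(2-p))\kappa_p d}$ is decreasing for large $d$, so each negative part is at most $C(2-p_n)D_n^2q_{D_n}^2\le Cb_0^2q_{D_n}^2$. The error term $C(2-p)Dq_D^2e^{(2-p)\kappa_pD}$ is of the same small order. Choosing $b_0$ small then gives $E-NJ_1\ge\tfrac1{16}q_{D_n}^2>0$, a contradiction. Hence $D_n\ge b_0(2-p_n)^{-1/2}$. I expect this to be the main obstacle, because the pair terms change sign for $p<2$. The bookkeeping must show that all the negative contributions are dominated uniformly by the positive contribution $\tfrac14q_D^2$ of the closest pair, and that $N$ and the constants of Proposition~\ref{prop:two-sided-energy-lower} do not spoil the smallness.

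\textbf{Step 2: size of the correction.} Inserting $E-NJ_1\le0$ back into \eqref{eq:uniform-energy-lower}, the same estimates give
\[
\tfrac{c_*}{2}\|K_n-H_{\mathrm F}\|_{H^1}^2\le C\sum_{i<j}d_{ij}^2e^{-(1+p)\kappa_pd_{ij}}\le CD_n^2e^{-\frac32\kappa_pD_n}.
\]
Combined with $\|H_{\mathrm F}\|_{H^1}\le Cq_{D_n}$ from \eqref{eq:residual-response-size}, this yields $\|K_n\|_{H^1}\le Ce^{-\frac12\kappa_pD_n}$.

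\textbf{Step 3: the multipliers.} Testing \eqref{eq:matrix-euler-lagrange} against the orthonormal components gives
\[
R_n^\top\Lambda_nR_n=-\mathsf M(V',V')+\mathsf M\bigl(|V|^{2p-2}V,V\bigr),\qquad V=\cR_{\Psi_{p_n,z_n}}(K_n).
\]
At $V=\Psi_{p,z}$ this equals $-\kappa_p^2I_N+\mathsf M(F,\Psi_{p,z})$ up to sign conventions, since $\cF_{\kappa_p}(\Psi)=-F$ and $\mathsf M(\Psi)=I_N$. By Lemma~\ref{lem:residual-response}, $\|\mathsf M(F,\Psi_{p,z})\|_{\mathrm F}\le Cq_D$. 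Replacing $\Psi$ by $V$ changes the matrix by $O(\|K_n\|_{H^1})$, using the uniform $H^1$ and $L^\infty$ bounds and \eqref{2.71}. Frobenius norms are invariant under conjugation by $R_n$. Therefore
\[
\|\Lambda_n+\kappa_{p_n}^2I_N\|_{\mathrm F}\le C\bigl(q_{D_n}+e^{-\frac12\kappa_pD_n}\bigr)\le Ce^{-\frac1{16}b_0(2-p_n)^{-1/2}},
\]
using $\kappa_p\ge1/8$. Taking $b=b_0/16$ contradicts the choice of the sequence, and this yields constants $C_N$, $b_N$ and $\delta_N$ as claimed.
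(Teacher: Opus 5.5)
Your argument is correct and is essentially the paper's proof. It uses the chart from Lemmas~\ref{prop:profile} and~\ref{lem:center}, the separation $D(z_p)\gtrsim(2-p)^{-1/2}$ obtained by playing Proposition~\ref{prop:two-sided-energy-lower} against $J_N(p)\le NJ_1(p)$, a bound on $K_p$, and reads $\Lambda_p$ off the Euler--Lagrange equation. (The paper gets the sharper $\|K_p\|_{H^1}\le Cq_{D(z_p)}$ by comparing with the competitor $\Psi_{p,z_p}$ through Lemma~\ref{lem:energy-taylor}, but your weaker $e^{-\kappa_pD/2}$ bound suffices.)

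There are two cosmetic slips. The multiplier identity is $R_n^\top\Lambda_nR_n=\mathsf M(V',V')-\mathsf M(|V|^{2p-2}V,V)$, so $R_n^\top\Lambda_nR_n+\kappa_p^2I_N=-\mathsf M(F,\Psi_{p,z})+O(\|K_n\|_{H^1})$. In Step 2 the right-hand side should be $C(2-p)\sum_{i<j}d_{ij}^2e^{-p\kappa_pd_{ij}}$ plus the error term, not $\sum_{i<j}d_{ij}^2e^{-(1+p)\kappa_pd_{ij}}$; this still gives your bound $CD_n^2e^{-\frac32\kappa_pD_n}$ since $p\ge 15/8$.
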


\begin{proof}
For \( N = 1 \), the conclusion (\ref{eq:left-exponential-clustering}) follows  directly from the fact that the associated  Lagrange multiplier of the minimizer is \(-\kappa_p^2\).

We now assume that $N\ge2$. Fix  sufficiently small  \(\delta,\varepsilon_0>0\), and  sufficiently large \(D_0>1\), so that Proposition \ref{prop:two-sided-energy-lower} and Lemma \ref{lem:energy-taylor} can be applied.
By Lemmas \ref{prop:profile} and \ref{lem:center},
a standard sequential argument yields that there exists $\delta_N\in(0,\delta]$
such that  for each $p\in[2-\delta_N,2)$, every minimizer $U_p$
admits a representation satisfying
\begin{align}\label{7.8}
	R_p^\top U_p=\cR_{\Psi_{p,z_p}}(K_p),\ \
	D(z_p)>D_0,\ \  \|K_p\|_{H^1}\le\varepsilon_0,
\end{align}
where \((R_p,z_p,K_p)\) may depend on \(p\) and \(U_p\).


Since \(\kappa_p\to1/4\) as \(p\to2\), one can decrease \(\delta_N>0\)
so that \(1/8\le\kappa_p\le1/2\) on $[2-\delta_N,2)$.  Because
\[
0\le\frac{e^t-1}{t}-1
=\int_0^1(e^{st}-1)\,ds
\le\int_0^1ste^t\,ds=\frac12te^t, \ \ \forall\ t\ge 0,
\]
we have
\begin{align}\label{eq:left-pair-lower}
	&e^{-2\kappa_pd}\bigg[\frac14+\frac {d}4\Big(1-\frac{e^{(2-p)\kappa_pd}-1}{(2-p)\kappa_pd}\Big)
	-C(2-p)d e^{(2-p)\kappa_pd}\bigg]\nonumber\\
	\ge&\frac14e^{-2\kappa_pd}-\left(C+\frac{\kappa_p}{8}\right)
	(2-p)d^2e^{-p\kappa_pd}\\
\ge &\frac14e^{-2\kappa_pd}-2C
	(2-p)d^2e^{-p\kappa_pd},\ \ \forall\ p\in[2-\delta_N,2), \ d>1,\nonumber
\end{align}
where the constant  $C>1$ is fixed. Since
$$D(z_p)e^{-p\kappa_pD(z_p)}\le\sum_{i<j}d_{ij}e^{-p\kappa_pd_{ij}},$$
we conclude from \eqref{eq:uniform-energy-lower}  that for any  $p\in[2-\delta_N,2)$,
\begin{equation}
	\begin{aligned}
		&E_p\big(\cR_{\Psi_{p,z_p}}(K_p)\big)-NJ_1(p)\\
		\ge&\frac14\sum_{i<j}e^{-2\kappa_pd_{ij}}
		-2C(2-p)\sum_{i<j}d_{ij}^2e^{-p\kappa_pd_{ij}}+\frac{c_*}{2}\|K_p-H_{\mathrm F}\|_{H^1}^2,
	\end{aligned}
	\label{eq:left-all-pairs-lower}
\end{equation}
where \(F:=-\cF_{\kappa_p}(\Psi_{p,z_p})\) and \(H_{\mathrm{F}}\) are as in \eqref{f} and \eqref{eq:energy-linear-response}, respectively. Since \(D_0\) is chosen sufficiently large, we get that  for any
\(p\in[2-\delta_N,2)\), the function \(d\mapsto d^2e^{-p\kappa_pd}\)
is decreasing on \([D_0,\infty)\).  This yields that
$$\sum_{i<j}d_{ij}^2e^{-p\kappa_pd_{ij}}
\le\frac{N(N-1)}{2}D^2(z_p)e^{-p\kappa_pD(z_p)}.$$
We thus  deduce from  \eqref{eq:left-all-pairs-lower} that   for any  $p\in[2-\delta_N,2)$,
\begin{equation}\label{eq:left-minimum-gap-lower}
	\begin{aligned}
		&E_p\big(\cR_{\Psi_{p,z_p}}(K_p)\big)-NJ_1(p)\\
		\ge&q_{D(z_p)}^2\Big[\frac14-C_N(2-p)D^2(z_p)e^{(2-p)\kappa_pD(z_p)}\Big]
		+\frac{c_*}{2}\|K_p-H_{\mathrm{F}}\|_{H^1}^2.
	\end{aligned}
\end{equation}

Choose sufficiently small \(\eta_N > 0\),  so that
\(C_N\eta_N e^{\eta_N/2}\le1/8\).  We claim that
\begin{equation}\label{eq:left-separation-capture}
	D(z_p)>\sqrt{\eta_N/(2-p)},\ \ \forall\ p\in[2-\delta_N,2).
\end{equation}
On the contrary, suppose that \((2-p)D^2(z_p)\le\eta_N\) holds for some $p\in[2-\delta_N,2)$. We then have \((2-p)\kappa_pD(z_p)\le\eta_N/2\). This yields from  \eqref{7.8} and  \eqref{eq:left-minimum-gap-lower}  that
\begin{align*}
	J_N(p)-NJ_1(p)=&E_p(U_p)-NJ_1(p)=E_p(R^\top_pU_p)-NJ_1(p)\\
	=&E_p\big(\cR_{\Psi_{p,z_p}}(K_p)\big)-NJ_1(p)\ge\frac18q_{D(z_p)}^2
	+\frac{c_*}{2}\|K_p-H_{\mathrm{F}}\|_{H^1}^2>0,
\end{align*}
which however contradicts with the fact that $J_N(p)\leq NJ_1(p)$ in view of \cite[Lemma 12 (ii)]{GLN}.  Thus, the claim \eqref{eq:left-separation-capture} holds true.

Since \(\Psi_{p,z_p}\in\cM_N\) and \(R^\top_pU_p=\cR_{\Psi_{p,z_p}}(K_p)\) is a minimizer of $J_N(p)$,  we obtain from  Lemma \ref{lem:energy-taylor} and  \eqref{eq:residual-response-size} that  for any $p\in[2-\delta_N, 2)$,
\begin{equation}\label{eq:left-correction-energy}
	\begin{aligned}
		\frac{c_*}{2}\|K_p-H_{\mathrm{F}}\|_{H^1}^2
		&\le E_p(R^\top_pU_p)-E_p(\Psi_{p,z_p})+\langle F,H_{\mathrm{F}}\rangle+C_Nq_{D(z_p)}^3\\
		&\le\|F\|_{H^{-1}}\|H_{\mathrm{F}}\|_{H^1}+C_Nq_{D(z_p)}^3
		\le C_Nq_{D(z_p)}^2,
	\end{aligned}
\end{equation}
which  implies that
$$
\|K_p\|_{H^1}\le\|K_p-H_{\mathrm{F}}\|_{H^1}+\|H_{\mathrm{F}}\|_{H^1}\le C_Nq_{D(z_p)}.
$$
We then derive from  \eqref{2.72} that
$$\|R^\top_pU_p-\Psi_{p,z_p}\|_{H^1}\le C_Nq_{D(z_p)}, \ \ p\in[2-\delta_N, 2).$$
This further yields from \eqref{eq:residual-response-size} that  for any $p\in[2-\delta_N, 2)$,
\begin{align}\label{7.14}
	\|\cF_{\kappa_p}(R_p^\top U_p)\|_{H^{-1}}
	\le\|\cF_{\kappa_p}(\Psi_{p,z_p})\|_{H^{-1}}
	+C_N\|R_p^\top U_p-\Psi_{p,z_p}\|_{H^1}\le C_Nq_{D(z_p)},
\end{align}
where the first inequality follows from the $L^\infty$-uniform boundedness  of $\{\Psi_{p, z_p}\}$ and $\{R_p^\top U_p\}$.
Let $\Lambda_p$ be the  associated multiplier matrix of $U_p$. By  the definition of $\cF_{\kappa_p}$, we then obtain from \eqref{eq:matrix-euler-lagrange} that
$$
\cF_{\kappa_p}(R_p^\top U_p)
=(R_p^\top\Lambda_pR_p+\kappa_p^2I_N)R_p^\top U_p,
$$
which further yields that
\begin{equation}\label{eq:left-multiplier-pairing}
	(R_p^\top\Lambda_pR_p+\kappa_p^2I_N)_{ij}
	=\big\langle\big(\cF_{\kappa_p}(R_p^\top U_p)\big)_i,\, (R_p^\top U_p)_j\big\rangle_{H^{-1},H^1},\ \ i,j=1, \cdots, N.
\end{equation}
It thus follows from \eqref{7.14} and \eqref{eq:left-multiplier-pairing} that for any $p\in[2-\delta_N, 2)$,
\begin{align}\label{7.17}
	\|\Lambda_p+\kappa_p^2I_N\|_{\mathrm F}
	&=\|R_p^\top\Lambda_pR_p+\kappa_p^2I_N\|_{\mathrm F}
	\le C_N\|\cF_{\kappa_p}(R_p^\top U_p)\|_{H^{-1}}
	\le C_Nq_{D(z_p)},
\end{align}
where we have used the $H^1$-uniform boundedness  of $\{R_p^\top U_p\}$. Finally,   since $\inf_{p\in[2-\delta_N, 2)}\kappa_p\ge1/8$,  one gets  from \eqref{eq:left-separation-capture} that
$
q_{D(z_p)}=e^{-\kappa_pD(z_p)}
\le e^{-\sqrt{\eta_N}/(8\sqrt{2-p})}
$ holds for any $p\in[2-\delta_N, 2)$. This implies from  \eqref{7.17} that  \eqref{eq:left-exponential-clustering} holds true, and the proof of Lemma \ref{prop:left-multiplier-clustering} is therefore complete.
\end{proof}

We next introduce  the following analytical properties of the minimizers for $J_N(p)$.

\begin{lemma}\label{prop:gln-input}
Let $1<p<3$, and $N, k,\ell\in\mathbb{N}^+$.
\begin{enumerate}
\item[(i).]  If $J_N(p)$ admits a minimizer, then there exists
a minimizer $U=(u_1,\cdots,u_N)^\top\in\cM_N$ such that
$u_1,\cdots,u_N$ are the first $N$ eigenfunctions associated with
the first $N$ negative eigenvalues for the operator
$-\partial_{xx}-|U|^{2p-2}$ in $\R$.

\item[(ii).]
Let $U\in\cM_k$ and $\tilde U\in\cM_\ell$ be a minimizer of
$J_k(p)$ and $J_\ell(p)$ satisfying (i), respectively.
Let $\mu_k<0$ and $\tilde\mu_\ell<0$ be the $k$-th  eigenvalue of the operator
$-\partial_{xx}-|U|^{2p-2}$ in $\R$ and
the $\ell$-th eigenvalue of
$-\partial_{xx}-|\tilde U|^{2p-2}$ in $\R$, respectively.
If
$
0<p-1<
\sqrt{\frac{\min\{|\mu_k|,|\tilde\mu_\ell|\}}
	{\max\{|\mu_k|,|\tilde\mu_\ell|\}}},
$
then we have
$$J_{k+\ell}(p)<J_k(p)+J_\ell(p), \ \ \forall \,k,\ell\in\mathbb{N}^+.$$

\item[(iii).]
If $N\ge2$ and $J_N(p)$ satisfies
$$J_N(p)<J_k(p)+J_{N-k}(p),\ \ k=1, \cdots, N-1,$$
then $J_N(p)$ admits a minimizer.
\end{enumerate}
\end{lemma}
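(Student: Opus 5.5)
The three assertions are essentially the structural results of Gontier--Lewin--Nazar~\cite{GLN}. I would therefore recall them in the present normalization of $E_p$ and $\mathcal M_N$ rather than re-prove them from scratch, and check only that the hypotheses match.

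For (i), I start from an arbitrary minimizer $U$ with Euler--Lagrange equation \eqref{eq:matrix-euler-lagrange}. Diagonalizing the symmetric matrix $\Lambda$ by some $R\in O(N)$ and using the invariance of $E_p$ and $\mathcal M_N$ under $U\mapsto R^\top U$, I may assume $\Lambda=\operatorname{diag}(\mu_1,\dots,\mu_N)$. Then each $u_i$ is an eigenfunction of $H_U:=-\partial_{xx}-|U|^{2p-2}$. The remaining point is that these are exactly the first $N$ eigenvalues, all negative. Here I would invoke the second-variation (``no unfilled shell''/aufbau) argument of \cite{GLN}, which was already used in Step~2 of Lemma~\ref{prop:profile} through \cite[Equation (43)]{GLN} and \eqref{eq:profile-negative-multipliers}. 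The argument runs as follows. If some eigenvalue of $H_U$ below $\mu_N$ had an eigenfunction $v\perp\{u_i\}$, rotate $u_N$ towards $v$ inside $\mathcal M_N$. The energy then changes at second order by $2(\langle H_Uv,v\rangle-\mu_N)$ plus a contribution of the nonlinearity that is nonpositive by convexity of $t\mapsto t^p$. This strictly lowers $E_p$, a contradiction. Negativity of the $\mu_i$ follows from $\mu_i\le J_N(p)-J_{N-1}(p)\le J_1(p)<0$.

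For (ii), I would build a trial state for $J_{k+\ell}(p)$ by placing $U$ at the origin and $\tilde U(\cdot-R)$ far to the right. I then orthonormalize by the Gram matrix exactly as in \eqref{w}, and expand the energy as in Lemma~\ref{lem:reference-energy}. Each factor's components decay at rate at most $\sqrt{|\mu_k|}$, respectively $\sqrt{|\tilde\mu_\ell|}$. The overlap (Gram) correction costs a term of order $e^{-(\sqrt{|\mu_k|}+\sqrt{|\tilde\mu_\ell|})R}$, with a polynomial prefactor. The nonlinear cross term $-\frac1p\int[(|U|^2+|\tilde U_R|^2)^p-|U|^{2p}-|\tilde U_R|^{2p}]$ gains roughly $e^{-2(p-1)\min\{\cdots\}R}$ times a factor coming from the slower-decaying tail. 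The hypothesis $p-1<\sqrt{\min/\max}$ is exactly the condition making the nonlinear gain exponentially dominant. This gives $J_{k+\ell}(p)<J_k(p)+J_\ell(p)$ for $R$ large. Doing this decay comparison carefully, with tails controlled via Lemma~\ref{lem:L1-asymptotic}-type asymptotics, is the main technical step. Here I would simply cite the corresponding theorem of \cite{GLN}.

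For (iii), I take a minimizing sequence and apply concentration-compactness (Lions) in the form used in Step~3 of Lemma~\ref{prop:profile}. Dichotomy splits the orthonormal family into two groups of $k$ and $N-k$ functions going apart. That gives $J_N(p)\ge J_k(p)+J_{N-k}(p)$, which contradicts the strict inequality. Vanishing is excluded since $J_N(p)<0$. Hence the sequence is compact up to translation and yields a minimizer. The expected obstacle is (ii), which I treat by appeal to \cite{GLN}.
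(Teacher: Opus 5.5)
Your plan is the paper's: the paper also gives no proof and simply invokes \cite[Propositions~16 and~20, Theorem~14]{GLN}. Your second-variation argument for (i) is correct. What you leave out is the one point the paper does address. \cite{GLN} is formulated for complex-valued orthonormal systems, while $\cM_N$ is real, so ``checking the hypotheses'' must include this transfer. One way to do it: for a complex density matrix $\gamma$, the real operator $\frac12(\gamma+\bar\gamma)$ has the same density and kinetic energy, and concavity of the energy then shows that the real and complex infima coincide.

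The sketches you attach to (ii) and (iii) should not be offered as justification, because they are wrong.

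In (ii), set $a=\sqrt{|\mu_k|}$ and $b=\sqrt{|\tilde\mu_\ell|}$.
\begin{itemize}
\item The overlaps between the two clusters are of size $e^{-\min\{a,b\}R}$: this is the slower tail evaluated on the other cluster. Hence the Gram cost is of order $e^{-2\min\{a,b\}R}$ (times $R$ if $a=b$, cf.\ Lemma~\ref{lem:reference-energy}), not $e^{-(a+b)R}$.
\item When $(p-1)\max\{a,b\}<\min\{a,b\}$, the nonlinear gain is concentrated near the crossover point where the two densities are equal. It is of order $e^{-2pabR/(a+b)}$.
\item Comparing exponents, the gain wins if and only if $p\max\{a,b\}<a+b$. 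This is exactly the hypothesis. With the rates you wrote, the hypothesis does not come out.
\end{itemize}

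In (iii), dichotomy does not split the orthonormal family into $k$ and $N-k$ functions.
\begin{itemize}
\item Localizing the density matrix gives masses $\lambda$ and $N-\lambda$, with $\lambda$ in general non-integer, since a single orbital can split between the two regions.
\item You therefore have to pass to the relaxed problem over density matrices $0\le\gamma\le1$ with real trace. The concavity lower bound $J_{n+t}(p)\ge(1-t)J_n(p)+tJ_{n+1}(p)$ for the relaxed infimum then gives the contradiction with the integer binding inequalities.
\item Step~3 of Lemma~\ref{prop:profile} is not a general concentration--compactness argument. It uses that the weak limits solve the $p=2$ system, together with Theorems~\ref{thm:completeness-multiple-root} and~\ref{thm:L2iden}. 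Nothing like this is available for a minimizing sequence at fixed $p$.
\end{itemize}
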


Since the proof of Lemma \ref{prop:gln-input} can follow essentially from
\cite[Propositions~16 and~20, Theorem~14]{GLN}, we omit its details for simplicity.
We remark that the results of \cite{GLN} are addressed for complex-valued orthonormal
functions. In the present setting, since the associated Schr\"odinger
operators have real potentials,   their occupied eigenfunctions may be
chosen in the real range. The proofs of the binding and compactness criteria therefore
apply equally to the real constraint class \(\cM_N\), and hence we shall use them
in this form below.

\begin{proof}[\normalfont\bfseries Proof of Theorem~\ref{thm:main} (i)]
We prove   Theorem~\ref{thm:main} (i) by induction on $N\in\mathbb{N}^+$.
For $N=1$, $Q_p$ is a minimizer of $J_1(p)$ for any $p\in I_{1/8}$, whose Lagrange multiplier is $-\kappa_p^2$. For $N\ge2$, suppose  $J_k(p)$ admits a minimizer for every $1\le k<N$, where $1<p<2$ is sufficiently close to $2$.
Choose  sufficiently small $\delta_N>0$, so that $J_k(p)$ admits the minimizers
 for any $p\in[2-\delta_N,2)$ and the estimate of
Lemma \ref{prop:left-multiplier-clustering} holds, where
$1\le k<N$ is arbitrary.


For each $1\le k<N$, let $U$ and $\tilde U$ be the minimizers of
$J_k(p)$ and $J_{N-k}(p)$ satisfying
Lemma \ref{prop:gln-input} (i), respectively, and suppose that  $\mu_k<0$ and $\tilde\mu_{N-k}<0$ is the $k$-th  eigenvalue  of the operator
$-\partial_{xx}-|U|^{2p-2}$ in $\R$ and the
$(N-k)$-th eigenvalue of the operator
$-\partial_{xx}-|\tilde U|^{2p-2}$ in $\R$, respectively.
It then follows from
Lemma \ref{prop:left-multiplier-clustering}  that
\begin{equation}\label{7.17a}
	\big||\mu_k|-|\tilde\mu_{N-k}|\big|\leq\big||\mu_k|-\kappa_p^2\big|
	+\big||\tilde\mu_{N-k}|-\kappa_p^2\big|
	\le C_N e^{-b_N/\sqrt{2-p}},\ \ \forall\ p\in[2-\delta_N,2).
\end{equation}
Since \(\kappa_p\ge1/8\) holds for any $p\in[2-\delta_N,2)$, by decreasing $\delta_N>0$ if necessary, we obtain from \eqref{7.17a} that
\begin{equation}\label{eq:left-binding-ratio}
	\sqrt{\frac{\min\{|\mu_k|,|\tilde\mu_{N-k}|\}}{\max\{|\mu_k|,|\tilde\mu_{N-k}|\}}}
	\ge1-C_N e^{-b_N/\sqrt{2-p}}
	>1-(2-p)=p-1,\ \ \forall\ p\in[2-\delta_N,2),
\end{equation}
where the strict inequality follows from the fact that $e^{-b_N/\sqrt{2-p}}=o(2-p)$ as $p\nearrow2$.
We then deduce from Lemma \ref{prop:gln-input} (ii)  that  for any $p\in[2-\delta_N,2),$
$$J_N(p)<J_k(p)+J_{N-k}(p),\ \ 1\le k<N,$$
which further yields from Lemma \ref{prop:gln-input} (iii) that $J_N(p)$ admits a minimizer   for any $p\in[2-\delta_N,2)$.
By induction, this therefore completes the proof of Theorem~\ref{thm:main} (i).
\end{proof}

\appendix
\section{Auxiliary estimates}\label{app:technical-estimates}

In this appendix, we prove some auxiliary estimates which are used in Sections~\ref{sec:profile-reduction} and~\ref{sec:energy-conclusion}.

\begin{proof}[\normalfont\bfseries Proof of  \eqref{eq:reference-center-derivatives} and \eqref{2.35}]
Since $0<\inf\limits_{p\in I_{1/8}}\kappa_p\leq\kappa_p\leq\sup\limits_{p\in I_{1/8}}\kappa_p<\infty$ on $I_{1/8}$,   the explicit formula of  $Q_p$ gives that  for any $p\in I_{1/8}$,
\begin{align}\label{a.1}
	\sum_{j=0}^3|Q_p^{(j)}(x)|
	\leq C e^{-\kappa_p|x|}\ \,\ \text{in}\ \ \R, 
\end{align}
where we denote the derivative $Q_p^{(j)}(x):=\frac{d^j}{dx^j}Q_p(x)$.
Denote $E_{p,z}:=G_{p,z}-I_N$. One can verify from  \eqref{a.1} that  for any $p\in I_{1/8}$ and  $z\in\R^N$,
\begin{equation}\label{a.2a}
\begin{split}
	&\big|\inte Q_p(x-z_i)Q_p(x-z_j)dx\big|+\big|\inte Q_p(x-z_i)Q'_p(x-z_j)dx\big|\\
&\le C\big(1+D(z)\big)e^{-\kappa_pD(z)},\ \ i\neq j,
\end{split}
\end{equation}
 and hence
\begin{align}\label{a.2}
	\|E_{p,z}\|_{\mathrm{F}}+\|\partial_{z_j}E_{p,z}\|_{\mathrm{F}}\le C\big(1+D(z)\big)e^{-\kappa_pD(z)}.
\end{align}
Here and below in the proof,  \( C > 0 \) is independent of \((z, p)\), but it may vary from line to line.

Applying (\ref{a.2}), we can choose \(D_*>1\) sufficiently large so that for any $p\in I_{1/8}$ and  $z\in\R^N$ satisfying $D(z)\ge D_*$,
\begin{align}\label{a.3a}
\|E_{p, z}\|_{\mathrm F}+\|\partial_{z_j}E_{p,z}\|_{\mathrm{F}}<1/2,\ \ j=1, \cdots,N.
\end{align}
The matrix Taylor expansion then  gives that   for any $p\in I_{1/8}$ and   \(D(z)\ge D_*\),
\begin{align}\label{a.3}
	G_{p,z}^{-1/2}
	=\big[I_N+E_{p,z}\big]^{-1/2}=
	I_N-\frac12E_{p,z}
	+\sum_{m=2}^{\infty}c_mE_{p,z}^m,\ \ c_m:=\binom{-1/2}{m},
\end{align}
where \(\binom{\alpha}{m}\) denotes the
generalized binomial coefficient
\[
\binom{\alpha}{m}
:=\frac{\alpha(\alpha-1)\cdots(\alpha-m+1)}{m!},
\ \ \alpha\in\R,\quad m=1,2,\cdots.
\]
Differentiating \eqref{a.3} gives that  for any $p\in I_{1/8}$ and   \(D(z)\ge D_*\),
\begin{align}\label{a.4}
	\partial_{z_j}(G_{p,z}^{-1/2})
	=&-\frac12\partial_{z_j}E_{p,z}
	+
	\sum_{m=2}^{\infty}
	c_m
	\sum_{\ell=0}^{m-1}
	E_{p,z}^\ell
	(\partial_{z_j}E_{p,z})
	E_{p,z}^{m-1-\ell},\ \ j=1, \cdots, N.
\end{align}
Note from \eqref{a.3a} that  for any $p\in I_{1/8}$ and   \(D(z)\ge D_*\),
\begin{align}\label{a.5}
	\Big\|
	\sum_{m=2}^{\infty}c_mE_{p,z}^m
	\Big\|_{\mathrm F}
	\leq
	C\|E_{p,z}\|_{\mathrm F}^2,
\end{align}
and
\begin{align}\label{a.6}
	&\left\|
	\sum_{m=2}^{\infty}
	c_m\Big(\sum_{\ell=0}^{m-1}
	E_{p,z}^\ell(\partial_{z_j}E_{p,z})E_{p,z}^{m-1-\ell}\Big)
	\right\|_{\mathrm F}\nonumber\\
	\leq &\sum_{m=2}^{\infty}
	|c_m|\Big(
	\sum_{\ell=0}^{m-1}
	\|E_{p,z}\|_{\mathrm F}^{\ell}
	\|\partial_{z_j}E_{p,z}\|_{\mathrm F}
	\|E_{p,z}\|_{\mathrm F}^{m-1-\ell}\Big)
	\\
	=&\sum_{m=2}^{\infty}
	m|c_m|
	\|E_{p,z}\|_{\mathrm F}^{m-1}
	\|\partial_{z_j}E_{p,z}\|_{\mathrm F}\leq C\|\partial_{z_j}E_{p,z}\|_{\mathrm F}.\nonumber
\end{align}
As a consequence of \eqref{a.2} and \eqref{a.3}--\eqref{a.6},  we conclude that   for any $p\in I_{1/8}$ and \(D(z)>D_*\),
\begin{align}\label{a.7}
	\|G_{p,z}^{-1/2}-I_N\|_{\mathrm{F}}+\|\partial_{z_j}(G_{p,z}^{-1/2})\|_{\mathrm{F}}\leq CD(z)e^{-\kappa_pD(z)},\ \ j=1, \cdots, N.
\end{align}

Since it follows from  \eqref{a.1} that $\{Q_p\}_{p\in I_{1/8}}$ and $\{Q_p'\}_{p\in I_{1/8}}$ are bounded uniformly in $H^1(\R)$,  we conclude  from \eqref{a.7} that for any $p\in I_{1/8}$ and \(D(z)>D_*\),
\begin{equation}\label{a.10a}
\big\|\Psi_{p,z}-W_{p,z}\big\|_{H^1}=\big\|(G_{p,z}^{-1/2}-I_N)W_{p,z}\big\|_{H^1}\le CD(z)e^{-\kappa_pD(z)}, \qquad
\end{equation}
\begin{equation}\label{a.11}
\begin{split}
&\big\|\partial_{z_j}\Psi_{p,z}+e_jQ_p'(\cdot-z_j)\big\|_{H^1}
=\big\|(\partial_{z_j}G_{p,z}^{-1/2})W_{p,z}
-(G_{p,z}^{-1/2}-I_N)e_jQ_{jp}'\big\|_{H^1}\\
&\ \ \ \  \ \ \ \ \ \  \ \  \ \ \ \ \ \  \ \ \ \ \ \  \ \   \ \ \ \ \ \ \ \  \ \  \le CD(z)e^{-\kappa_pD(z)},\ \ j=1, \cdots, N. \end{split}
\end{equation}
Note that
\begin{align}\label{a.12a}
\arraycolsep=1.0pt
\left\{\begin{array}{lll}
T_j+e_jQ_{jp}'=\partial_{z_j}\Psi_{p,z}+e_jQ_p'(\cdot-z_j)-\mathsf M(\partial_{z_j}\Psi_{p,z},\Psi_{p,z})\Psi_{p,z},\\
\mathsf M(\partial_{z_j}\Psi_{p,z},\Psi_{p,z})
=(\partial_{z_j}G_{p,z}^{-1/2})G_{p,z}^{1/2}
-G_{p,z}^{-1/2}\,\mathsf M(e_jQ'_{jp},W_{p,z})G_{p,z}^{-1/2}.
\end{array}\right.
\end{align}
We then deduce from \eqref{a.7}, \eqref{a.11} and \eqref{a.12a} that   for any $p\in I_{1/8}$ and \(D(z)>D_*\),
\begin{align}\label{a.14}
\|T_j+e_jQ_{jp}'\|_{H^1}\le CD(z)e^{-\kappa_pD(z)}+
C \|\mathsf M(e_jQ'_{jp},W_{p,z})\|_{\text{F}},\ \ j=1, \cdots, N,
\end{align}
Here we have also used the uniform boundedness of \(\{\|G_{p,z}^{1/2}\|_{\mathrm F}\}\), \(\{\|G_{p,z}^{-1/2}\|_{\mathrm F}\}\), and \(\{\|\Psi_{p,z}\|_{H^1}\}\). Moreover,
since \(\langle Q_p',Q_p\rangle=0\),  the $j$th row of the matrix \(\mathsf M(e_jQ'_{jp},W_{p,z})\)  is \((\langle Q'_{jp},Q_{\ell p}\rangle_{L^2})_{\ell=1}^N\),  and its all other elements are zero, we obtain  from \eqref{a.2a} that  for any $p\in I_{1/8}$ and \(D(z)>D_*\),
$$
\|\mathsf M(e_jQ'_{jp},W_{p,z})\|_{\text{F}}\le CD(z)e^{-\kappa_pD(z)},\ \ j=1, \cdots, N.
$$
which thus yields from \eqref{a.10a} and  \eqref{a.14}  that   for any $p\in I_{1/8}$ and \(D(z)>D_*\),
\begin{align}\label{eq:s4-full-background}
	\|\Psi_{p,z}-W_{p,z}\|_{H^1}+\|T_j(p, z)+e_j Q'_{jp}\|_{H^1}\leq CD(z)e^{-\kappa_pD(z)},\ \ j=1, \cdots, N.
\end{align}
This  proves \eqref{2.35}.

Finally, using \eqref{a.1} and differentiating \eqref{a.4} once more,
the same arguments of proving \eqref{a.2} and \eqref{a.6} then  give that  for any $p\in I_{1/8}$ and \(D(z)>D_*\),
\[
\|\partial_{z_i}\partial_{z_j}G_{p,z}\|_{\mathrm F}
+\|\partial_{z_i}\partial_{z_j}(G_{p,z}^{-1/2})\|_{\mathrm F}
\le CD(z)e^{-\kappa_pD(z)},\quad i,j=1,\cdots,N.
\]
Combining this with \eqref{a.1}, \eqref{a.7}, and
$\Psi_{p,z}=G_{p,z}^{-1/2}W_{p,z}$, we obtain that
\begin{equation}\label{a.10}
	\sup_{\substack{p\in I_{1/8}\\D(z)\ge D_*}}
	\sum_{|\alpha|\le2}\|\partial_z^\alpha\Psi_{p,z}\|_{H^1}
	\le C.
\end{equation}
This therefore completes the proof of  \eqref{eq:reference-center-derivatives}.
\end{proof}

\begin{lemma}\label{lem:two-sided-power-bounds}
For every \(M>0\), there exists  a constant \(C_M>0\)
such that for any \(\alpha\in[7/8,9/8]\) and \(0\le A,B\le M\),
\begin{equation}\label{eq:two-sided-power-bounds}
 	|(A+B)^\alpha-A^\alpha|\le C_MB^{7/8},\ \,\
	|(A+B)^\alpha-A^\alpha-B^\alpha|
	\le C_M\min(A,B)^{7/8}.
\end{equation}
\end{lemma}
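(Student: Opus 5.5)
The plan is to reduce both estimates to elementary one-variable inequalities by homogeneity and monotonicity, treating the two regimes $B\le A$ and $B\ge A$ separately. Throughout we use that $\alpha\in[7/8,9/8]$, so $\alpha-1\in[-1/8,1/8]$. We also use that $A,B\le M$, which lets us trade a power $B^{\alpha}$ or $B^{1}$ for $B^{7/8}$ at the cost of a constant depending on $M$; for instance $B^\alpha\le M^{\alpha-7/8}B^{7/8}$ since $\alpha\ge 7/8$.

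\textbf{First inequality.} If $B\ge A$, we bound directly: $|(A+B)^\alpha-A^\alpha|\le (A+B)^\alpha\le (2B)^\alpha\le C_MB^{7/8}$.

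If $B\le A$, we use the mean value theorem: $|(A+B)^\alpha-A^\alpha|=\alpha\xi^{\alpha-1}B$ for some $\xi\in[A,A+B]$. When $\alpha\ge1$, we have $\xi^{\alpha-1}\le (2M)^{1/8}$, so the difference is at most $CB\le CM^{1/8}B^{7/8}$. When $\alpha<1$, we have $\xi^{\alpha-1}\le A^{\alpha-1}\le B^{\alpha-1}$ because $\xi\ge A\ge B$. This gives $\alpha B^{\alpha}\le C_MB^{7/8}$.

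\textbf{Second inequality.} By symmetry in $A$ and $B$, we may assume $B\le A$, so $\min(A,B)=B$. Then
\[
|(A+B)^\alpha-A^\alpha-B^\alpha|\le |(A+B)^\alpha-A^\alpha|+B^\alpha,
\]
and both terms are at most $C_MB^{7/8}$ by the first part and the trade-off above. This gives the claim with $\min(A,B)^{7/8}$.

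\textbf{Main obstacle.} The only delicate point is the case $\alpha<1$, where $\xi^{\alpha-1}$ blows up as $\xi\to0$. That blow-up is exactly compensated by the choice $\xi\ge A\ge B$, which yields the power $B^\alpha\ge$ (up to constants) $B^{7/8}$ within the bounded range. All constants depend only on $M$, uniformly in $\alpha$.
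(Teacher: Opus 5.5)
Your proof is correct and follows essentially the same route as the paper. The second inequality is handled identically: reduce to $B\le A$, use the triangle inequality, the first bound, and $B^\alpha\le C_MB^{7/8}$. For the first inequality, the paper avoids your split into $B\ge A$ versus $B\le A$ by writing $(A+B)^\alpha-A^\alpha=\alpha\int_0^B(A+s)^{\alpha-1}ds\le B^\alpha$ for $\alpha\le1$, whereas you use the mean-value point $\xi\ge A\ge B$.

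Two cosmetic fixes are needed. First, $\xi^{\alpha-1}\le(2M)^{1/8}$ should read $\xi^{\alpha-1}\le\max\{1,(2M)^{1/8}\}$, since your bound fails when $2M<1$. Second, the remark ``$B^\alpha\ge$ (up to constants) $B^{7/8}$'' should read $B^\alpha\le C_MB^{7/8}$.
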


\begin{proof}
If \(B=0\), then the first inequality of (\ref{eq:two-sided-power-bounds}) holds  immediately. We now suppose \(B>0\).
If  \(7/8\le\alpha\le1\), then
\[
0\le(A+B)^\alpha-A^\alpha
=\alpha\int_0^B(A+s)^{\alpha-1}\,ds
\le\alpha\int_0^B s^{\alpha-1}\,ds=B^\alpha.
\]
If \(1\le\alpha\le9/8\),  then the mean value theorem gives that
\[
0\le(A+B)^\alpha-A^\alpha
\le\alpha B(A+B)^{\alpha-1}\le C_MB.
\]
Since
\begin{align}\label{a.12}
B^\alpha=B^{7/8}B^{\alpha-7/8}
\le\max\{1,M^{1/4}\}B^{7/8},\ \
B=B^{7/8}B^{1/8}
\le\max\{1,M^{1/8}\}B^{7/8},
\end{align}
we conclude from above that the first inequality of \eqref{eq:two-sided-power-bounds} holds true.

We next prove the second inequality of \eqref{eq:two-sided-power-bounds}. If \(0<B\le A\),  then it follows from \eqref{a.12} that
\[
|(A+B)^\alpha-A^\alpha-B^\alpha|
\le |(A+B)^\alpha-A^\alpha|+B^\alpha
\le C_MB^{7/8},
\]
where the first inequality of \eqref{eq:two-sided-power-bounds} is also used.
Similarly, if \(0<A\le B\), then we have
$$|(A+B)^\alpha-A^\alpha-B^\alpha|
\le C_MA^{7/8}.$$
This therefore proves the second inequality of \eqref{eq:two-sided-power-bounds}, and we are done.
\end{proof}

\begin{lemma}\label{lem:three-label-energy}
Let \(2\le N\in\mathbb{N}^+\). Then  there exists a constant $c>0$ such that for any \(p\in I_{1/8}\) and $z\in\R^N$ satisfying $D(z)>1$,
\begin{equation}
	\begin{aligned}
		\left|\int_\R\bigg[
		\Big(\sum_{j=1}^NQ_{jp}^2\Big)^p
		-\sum_{j<k}(Q_{jp}^2+Q_{kp}^2)^p
		+(N-2)\sum_{j=1}^NQ_{jp}^{2p}\bigg]dx\right|
		\le ce^{-\frac{5}{2}\kappa_pD(z)},
	\end{aligned}
	\label{eq:three-label-energy}
\end{equation}
\end{lemma}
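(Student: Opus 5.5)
The plan is to prove the pointwise bound first and then integrate it. Write $a_j:=Q_{jp}^2$ and set
\[
\Phi(a_1,\dots,a_N):=\Big(\sum_j a_j\Big)^p-\sum_{j<k}(a_j+a_k)^p+(N-2)\sum_j a_j^p .
\]
\textbf{Step 1: vanishing.} $\Phi$ vanishes whenever at most two of the $a_j$ are nonzero. If only $a_1,a_2$ are nonzero, the terms are
\[
(a_1+a_2)^p-\big[(a_1+a_2)^p+(N-2)(a_1^p+a_2^p)\big]+(N-2)(a_1^p+a_2^p)=0 .
\]
Here the pairs $(1,k)$ with $k\ge3$ contribute $a_1^p$, the pairs $(2,k)$ contribute $a_2^p$, and the pair $(1,2)$ contributes $(a_1+a_2)^p$. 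The same cancellation occurs when only one $a_j$ is nonzero.

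\textbf{Step 2: pointwise bound.} Fix $x$ and order the values so that $a_{(1)}\ge a_{(2)}\ge a_{(3)}\ge\cdots$. I claim
\[
|\Phi|\le C\,a_{(3)}\,a_{(1)}^{p-1},
\]
with $C$ uniform since all $a_j\le M$. To see this, compare $\Phi(a)$ with $\Phi$ evaluated at the vector $\tilde a$ in which all entries except the two largest are set to zero; by Step 1, $\Phi(\tilde a)=0$. Each power term in $\Phi$ is then perturbed by adding a total of at most $\sum_{j\ge3}a_{(j)}\le (N-2)a_{(3)}$ to its base. For any base that contains the largest value $a_{(1)}$, the mean value theorem gives a change of order $a_{(3)}a_{(1)}^{p-1}$, because $p-1>0$. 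The terms whose base consists only of small values are themselves bounded by $(2(N-2)a_{(3)})^p$, and $a_{(3)}^p\le a_{(3)}a_{(1)}^{p-1}$. This pairwise-telescoping step, in the spirit of Lemma~\ref{lem:two-sided-power-bounds} and \eqref{4.7}, is the step I regard as the main obstacle: one must organize the differences carefully so that every error term carries a factor of the third largest value. Any bound of the form $\min(a_j,a_k)$ only, without the third-largest factor, would yield merely $e^{-2\kappa_pD}$, which is not enough.

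\textbf{Step 3: integration.} Since $p-1\ge 7/8$ and $a_{(1)}\le M$, Step 2 gives
\[
|\Phi|\le C\sum_{\{i,j,k\}\ \text{distinct}} Q_{ip}^{2}Q_{jp}^{7/4}Q_{kp}^{7/4}\cdot(\text{bounded}) .
\]
More simply, I use $a_{(3)}a_{(1)}^{p-1}\le C\,a_{(3)}^{1/2}a_{(2)}^{1/2}a_{(1)}^{7/8}$ summed over distinct triples. By the decay $Q_p\le Ce^{-\kappa_p|x|}$ from \eqref{a.1}, the integral
\[
\int_\R Q_{ip}Q_{jp}Q_{kp}^{7/4}\,dx,\qquad i,j,k \text{ distinct},
\]
must be estimated. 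Take $z_i<z_j<z_k$ with each gap at least $D$. At any point $x$, the sum of the distances to the two centers farther from $x$ is at least $2D$ minus nothing; more precisely, $|x-z_i|+|x-z_j|+|x-z_k|\ge 2D$. Using the $Q_{kp}^{7/4}$ weight, the exponent rate is at least $\frac{5}{2}\kappa_pD$, after absorbing the linear factors coming from flat regions into the spare decay. If this exponent computation comes out slightly short, I will instead use the rate provided by $a_{(1)}^{p-1}\ge a_{(1)}^{7/8}$, together with the observation that on the region where the third-largest value is attained at a center at distance $\ge D$, the two larger values contribute at least $\frac32$ of the decay. Summing over the finitely many triples then gives \eqref{eq:three-label-energy}.
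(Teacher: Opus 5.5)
There is a genuine gap. Step~1 is correct, and the pointwise bound $|\Phi|\le C\,a_{(3)}a_{(1)}^{p-1}$ of Step~2 is also true. Terms whose base contains $a_{(2)}$ but not $a_{(1)}$ are handled by the same mean value argument. The problem is that this bound is too weak to give $e^{-\frac52\kappa_pD}$, so Step~3 cannot succeed.

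Take three consecutive centers $z_{m-1}<z_m<z_{m+1}$ with both gaps equal to $D$. This is allowed, since $D$ is only the minimal gap. Near $x=z_m$ you have $a_{(1)}=Q_{mp}^2\sim1$ and $a_{(2)}\sim a_{(3)}\sim e^{-2\kappa_pD}$. So $a_{(3)}a_{(1)}^{p-1}\sim e^{-2\kappa_pD}$ on an interval of length $O(1)$, and any integral of your bound is at least of order $e^{-2\kappa_pD}$. This is exactly the obstruction you flag for bounds using only $\min(a_j,a_k)$.

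Your Step~3 exponent claim fails for the same configuration. For $z_i<z_k<z_j$, the function $\kappa_p\big(|x-z_i|+|x-z_j|+\frac74|x-z_k|\big)$ attains its minimum $\kappa_p(z_j-z_i)$ at $x=z_k$. This equals $2\kappa_pD$ when both gaps are $D$, so $\int_\R Q_{ip}Q_{jp}Q_{kp}^{7/4}\,dx$ is of order $e^{-2\kappa_pD}$.

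The fallback does not rescue this. First, $a_{(1)}^{p-1}\le a_{(1)}^{7/8}$ for $a_{(1)}\le1$, not $\ge$. Second, your bound contains no factor of $a_{(2)}$ whose decay could be used.

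The missing idea is a genuinely third-order estimate that carries a small factor from each of the three interacting profiles. Comparing only with the vector in which the $N-2$ smallest entries are set to zero yields just one small factor.

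The paper does this as follows.
\begin{itemize}
\item It writes $\Phi$ as the telescoping sum $\sum_{k=3}^N\sum_{m=2}^{k-1}\Delta_3f_p(S_{m-1},A_m,A_k)$ of mixed third differences of $f_p(s)=s^p$.
\item It uses the representation $\Delta_3f_p(A,B,C)=p(p-1)(p-2)\int_0^A\int_0^B\int_0^C(s+t+u)^{p-3}\,du\,dt\,ds$. Together with AM--GM, which applies since $p<3$, this gives $|\Delta_3f_p(A,B,C)|\le c(ABC)^{p/3}$.
\item Subadditivity of $s\mapsto s^{p/3}$ and $2p/3\ge 5/4$ then give $|\Phi|\le C\sum_{\ell,m,k\ \text{distinct}}(Q_{\ell p}Q_{mp}Q_{kp})^{5/4}$.
\item With symmetric weights, $|x-z_\ell|+|x-z_m|+|x-z_k|\ge 2D+|x-z_m|$ when $z_m$ is the middle center. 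This yields exactly $e^{-\frac52\kappa_pD}$.
\end{itemize}
At the middle center this bound is of size $e^{-\frac52\kappa_pD}$ rather than $e^{-2\kappa_pD}$; that is precisely what your estimate loses.
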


\begin{proof}
If \(N=2\), then the left-hand side of \eqref{eq:three-label-energy} vanishes for any \(p\in I_{1/8}\), and hence (\ref{eq:three-label-energy}) holds true.

We next focus on the case \(N\ge3\). Set \(f_p(s):=s^p\) and write
\[
\begin{aligned}
	\Delta_3f_p(A,B,C):=&f_p(A+B+C)-f_p(A+B)-f_p(A+C)-f_p(B+C)\\
	&+f_p(A)+f_p(B)+f_p(C),\ \ A, B, C>0.
\end{aligned}
\]
We then obtain that for any  \(p\in I_{1/8}\),
\begin{equation}\label{AA:5}
\Delta_3f_p(A,B,C)
=p(p-1)(p-2)\int_0^A\!\int_0^B\!\int_0^C
(s+t+u)^{p-3}\,du\,dt\,ds.
\end{equation}
Since $p<3$, the arithmetic--geometric mean inequality gives that
$$
(s+t+u)^{p-3}
\le 3^{p-3}(stu)^{p/3-1},\ \ \forall\ s>0, \ t>0, \ u>0,
$$
which further  implies from (\ref{AA:5}) that for any  \(p\in I_{1/8}\),
\begin{equation}\label{AA:6}
	\begin{aligned}
		|\Delta_3f_p(A,B,C)|
		&\le p(p-1)|p-2|3^{p-3}\int_0^A\int_0^B\int_0^C
		(stu)^{p/3-1}\,du\,dt\,ds\\
&\le c(ABC)^{p/3}.
	\end{aligned}
\end{equation}
Here the constant $c>0$ is independent of $(p,A, B, C)$.

Denote \(A_j:=Q_{jp}^2\) and \(S_m:=\sum_{j\le m}A_j\). Note that 
\begin{equation}
	\begin{aligned}
		f_p(S_N)-\sum_{j<k}f_p(A_j+A_k)+(N-2)\sum_jf_p(A_j)
		=\sum_{k=3}^N\sum_{m=2}^{k-1}\Delta_3f_p(S_{m-1},A_m,A_k).
	\end{aligned}
	\label{eq:finite-label-telescoping}
\end{equation}
Since $p/3<1$ and $2p/3\ge5/4$,   the uniform boundedness of
$\{Q_p\}_{p\in I_{1/8}}$ in $L^\infty(\R)$ gives from (\ref{AA:6}) that for any  \(p\in I_{1/8}\) and $D(z)>1$,
\begin{equation}\label{eq:three-label-pointwise}
|\Delta_3f_p(S_{m-1},A_m,A_k)|
\le c_1\sum_{\ell<m}(A_\ell A_mA_k)^{p/3}
\le c_1\sum_{\ell<m}(Q_{\ell p}Q_{mp}Q_{kp})^{5/4},
\end{equation}
where  \( c_1 > 0 \) is independent of \((z, p)\).
Since the product
$Q_{\ell p}Q_{mp}Q_{kp}$ is unchanged for three distinct indices $\ell,m,k$,
we may assume $z_\ell<z_m<z_k$. This then shows that  $z_k-z_\ell\ge2D(z)$, and
\[
|x-z_\ell|+|x-z_m|+|x-z_k|
\ge z_k-z_\ell+|x-z_m|
\ge2D(z)+|x-z_m|.
\]
We thus calculate from \eqref{a.1} that for any  \(p\in I_{1/8}\) and $D(z)>1$,
\begin{equation}\label{eq:three-center-integral}
\begin{aligned}
	\int_\R(Q_{\ell p}Q_{mp}Q_{kp})^{5/4}\,dx
	&\le c_1e^{-\frac52\kappa_pD(z)}
	\int_\R e^{-\frac54\kappa_p|x-z_m|}\,dx
	\le c_1e^{-\frac52\kappa_pD(z)}.
\end{aligned}
\end{equation}
Integrating \eqref{eq:three-label-pointwise}, we  deduce from \eqref{eq:finite-label-telescoping} and
\eqref{eq:three-center-integral}  that  \eqref{eq:three-label-energy} holds true. This therefore completes the proof of  Lemma \ref{lem:three-label-energy}.
\end{proof}

\vspace{15pt}

\section*{Declaration of AI-assisted tools}
During the preparation of the present paper, the authors partly used OpenAI models  as auxiliary tools for exploratory discussions and preliminary proof drafting. The
mathematical validation, the final proof choices, and the final text are the responsibility
of the human authors.

\end{document}